\theoremstyle{proclaim}
\newtheorem{theorem}{Theorem}[section]
\newtheorem{lemma}[theorem]{Lemma}
\newtheorem{sublemma}[theorem]{Sublemma}
\newtheorem{corollary}[theorem]{Corollary}
\newtheorem{proposition}[theorem]{Proposition}
\newtheorem{problem}[theorem]{Problem}
\theoremstyle{remark}
\newtheorem{remark}[theorem]{Remark}
\newtheorem{definition}[theorem]{Definition}
\newtheorem{construction}[theorem]{Construction}
\newtheorem{example}[theorem]{Example}
\newtheorem{question}[theorem]{Question}
\newcommand{\nr}[1]{\vspace{0.1ex}\noindent\hspace*{9mm}\llap{\textup{(#1)}}}
\newcommand{\norm}[1]{\left\Vert#1\right\Vert}
\newcommand{\abs}[1]{\left\vert#1\right\vert}
\newcommand{\set}[1]{\left\{#1\right\}}
\newcommand{\field}{\mathbb K}
\newcommand{\nat}{\mathbb N}
\newcommand{\real}{\mathbb R}
\newcommand{\eps}{\varepsilon}
\newcommand{\To}{\longrightarrow}
\newcommand{\cball}[1]{B_{#1}}
\newcommand{\identityop}[1]{I_{#1}}
\newcommand{\dual}{\sp{\ast}}
\newcommand{\functional}[2]{\langle #1 ,\,  #2 \rangle }
\newcommand{\finiteop}{\ensuremath{\mathscr{F}}}
\newcommand{\compactop}{\ensuremath{\mathscr{K}}}
\newcommand{\szlenkop}[1]{\ensuremath{\mathscr{S\hspace{-0.8mm}Z}_{\hspace{-1mm}#1}}}
\newcommand{\wscop}{\ensuremath{\mathscr{M}}}
\newcommand{\asplundop}{\ensuremath{\mathscr{D}}}
\newcommand{\sepop}{\ensuremath{\mathscr{X}}}
\newcommand{\wcompactop}{\ensuremath{\mathscr{W}}}
\newcommand{\allop}{\ensuremath{\mathscr{B}}}
\newcommand{\fac}[1]{\mathscr{G}_{#1}}
\newcommand{\cfac}[1]{\overline{\mathscr{G}}_{ \hspace{-0.8mm} #1}}
\newcommand{\opideal}{\ensuremath{\mathscr{I}}}
\newcommand{\mzlenkop}[1]{\ensuremath{\mathscr{M\hspace{-0.8mm}Z}_{\hspace{-1mm}#1}}}
\newcommand{\nzlenkop}[1]{\ensuremath{\mathscr{N\hspace{-1.0mm}Z}_{\hspace{-1mm}#1}}}
\newcommand{\mszlenk}[1]{{\rm Sz}(#1)}
\newcommand{\meeszlenk}[2]{{\rm Sz}_{#1} ( #2 )}
\newcommand{\mdset}[3]{s^{#1}_{#2}( #3 )}
\newcommand{\diam}{{\rm diam}}
\newcommand{\ord}{{\textsc{Ord}}}
\newcommand{\ban}{{\textsc{Ban}}}
\newcommand{\sep}{{\textsc{Sep}}}
\newcommand{\szl}{{\textsc{Szl}}}
\newcommand{\pzl}{{\textsc{Pzl}}}
\newcommand{\cf}[1]{cf \left( #1\right)}
\newcommand{\inj}{^{inj}}
\newcommand{\sur}{^{sur}}
\newcommand{\fin}{^{ < \, \infty}}
\newcommand{\eff}{\mathcal{F}}
\newcommand{\gee}{\mathcal{G}}
\newcommand{\ess}{\mathcal{S}}
\newcommand{\arr}{\mathcal{R}}
\newcommand{\iop}{\mathscr{I}}
\newcommand{\jop}{\mathscr{J}}
\DeclareMathOperator{\Space}{Space}
\DeclareMathOperator{\Op}{Op}
\newcommand{\fresh}{Fr{\'e}chet}
\newcommand{\hajek}{H{\'a}jek}
\newcommand{\pelch}{Pe{\l}czy{\'n}ski}
\newcommand{\reinov}{Re{\u\i}nov}
\numberwithin{theorem}{section}
\numberwithin{equation}{section}
\begin{document}
\title{Asplund operators and the {S}zlenk index\footnote{Research supported by an ANU PhD Scholarship. The present work forms part of the author's doctoral dissertation, written at the Australian National University under the supervision of Dr. Richard J. Loy.}}
\author{Philip A.H. Brooker}
\date{}
\maketitle

\begin{abstract}
For $\alpha$ an ordinal, we investigate the class $\szlenkop{\alpha}$ consisting of all operators whose Szlenk index is an ordinal not exceeding $\omega^\alpha$. We show that each class $\szlenkop{\alpha}$ is a closed operator ideal and study various operator ideal properties for these classes. The relationship between the classes $\szlenkop{\alpha}$ and several well-known closed operator ideals is investigated and quantitative factorization results in terms of the Szlenk index are obtained for the class of Asplund operators.
\end{abstract}

\section*{Introduction}For Banach spaces, the Szlenk index is an isomorphic invariant introduced by W. Szlenk in \cite{Szlenk1968}, where an ordinal-valued index is used to show that there is no separable reflexive Banach space containing all separable reflexive Banach spaces isomorphically. Since then, the Szlenk index has found various applications in the study of the geometry of Banach spaces. For example, it has proved to be useful in the study of universality problems, linear classification of separable $C(K)$ spaces, renorming theory and the Lipschitz and uniform classification of Banach spaces. We refer the reader to \cite{Lancien2006} for a survey on the Szlenk index and its applications in the study of the geometry of Banach spaces. Quite recently, the Szlenk index has also found application in fixed point theory \cite{Dom'inguez2010}, and connections between the Szlenk index, metric embeddings of trees into Banach spaces and the uniform classification of Banach spaces are established in \cite{Baudier}.

The notion of Szlenk index of a Banach space has a natural analogue for operators, and this more general setting for the Szlenk index has been considered by several authors, for example in \cite{Alspach1978}, \cite{Alspach1997}, \cite[p.68]{Bossard1997}, \cite{Bourgain1979} and \cite{Gasparis2005}. A survey on the applications of the Szlenk index to the study of operators on spaces of continuous functions can be found in \cite{Rosenthal2003}. 

The last couple of decades have bore witness to substantial interest in the relationship between the geometry of a Banach space $E$, on the one hand, and the closed ideal structure of $\allop (E)$, on the other ($\allop (E)$ is the Banach algebra of all bounded linear operators $E\To E$). One of the main tools in the study of these relationships is the notion of a closed operator ideal. Given the increasingly important role that the Szlenk index plays in the study of Banach space geometry, we are thus prompted to consider whether there are closed operator ideals naturally associated with the notion of Szlenk index of an operator. We show here that the Szlenk index gives rise to a family of closed operator ideals $\szlenkop{\alpha}$, where $\alpha$ is an ordinal. We study the operator ideal properties of the classes $\szlenkop{\alpha}$ and the relationship of the classes $\szlenkop{\alpha}$ with several other operator ideals already familiar to analysts.
 
We now outline the contents and layout of the current paper. Section~\ref{greatfamily} contains most of the necessary notation and background results that we shall require. In Section~\ref{opsect} we formally introduce the classes $\szlenkop{\alpha}$, establishing them as closed operator ideals and investigating their relationship with the operator ideals of compact operators, Asplund operators and separable range operators. Section~\ref{yemem} is a discussion of some examples involving a number of well-known Banach spaces. In Section~\ref{feekwhack} we show that every $\alpha$-Szlenk operator factors through a Banach space of Szlenk index not exceeding $\omega^{\alpha +1}$. We go on to deduce that for a proper class of ordinals $\alpha$, $\szlenkop{\alpha}$ possesses the factorization property. Section~\ref{theredethere} is then devoted to establishing a similar, but negative, result. In particular, we show that for a proper class of ordinals $\alpha$, $\szlenkop{\alpha}$ lacks the factorization property. In Section~\ref{spacesect} we introduce and study a class of space ideals that are of interest in determining whether the operator ideals $\szlenkop{\alpha +1}$ have the factorization property. We conclude in Section~\ref{closeone} with discussion of possible future directions for work related to the problems addressed here.

Throughout, we rely heavily on results and techniques developed in \cite{Brookera}, where a detailed analysis of the behaviour of the Szlenk index under direct sums is carried out. Indeed, forming direct sums of Banach spaces and their operators is important to many of the results presented here. We also note that the results of Section~\ref{feekwhack} in particular make significant use of the interpolation techniques developed by S.~Heinrich in \cite{Heinrich1980}.

\section{Preliminaries}\label{greatfamily}
\subsection{\textsc{Notation and terminology}}
The class of all Banach spaces is denoted $\ban$, and typical elements of $\ban$ are denoted by the letters $D, \, E, \, F$ and $ G$. For a Banach space $E$ and nonempty bounded $S\subseteq E$, we define $\abs{S} := \sup_{x\in S}\norm{x}$. The closed unit ball of $E$ is denoted $\cball{E}$, and the identity operator of $E$ is $I_E$. By an \emph{operator} we mean a norm-continuous linear map acting between Banach spaces. The class of all operators between arbitrary Banach spaces is denoted $\allop$, and for given Banach spaces $E$ and $F$ the set of all operators $E\To F$ is $\allop (E, \, F)$. For a Banach space $F$, the \emph{canonical embedding of $F$} is the map $\mathfrak{J}_F : F \longrightarrow \ell_\infty (\cball{F\dual})$ given by setting $\mathfrak{J}_F(y) = (\functional{y\dual}{y})_{y\dual \in \cball{F\dual}}$, $y\in F$. For a Banach space $E$, the \emph{canonical surjection onto $E$} is the mapping $\mathfrak{Q}_E : \ell_1(\cball{E}) \longrightarrow E : (a_x)_{x\in \cball{E}} \mapsto \sum_{x\in \cball{E}} a_xx$. 

We write $\ord$ for the class of all ordinals, whose elements shall typically be denoted by the lower-case Greek letters $\alpha$, $\beta$ and $\gamma$. For an ordinal $\alpha$, we write $\cf{\alpha}$ for the cofinality of $\alpha$. For $\Lambda$ a set, $\Lambda\fin$ shall denote the set of all nonempty finite subsets of $\Lambda$. Whenever $\Lambda$ and $\Upsilon$ are used to denote index sets over which we take direct sums and direct products, we assume for simplicity that $\Lambda$ and $\Upsilon$ are nonempty.

Let $1\leqslant q<\infty$. We say that $p \in \set{0} \cup [1, \, \infty )$ is \emph{predual} to $q$ if it satisfies:
\begin{equation*}
p =
\begin{cases}
0& \text{if $q=1$},\\
{q}{(q-1)^{-1}}& \text{if $1<q<\infty$.}
\end{cases}
\end{equation*}

For $1\leqslant p\leqslant \infty$, a set $\Lambda$ and Banach spaces $E_\lambda$, $\lambda \in \Lambda$, the $\ell_p$-direct sum of $\set{E_\lambda \mid \lambda \in \Lambda}$ is denoted $(\bigoplus_{\lambda \in \Lambda}E_\lambda)_p$, and the $c_0$-direct sum of $\set{E_\lambda \mid \lambda \in \Lambda}$ is denoted $(\bigoplus_{\lambda \in \Lambda}E_\lambda)_0$. Throughout, for $1<p, q<\infty$ satisfying $p+q=pq$, we implicitly identify $(\bigoplus_{\lambda \in \Lambda}E_\lambda)\dual_p$ with $(\bigoplus_{\lambda \in \Lambda}E_\lambda\dual)_q$, so that the dual of a direct sum is the dual direct sum of the duals of the spaces $E_\lambda$. Making this identification allows us to consider direct products of the form $\prod_{\lambda \in \Lambda}K_\lambda$, where $K_\lambda \subseteq E_\lambda\dual$ and $(\abs{K_\lambda})_{\lambda \in \Lambda} \in \ell_q (\Lambda)$, as subsets of $(\bigoplus_{\lambda \in \Lambda}E_\lambda)\dual_p$. Similarly, $(\bigoplus_{\lambda \in \Lambda}E_\lambda)\dual_0$ is naturally identified with $(\bigoplus_{\lambda \in \Lambda}E_\lambda\dual)_1$ throughout.

We shall often consider operators $T: (\bigoplus_{\lambda \in \Lambda}E_\lambda)_p  \To (\bigoplus_{\upsilon \in \Upsilon}F_\upsilon)_p$, where $\Lambda$ and $\Upsilon$ are sets, $\set{E_\lambda \mid \lambda \in \Lambda}$ and $\set{F_\upsilon \mid \upsilon \in \Upsilon}$ families of Banach spaces and $p=0$ or $1<p<\infty$. In this setting, for $\arr \subseteq \Lambda$ we denote by $U_\arr$ the canonical injection of $(\bigoplus_{\lambda \in \arr}E_\lambda)_p $ into $(\bigoplus_{\lambda \in \Lambda}E_\lambda)_p$. For $\ess \subseteq \Upsilon$, we denote by $V_\ess$ the canonical injection of $(\bigoplus_{\upsilon \in \ess}F_\upsilon)_p $ into $(\bigoplus_{\upsilon \in \Upsilon}F_\upsilon)_p$, and by $Q_\ess$ the canonical surjection of $(\bigoplus_{\upsilon \in \Upsilon}F_\upsilon)_p$ onto $(\bigoplus_{\upsilon \in \ess}F_\upsilon)_p$. Thus $V_\ess$ and $Q_\ess$ act to and from the codomain of $T$ respectively.

We work within the theory of operator ideals as expounded by A.~Pietsch in \cite{Pietsch1980}. The starting point of this theory is the following definition that we shall refer to in the proof of Theorem~\ref{idealthm}.
\begin{definition}\label{idealdefn}(\cite[\S1.1.1]{Pietsch1980})
An \emph{operator ideal} $\opideal$ is a subclass of $\allop$ such that for Banach spaces $E$ and $F$, the components $ \opideal (E, \, F ) :=  \allop (E, \, F) \cap \opideal $ satisfy the following three conditions:

\nr{${\rm OI}_1$} $\identityop{\field} \in \opideal$;

\nr{${\rm OI}_2$} $S + T \in \opideal (E, \, F) $ whenever $S,\, T \in \opideal (E, \, F)$.

\nr{${\rm OI}_3$} $U\in \allop (D, \, E)$, $T\in \opideal (E, \, F) $ and $V \in \allop (F, \, G)$ implies $VTU \in \opideal$.
\end{definition}

We otherwise assume the reader is familiar with the rudiments of operator ideal theory, and refer the reader to \cite[Part~I]{Pietsch1980} for any unexplained notions regarding operator ideals. In particular, we assume the reader is familiar with what it means for an operator ideal to be closed, injective and surjective. For a given operator ideal $\opideal$, the closed, injective and surjective hulls of $\opideal$ are denoted $\overline{\opideal}$, $\opideal\inj$ and $\opideal\sur$, respectively. We also assume knowledge of basic notions and facts regarding space ideals \cite[p.53]{Pietsch1980}.

Well-known operator ideals that we shall be concerned with here are the compact operators $\compactop$, the weakly compact operators $\wcompactop$, the separable range operators $\sepop$ and the Hilbert space-factorable operators $\Gamma_2$. For a Cartesian Banach space $E$ (that is, $E$ is isomorphic to its square $E\oplus E$), we denote by $\fac{E}$ the operator ideal consisting of all operators that admit a continuous linear factorization through $E$.

For an operator ideal $\iop$, we denote by $\Space (\iop )$ the space ideal consisting of all Banach spaces whose identity operator belongs to $\iop$. For a space ideal $\mathtt{I}$, we denote by $\Op (\mathtt{I})$ the operator ideal consisting of all operators that admit a continuous linear factorization through an element of $\mathtt{I}$. For operator ideals $\iop$ and $\jop$, we say that $\iop$ has the \emph{$\jop$-factorization property} if $\iop \subseteq \Op (\Space(\jop))$; evidently, this implies that $\iop \subseteq \jop$. An operator ideal $\iop$ has the \emph{factorization property} if it has the $\iop$-factorization property.

In various parts of the paper we call upon a factorization result due to S.~Heinrich. In order to state Heinrich's result, we require the following definition.

\begin{definition}
Let $\iop$ and $\jop$ be operator ideals and $1<p<\infty$. We say that $(\iop , \, \jop)$ is a \emph{$\Sigma_p$-pair} if the following holds for any sequences of Banach spaces $(E_m)_{m\in \nat}$ and $(F_n)_{n\in\nat}$ and $T\in \allop ((\bigoplus_{m \in \nat}E_m)_p , \, (\bigoplus_{n\in\nat}F_n)_p )$: if $Q_\gee TU_\eff \in \iop$ for all $\eff, \, \gee \in \nat\fin$, then $T\in \jop$.
\end{definition}

Heinrich establishes the following result in \cite{Heinrich1980}:

\begin{theorem}\label{manna}
Let $1<p<\infty$ and let $\iop$ and $\jop$ be surjective operator ideals such that $(\iop , \, \jop)$ is a $\Sigma_p$-pair and $\jop$ is injective. Then $\iop$ has the $\jop$-factorization property.
\end{theorem}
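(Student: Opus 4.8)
The plan is to run the Davis--Figiel--Johnson--\pelch{} interpolation construction with exponent $p$, thereby factoring $T$ through a Banach space $Z$, and then to feed a suitable operator between $\ell_p$-sums into the $\Sigma_p$-pair hypothesis in order to place $I_Z$ in $\jop$. By rescaling (via ${\rm OI}_3$) we may assume $T\in\iop(E,F)$ with $\norm{T}\leqslant 1$, and it then suffices to construct $Z$ with $I_Z\in\jop$ together with operators $A\in\allop(E,Z)$ and $j\in\allop(Z,F)$ satisfying $jA=T$, for then $T\in\Op(\Space(\jop))$. Put $C:=\overline{T(\cball{E})}$ and, for $n\geqslant 1$, $C_n:=2^nC+2^{-n}\cball{F}$ with Minkowski gauge $\norm{\cdot}_n$; since $C$ is bounded and $0$ is interior to $C_n$, each $\norm{\cdot}_n$ is a norm on $F$ equivalent to $\norm{\cdot}_F$. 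Writing $F_n:=(F,\norm{\cdot}_n)$ and $X:=(\bigoplus_{n\geqslant 1}F_n)_p$, set $\norm{y}_Z:=(\sum_{n\geqslant 1}\norm{y}_n^p)^{1/p}$ for $y\in F$ and $Z:=\set{y\in F : \norm{y}_Z<\infty}$; then $Z$ is a Banach space with formal inclusion $j\colon Z\To F$. Because $Tx\in C$, and hence $\norm{Tx}_n\leqslant 2^{-n}$, for $x\in\cball{E}$, the map $T$ corestricts to a bounded $A\colon E\To Z$ with $jA=T$; and the diagonal map $\delta\colon Z\To X$, $\delta y=(y)_{n\geqslant 1}$, is an isometric embedding, hence a metric injection.

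The theorem thus reduces to showing $I_Z\in\jop$. Fix $1<q<\infty$ with $p+q=pq$, set $\kappa:=(\sum_{m\geqslant 1}2^{-mq})^{-1/q}$, and consider $\rho\colon(\bigoplus_{m\geqslant 1}Z)_p\To Z$ defined by $\rho((\xi_m)_m):=\kappa\sum_m 2^{-m}\xi_m$. One checks that $\rho$ is a metric surjection: its norm is at most $1$ by H\"older's inequality, while a given $y$ with $\norm{y}_Z<1$ is hit by the element $(\xi_m)_m$ with $\xi_m:=a_m y$, where $(a_m)_m$ is taken extremal in H\"older's inequality and normalised so that $\norm{(\xi_m)_m}_p=\norm{y}_Z<1$. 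Since $\jop$ is surjective, $I_Z\rho=\rho\in\jop$ will give $I_Z\in\jop$; and since $\jop$ is injective and $\delta$ a metric injection, $\delta\rho\in\jop$ will give $\rho\in\jop$. Hence the whole proof comes down to proving $\delta\rho\in\jop$.

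This is where the $\Sigma_p$-pair hypothesis is used. The operator $\delta\rho$ acts between the $\ell_p$-sums $(\bigoplus_{m\geqslant 1}Z)_p$ and $X=(\bigoplus_{n\geqslant 1}F_n)_p$, and for $\eff,\gee\in\nat\fin$ the corner $Q_\gee(\delta\rho)U_\eff$ carries $(\xi_m)_{m\in\eff}$ to $(\kappa\sum_{m\in\eff}2^{-m}\xi_m)_{n\in\gee}$; it is therefore a finite sum of operators, each of which factors through one of the inclusions $\iota_n\colon Z\To F_n$. The key lemma I would then need is that \emph{$j\colon Z\To F$ belongs to $\iop$} --- equivalently that each $\iota_n\in\iop$, since $\iota_n$ and $j$ differ only by an isomorphism of $F$. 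Granting it, every corner $Q_\gee(\delta\rho)U_\eff$ is a finite sum of members of $\iop$, hence lies in $\iop$ by ${\rm OI}_2$ and ${\rm OI}_3$, so the $\Sigma_p$-pair property yields $\delta\rho\in\jop$; then $\rho\in\jop$ and $I_Z\in\jop$ by the preceding paragraph, and since $jA=T$ this exhibits $T$ as factoring through $Z\in\Space(\jop)$, completing the proof.

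The hard part is the key lemma $j\in\iop$. Its plausibility is already visible from the construction: if $y\in\cball{Z}$ then $\norm{y}_n\leqslant 1$, i.e. $y\in 2^n\overline{T(\cball{E})}+2^{-n}\cball{F}$, for every $n$, so $j(\cball{Z})\subseteq\bigcap_{n\geqslant 1}\big(2^n\overline{T(\cball{E})}+2^{-n}\cball{F}\big)$; as $n\to\infty$ the term $2^{-n}\cball{F}$ becomes negligible and the intersection is governed by $C=\overline{T(\cball{E})}$, a set that is ``$\iop$-small'' precisely because $T\in\iop$. Making this rigorous is the substance of Heinrich's interpolation argument in \cite{Heinrich1980}; I would expect it to proceed by exhibiting $j$ --- or, invoking the surjectivity of $\iop$, the operator $j\mathfrak{Q}_Z$ --- directly as a member of $\iop$ assembled from the operators $2^nT$ by an $\ell_p$-type summation, which is where the precise exponent $p$ and the surjectivity of $\iop$ enter. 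The remaining verifications --- that the $\norm{\cdot}_n$ are equivalent norms, that $\rho$ is a metric surjection, that $\delta$ is an isometric embedding, and the stated form of the corners of $\delta\rho$ --- are routine and I would not dwell on them.
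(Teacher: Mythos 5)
The paper does not actually supply a proof of Theorem~\ref{manna}: it only states that the result is ``presented and proved in \cite{Heinrich1980} under the additional hypothesis that $\iop = \jop$'' and that ``a straightforward notational substitution in Heinrich's proofs should suffice.'' So there is no in-paper argument to compare against; what follows is an assessment of your reconstruction on its own terms.

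Your architecture is the right one and matches Heinrich's: run the DFJP interpolation with exponent $p$ to produce $Z$, $A$, $j$; observe that $\delta\colon Z\To X=(\bigoplus_n F_n)_p$ is a metric injection and $\rho\colon(\bigoplus_m Z)_p\To Z$ is a metric surjection; and then argue $I_Z\in\jop\Leftarrow\rho\in\jop\Leftarrow\delta\rho\in\jop$ using that $\jop$ is surjective and injective, with $\delta\rho\in\jop$ to come from the $\Sigma_p$-pair property once the corners $Q_\gee(\delta\rho)U_\eff$ are shown to lie in $\iop$. All of the reductions up to this point are correct, and the routine verifications you defer really are routine.

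The difficulty is exactly the one you flag and do not close: the corners of $\delta\rho$ factor through the formal inclusions $\iota_n\colon Z\To F_n$ (each isomorphic to $j\colon Z\To F$), so your argument needs $j\in\iop$. You offer only a plausibility sketch --- $j(\cball{Z})\subseteq\bigcap_n\big(2^n\overline{T(\cball{E})}+2^{-n}\cball{F}\big)$ --- and an appeal to ``Heinrich's interpolation argument.'' That is not a proof, and it is not a small omission: this single step is where all of the interpolation-theoretic work resides. A natural attempt using the surjectivity of $\iop$ shows, by choosing for each $z\in\cball{Z}$ and each $n$ some $x_n(z)\in\cball{E}$ with $\Vert z-2^nTx_n(z)\Vert\leqslant 2\cdot 2^{-n}$ and setting $R_n:=2^nT\circ S_n$ with $S_n\colon\ell_1(\cball{Z})\To E$ the induced linear map, that $\Vert j\mathfrak{Q}_Z-R_n\Vert\leqslant 2^{1-n}$ with $R_n\in\iop$; but this only places $j\mathfrak{Q}_Z$ in $\overline{\iop}$, and the hypothesis of Theorem~\ref{manna} does not assume $\iop$ closed. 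So either one must further argue that the limit can be avoided (e.g.\ by feeding a cleverer operator into the $\Sigma_p$-pair whose corners manifestly factor through $T$ rather than through $j$, which is what I believe Heinrich actually does), or one must address the closedness issue. In either case, what you have written names the gap and guesses at its resolution but does not establish it, so the argument as written is incomplete at precisely its load-bearing step.
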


We note that Theorem~\ref{manna} is presented and proved in \cite{Heinrich1980} under the additional hypothesis that $\iop = \jop$. This restriction is, in fact, unnecessary, and we leave it to the interested reader to verify that Heinrich's proof of Theorem~\ref{manna} holds in the generality in which it is stated above (a straightforward notational substitution in Heinrich's proofs should suffice for the reader familiar with interpolation theory).

A real Banach space $E$ is said to be \emph{Asplund} if every real-valued convex continuous function defined on a convex open subset $U$ of $E$ is {\fresh} differentiable on a dense $G_\delta$ subset of $U$. A complex Banach space $E$ is said to be Asplund if its underlying real Banach space $E_\real$ is Asplund in the real scalar sense. Of particular importance to the context of our discussion is the following theorem that collects several useful characterizations of Asplund spaces; for $C\subseteq E\dual$, $\eps>0$ and $x\in E$, the \emph{$w\dual$-slice of $C$ determined by $x$ and $\eps$} is the set $\set{x\dual \in C\mid \Re\functional{x\dual}{x} > \sup \set{\Re \functional{y\dual}{x}\mid y\dual \in C}-\eps} $.

\begin{theorem}\label{Aspequiv}
Let $E$ be a Banach space. The following are equivalent:

\nr{i} $E$ is an Asplund space;

\nr{ii} Every separable subspace of $E$ is an Asplund space;

\nr{iii} Every separable subspace of $E$ has separable dual;

\nr{iv} Every bounded nonempty subset of $E\dual$ admits nonempty $w\dual$-slices of arbitrarily small diameter.
\end{theorem}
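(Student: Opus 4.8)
The plan is to treat this as the classical set of characterisations of Asplund spaces (see, e.g., \cite{Phelps} or \cite{DGZ}) and to organise the proof around the core equivalences among (i), (iii) and (iv), after which (ii) folds in at the end. Concretely I would prove the chain $\neg$(iv)$\Rightarrow\neg$(i), then (iv)$\Rightarrow$(i), then (iv)$\Rightarrow$(iii), then (iii)$\Rightarrow$(iv), closing the loop on (i), (iii) and (iv). The remaining implications are then soft: (iii)$\Rightarrow$(ii) holds because condition (iii) passes to any subspace $Y\subseteq E$ (a separable subspace of $Y$ is a separable subspace of $E$) and hence makes $Y$ Asplund via (iii)$\Rightarrow$(i); and (ii)$\Rightarrow$(iii) holds because each separable subspace of $E$ is then a separable Asplund space and so has separable dual, by the separable case of the already-established (i)$\Rightarrow$(iii). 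For complex $E$ one passes to the underlying real space $E_\real$, which affects none of (ii)--(iv), so it suffices to work with real scalars throughout.

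For $\neg$(iv)$\Rightarrow\neg$(i), suppose $C\subseteq E\dual$ is bounded and there is $\eps>0$ such that every $w\dual$-slice of $C$ has diameter at least $\eps$. Replacing $C$ by its $w\dual$-closed convex hull---which shrinks no $w\dual$-slice and leaves the support function unchanged---we may assume $C$ is $w\dual$-compact and convex. Then $\sigma_C(x):=\sup\set{\Re\functional{x\dual}{x}\mid x\dual\in C}$ is convex and globally Lipschitz on $E$, and for each $x_0\in E\setminus\{0\}$ the $w\dual$-slices of $C$ determined by $x_0$ all have diameter at least $\eps$; by the standard criterion that $\sigma_C$ is {\fresh} differentiable at $x_0$ precisely when the diameters of those slices tend to $0$, it follows that $\sigma_C$ is {\fresh} differentiable at no point of $E\setminus\{0\}$. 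Taking $U$ to be any open ball missing the origin then exhibits a convex continuous function on a convex open set that is {\fresh} differentiable nowhere, so $E$ fails to be Asplund.

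The analytic heart is (iv)$\Rightarrow$(i). Let $f$ be a real-valued convex continuous function on a convex open set $U\subseteq E$; then each subdifferential $\partial f(x)$ is a nonempty $w\dual$-compact convex subset of $E\dual$, and these are locally uniformly bounded. For $n\in\nat$ let $U_n$ be the (open) set of points $x\in U$ possessing a neighbourhood $V$ with $\diam\bigcup_{y\in V}\partial f(y)<1/n$. Density of $U_n$ is where (iv) enters: given $x_0\in U$ and a small ball $V_0$ about it, the bounded set $\bigcup_{y\in V_0}\partial f(y)$ admits, by (iv), a $w\dual$-slice of diameter $<1/n$ in some direction $h$, and a point of $V_0$ lying far enough from $x_0$ along the ray through $x_0$ in the direction $h$, chosen where $f$ increases as rapidly as possible, has its subdifferential---and the subdifferentials at nearby points---contained in that slice. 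Then $\bigcap_{n\in\nat}U_n$ is a dense $G_\delta$ subset of $U$ on which $\partial f$ is single-valued and norm-to-norm continuous, and a convex continuous function is {\fresh} differentiable at every point where its subdifferential is single-valued and norm-to-norm continuous; this gives (i). I expect this subdifferential bookkeeping to be the main obstacle of the whole argument.

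It remains to link (iv) and (iii). For (iv)$\Rightarrow$(iii), given a separable subspace $Y\subseteq E$ I would first lift (iv) to $Y$: a bounded subset of $Y\dual$ and a slicing direction lift to $E\dual$ and to $E$ by Hahn--Banach, so that the $w\dual$-metrizable compactum $(\cball{Y\dual},w\dual)$ is fragmented by the norm---each of its nonempty $w\dual$-closed subsets has nonempty relatively $w\dual$-open pieces of arbitrarily small norm-diameter. A transfinite derivation entirely analogous to the Szlenk derivation then stabilises at a countable ordinal and exhibits $\cball{Y\dual}$, for each prescribed smallness, as a countable union of norm-small sets; hence $Y\dual$ is norm-separable. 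For the converse (iii)$\Rightarrow$(iv) I would invoke the classical facts that (iii) forces $E\dual$ to have the Radon--{\niko} property (Stegall) and that a dual space with the Radon--{\niko} property has every bounded subset $w\dual$-dentable (Namioka--Phelps). Besides the analytic step (iv)$\Rightarrow$(i) and the fragmentation-to-separability passage just indicated, the one further point requiring care is to keep the organisation acyclic---in particular, to establish ``separable dual implies Asplund'' in the separable case on its own footing (via dentability of $\cball{Y\dual}$ and the derivation above) before the reductions at the start depend on it.
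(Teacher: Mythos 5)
The paper does not actually prove Theorem~\ref{Aspequiv}: it is stated as a recollection of classical facts, and immediately after the statement the text says that the real case is proved in Chapter~I.5 of \cite{Deville1993} while the complex case follows by applying the canonical isometric, $w\dual$-to-$w\dual$ homeomorphic surjection $\varphi:x\dual\mapsto \Re x\dual$ of $E\dual$ onto $(E_\real)\dual$. Your proposal therefore goes down a genuinely different road, attempting a self-contained proof along the Phelps/Namioka--Phelps/Stegall lines. The overall architecture --- closing the loop (i)$\iff$(iv), (iv)$\iff$(iii), and then folding in (ii) by restriction --- is the right one, and most of the individual steps ($\neg$(iv)$\Rightarrow\neg$(i) via {\v S}mulyan's Fr\'echet criterion for support functions; (iv)$\Rightarrow$(i) via the dense $G_\delta$ sets $U_n$ and norm-to-norm continuity of single-valued subdifferentials; (iii)$\Rightarrow$(iv) via Stegall and Namioka--Phelps) are sound in outline.

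The step that does not survive scrutiny as written is (iv)$\Rightarrow$(iii). You claim that ``a bounded subset of $Y\dual$ and a slicing direction lift to $E\dual$ and to $E$ by Hahn--Banach,'' so that (iv) for $E$ forces $\cball{Y\dual}$ to be fragmented by the norm in the $\sigma(Y\dual,Y)$ topology. But the logic runs in the wrong direction: a bounded $C\subseteq Y\dual$ can indeed be lifted to a bounded $\tilde{C}:=R^{-1}(C)\cap M\cball{E\dual}$ in $E\dual$ (where $R:E\dual\To Y\dual$ is restriction), yet (iv) then produces a small $w\dual$-slice of $\tilde{C}$ determined by some $x\in E$, and there is no reason for that $x$ to lie in $Y$. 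The $R$-image of such a slice is in general neither a $\sigma(Y\dual,Y)$-slice of $C$ nor even a relatively $\sigma(Y\dual,Y)$-open subset of $C$, because $R$ is a continuous surjection but not an open map. So the lifting does not transfer smallness of slices or fragmentation to $Y\dual$. One must instead argue more carefully --- for instance, via the known fact that fragmentability passes through continuous surjections of compacta equipped with compatible lower semicontinuous metrics by passing to an irreducible restriction of $R$ (a Namioka-type argument), or else by first upgrading (iv)$\Rightarrow$(i) to ``(iv) implies every $w\dual$-compact subset of $Y\dual$ is norm-fragmented in $\sigma(Y\dual,Y)$'' directly, before invoking $w\dual$-metrizability of $\cball{Y\dual}$ to conclude separability of $Y\dual$. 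Until that is filled in, the chain does not close, and --- as you yourself note at the end --- the danger of circularity becomes real, since the easy route to inheritance of Asplundness by subspaces is precisely characterization (iii), which is what you are trying to reach.
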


Theorem~\ref{Aspequiv} is proved for real Banach spaces in Chapter~I.5 of \cite{Deville1993}. For complex Banach spaces $E$, Theorem~\ref{Aspequiv} follows from the real scalar case and properties of the canonical linear surjection $\varphi: x\dual \mapsto \Re x\dual$ of $E\dual$ onto $(E_\real)\dual$. In particular, $\varphi$ is a norm-to-norm isometric, $\sigma(E\dual, \, E)$-to-$\sigma ((E_\real)\dual, \, E_\real)$ homeomorphism; this is easily deduced from \cite[Proposition~1.9.3]{Megginson1998}.

Let $E$ and $F$ be Banach spaces. An operator $T:E\To F$ is \emph{Asplund} if for any finite positive measure space $(\Omega, \, \Sigma , \, \mu)$, any $S\in \allop (F, \,  L_\infty (\Omega, \, \Sigma , \, \mu))$ and any $\eps >0$, there exists $B\in \Sigma$ such that $\mu (B)>\mu (\Omega )-\eps$ and $\set{f\chi_B\mid f\in ST(\cball{E})}$ is relatively compact in $L_\infty (\Omega, \, \Sigma , \, \mu)$ (here $\chi_B$ denotes the characteristic function of $B$ on $\Omega$). The class of all Asplund operators is denoted $\asplundop$. We note that some authors, for example in \cite{Pietsch1980} and \cite{Heinrich1980}, refer to Asplund operators as \emph{decomposing operators}. Standard references for Asplund operators are \cite{Pietsch1980} and \cite{Stegall1981}, where it is shown that the Asplund operators form a closed operator ideal and that a Banach space is an Asplund space if and only if its identity operator is an Asplund operator. A further result is that every Asplund operator factors through an Asplund space, due independently to O.~{\reinov} \cite{Reuinov1978}, S.~Heinrich \cite{Heinrich1980} and C.~Stegall \cite{Stegall1981}.

\subsection{The Szlenk index}

We now define the Szlenk index, noting that our definition varies from that given by W. Szlenk in \cite{Szlenk1968}. However, the two definitions give the same index for operators acting on separable Banach spaces containing no isomorphic copy of $\ell_1$ (see the proof of \cite[Proposition~3.3]{Lancien1993} for details).  

Let $E$ be a Banach space, $K \subseteq E\dual$ a $w\dual$-compact set and $\eps >0$. Define
\begin{equation*}
\mdset{\mbox{}}{\eps}{K} = \set{x \in K \mid \diam (K \cap V)> \eps \mbox{ for every } w\dual \mbox{-open }V\ni x}.
\end{equation*} We iterate $s_\eps$ transfinitely as follows: $\mdset{0}{\eps}{K} = K$, $ \mdset{\alpha+1}{\eps}{K}= \mdset{\mbox{}}{\eps}{\mdset{\alpha}{\eps}{K}}$ for each ordinal $\alpha$ and $ \mdset{\alpha}{\eps}{K} = \bigcap_{\beta< \alpha} \mdset{\beta}{\eps}{K}$ whenever $\alpha $ is a limit ordinal. 

The \emph{$\eps$-Szlenk index of $K$}, denoted $\meeszlenk{\eps}{K}$, is the class of all ordinals $\alpha$ such that $\mdset{\alpha}{\eps}{K} \neq \emptyset$. The \emph{Szlenk index of $K$} is the class $\bigcup_{\eps >0}\meeszlenk{\eps}{K}$. Note that $\meeszlenk{\eps}{K}$ (resp., $\mszlenk{K}$) is either an ordinal or the class $\ord$ of all ordinals. If $\meeszlenk{\eps}{K}$ (resp., $\mszlenk{K}$) is an ordinal, then we write $\meeszlenk{\eps}{K} < \infty$ (resp., $\mszlenk{K}<\infty$), and otherwise we write $\meeszlenk{\eps}{K} = \infty$ (resp., $\mszlenk{K}=\infty$). For a Banach space $E$, the \emph{$\eps$-Szlenk index of $E$} is $\meeszlenk{\eps}{E} = \meeszlenk{\eps}{{\cball{E\dual}}}$, and the \emph{Szlenk index of $E$} is $\mszlenk{E} = \mszlenk{\cball{E\dual}}$. If $T: E\To F$ is an operator, the \emph{$\eps$-Szlenk index of $T$} is $\meeszlenk{\eps}{T} = \meeszlenk{\eps}{{T\dual\cball{F\dual}}}$, whilst the \emph{Szlenk index of $T$} is $\mszlenk{T} = \mszlenk{T\dual\cball{F\dual}}$. For $\alpha$ an ordinal, $\szl_\alpha := \set{E\in\ban \mid \mszlenk{E}\leqslant \omega^\alpha}$.

It is clear that the Szlenk index of a nonempty $w\dual$-compact set cannot be $0$. We also note that, by $w\dual$-compactness, the $\eps$-Szlenk index of a nonempty $w\dual$-compact set $K$ is never a limit ordinal.

The following proposition collects some known facts about Szlenk indices.
\begin{proposition}\label{collection}
Let $E$ and $F$ be Banach spaces.

\nr{i} If $E$ is isomorphic to a quotient or subspace of $F$, then $ \mszlenk{E}\leqslant \mszlenk{F}$. In particular, the Szlenk index is an isomorphic invariant of a Banach space.

\nr{ii} $\mszlenk{E} < \infty$ if and only if $E$ is Asplund.

\nr{iii} If $K\subseteq E\dual$ is nonempty, absolutely convex and $w\dual$-compact, then either $\mszlenk{K} = \infty$ or there exists an ordinal $\alpha$ such that $\mszlenk{K} = \omega^{\alpha}$. In particular, for $T\in\allop$ either $\mszlenk{T} = \infty$ or $\mszlenk{T}=\omega^\alpha$ for some ordinal $\alpha$.

\nr{iv} If $E$ is separable, then $E\dual$ is norm separable if and only if $\mszlenk{E}< \omega_1$, if and only if $\mszlenk{E} < \infty$.

\nr{v} $\mszlenk{E\oplus F} = \max \set{\mszlenk{E}, \, \mszlenk{F}}$.

\nr{vi} $\szl_\alpha$ is a space ideal for each ordinal $\alpha$.
\end{proposition}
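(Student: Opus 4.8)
The plan is to verify the three axioms ${\rm OI}_1$, ${\rm OI}_2$, ${\rm OI}_3$ from Definition~\ref{idealdefn}, interpreted in the setting of space ideals via identity operators — equivalently, to check directly that $\szl_\alpha$ contains the scalar field, is closed under finite direct sums, and is closed under passing to spaces isomorphic to (complemented, or merely) subspaces. Recall $\szl_\alpha = \set{E \in \ban \mid \mszlenk{E} \leqslant \omega^\alpha}$. First I would dispose of the triviality: the scalar field $\field$ has $\cball{\field\dual}$ a single segment, so $s_\eps(\cball{\field\dual}) = \emptyset$ for every $\eps$ once $\eps$ exceeds the diameter, and in any case $\mszlenk{\field}$ is the least possible value for a nonempty $w\dual$-compact set, which by the remark after the definition is at most $\omega = \omega^1 \leqslant \omega^\alpha$ provided $\alpha \geqslant 1$; for $\alpha = 0$ one notes $\mszlenk{\field} = 1 = \omega^0$. (In fact $\mszlenk{\field}=1$ works for all $\alpha$.) So $\field \in \szl_\alpha$.

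Next, closure under finite direct sums is immediate from Proposition~\ref{collection}(v): if $E, F \in \szl_\alpha$ then $\mszlenk{E \oplus F} = \max\set{\mszlenk{E}, \mszlenk{F}} \leqslant \omega^\alpha$, so $E \oplus F \in \szl_\alpha$. For the third axiom — if $E \in \szl_\alpha$ and $F$ is isomorphic to a complemented subspace of $E$, then $F \in \szl_\alpha$ — I would invoke Proposition~\ref{collection}(i): $F$ is (isomorphic to) a subspace of $E$, hence $\mszlenk{F} \leqslant \mszlenk{E} \leqslant \omega^\alpha$. Thus $F \in \szl_\alpha$. These three facts together are exactly the defining conditions of a space ideal in the sense of \cite[p.53]{Pietsch1980}: containing $\field$, stable under finite products, and stable under complemented subspaces.

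I do not expect any genuine obstacle here, since all the substantive content has already been packaged into parts (i) and (v) of the same proposition; the proof is essentially a bookkeeping exercise translating "space ideal" into its axioms and citing the relevant parts. The only point requiring a moment's care is the $\field \in \szl_\alpha$ check for small $\alpha$ (notably $\alpha = 0$), where one should not carelessly write $\mszlenk{\field} \leqslant \omega$ but rather observe $\mszlenk{\field} = 1 \leqslant \omega^\alpha$ for every ordinal $\alpha \geqslant 0$; and one should confirm that the notion of space ideal being used only demands stability under \emph{complemented} subspaces (as in Pietsch), which is all that (i) is needed for, rather than the stronger statement for arbitrary subspaces (which (i) nonetheless also gives).
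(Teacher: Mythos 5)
Your proof is correct and takes essentially the same approach the paper does: the paper's justification of (vi) is one sentence citing part (i), part (v), and the fact that finite‑dimensional spaces have Szlenk index $1$, and your verification unpacks exactly those three ingredients against Pietsch's space‑ideal axioms (membership of $\field$, stability under $\oplus$, stability under complemented subspaces). The one caveat you flag yourself — that part (i) gives closure under \emph{arbitrary} subspaces, while only complemented subspaces are needed — is correctly noted as harmless over‑strength.
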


We briefly indicate the origins of the various assertions of Proposition~\ref{collection}. Part (i) is well-known; see, for example, \cite[p.2032]{H'ajek2007a}. Part (ii) follows from Theorem~\ref{Aspequiv}(i)$\iff$(iv) above. Part (iii) is due to G.~Lancien \cite{Lancien1996}; note that although Lancien's proof is given for the case where $K$ is the closed unit ball of a dual Banach space, his argument works equally well in the more general setting presented above. We mention also that the first occurrence of a statement like (iii) is a similar result for the Lavrientiev index of a Banach space due to A.~Sersouri \cite{Sersouri1989}. For (iv), see \cite[Theorem~3.1]{Odell2004} and its proof. Part (v) follows from Lemma~\ref{carbon} of the current paper (which is due to P.~{\hajek} and G.~Lancien \cite{H'ajek2007}). Finally, (vi) is a consequence of (i), (v) and the well-known fact that a Banach space is finite-dimensional if and only if it has Szlenk index equal to $1$ (this is noted in \cite[p.211]{Lancien2006}, but see also Proposition~\ref{lastlabel} below).

\section{$\alpha$-Szlenk operators}\label{opsect}
Here we consider the Szlenk index of an operator and show that this index can be used in a natural way to define a class of closed operator ideals indexed by the class of all ordinals. 
\begin{definition}
For each ordinal $\alpha$, define $\szlenkop{\alpha} := \set{T \in \allop \mid \mszlenk{T} \leqslant \omega^\alpha}$. An element of $\szlenkop{\alpha}$ shall be known as an \emph{$\alpha$-Szlenk operator}. For each ordinal $\alpha$ and pair of Banach spaces $(E, \, F)$, define $\szlenkop{\alpha} (E, \, F) := \allop (E, \, F)\cap \szlenkop{\alpha} $.
\end{definition}

It is trivial that $\szlenkop{\alpha} \subseteq \szlenkop{\beta}$ whenever $\alpha$ and $\beta$ are ordinals satisfying $\alpha \leqslant \beta$. In fact, $\szlenkop{\alpha} \subsetneq \szlenkop{\beta}$ whenever $\alpha <\beta$. Indeed, it is shown in \cite[Proposition~2.16]{Brookera} that for an each ordinal $\alpha$ there exists a Banach space $E$ with $\mszlenk{E} = \omega^{\alpha+1}$; the identity operator of such a space $E$ belongs to $\szlenkop{\alpha +1} \backslash \szlenkop{\alpha}$.

The following theorem is the main result of the current section.
\begin{theorem}\label{idealthm}
For $\alpha$ an ordinal, $\szlenkop{\alpha}$ is a closed, injective and surjective operator ideal.
\end{theorem}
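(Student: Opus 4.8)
The plan is to verify the three operator-ideal axioms of Definition~\ref{idealdefn}, then injectivity and surjectivity, and finally closedness, working throughout with the $w\dual$-compact sets $T\dual\cball{F\dual}$ and drawing on a small toolbox of stability properties of the Szlenk index of a bounded $w\dual$-compact $K\subseteq E\dual$: \emph{(a) monotonicity} --- $L\subseteq K$ implies $\mdset{\beta}{\eps}{L}\subseteq\mdset{\beta}{\eps}{K}$ for every $\beta$, hence $\mszlenk{L}\leqslant\mszlenk{K}$; \emph{(b) homogeneity} --- $\mszlenk{\lambda K}=\mszlenk{K}$ for $\lambda>0$; \emph{(c) isometric invariance} --- $\mszlenk{\varphi K}=\mszlenk{K}$ whenever $\varphi$ is a linear isometry that is a $w\dual$-to-$w\dual$ homeomorphism onto a $w\dual$-closed subspace; \emph{(d) the image estimate} --- $\mszlenk{U\dual K}\leqslant\mszlenk{K}$ for every $U\in\allop(D,\,E)$; \emph{(e) products} --- $\mszlenk{K_1\times K_2}=\max\{\mszlenk{K_1},\mszlenk{K_2}\}$ for $w\dual$-compact $K_i\subseteq E_i\dual$ (the set form of Proposition~\ref{collection}(v), available from \cite{Brookera} and \cite{H'ajek2007}); and \emph{(f) perturbation} --- if $K\subseteq L+\delta\cball{E\dual}$ with $K,L$ $w\dual$-compact and $\eps>4\delta$, then $\meeszlenk{\eps}{K}\leqslant\meeszlenk{\eps/2-2\delta}{L}$. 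Items (a)--(c) are immediate from the definition of $s_\eps$ and (e) is quoted; (d) and (f) are the only statements needing a real argument.

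For the axioms: $\field$ is finite dimensional, so $\mszlenk{\identityop{\field}}=\mszlenk{\cball{\field\dual}}=1=\omega^0\leqslant\omega^\alpha$, giving ${\rm OI}_1$. For ${\rm OI}_3$, given $U\in\allop(D,\,E)$, $T\in\szlenkop{\alpha}(E,\,F)$, $V\in\allop(F,\,G)$, I would write $(VTU)\dual\cball{G\dual}=U\dual T\dual V\dual\cball{G\dual}\subseteq\norm{V}\,U\dual(T\dual\cball{F\dual})$ and apply (a), (b), (d) to get $\mszlenk{VTU}\leqslant\mszlenk{U\dual(T\dual\cball{F\dual})}\leqslant\mszlenk{T\dual\cball{F\dual}}=\mszlenk{T}\leqslant\omega^\alpha$. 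For ${\rm OI}_2$, I would use $(S+T)\dual\cball{F\dual}\subseteq S\dual\cball{F\dual}+T\dual\cball{F\dual}$ together with the fact that the sum map $\sigma\colon(E\dual\oplus E\dual)_\infty\To E\dual$, $(\xi,\eta)\mapsto\xi+\eta$, is the adjoint of the diagonal embedding $E\To(E\oplus E)_1$; then (a), (d), (e) give
\[
\mszlenk{S+T}\leqslant\mszlenk{\sigma\bigl(S\dual\cball{F\dual}\times T\dual\cball{F\dual}\bigr)}\leqslant\mszlenk{S\dual\cball{F\dual}\times T\dual\cball{F\dual}}=\max\{\mszlenk{S},\mszlenk{T}\}\leqslant\omega^\alpha .
\]

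Injectivity and surjectivity follow quickly from the same circle of ideas. For an isometric embedding $j\colon F\To G$, Hahn--Banach gives $j\dual\cball{G\dual}=\cball{F\dual}$, so $(jT)\dual\cball{G\dual}=T\dual\cball{F\dual}$ and $\mszlenk{jT}=\mszlenk{T}$; taking $j=\mathfrak{J}_F$ shows $\szlenkop{\alpha}$ is injective. For a metric surjection $q\colon D\To E$, the adjoint $q\dual$ is a linear isometry of $E\dual$ onto the $w\dual$-closed subspace $(\ker q)^{\perp}\subseteq D\dual$ and a $w\dual$-to-$w\dual$ homeomorphism onto its image, so (c) applied to $K=T\dual\cball{F\dual}$ yields $\mszlenk{Tq}=\mszlenk{q\dual(T\dual\cball{F\dual})}=\mszlenk{T}$; taking $q=\mathfrak{Q}_E$ shows $\szlenkop{\alpha}$ is surjective.

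The remaining, and genuinely substantive, point is closedness, and its crux is the perturbation estimate (f). Granting (f), if $T_n\in\szlenkop{\alpha}(E,\,F)$ and $\norm{T_n-T}\To0$, then for each $\eps>0$ I pick $n$ with $\delta:=\norm{T_n-T}<\eps/4$, note $T\dual\cball{F\dual}\subseteq T_n\dual\cball{F\dual}+\delta\cball{E\dual}$, and conclude $\meeszlenk{\eps}{T}\leqslant\meeszlenk{\eps/2-2\delta}{T_n}\leqslant\mszlenk{T_n}\leqslant\omega^\alpha$; since $\eps$ is arbitrary, $\mszlenk{T}=\bigcup_{\eps>0}\meeszlenk{\eps}{T}\leqslant\omega^\alpha$. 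The difficulty in (f) is that $s_\eps$ is iterated transfinitely, so one must show the $\delta$-fattening does not accumulate along the iteration; I would prove by transfinite induction on $\beta$ that $\mdset{\beta}{\eps}{K}\subseteq\mdset{\beta}{\eta}{L}+\delta\cball{E\dual}$ with $\eta:=\eps/2-2\delta$, the successor step resting on the single-step inclusion $s_\eps(L'+\delta\cball{E\dual})\subseteq s_{\eps/2-2\delta}(L')+\delta\cball{E\dual}$ for $w\dual$-compact $L'$ (proved by covering, with finitely many $w\dual$-open sets of diameter at most $\eta$, those points of $L'$ lying within $\delta$ of a given point surviving the outer derivation, then excising the $w\dual$-compact set $A+\delta\cball{E\dual}$ where $A$ is the uncovered part of $L'$ --- here one uses that closed norm-balls in $E\dual$ are $w\dual$-closed), and the limit step resting on the finite-intersection property for decreasing nets of nonempty $w\dual$-compact sets. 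The same finite-subcover/$w\dual$-compactness machinery --- applied to preimages under $U\dual$ and using that $U\dual$ maps $w\dual$-compact sets to $w\dual$-closed sets --- gives the image estimate (d); so (d) and (f), both in the spirit of the techniques of \cite{Brookera}, carry essentially all the weight of the proof.
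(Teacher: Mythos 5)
Your plan matches the paper's almost step for step: you verify the ideal axioms via the image estimate and a product lemma of Hájek--Lancien type, handle injectivity/surjectivity via the canonical embedding and surjection, and reduce closedness to a perturbation estimate for iterated $s_\eps$-derivations proved by transfinite induction. The key facts you list as (d), (e), (f) correspond exactly to the paper's Lemma~\ref{poopoopoo}, Lemma~\ref{carbon}, and Lemma~\ref{weekendoz} respectively (your constant $\eps/2-2\delta$ in (f) is a harmless variant of the paper's $\eps/4$ under $\delta\leqslant\eps/8$), and your diagonal-embedding phrasing of ${\rm OI}_2$ is just a cleaner set-level rendering of the paper's $S+T=RQ$ factorization.

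The one genuine problem is the \emph{covering} argument you sketch for the single-step inclusion $s_\eps(L'+\delta\cball{E\dual})\subseteq s_\eta(L')+\delta\cball{E\dual}$, and likewise for (d). Covering $B=\{y\in L'\mid\norm{x-y}\leqslant\delta\}$ by finitely many $w\dual$-open $V_1,\ldots,V_n$ with $\diam(V_i\cap L')\leqslant\eta$, and choosing $W\ni x$ disjoint from $A+\delta\cball{E\dual}$ where $A=L'\setminus\bigcup V_i$, only yields $W\cap(L'+\delta\cball{E\dual})\subseteq\bigcup_{i=1}^n\bigl((V_i\cap L')+\delta\cball{E\dual}\bigr)$, a \emph{finite union} of sets of diameter at most $\eta+2\delta=\eps/2$; the union itself need not have small diameter, so no neighbourhood $W$ witnesses $x\notin s_\eps(L'+\delta\cball{E\dual})$ and no contradiction results. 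The same obstruction defeats the analogous covering proof of (d). Both facts are nevertheless true, with exactly the constants you state, but the argument that works (and is what the paper uses in Lemmas~\ref{poopoopoo} and~\ref{weekendoz}) is a net/subnet argument rather than a finite-subcover one: take a net $z_i\in L'+\delta\cball{E\dual}$ with $z_i\stackrel{w\dual}{\to}x$ and $\norm{z_i-x}>\eps/2$, write $z_i=y_i+w_i$ with $y_i\in L'$ and $\norm{w_i}\leqslant\delta$, pass to a subnet so that $y_i\stackrel{w\dual}{\to}y\in L'$ and $w_i\stackrel{w\dual}{\to}w$ with $\norm{w}\leqslant\delta$; then $y=x-w$, $\norm{x-y}\leqslant\delta$, and $\norm{y_i-y}\geqslant\norm{z_i-x}-\norm{w_i}-\norm{w}>\eps/2-2\delta$, so $y\in s_{\eps/2-2\delta}(L')$. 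Replacing your covering step by this net argument (and its analogue for (d)) closes the gap and recovers the paper's proof.
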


For $\alpha =0$, the assertion of Theorem~\ref{idealthm} follows from the following proposition and the well-known fact that $\compactop$ is closed, injective and surjective.

\begin{proposition}\label{basecase}
$\szlenkop{0} = \compactop$.
\end{proposition}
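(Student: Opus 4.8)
The plan is to show both inclusions $\szlenkop{0}\subseteq\compactop$ and $\compactop\subseteq\szlenkop{0}$. Recall that $T\in\szlenkop{0}$ means $\mszlenk{T}=\mszlenk{T\dual\cball{F\dual}}\leqslant\omega^0=1$; since the Szlenk index of a nonempty $w\dual$-compact set is never $0$ and $T\dual\cball{F\dual}$ is $w\dual$-compact (being the continuous image of the $w\dual$-compact ball $\cball{F\dual}$ under the $w\dual$-to-$w\dual$ continuous map $T\dual$), this says precisely that $\mszlenk{T\dual\cball{F\dual}}=1$, equivalently that $\mdset{1}{\eps}{T\dual\cball{F\dual}}=\emptyset$ for every $\eps>0$. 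So the whole statement reduces to: \emph{$T$ is compact if and only if for every $\eps>0$ the set $K:=T\dual\cball{F\dull}$ has $\mdset{1}{\eps}{K}=\emptyset$.}

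For the direction $\compactop\subseteq\szlenkop{0}$, I would argue as follows. If $T$ is compact then $T\dual$ is compact, so $K=T\dual\cball{F\dual}$ is a norm-compact subset of $E\dual$, and on a norm-compact set the $w\dual$-topology agrees with the norm topology. Fix $\eps>0$. Cover $K$ by finitely many norm-open balls of radius $\eps/4$; each of these, intersected with $K$, is a relatively $w\dual$-open subset of $K$ of diameter at most $\eps/2<\eps$, so no point of $K$ can have all its relative $w\dual$-neighbourhoods of diameter $>\eps$. Hence $\mdset{1}{\eps}{K}=\emptyset$, and as $\eps$ was arbitrary, $\mszlenk{K}=1$, i.e. $T\in\szlenkop{0}$.

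For the converse $\szlenkop{0}\subseteq\compactop$, suppose $T\dual\cball{F\dual}$ has $\eps$-Szlenk index $1$ for every $\eps>0$. The key is that $\mdset{1}{\eps}{K}=\emptyset$ means every point of the $w\dual$-compact set $K$ has a relative $w\dual$-neighbourhood of diameter $\leqslant\eps$; by $w\dual$-compactness, finitely many such neighbourhoods cover $K$, so $K$ is covered by finitely many norm-sets of diameter $\leqslant\eps$, hence $K=T\dual\cball{F\dual}$ is totally bounded, hence norm-relatively-compact (it is already $w\dual$-compact, and one checks it is norm-closed, or simply that a totally bounded set in a complete space has compact closure and $K$ itself is closed). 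Therefore $T\dual$, and so $T$, is compact. The one point requiring a little care is the extraction of a \emph{finite} subcover: this is exactly where $w\dual$-compactness of $T\dual\cball{F\dual}$ enters, and it is the crux of the equivalence — everything else is a routine comparison of the norm and weak$^*$ topologies on the relevant sets. I would present it essentially as above, perhaps citing the standard fact (used repeatedly in the Szlenk-index literature) that a $w\dual$-compact set $K$ with $\mdset{1}{\eps}{K}=\emptyset$ for all $\eps>0$ is norm-compact.
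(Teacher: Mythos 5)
Your argument is correct and follows the same high-level route as the paper: reduce via Schauder's theorem to showing that a $w\dual$-compact set $K$ satisfies $\mszlenk{K}=1$ if and only if $K$ is norm-compact, which is exactly the paper's Proposition~\ref{lastlabel}. The only stylistic difference is that for the direction $\szlenkop{0}\subseteq\compactop$ you establish norm-compactness via a finite-subcover/total-boundedness argument, whereas the paper verifies that the relative norm and $w\dual$ topologies on $K$ coincide using nets; both rest on the same use of $w\dual$-compactness and are essentially interchangeable.
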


Proposition~\ref{basecase} is a consequence of Schauder's theorem and the following general result:

\begin{proposition}\label{lastlabel}
Let $E$ be a Banach space, $K$ a nonempty $w\dual$-compact subset of $E\dual$. Then $K$ is norm-compact if and only if $\mszlenk{K}=1$.
\end{proposition}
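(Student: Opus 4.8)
The plan is to prove the two implications separately, with the forward direction (norm-compact $\Rightarrow$ $\mszlenk{K}=1$) being the routine one and the converse ($\mszlenk{K}=1$ $\Rightarrow$ norm-compact) being where the real work lies. For the forward direction, suppose $K$ is norm-compact and $\eps>0$. I would cover $K$ by finitely many norm-open balls of diameter at most $\eps$; intersecting with $K$ gives a finite relatively $w\dual$-open cover of $K$ (relative $w\dual$-open because on a norm-compact set the norm and $w\dual$ topologies agree, or more simply: norm-open sets are $w\dual$-open only after we pass to the compact set — here it is cleanest to note that a norm-compact set is metrizable and that the identity from $(K,w\dual)$ to $(K,\text{norm})$ is a continuous bijection from a compact space to a Hausdorff space, hence a homeomorphism). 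Thus every point of $K$ has a (relatively) $w\dual$-open neighbourhood of diameter $\leqslant\eps$, so $\mdset{1}{\eps}{K}=\emptyset$. Since this holds for all $\eps>0$, $\mszlenk{K}\leqslant 1$; and $\mszlenk{K}\neq 0$ because $K\neq\emptyset$ (as noted in the text, the Szlenk index of a nonempty $w\dual$-compact set cannot be $0$). Hence $\mszlenk{K}=1$.

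For the converse, assume $\mszlenk{K}=1$, i.e. $\mdset{1}{\eps}{K}=\emptyset$ for every $\eps>0$, and I want to show $K$ is norm-compact. Fix $\eps>0$. The condition $\mdset{1}{\eps}{K}=\emptyset$ says precisely that every $x\in K$ has a $w\dual$-open neighbourhood $V_x$ with $\diam(K\cap V_x)\leqslant\eps$. By $w\dual$-compactness of $K$, finitely many such neighbourhoods $V_{x_1},\dots,V_{x_n}$ cover $K$, so $K$ is covered by finitely many sets each of norm-diameter $\leqslant\eps$; that is, $K$ is totally bounded. Doing this for each $\eps=1/k$ shows $K$ is totally bounded in the norm. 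Since $K$ is $w\dual$-compact it is in particular $w\dual$-closed, hence norm-closed (the norm topology is finer), and a norm-closed, totally bounded subset of the Banach space $E\dual$ is norm-compact by completeness. This completes the converse.

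The main obstacle — really the only subtle point — is the interplay between the two topologies on $K$, and in particular making sure the implication "$\mdset{1}{\eps}{K}=\emptyset$ $\Rightarrow$ finitely many $\eps$-small pieces" is stated correctly: one needs that the sets witnessing membership in $\mdset{1}{\eps}{K}$ are $w\dual$-neighbourhoods (so that $w\dual$-compactness applies to extract a finite subcover), whereas total boundedness is a statement about norm-diameters. Once one keeps straight that $\diam$ always refers to the norm while the open sets $V$ are $w\dual$-open, both directions are short. I should also make explicit the elementary fact that a totally bounded, complete (equivalently, closed-in-a-Banach-space) set is compact, since that is what upgrades total boundedness to genuine norm-compactness.
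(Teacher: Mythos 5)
Your argument is correct, and it takes a genuinely different route from the paper's. The paper rests both implications on the single criterion that $K$ is norm-compact if and only if the relative norm and $w\dual$ topologies on $K$ coincide, and then argues with nets: for $\mszlenk{K}=1 \Rightarrow$ compact it takes an arbitrary $w\dual$-convergent net in $K$ and shows it must be norm-convergent; for $\mszlenk{K}>1 \Rightarrow$ not compact it manufactures a net that is $w\dual$-convergent but not norm-convergent to a point of $s_\eps(K)$. You instead prove the nontrivial direction via total boundedness: $s_\eps(K)=\emptyset$ supplies a $w\dual$-open cover of $K$ by sets of norm-diameter $\leqslant \eps$, $w\dual$-compactness extracts a finite subcover, and so $K$ is totally bounded; combined with the observation that a $w\dual$-compact set is norm-closed and $E\dual$ is complete, this gives norm-compactness. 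For the easy direction you use the standard compact-to-Hausdorff homeomorphism argument rather than a net argument. Both proofs are short; yours has the advantage of replacing the net manipulations with the cleaner ``totally bounded + closed in a complete space'' criterion, and it makes more explicit where $w\dual$-compactness (finite subcovers) enters the argument. One minor remark: your parenthetical aside in the forward direction about norm-open sets being $w\dual$-open ``only after we pass to the compact set'' is worded imprecisely, but the subsequent compact-to-Hausdorff-bijection argument you give is exactly right and supersedes it.
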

\begin{proof}
We use the fact that $K$ is norm-compact if and only if the relative norm and $w\dual$ topologies of $K$ are the same (see, e.g., \cite[Corollary~3.1.14]{Engelking1989}).

First suppose that $\mszlenk{K}=1$. Let $(x_i)_{i\in I}$ be a $w\dual$-convergent net in $K$; the norm-compactness of $K$ will follow if $(x_i)_{i\in I}$ is necessarily norm convergent. Let $x = w\dual -\lim_i x_i \in K$ and note that, as $x\notin \bigcup_{\eps>0}s_\eps (K)$, for every $\eps >0$ there exists $w\dual$-open $U_\eps \ni x$ such that $\diam (U_\eps \cap K)\leqslant \eps$. For each $\eps > 0$ let $j_\eps \in I$ be such that $j_\eps \prec j'$ implies $x_{j'} \in U_\eps \cap K$. Then $j_\eps\prec j'$ implies $\Vert x-x_{j'}\Vert \leqslant \eps$. As $\eps >0$ is arbitrary, $\Vert x-x_i\Vert {\rightarrow} 0$.

Now suppose $\mszlenk{K} >1$. Then there is $x\in K$ and $\eps >0$ such that $x\in s_\eps (K)$, so for each $w\dual$-open $U\ni x$ there is $x_U \in U\cap K$ such that $\norm{x-x_U}>\eps /2$. Since $x_U \stackrel{w\dual}{\rightarrow} x$ and $x_U \stackrel{\norm{\, \cdot \,}}{\nrightarrow}x$ (here, the set of $w\dual$-open sets containing $x$ carries the usual order induced by reverse set inclusion), the relative norm and $w\dual$ topologies of $K$ are not the same. Hence $K$ is not norm-compact.
\end{proof}

We now prove the general case.

\begin{proof}[Proof of Theorem~\ref{idealthm}]
Let $\alpha$ be an ordinal. We must first show that $\szlenkop{\alpha}$ satisfies ${\rm OI}_1$-${\rm OI}_3$ of Definition~\ref{idealdefn}. To see that $\szlenkop{\alpha}$ satisfies ${\rm OI}_1$, note that by Proposition~\ref{basecase} we have
\begin{equation*}
\identityop{\field} \in \compactop = \szlenkop{0} \subseteq \szlenkop{\alpha}.
\end{equation*}

Next we show that $\szlenkop{\alpha}$ satisfies ${\rm OI}_3$. Let $D$, $E$, $F$ and $G$ be Banach spaces and $U \in \allop (D, \, E )$, $T\in \szlenkop{\alpha} (E, \, F)$ and $V \in \allop (F, \, G)$ operators. We want to show that $VTU\in\szlenkop{\alpha}$; this is clearly true if either $U$ or $V$ is zero, so we henceforth assume that $U$ and $V$ are nonzero. It suffices to show separately that $TU \in \szlenkop{\alpha}$ and $VT \in \szlenkop{\alpha}$. The fact that $TU \in \szlenkop{\alpha}$ will be deduced from the following generalization of \cite[Lemma~2]{H'ajek2007a}.
\begin{lemma}\label{poopoopoo}
Let $D$ and $G$ be Banach spaces, $S\in \allop (D, \, G)$ a nonzero operator, $K\subseteq G\dual$ a $w\dual$-compact set, $\alpha$ an ordinal and $\eps > 0$. Then $s^\alpha_\eps (S\dual K) \subseteq S\dual \big(s^\alpha_{\eps /(2\norm{S})}(K)\big) $. \end{lemma}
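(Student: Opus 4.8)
The plan is to isolate the one-step case of the inclusion and then iterate it by transfinite induction on $\alpha$. Two preliminary remarks are used throughout. First, the derivation is monotone under inclusion: if $A\subseteq B$ are $w\dual$-compact subsets of a dual space, then $s_\eps(A)\subseteq s_\eps(B)$, since $A\cap V\subseteq B\cap V$ for every $w\dual$-open $V$. Second, the adjoint $S\dual$ is $w\dual$-to-$w\dual$ continuous with $\norm{S\dual}=\norm{S}$, and $\norm{S}>0$ because $S$ is nonzero (which is also what makes $\eps/(2\norm{S})$ meaningful); hence $S\dual$ sends $w\dual$-compact sets to $w\dual$-compact sets and satisfies $\diam(S\dual A)\leqslant\norm{S}\diam(A)$. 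Set $\eps':=\eps/(2\norm{S})$ once and for all.

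The crux is the case $\alpha=1$, which I would prove for an \emph{arbitrary} $w\dual$-compact set $L\subseteq G\dual$: namely
\[
s_\eps(S\dual L)\ \subseteq\ S\dual\big(s_{\eps'}(L)\big).
\]
Fix $x\dual\in s_\eps(S\dual L)$ and let $C:=(S\dual)^{-1}(\set{x\dual})\cap L$, which is nonempty and $w\dual$-compact. Suppose, toward a contradiction, that $C\cap s_{\eps'}(L)=\emptyset$. Then for each $y\dual\in C$ there is a $w\dual$-open $W_{y\dual}\ni y\dual$ with $\diam(L\cap W_{y\dual})\leqslant\eps'$; by compactness of $C$, finitely many of these, say $W_1,\dots,W_n$ with witnesses $y\dual_1,\dots,y\dual_n\in C$, cover $C$, and we put $W:=\bigcup_{i=1}^n W_i$. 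Since $L\setminus W$ is $w\dual$-compact and disjoint from $(S\dual)^{-1}(\set{x\dual})$, its image $S\dual(L\setminus W)$ is $w\dual$-compact and omits $x\dual$; choosing the $w\dual$-open set $V:=D\dual\setminus S\dual(L\setminus W)\ni x\dual$ we get $L\cap(S\dual)^{-1}(V)\subseteq W$, and therefore
\begin{align*}
S\dual L\cap V &= S\dual\big(L\cap(S\dual)^{-1}(V)\big)\\
&\subseteq S\dual(L\cap W)=\bigcup_{i=1}^n S\dual(L\cap W_i).
\end{align*}
Here is the point on which the whole argument turns, and which I expect to be the only genuine obstacle: each set $S\dual(L\cap W_i)$ has diameter at most $\norm{S}\eps'=\eps/2$ \emph{and} contains the common point $x\dual=S\dual y\dual_i$, so any two points of the union $\bigcup_i S\dual(L\cap W_i)$ are within $\eps/2+\eps/2=\eps$ of one another. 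Hence $\diam(S\dual L\cap V)\leqslant\eps$, contradicting $x\dual\in s_\eps(S\dual L)$. Therefore some $y\dual\in C$ lies in $s_{\eps'}(L)$, whence $x\dual=S\dual y\dual\in S\dual\big(s_{\eps'}(L)\big)$.

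Granting the one-step case, I would prove $s^\alpha_\eps(S\dual K)\subseteq S\dual\big(s^\alpha_{\eps'}(K)\big)$ by transfinite induction on $\alpha$. The case $\alpha=0$ is the identity $S\dual K=S\dual K$. For the successor step, the inductive hypothesis together with monotonicity of $s_\eps$ gives
\[
s^{\alpha+1}_\eps(S\dual K)=s_\eps\big(s^\alpha_\eps(S\dual K)\big)\subseteq s_\eps\big(S\dual(s^\alpha_{\eps'}(K))\big),
\]
and the one-step case applied to $L=s^\alpha_{\eps'}(K)$ yields $s_\eps(S\dual L)\subseteq S\dual(s_{\eps'}(L))=S\dual\big(s^{\alpha+1}_{\eps'}(K)\big)$. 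For a limit ordinal $\alpha$, the inductive hypotheses give $s^\alpha_\eps(S\dual K)=\bigcap_{\beta<\alpha}s^\beta_\eps(S\dual K)\subseteq\bigcap_{\beta<\alpha}S\dual\big(s^\beta_{\eps'}(K)\big)$, so it remains to observe that $\bigcap_{\beta<\alpha}S\dual\big(s^\beta_{\eps'}(K)\big)=S\dual\big(\bigcap_{\beta<\alpha}s^\beta_{\eps'}(K)\big)=S\dual\big(s^\alpha_{\eps'}(K)\big)$. Only the first equality, and only its nontrivial inclusion, needs comment: given $x\dual$ in the left-hand intersection, the sets $(S\dual)^{-1}(\set{x\dual})\cap s^\beta_{\eps'}(K)$ for $\beta<\alpha$ form a decreasing chain of nonempty $w\dual$-closed subsets of the $w\dual$-compact set $K$, hence have nonempty intersection, and any point of that intersection is a preimage of $x\dual$ lying in $\bigcap_{\beta<\alpha}s^\beta_{\eps'}(K)=s^\alpha_{\eps'}(K)$. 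This closes the induction and proves the lemma.
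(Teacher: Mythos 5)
Your argument is correct, and it reaches the lemma by a genuinely different route than the paper's. The paper's proof is a single transfinite induction in which the successor step is handled by a net argument: from $x\in s^{\gamma+1}_\eps(S\dual K)$ one extracts a net $(x_i)$ in $s^\gamma_\eps(S\dual K)$ converging $w\dual$ to $x$ with $\norm{x_i-x}>\eps/2$, lifts each $x_i$ to $y_i\in s^\gamma_{\eps'}(K)$ via the inductive hypothesis, passes to a $w\dual$-convergent subnet $y_i\to y$, and checks $S\dual y=x$ and $\norm{y_i-y}\geqslant\norm{x_i-x}/\norm{S}>\eps'$; the limit step similarly extracts a subnet of preimages. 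You instead isolate the clean one-step inclusion $s_\eps(S\dual L)\subseteq S\dual(s_{\eps'}(L))$ for arbitrary $w\dual$-compact $L$, proving it by contradiction through a finite $w\dual$-open cover of the fiber $C=(S\dual)^{-1}(\set{x\dual})\cap L$ and the explicit neighbourhood $V=D\dual\setminus S\dual(L\setminus W)$, which forces $\diam(S\dual L\cap V)\leqslant\eps$; the full statement then follows by a formal induction in which the successor step is just monotonicity plus the one-step lemma, and the limit step is a finite-intersection-property argument for the fibers $(S\dual)^{-1}(\set{x\dual})\cap s^\beta_{\eps'}(K)$. Both proofs exploit the same two facts ($w\dual$-to-$w\dual$ continuity of $S\dual$ and $\norm{S\dual}=\norm{S}$, giving the loss factor $2\norm{S}$), and both use $w\dual$-compactness in essentially the same places; the difference is cover-based versus net-based compactness. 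What your version buys is modularity: the one-step lemma is a self-contained fact about a single application of the derivation (it is exactly the $\alpha=1$ case), and the transfinite bookkeeping becomes purely mechanical, whereas the paper's net argument re-derives the one-step phenomenon inside each successor step.
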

\begin{proof}
We proceed by induction on $\alpha$. The assertion of the lemma is trivially true for $\alpha =0$. Suppose that $\beta >0$ is an ordinal such that the assertion of the lemma is true for all $\alpha < \beta$; we show that it is then true for $\alpha = \beta$. First suppose that $\beta$ is a successor, say $\beta = \gamma+1$. Let $x \in s^{\beta}(S\dual K)$. Then there is a net $(x_i)_{i\in I}$ in $s^\gamma_\eps (S\dual K)$ with $x_i \stackrel{w\dual}{\rightarrow}x$ and $\norm{x_i - x}>\eps /2$ for all $i$ (for example, let $I$ be the set of all $w\dual$-neighbourhoods of $x$, ordered by reverse set inclusion). By the induction hypothesis, for each $i$ there is $y_i \in s^\gamma_{\eps /(2\norm{S})}(K)$ such that $S\dual y_i = x_i$. Passing to a subnet, we may assume that the net $(y_i)_{i\in I}$ has a $w\dual$-limit $y\in s^\gamma_{\eps /(2\norm{S})}(K)$. Then $S\dual y = x$ and for all $i$ we have $\norm{y_i - y} \geqslant \norm{x_i - x}/ \norm{S} > \eps / (2\norm{S})$, hence $y\in s^{\beta}_{\eps /(2\norm{S})}(K)$. It follows that the assertion of the lemma passes to successor ordinals. 

Now suppose that $\beta$ is a limit ordinal. Let $x\in s^\beta_\eps (S\dual K) = \bigcap_{\alpha <\beta}s^\alpha_\eps (S\dual K)$. For each $\alpha < \beta$ there is $y_\alpha \in s^\alpha_{\eps /(2\norm{S})}(K)$ with $S\dual y_\alpha = x$. The net $(y_\alpha)_{\alpha <\beta}$ admits a subnet $(y_j)_{j\in J}$ with $w\dual$-limit $y\in \bigcap_{\alpha <\beta}s^\alpha_{\eps / (2\norm{S)}}(K) = s^\beta_{\eps /(2\norm{S})}(K)$. Since $S\dual y = x$, we are done.\end{proof}
By Lemma~\ref{poopoopoo},
\begin{equation*}
\mszlenk{TU} = \sup_{\eps>0}\meeszlenk{\eps}{(TU)\dual \cball{F\dual}} \leqslant \sup_{\eps>0}\meeszlenk{\eps / (2\norm{U})}{T\dual \cball{F\dual}} = \mszlenk{T}\leqslant \omega^\alpha ,
\end{equation*} hence $TU\in \szlenkop{\alpha}$.

As $VT = (\norm{V}^{-1}V)T(\norm{V}I_E)$ and $T(\norm{V}I_E)\in\szlenkop{\alpha}$ (take $U=\norm{V}I_E$ above), to show that $VT\in\szlenkop{\alpha}$ we may assume that $\norm{V}\leqslant 1$. Then
\begin{equation*}
(VT)\dual \cball{G\dual} = T\dual (V\dual \cball{G\dual}) \subseteq T\dual \cball{F\dual},
\end{equation*}
hence $\mszlenk{VT} = \mszlenk{(VT)\dual \cball{G\dual}} \leqslant \mszlenk{T\dual \cball{F\dual}} = \mszlenk{T} \leqslant \omega^\alpha$,
as desired. We have now shown that $\szlenkop{\alpha}$ satisfies ${\rm OI}_3$.

To show that $\szlenkop{\alpha}$ satisfies ${\rm OI}_2$, we make use of the following lemma of P. {\hajek} and G. Lancien (see \cite[Equation~(2.3)]{H'ajek2007}). The author is grateful to Professor Lancien for communicating to him a corrected proof of Lemma~\ref{carbon} (the proof of Lemma~\ref{carbon} in \cite{H'ajek2007} seems to be slightly incorrect); the proof of Sublemma~\ref{carbonesque} of the current paper uses some similar arguments.

\begin{lemma}\label{carbon}
Let $E_1, \ldots , E_n$ be Banach spaces and let $K_1 \subseteq E_1\dual, \ldots , K_n \subseteq E_n\dual$ be $w\dual$-compact sets. Consider $\prod_{i=1}^n K_i$ as a subset of $(\bigoplus_{i=1}^n E_i)_1\dual$. Then, for all $\eps > 0$ and ordinals $\alpha$,
\begin{equation}\label{reedideas}
s^{\omega^\alpha}_{\eps}\hspace{-1.0mm}\left( \prod_{i=1}^nK_i\right) \subseteq  \bigcup_{\substack{g_1, \ldots , g_n <\,  \omega \\ g_1 + \ldots + g_n = \,1}} \hspace{0.5mm}\prod_{i=1}^n \mdset{\omega^\alpha \cdot g_i}{\eps}{K_i}.
\end{equation}
\end{lemma}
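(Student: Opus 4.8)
The plan is to prove the inclusion (\ref{reedideas}) by transfinite induction on $\alpha$, with the heart of the argument being the successor step, where one has to analyze how a single derivation $s_\eps$ acts on a product $\prod_i s^{\omega^\alpha\cdot g_i}_\eps(K_i)$. First I would isolate the key one-step fact: if $L_1\subseteq E_1\dual,\ldots,L_n\subseteq E_n\dual$ are $w\dual$-compact and we regard $\prod_i L_i$ inside $(\bigoplus_i E_i)_1\dual$, then
\begin{equation*}
s_\eps\Bigl(\prod_{i=1}^n L_i\Bigr)\subseteq \bigcup_{j=1}^n L_1\times\cdots\times s_\eps(L_j)\times\cdots\times L_n .
\end{equation*}
This holds because the norm on $(\bigoplus_i E_i)_1\dual$ is the supremum of the coordinate norms, so if a point $(x_1,\ldots,x_n)$ lies in every $w\dual$-neighbourhood-intersection of $\prod_i L_i$ of diameter $\le\eps$, then in at least one coordinate $j$ the point $x_j$ must fail to have small-diameter $w\dual$-neighbourhoods in $L_j$ (a product of small-diameter $w\dual$-open pieces is a $w\dual$-open piece of small diameter). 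Actually one wants the slightly stronger statement that $x_i\in L_i$ for every $i$ and $x_j\in s_\eps(L_j)$ for at least one $j$; this is what lets the exponents add up correctly.

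Next I would set up the induction. The base case $\alpha=0$ reads $s^1_\eps(\prod K_i)\subseteq\bigcup_j K_1\times\cdots\times s_\eps(K_j)\times\cdots\times K_n$, which is exactly the one-step fact above (since $\omega^0=1$ and $\omega^0\cdot 1=1$, $\omega^0\cdot 0=0$, $s^0_\eps(K_i)=K_i$). For the successor step, assuming (\ref{reedideas}) for $\alpha$, I would write $s^{\omega^{\alpha+1}}_\eps=\bigl(s^{\omega^\alpha}_\eps\bigr)^{\omega}$ and iterate: applying the inductive hypothesis once gives that $s^{\omega^\alpha}_\eps(\prod K_i)$ is contained in a finite union of products $\prod_i s^{\omega^\alpha\cdot g_i}_\eps(K_i)$ with $\sum g_i=1$; applying it again to each such product (using $s^{\omega^\alpha}_\eps(s^{\omega^\alpha\cdot g_i}_\eps(K_i))=s^{\omega^\alpha\cdot(g_i+1)}_\eps(K_i)$ together with the ordinal arithmetic $\omega^\alpha+\omega^\alpha\cdot g_i=\omega^\alpha\cdot(g_i+1)$) and bookkeeping the exponents, after $m$ iterations one lands in $\bigcup\prod_i s^{\omega^\alpha\cdot g_i}_\eps(K_i)$ over $\sum g_i=m$. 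Taking the intersection over all $m<\omega$ and using that $s^{\omega^\alpha\cdot g}_\eps(K_i)\supseteq s^{\omega^\alpha\cdot(g+1)}_\eps(K_i)$ are nested $w\dual$-compact sets, one concludes $s^{\omega^{\alpha+1}}_\eps(\prod K_i)\subseteq\bigcap_{m<\omega}\bigcup_{\sum g_i=m}\prod_i s^{\omega^\alpha\cdot g_i}_\eps(K_i)$; a compactness argument on the finitely many "shapes" $(g_1,\ldots,g_n)$ then forces the right-hand side into $\bigcup_{g_1+\cdots+g_n=1}\prod_i s^{(\omega^\alpha)\cdot g_i\cdot\omega}_\eps(K_i)=\bigcup_{g_1+\cdots+g_n=1}\prod_i s^{\omega^{\alpha+1}\cdot g_i}_\eps(K_i)$, i.e. for each point, one coordinate survives $\omega^{\alpha+1}$-many derivations while the rest survive at least $0$-many, which is the desired inclusion for $\alpha+1$.

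For the limit step, suppose $\alpha$ is a limit ordinal and (\ref{reedideas}) holds for all $\beta<\alpha$. Since $\omega^\alpha=\sup_{\beta<\alpha}\omega^\beta$ (and $\omega^\beta\cdot g_i\ge\omega^\beta$), we have $s^{\omega^\alpha}_\eps(\prod K_i)=\bigcap_{\beta<\alpha}s^{\omega^\beta}_\eps(\prod K_i)\subseteq\bigcap_{\beta<\alpha}\bigcup_{g_1+\cdots+g_n=1}\prod_i s^{\omega^\beta\cdot g_i}_\eps(K_i)$; again with only finitely many shapes $(g_1,\ldots,g_n)$ and nestedness in $\beta$, a pigeonhole/compactness argument yields a single shape $(g_1,\ldots,g_n)$ with $\prod_i\bigl(\bigcap_{\beta<\alpha}s^{\omega^\beta\cdot g_i}_\eps(K_i)\bigr)=\prod_i s^{\omega^\alpha\cdot g_i}_\eps(K_i)$ containing the given point (here one uses $\bigcap_{\beta<\alpha}s^{\omega^\beta\cdot g_i}_\eps(K_i)=s^{\omega^\alpha\cdot g_i}_\eps(K_i)$ for $g_i\ge 1$, while for $g_i=0$ the set is just $K_i$). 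I expect the main obstacle to be the successor step — specifically, carefully managing the ordinal arithmetic of the exponents across the $\omega$ iterations and justifying the passage to the limit via $w\dual$-compactness of the nested derived sets, rather than any single estimate; the one-step product lemma and the limit step are comparatively routine once the $\ell_1$-sum norm structure is exploited.
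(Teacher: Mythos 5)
Your plan follows essentially the same strategy as the paper's proof of the closely analogous Sublemma~\ref{carbonesque} (the paper does not prove Lemma~\ref{carbon} directly but cites it to H\'ajek--Lancien and notes that the carbonesque proof uses similar arguments): the base case exploits the $\ell_\infty$-norm on $(\bigoplus_i E_i)_1\dual=(\bigoplus_i E_i\dual)_\infty$; the successor step iterates the inductive hypothesis and finishes with a pigeonhole on the finitely many ``shapes'' using that $\set{\omega^\alpha\cdot m}_{m<\omega}$ is cofinal in $\omega^{\alpha+1}$; the limit step is a cofinal pigeonhole over nested derived sets. The base case, the final compactness/pigeonhole at successors, and the limit step are all correct as sketched.

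There is, however, one genuine gap in the successor step. After a single application of the inductive hypothesis, $s^{\omega^\alpha}_\eps(\prod_i K_i)$ lies inside a \emph{finite union} of products $P_g=\prod_i s^{\omega^\alpha\cdot g_i}_\eps(K_i)$; to ``apply it again to each such product'' you have silently passed $s^{\omega^\alpha}_\eps$ through this union, i.e.\ used $s^{\omega^\alpha}_\eps(\bigcup_g P_g)\subseteq\bigcup_g s^{\omega^\alpha}_\eps(P_g)$. The derivation $s_\eps$ does \emph{not} commute with finite unions in general, so this inclusion needs justification. The paper's Sublemma~\ref{unionlemma} supplies exactly what is needed: for $\omega^\alpha$ a limit ordinal (i.e.\ $\alpha\geqslant 1$) one has $s^{\omega^\alpha}_\eps(\bigcup_{i=1}^nA_i)\subseteq\bigcup_{i=1}^n s^{\omega^\alpha}_\eps(A_i)$, while for $\alpha=0$ (where $\omega^\alpha=1$) one only gets $s^{mn}_\eps(\bigcup_{i=1}^nA_i)\subseteq\bigcup_{i=1}^n s^m_\eps(A_i)$, so extra derivations are spent absorbing the union. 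Consequently your intermediate claim --- that after $m$ iterations of $s^{\omega^\alpha}_\eps$ one lands in $\bigcup_{\sum g_i=m}\prod_i s^{\omega^\alpha\cdot g_i}_\eps(K_i)$ --- is unjustified, and in the case $\alpha=0$ it is simply false with the counter $m$; the paper iterates along a superlinear but still cofinal sequence ($m_j=\sum_{t\leqslant j}t$ in the $n=2$ case) precisely to pay for this. Since any cofinal sequence suffices for the final $\bigcap_j$ step, the fix is routine, but as written the plan would stall at this point without a union lemma and the corrected bookkeeping.
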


Let $E$ and $F$ be Banach spaces and let $S, T \in \allop (E, \, F)$ be operators such that $S+T \notin \szlenkop{\alpha}$. Define operators $Q : E \To E\oplus_1 E $ and $R: E\oplus_1 E \To F$ by setting $Qx = (x, \, x)$ for $x\in E$, and $R (y, \, z) = Sy + Tz$ for $(y, \, z) \in E\oplus_1 E$, so that $RQ = S+T \notin \szlenkop{\alpha}$. Then $\mszlenk{Q\dual (R\dual \cball{F\dual})} > \omega^\alpha$, hence $\mszlenk{R\dual \cball{F\dual}} > \omega^\alpha$ since $\szlenkop{\alpha}$ satisfies ${\rm OI}_3$. We have $
R\dual \cball{F\dual} = \set{( S\dual x, \, T\dual x ) \mid x \in \cball{F\dual}} \subseteq S\dual \cball{F\dual} \times T\dual \cball{F\dual},$
hence $\mszlenk{S\dual \cball{F\dual} \times T\dual \cball{F\dual}} > \omega^\alpha$. Let $\eps > 0$ be such that $\mdset{\omega^\alpha}{\eps}{S\dual \cball{F\dual} \times T\dual \cball{F\dual}}$ is nonempty. By Lemma~\ref{carbon}, either $\mdset{\omega^\alpha}{\eps}{S\dual \cball{F\dual}}$ or $\mdset{\omega^\alpha}{\eps}{T\dual \cball{F\dual}}$ is nonempty, hence either $\mszlenk{S} > \omega^\alpha$ or $\mszlenk{T} > \omega^\alpha$. In other words, either $S\notin \szlenkop{\alpha}$ or $T\notin \szlenkop{\alpha}$. Thus $\szlenkop{\alpha}$ satisfies ${\rm OI}_2$, and is an operator ideal.

The injectivity of $\szlenkop{\alpha}$ follows from the fact that for Banach spaces $E$ and $F$ and an operator $T\in \allop (E, \,  F)$, the Szlenk indices of $T$ and $\mathfrak{J}_FT$ are determined by the same set, namely $T\dual\cball{F\dual} = (\mathfrak{J}_FT)\dual \cball{\ell_\infty (\cball{F\dual})\dual}$.

The surjectivity of $\szlenkop{\alpha}$ is only slightly more difficult. Notice that for Banach spaces $E$ and $F$ and $T\in \allop (E, \, F)$, the restriction of $\mathfrak{Q}_E\dual$ to $T\dual \cball{F\dual}$ is a norm-isometric $w\dual$-homeomorphic embedding of $T\dual \cball{F\dual}$ into $\ell_1(\cball{E})\dual$. It follows then that $\mathfrak{Q}_E\dual (s^\alpha_\eps (T\dual \cball{F\dual})) \subseteq s^\alpha_\eps (\mathfrak{Q}_E\dual T\dual \cball{F\dual}) = s^\alpha_\eps ((T\mathfrak{Q}_E)\dual \cball{F\dual})$ for all ordinals $\alpha$ and $\eps >0$ (the proof is a straightforward transfinite induction), hence $\mszlenk{T}\leqslant \mszlenk{T\mathfrak{Q}_E}$. In particular, $\szlenkop{\alpha}$ is surjective.

Finally, we turn our attention to showing that $\szlenkop{\alpha}$ is a \emph{closed} operator ideal. Recall that for a Banach space $E$, a nonempty, $w\dual$-compact set $K\subseteq E\dual$ and $x\in E\dual$, there exists $y\in K$ such that $\norm{x-y} = d(x, \, K)$ (here $d(x, \, K)$ denotes the norm distance of $x$ to $K$, defined as $d(x, \, K):= \inf \set{\norm{x-z}\mid z\in K}$). Our proof that $\szlenkop{\alpha}$ is closed will be a straightforward application of the following lemma.

\begin{lemma}\label{weekendoz}
Let $D$ be a Banach space, $\eps>0$ and $K,\, L \subseteq D\dual$ nonempty, $w\dual$-compact sets with $\sup \set{d(x, \, L)\mid x\in K}\leqslant \eps /8$. Then $\meeszlenk{\eps}{K}\leqslant \meeszlenk{\eps/4}{L}$.
\end{lemma}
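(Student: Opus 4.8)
The plan is to establish, by transfinite induction on $\alpha$, the following quantitative refinement: \emph{for every ordinal $\alpha$ and every $x\in \mdset{\alpha}{\eps}{K}$ there exists $y\in \mdset{\alpha}{\eps/4}{L}$ with $\norm{x-y}\leqslant \eps/8$} (equivalently, $\mdset{\alpha}{\eps}{K}\subseteq \mdset{\alpha}{\eps/4}{L}+(\eps/8)\cball{D\dual}$). Granting this, the lemma is immediate: if $\mdset{\alpha}{\eps/4}{L}=\emptyset$ for some ordinal $\alpha$, then $\mdset{\alpha}{\eps}{K}=\emptyset$ as well, so every ordinal in $\meeszlenk{\eps}{K}$ lies in $\meeszlenk{\eps/4}{L}$, i.e. $\meeszlenk{\eps}{K}\leqslant \meeszlenk{\eps/4}{L}$. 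Throughout I use the (standard) fact that each set $\mdset{\beta}{\eps/4}{L}$ is $w\dual$-closed, hence $w\dual$-compact.

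The case $\alpha=0$ is exactly the hypothesis $\sup\set{d(x,\, L)\mid x\in K}\leqslant \eps/8$ together with the attainment of $d(x,\, L)$ on the $w\dual$-compact set $L$ recalled just before the statement. For $\alpha$ a limit ordinal I would take $x\in \mdset{\alpha}{\eps}{K}=\bigcap_{\beta<\alpha}\mdset{\beta}{\eps}{K}$, use the inductive hypothesis to choose for each $\beta<\alpha$ a point $y_\beta\in \mdset{\beta}{\eps/4}{L}$ with $\norm{x-y_\beta}\leqslant \eps/8$, and note that $(y_\beta)_{\beta<\alpha}$ is a net in the $w\dual$-compact set $L\cap(x+(\eps/8)\cball{D\dual})$; any $w\dual$-cluster point $y$ satisfies $\norm{x-y}\leqslant \eps/8$, and since the net is eventually inside each ($w\dual$-closed) set $\mdset{\beta}{\eps/4}{L}$, we get $y\in\bigcap_{\beta<\alpha}\mdset{\beta}{\eps/4}{L}=\mdset{\alpha}{\eps/4}{L}$.

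The successor case $\alpha=\gamma+1$ is the crux, and I expect the main obstacle to lie precisely here: one cannot simply take the point supplied by the inductive hypothesis at $x$ and verify it lies in $\mdset{\gamma+1}{\eps/4}{L}$, because a $w\dual$-neighbourhood of that point is not controlled by any $w\dual$-neighbourhood of $x$ once a norm perturbation of size $\eps/8$ is introduced. Instead, given $x\in \mdset{\gamma+1}{\eps}{K}=\mdset{\mbox{}}{\eps}{\mdset{\gamma}{\eps}{K}}$, for each $w\dual$-open $V\ni x$ I would use $x\in \mdset{\gamma}{\eps}{K}\cap V$ and $\diam(\mdset{\gamma}{\eps}{K}\cap V)>\eps$ to pick $u_V\in \mdset{\gamma}{\eps}{K}\cap V$ with $\norm{x-u_V}>\eps/2$ (if $a_1,a_2$ witness the large diameter then one of them is more than $\eps/2$ from $x$), then apply the inductive hypothesis to $u_V$ to obtain $v_V\in \mdset{\gamma}{\eps/4}{L}$ with $\norm{u_V-v_V}\leqslant \eps/8$, so that $\norm{x-v_V}>3\eps/8$. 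Ordering the $w\dual$-open neighbourhoods of $x$ by reverse inclusion makes $(v_V)_V$ a net in the $w\dual$-compact set $\mdset{\gamma}{\eps/4}{L}$; let $y$ be a $w\dual$-cluster point. Since $u_V\in V$ and $\norm{u_V-v_V}\leqslant \eps/8$, testing against each (norm-one) functional defining a basic $w\dual$-neighbourhood of $x$ and letting that neighbourhood shrink gives $\norm{x-y}\leqslant \eps/8$. Finally, for any $w\dual$-open $W\ni y$, the cluster-point property produces some $V$ with $v_V\in W$; then $y,v_V\in \mdset{\gamma}{\eps/4}{L}\cap W$ and $\norm{y-v_V}\geqslant \norm{x-v_V}-\norm{x-y}>3\eps/8-\eps/8=\eps/4$, so $\diam(\mdset{\gamma}{\eps/4}{L}\cap W)>\eps/4$; hence $y\in \mdset{\mbox{}}{\eps/4}{\mdset{\gamma}{\eps/4}{L}}=\mdset{\gamma+1}{\eps/4}{L}$, completing the induction. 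I would note that the arithmetic $\eps/2\rightsquigarrow 3\eps/8\rightsquigarrow\eps/4$ is exactly what forces the constants $\eps/8$ and $\eps/4$ in the statement, and that each inequality above is strict wherever the definition of $s_\eps$ demands it.
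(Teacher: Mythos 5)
Your proposal is correct and follows essentially the same route as the paper's proof: transfinite induction on the derivation step, using nets indexed by the $w\dual$-neighbourhoods of $x$, passing to cluster points in the $w\dual$-compact derived sets of $L$, and the same arithmetic $\eps/2 - \eps/8 - \eps/8 = \eps/4$. The only cosmetic difference is that where you speak of ``testing against norm-one functionals,'' the paper simply invokes $w\dual$-lower semicontinuity of the norm to bound $\|x-y\|$, which is the cleaner way to phrase the same estimate.
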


\begin{proof}
It clearly suffices to show that for all $\gamma \in  \meeszlenk{\eps}{K}$,
\begin{equation}\label{strudel} s^\gamma_{\eps/4}(L) \neq \emptyset \quad \mbox{and} \quad \sup \big\{d(x, \, s^\gamma_{\eps/4}(L))\mid x\in s^\gamma_{\eps}(K)\big\}\leqslant \eps /8. \end{equation}
The assertions of (\ref{strudel}) hold trivially for $\gamma = 0$. Suppose that $\beta \in \meeszlenk{\eps}{K}$ is such that (\ref{strudel}) holds for all $\gamma<\beta$; we will show that (\ref{strudel}) holds for $\gamma = \beta$.

First suppose that $\beta$ is a successor, say $\beta = \zeta+1$, and let $x\in s^\beta_\eps(K)$. Then there exists a net $(x_i)_{i\in I}$ in $s^\zeta_\eps(K)$ with $x_i\stackrel{w\dual}{\rightarrow}x$ and $\norm{x_i - x}>\eps/2$ for all $i$ (for example, take $I$ to be the set of all $w\dual$-neighbourhoods of $x$, ordered by reverse set inclusion). By the induction hypothesis, for each $i\in I$ there is $y_i \in s^\zeta_{\eps/4}(L)$ with $\norm{x_i - y_i}\leqslant \eps/8$. Passing to a subnet, we may assume $(y_i)_{i\in I}$ has a $w\dual$-limit, $y$ say, in $s^\zeta_{\eps/4}(L)$. By $w\dual$-lower semicontinuity, $\norm{x-y}\leqslant \liminf_{i\in I}\norm{x_i-y_i}\leqslant \eps /8$. Thus, for all $i\in I$,\begin{equation*}  \norm{y - y_i}\geqslant  \norm{x-x_i} - \norm{x_i - y_i}-\norm{x-y}> \frac{\eps}{2}-\frac{\eps}{8}-\frac{\eps}{8} = \frac{\eps}{4}, \end{equation*} hence $y\in s_{\eps/4}(s^\zeta_{\eps/4}(L)) = s^\beta_{\eps /4}(L)$. In particular, $ s^\beta_{\eps /4}(L)$ is nonempty. Moreover, $d(x, \,  s^\beta_{\eps /4}(L))\leqslant \norm{x-y}\leqslant \eps/8$. Thus, since $x\in s^\beta_\eps(K)$ is arbitrary, we conclude that $\sup \big\{d(x, \, s^\beta_{\eps/4}(L))\mid x\in s^\beta_{\eps}(K)\big\}\leqslant \eps /8$. We have now shown that (\ref{strudel}) passes to successor ordinals in $\meeszlenk{\eps}{K}$.

Now suppose that $\beta$ is a limit ordinal. Then $s^\beta_{\eps /4}(L)$ is nonempty by the induction hypothesis and $w\dual$-compactness. For the second assertion of (\ref{strudel}), we again let $x\in s^\beta_\eps(K)$. By the induction hypothesis, for each $\zeta<\beta$ there is $y_\zeta \in s^\zeta_{\eps/4}(L)$ such that $\Vert x-y_\zeta \Vert\leqslant \eps/8$. Let $(z_j)_{j\in J}$ be a $w\dual$-convergent subnet of $(y_\zeta)_{\zeta<\beta}$, with $w\dual$-limit $y$, say. Then $y\in \bigcap_{\zeta<\beta}s^\zeta_{\eps/4}(L) = s^\beta_{\eps/4}(L)$ and $\norm{x-y}\leqslant \liminf_{j\in J}\Vert x-z_j\Vert \leqslant \eps /8$, hence $d(x, \, s^\beta_{\eps/4}(L)) \leqslant \norm{x-y}\leqslant \eps /8$. As $x\in s^\beta_\eps(K)$ is arbitrary, the second assertion of (\ref{strudel}) holds for $\gamma=\beta$. This completes the proof of the lemma.
\end{proof}

Let $E$ and $F$ be Banach spaces and $T\in \allop (E, \, F)$ an operator such that $T\notin \szlenkop{\alpha}$. Then there is $\eps >0$ such that $\meeszlenk{\eps}{T}>\omega^\alpha$. Let $S\in \allop (E, \, F)$ be such that $\norm{T-S}<\eps /8$. Taking $K = T\dual \cball{F\dual}$ and $L = S\dual \cball{F\dual}$ in the statement of Lemma~\ref{weekendoz} yields $\omega^\alpha <\meeszlenk{\eps}{T} \leqslant \meeszlenk{\eps/4}{S}\leqslant \mszlenk{S}$, hence $S\notin \szlenkop{\alpha}$. In particular, the open ball in $\allop (E, \, F) $ centred at $T$ and of radius $\eps /8$ has trivial intersection with $\szlenkop{\alpha}(E, \, F)$. It follows that $\szlenkop{\alpha}(E, \, F)$ is closed in $\allop (E, \, F)$, and the proof of Theorem~\ref{idealthm} is complete.
\end{proof}

We now describe the relationship between the classes $\szlenkop{\alpha}$ and the class of Asplund operators. For this we shall call on the following characterization of Asplund operators that follows readily from work of C.~Stegall, in particular \cite[Proposition~2.10 and Theorem~1.12]{Stegall1981}.

\begin{proposition}\label{chickennuggets}
Let $E$ and $F$ be Banach spaces and $T:E\To F$ an operator. Then $T$ is Asplund if and only if for every separable Banach space $D$ and every operator $S: D\To E$, the set $S\dual T\dual \cball{F\dual}$ is norm separable.
\end{proposition}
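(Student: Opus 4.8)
The plan is to prove the two implications by quite different means: the forward direction follows from the factorization of Asplund operators through Asplund spaces, while the converse is essentially a reformulation of results of Stegall. For the forward direction, suppose $T$ is Asplund and let $D$ be a separable Banach space and $S:D\To E$ an operator. Since $\asplundop$ is an operator ideal we have $TS\in\asplundop$, and since every Asplund operator factors through an Asplund space we may write $TS=BA$ with $A:D\To G$, $B:G\To F$ and $G$ an Asplund space. Replacing $G$ by $\overline{A(D)}$ --- a separable subspace of the Asplund space $G$, hence itself Asplund by Theorem~\ref{Aspequiv} --- and factoring $A$ through it, we may assume in addition that $G$ is separable; then $G\dual$ is norm separable by Theorem~\ref{Aspequiv} (equivalently, by Proposition~\ref{collection}(iv)). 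Since
\begin{equation*}
S\dual T\dual\cball{F\dual}=(TS)\dual\cball{F\dual}=A\dual B\dual\cball{F\dual}\subseteq A\dual\bigl(\norm{B}\,\cball{G\dual}\bigr),
\end{equation*}
and the right-hand side is the image of a norm-separable set under the norm-to-norm continuous map $A\dual$, the set $S\dual T\dual\cball{F\dual}$ is norm separable, being a subset of a norm-separable metric space.

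For the converse, suppose $S\dual T\dual\cball{F\dual}$ is norm separable whenever $D$ is separable and $S:D\To E$ is an operator. Factoring an arbitrary such $S$ as $D\To\overline{S(D)}\hookrightarrow E$ shows this hypothesis is equivalent to requiring that $(T\iota)\dual\cball{F\dual}=\iota\dual T\dual\cball{F\dual}$ be norm separable for the inclusion $\iota:D\To E$ of each separable subspace $D\subseteq E$. It therefore suffices to establish the following two facts of Stegall (see \cite[Proposition~2.10 and Theorem~1.12]{Stegall1981}): $(\star)$ if $D$ is separable and $R:D\To F$ satisfies $R\dual\cball{F\dual}$ norm separable, then $R$ is Asplund; and $(\star\star)$ if $T:E\To F$ restricts to an Asplund operator on every separable subspace of $E$, then $T$ is Asplund. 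Indeed, for each separable subspace $D\subseteq E$ the hypothesis makes $(T\iota)\dual\cball{F\dual}$ norm separable, so $(\star)$ gives that $T\iota$ is Asplund, and then $(\star\star)$ gives that $T$ is Asplund.

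I expect $(\star)$ to be the main obstacle (with $(\star\star)$ the companion ``separable determination'' statement). It is not formal: the hypothesis of $(\star)$ does not force the range $\overline{R(D)}$ to be an Asplund space --- a compact operator with dense range in $C[0,1]$ has norm-compact, hence norm-separable, adjoint ball image but non-Asplund range --- so one cannot obtain $(\star)$ merely by factoring $R$ through its range and invoking Theorem~\ref{Aspequiv}. Instead $(\star)$ rests on the Radon--\niko{}-theoretic description of Asplund operators in \cite{Stegall1981}, which is the operator-theoretic analogue of the fact that a separable Banach space is Asplund precisely when its dual is norm separable (Theorem~\ref{Aspequiv}, Proposition~\ref{collection}(iv)).
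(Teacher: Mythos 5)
Your proof is correct and does what the paper itself only gestures at: the paper gives no proof of Proposition~\ref{chickennuggets}, merely asserting that it follows readily from \cite[Proposition~2.10 and Theorem~1.12]{Stegall1981}. Your argument supplies exactly that derivation --- using the Reinov--Heinrich--Stegall factorization theorem together with the separability of duals of separable Asplund spaces for the forward implication, and correctly isolating the two Stegall ingredients $(\star)$ and $(\star\star)$ for the converse, with an apt caveat (and a good counterexample) showing that $(\star)$ is genuinely nontrivial and cannot be obtained by factoring through the closed range.
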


We also require the following result concerning metrizable $w\dual$-compact sets; the proof is essentially contained in the proof of Proposition~\ref{collection}(iv).
\begin{lemma}\label{nickenchuggets}
Let $K$ be a $w\dual$-compact set that is metrizable in the $w\dual$ topology and nonseparable in the norm topology. Then $\mszlenk{K}=\infty$.
\end{lemma}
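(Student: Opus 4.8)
The plan is to produce a single $\eps>0$ for which the derivation $\mdset{\alpha}{\eps/2}{K}$ never becomes empty, so that $\meeszlenk{\eps/2}{K}=\ord$ and hence $\mszlenk{K}=\infty$. First I would use norm-nonseparability of $K$ to fix $\eps>0$ together with an uncountable set $D\subseteq K$ satisfying $\norm{x-y}\geqslant \eps$ for all distinct $x,y\in D$: for some $n\in\nat$, $K$ has no countable $(1/n)$-net (otherwise the union of such nets would be norm-dense), and a maximal $(1/n)$-separated subset of $K$ is then uncountable; put $\eps=1/n$.

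The core is a transfinite induction on ordinals $\alpha<\omega_1$ proving that $D_\alpha:=D\cap \mdset{\alpha}{\eps/2}{K}$ is co-countable in $D$. The case $\alpha=0$ is trivial, and for a limit $\lambda<\omega_1$ the set $D\setminus D_\lambda=\bigcup_{\beta<\lambda}(D\setminus D_\beta)$ is a countable union of countable sets. For the successor step, assume $D_\beta$ is co-countable, hence uncountable. Since $K$ with its $w\dual$ topology is compact metrizable, it is second countable, hence hereditarily Lindel{\"o}f; consequently all but countably many points of $D_\beta$ are $w\dual$-condensation points of $D_\beta$ (each non-condensation point has a $w\dual$-open neighbourhood meeting $D_\beta$ countably, and countably many such neighbourhoods cover the set of non-condensation points). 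If $x\in D_\beta$ is such a condensation point, then every $w\dual$-open $V\ni x$ contains two distinct points of $D_\beta\subseteq \mdset{\beta}{\eps/2}{K}$, so $\diam\big(V\cap \mdset{\beta}{\eps/2}{K}\big)\geqslant \eps>\eps/2$; thus $x\in \mdset{\beta+1}{\eps/2}{K}$, i.e.\ $x\in D_{\beta+1}$. Hence $D_{\beta+1}$ contains $D_\beta$ minus a countable set, and is therefore co-countable, completing the induction.

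It follows that $\mdset{\alpha}{\eps/2}{K}\supseteq D_\alpha\neq\emptyset$ for every $\alpha<\omega_1$. To extend this to all ordinals I would invoke the Cantor--Bendixson argument used in the proof of Proposition~\ref{collection}(iv): the $w\dual$-closed sets $\mdset{\alpha}{\eps/2}{K}$ form a decreasing transfinite chain in the second-countable space $K$, hence stabilize at some countable ordinal $\mu$; as $\mdset{\mu}{\eps/2}{K}\neq\emptyset$ by the induction and $\mu$ is its stabilization point, $\mdset{\alpha}{\eps/2}{K}\neq\emptyset$ for every ordinal $\alpha$. Therefore $\meeszlenk{\eps/2}{K}=\ord$ and $\mszlenk{K}=\infty$.

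The one genuinely delicate point is the transition from ``nonempty at all countable ordinals'' to ``nonempty at all ordinals'': the co-countability bookkeeping at the successor step only survives below $\omega_1$, so the Cantor--Bendixson fact (a decreasing chain of closed sets in a second-countable space stabilizes at a countable ordinal) is really needed, not merely convenient. The remaining ingredients --- extracting the uncountable $\eps$-separated set and the condensation-point count via hereditary Lindel{\"o}fness --- are routine.
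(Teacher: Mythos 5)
Your proof is correct, and the strategy is the standard one that the paper delegates to Odell's Theorem 3.1 via Proposition~\ref{collection}(iv): extract an uncountable $\eps$-separated subset $D$ of $K$, use hereditary Lindel{\"o}fness of the second-countable space $(K,w\dual)$ to argue via condensation points that the $\eps/2$-derivations meet $D$ co-countably at every countable ordinal, and then invoke the Cantor--Bendixson stabilization of decreasing chains of closed sets in a second-countable space to pass from countable ordinals to all ordinals. One small streamlining you might prefer: instead of tracking co-countability of $D_\alpha$ through a transfinite induction, pass directly to the set $P$ of $w\dual$-condensation points of $D$ in $K$. The same hereditary Lindel{\"o}f count shows $D\setminus P$ is countable, so $P$ is nonempty and $w\dual$-closed; and for any $w\dual$-open $V$ with $V\cap P\neq\emptyset$, the set $V\cap D\cap P$ is uncountable and $\eps$-separated, giving $\diam(V\cap P)\geqslant\eps>\eps/2$. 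Thus $P\subseteq s_{\eps/2}(P)$, so $P=s_{\eps/2}^\alpha(P)\subseteq s_{\eps/2}^\alpha(K)$ for \emph{every} ordinal $\alpha$ by monotonicity of the derivation in its argument. This disposes of the ordinal-by-ordinal bookkeeping and the Cantor--Bendixson step in one stroke, but it is the same underlying idea and your version is sound as written.
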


The following proposition asserts that the class of Asplund operators coincides with $\bigcup_{\alpha\in\ord}\szlenkop{\alpha}$.

\begin{proposition}\label{opequiv}
Let $E$ and $F$ be Banach spaces and $T :E\To F$ an operator. The following are equivalent:

\nr{i} $T$ is an $\alpha$-Szlenk operator for some ordinal $\alpha$.

\nr{ii} $T$ is an Asplund operator.
\end{proposition}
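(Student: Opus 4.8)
The plan is to prove the equivalence by a double implication, using the characterizations already assembled in the excerpt. For the direction (i)$\Rightarrow$(ii), suppose $T\in\szlenkop{\alpha}$ for some ordinal $\alpha$, so $\mszlenk{T\dual\cball{F\dual}}\leqslant\omega^\alpha<\infty$. I want to apply Proposition~\ref{chickennuggets}: given a separable Banach space $D$ and an operator $S:D\To E$, I must show $S\dual T\dual\cball{F\dual}$ is norm separable. By Lemma~\ref{poopoopoo} (applied to $S\dual$, taking $K=T\dual\cball{F\dual}$), the Szlenk index of $S\dual T\dual\cball{F\dual}$ is no larger than that of $T\dual\cball{F\dual}$, hence finite; equivalently, since $D$ is separable, $S\dual T\dual\cball{F\dual}$ is a $w\dual$-compact, $w\dual$-metrizable subset of $D\dual$ with finite Szlenk index. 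The contrapositive of Lemma~\ref{nickenchuggets} then forces $S\dual T\dual\cball{F\dual}$ to be norm separable. By Proposition~\ref{chickennuggets}, $T$ is Asplund.

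For the converse (ii)$\Rightarrow$(i), suppose $T$ is Asplund but, for contradiction, $T\notin\szlenkop{\alpha}$ for every ordinal $\alpha$; by Proposition~\ref{collection}(iii) this means $\mszlenk{T\dual\cball{F\dual}}=\infty$, so there is a fixed $\eps>0$ with $\meeszlenk{\eps}{T\dual\cball{F\dual}}=\infty$, i.e. $s^\gamma_\eps(T\dual\cball{F\dual})\neq\emptyset$ for all ordinals $\gamma$. The idea is to extract from this a separable $S:D\To E$ for which $S\dual T\dual\cball{F\dual}$ fails to be norm separable, contradicting Proposition~\ref{chickennuggets}. Concretely, for each countable ordinal $\gamma<\omega_1$ the set $s^\gamma_\eps(T\dual\cball{F\dual})$ is nonempty, and by iterating the definition of $s_\eps$ one builds a countable collection of points in $\cball{E}$ witnessing the derivations down to some large countable ordinal: each time one moves from $s^{\gamma+1}_\eps$ to $s^\gamma_\eps$ one uses a point $x\in\cball{E}$ separating the relevant functionals by more than $\eps/2$, and one needs only countably many such points to carry out the construction up through all countable ordinals (standard bookkeeping, as in the separable reduction used to prove Proposition~\ref{collection}(iv)). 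Let $D$ be the closed linear span in $E$ of this countable set, a separable subspace, and let $S:D\hookrightarrow E$ be the inclusion. Then $S\dual=$ restriction map, and the restriction of $T\dual\cball{F\dual}$ to $D$ still has $s^\gamma_\eps\neq\emptyset$ for all $\gamma<\omega_1$ (the witnessing points were chosen in $D$), so $\meeszlenk{\eps}{S\dual T\dual\cball{F\dual}}\geqslant\omega_1$; since $S\dual T\dual\cball{F\dual}\subseteq D\dual$ with $D$ separable, $w\dual$-compactness makes it $w\dual$-metrizable, and Proposition~\ref{collection}(iv) (or Lemma~\ref{nickenchuggets} in contrapositive form) shows it is norm nonseparable. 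This contradicts Proposition~\ref{chickennuggets}, so $T\in\szlenkop{\alpha}$ for some $\alpha$.

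The main obstacle is the separable-reduction bookkeeping in (ii)$\Rightarrow$(i): one must choose the countably many witnessing points $x\in\cball{E}$ so that, after passing to their closed span $D$, \emph{every} derivation $s^\gamma_\eps$ of the restricted set (for $\gamma<\omega_1$) remains nonempty. This is the same subtlety that underlies Proposition~\ref{collection}(iv) and its proof in \cite{Odell2004}, and I would handle it by the usual elementary-submodel or exhaustion argument: build an increasing $\omega_1$-indexed chain would be too much, so instead observe that the statement "$s^\gamma_\eps(T\dual\cball{F\dual})\neq\emptyset$ for all $\gamma$" already gives, for each countable $\gamma$, a finite set of separating points, and a diagonal/union over $\gamma<\omega_1$ together with closure under rational linear combinations produces a separable $D$ doing the job simultaneously for all $\gamma<\omega_1$. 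Once that separable space is in hand, everything else is a direct citation of the results quoted in the excerpt. Alternatively, and perhaps more cleanly, one can bypass the explicit construction entirely: if $T$ were not an $\alpha$-Szlenk operator for any $\alpha$ then in particular $\mszlenk{T}\geqslant\omega_1$, and a routine argument shows that any $w\dual$-compact $K\subseteq E\dual$ with $\mszlenk{K}\geqslant\omega_1$ admits a separable subspace $D\subseteq E$ whose restriction map sends $K$ to a norm-nonseparable set — which is exactly the failure of the Asplund criterion in Proposition~\ref{chickennuggets}.
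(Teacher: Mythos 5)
Your (i)$\Rightarrow$(ii) argument is correct and in substance the same as the paper's: the paper establishes it contrapositively (not Asplund $\Rightarrow$ some $S$ with $S\dual T\dual\cball{F\dual}$ nonseparable $\Rightarrow$ by Lemma~\ref{nickenchuggets} and the ideal property $\mszlenk{T}=\infty$), whereas you argue it directly, but the ingredients — Proposition~\ref{chickennuggets}, Lemma~\ref{poopoopoo} (or equivalently ${\rm OI}_3$), and the contrapositive of Lemma~\ref{nickenchuggets} — are identical.

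Your (ii)$\Rightarrow$(i) argument, however, is a genuinely different route from the paper's, and it has real gaps. The paper uses the {\reinov}-Heinrich-Stegall factorization theorem to factor $T$ through an Asplund space $G$, then applies Proposition~\ref{collection}(ii)$+$(iii) to get $\mszlenk{G}=\omega^\alpha$ and conclude $T\in\szlenkop{\alpha}$. You instead attempt a bare-hands separable reduction, and there are at least three problems. First, the "bookkeeping" step: you want countably many points $x\in\cball{E}$ that simultaneously witness $s^\gamma_\eps(T\dual\cball{F\dual})\neq\emptyset$ for \emph{all} $\gamma<\omega_1$, but each $\gamma$ can require a fresh countable family, and a union over $\omega_1$ many countable families is in general of size $\aleph_1$; your appeal to "standard bookkeeping, as in Proposition~\ref{collection}(iv)" does not apply, since there $E$ is already separable and no reduction is needed. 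Second, even granting the construction, the inference from $\meeszlenk{\eps}{S\dual T\dual\cball{F\dual}}=\infty$ (plus $w\dual$-metrizability) to norm nonseparability is not Lemma~\ref{nickenchuggets} in contrapositive form: that contrapositive gives $\mszlenk{K}<\infty \Rightarrow$ (norm separable), i.e.\ the direction you used in (i)$\Rightarrow$(ii); here you need the converse, $\mszlenk{K}=\infty\Rightarrow$ (norm nonseparable) for $w\dual$-metrizable compact $K$, which is a nontrivial fact not established in the excerpt. Third, your "routine argument" alternative is false as stated: a $w\dual$-compact set $K$ with merely $\mszlenk{K}\geqslant\omega_1$ can be $\cball{E\dual}$ for an Asplund space $E$ of large Szlenk index, in which case the restriction of $K$ to every separable subspace of $E$ is norm separable. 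What you would actually need is $\mszlenk{K}=\infty$, and even then the asserted separable-reduction fact is essentially equivalent to what you are trying to prove, not a routine lemma. The factorization theorem is doing serious work in the paper's proof, and your sketch does not replace it.
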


\begin{proof}
First suppose that $T$ is Asplund. By the {\reinov}-Heinrich-Stegall factorization theorem for Asplund operators (c.f. Section~\ref{greatfamily}), there exists an Asplund space $G$ such that $I_G$ factors $T$. By Proposition~\ref{collection}(iii), there is an ordinal $\alpha$ such that $\mszlenk{G}=\omega^\alpha$, hence $\mszlenk{T}\leqslant \mszlenk{I_G} = \mszlenk{G} = \omega^\alpha$. That is, $T$ is $\alpha$-Szlenk.

Now suppose that $T$ is not Asplund. By Proposition~\ref{chickennuggets}, there exists a separable Banach space $D$ and an operator $S:D\To E$ such that $S\dual T\dual \cball{F\dual}$ is nonseparable in the norm topology. As $D$ is norm separable, we have that $S\dual T\dual \cball{F\dual}$ is $w\dual$-metrizable, hence by Lemma~\ref{nickenchuggets} it follows that $\mszlenk{TS} = \mszlenk{S\dual T\dual \cball{F\dual}} = \infty$. That is, $TS$ fails to be $\alpha$-Szlenk for any ordinal $\alpha$. As the classes $\szlenkop{\alpha}$ are operator ideals, $T$ fails to be $\alpha$-Szlenk for any $\alpha$.
\end{proof}

For every pair of Banach spaces $(E, \, F)$, there is an ordinal $\alpha$ such that if $T\in \allop (E, \, F)$ is $\beta$-Szlenk for some ordinal $\beta$, then $T$ is $\alpha$-Szlenk. Indeed, we may take $\alpha$ to satisfy $\omega^\alpha = \sup \set{\mszlenk{T} \mid T\in \szlenkop{\beta} (E, \, F) \mbox{ for some ordinal }\beta}$. By Proposition~\ref{opequiv}, with $\alpha$ so defined we have $\asplundop (E, \, F) = \szlenkop{\alpha}(E, \, F)$.

We now determine the relationship between the operator ideals $\szlenkop{\alpha}$ and the operator ideal $\sepop$ of operators having separable range. In what follows, $\sepop\dual$ denotes the operator ideal of operators $T$ with $T\dual\in\sepop$. The following result is essentially an operator-theoretic generalization of Proposition~\ref{collection}(iv).

\begin{proposition}\label{sepsepseppy}
\begin{equation*} \displaystyle \sepop\dual = \sepop \cap \asplundop =  \sepop\cap\bigcup_{\alpha \in\ord} \szlenkop{\alpha} = \sepop \cap \bigcup_{\alpha < \omega_1}\szlenkop{\alpha} = \sepop \cap \szlenkop{\omega_1}.\end{equation*}
\end{proposition}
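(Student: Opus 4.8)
The plan is to prove the five‑term chain in two stages: first collapse the four rightmost terms, then prove the two inclusions that together give $\sepop\dual=\sepop\cap\asplundop$. Since $\asplundop=\bigcup_{\alpha\in\ord}\szlenkop{\alpha}$ by Proposition~\ref{opequiv}, and since trivially $\sepop\cap\bigcup_{\alpha<\omega_1}\szlenkop{\alpha}\subseteq\sepop\cap\szlenkop{\omega_1}\subseteq\sepop\cap\bigcup_{\alpha\in\ord}\szlenkop{\alpha}$, for the first stage it is enough to show that every $T\in\sepop$ with $\mszlenk{T}<\infty$ actually has $\mszlenk{T}<\omega_1$. The observation that drives everything is that for $T\in\allop(E,\,F)$ with separable range the $w\dual$-compact set $K:=T\dual\cball{F\dual}$ determining $\mszlenk{T}$ is $w\dual$-metrizable: factoring $T=\iota T_0$ through $Y:=\overline{T(E)}$, with $T_0:E\To Y$ of dense range and $\iota:Y\To F$ the isometric inclusion, one has $\iota\dual\cball{F\dual}=\cball{Y\dual}$, hence $K=T_0\dual\cball{Y\dual}$, a $w\dual$-continuous image of the $w\dual$-metrizable $w\dual$-compact ball $\cball{Y\dual}$ (metrizable since $Y$ is separable), so $K$ is compact metrizable. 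Granting this, the first stage is exactly the argument behind Proposition~\ref{collection}(iv) run for $K$ in place of a dual ball: being compact metrizable, $K$ is second countable, so for each $\eps>0$ the decreasing transfinite sequence $\big(\mdset{\alpha}{\eps}{K}\big)_\alpha$ of $w\dual$-closed sets strictly decreases only countably often; thus if $\mszlenk{T}<\infty$ each $\meeszlenk{\eps}{K}$ is a countable ordinal, and taking $\eps=1/n$ and the supremum gives $\mszlenk{T}<\omega_1$.

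For $\sepop\cap\asplundop\subseteq\sepop\dual$: if $T\in\sepop\cap\asplundop$ then $K$ is $w\dual$-metrizable and $\mszlenk{K}=\mszlenk{T}<\infty$, so by the contrapositive of Lemma~\ref{nickenchuggets} (a $w\dual$-metrizable $w\dual$-compact set of finite Szlenk index is norm-separable) $K$ is norm-separable; hence $\mathrm{range}(T\dual)=\bigcup_{n\geqslant1}nK$ is norm-separable, i.e. $T\dual\in\sepop$.

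For $\sepop\dual\subseteq\sepop\cap\asplundop$: suppose $W:=\overline{\mathrm{range}(T\dual)}$ is separable. Then $T$ is Asplund, because for every separable $D$ and every $S\in\allop(D,\,E)$ the set $S\dual T\dual\cball{F\dual}$ is a continuous image of the norm-separable set $T\dual\cball{F\dual}\subseteq W$, so Proposition~\ref{chickennuggets} applies. And $T$ has separable range: writing $T=\iota T_0$ as above, $K=T_0\dual\cball{Y\dual}\subseteq\mathrm{range}(T\dual)\subseteq W$ is norm-separable, while $T_0\dual$ is injective (dense range of $T_0$) and $w\dual$-$w\dual$-continuous, so $T_0\dual\big|_{\cball{Y\dual}}$ is a continuous bijection from the compact Hausdorff space $(\cball{Y\dual},w\dual)$ onto $(K,w\dual)$, hence a homeomorphism; since a norm-separable $w\dual$-compact subset of a dual space is $w\dual$-metrizable, $(\cball{Y\dual},w\dual)$ is metrizable, so $Y$ is separable and $\mathrm{range}(T)\subseteq Y$ is separable.

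I expect the inclusion $\sepop\dual\subseteq\sepop$ to be the crux: the remaining inclusions reduce directly to Propositions~\ref{opequiv}, \ref{chickennuggets}, \ref{collection}(iv) and Lemma~\ref{nickenchuggets}, whereas this one genuinely requires transporting separability across the adjoint via the homeomorphism $T_0\dual\big|_{\cball{Y\dual}}$. It will be worth isolating beforehand the elementary fact used twice above — that a norm-separable $w\dual$-compact subset $K$ of a dual Banach space is $w\dual$-metrizable: its closed linear span is a separable subspace, and countably many weak$^*$ evaluations from that span separate the points of $K$ and hence generate its topology.
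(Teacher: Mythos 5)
Your argument is correct but takes a genuinely different, and more elementary, route than the paper. The paper establishes the two nontrivial inclusions, $\sepop\cap\asplundop\subseteq\sepop\dual$ and $\sepop\dual\subseteq\szlenkop{\omega_1}$, via Heinrich's interpolation machinery: it checks that $(\sepop,\sepop)$, $(\sepop\cap\asplundop,\sepop\cap\asplundop)$ and $(\sepop\dual,\sepop\dual)$ are $\Sigma_p$-pairs, and then applies Theorem~\ref{manna} to conclude that every operator in $\sepop\cap\asplundop$, and every operator in $\sepop\dual$, factors through a Banach space with separable dual; the inclusions then drop out. Your proof bypasses factorization entirely and rests on one structural observation: for $T\in\sepop$ with $Y=\overline{T(E)}$ separable, factoring $T=\iota T_0$ gives $K:=T\dual\cball{F\dual}=T_0\dual\cball{Y\dual}$, a $w\dual$-homeomorphic copy of $(\cball{Y\dual},w\dual)$ and hence a $w\dual$-compact metrizable set. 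Second countability then caps $\mszlenk{K}$ below $\omega_1$ whenever it is finite (exactly the argument behind Proposition~\ref{collection}(iv)), and Proposition~\ref{chickennuggets} together with Lemma~\ref{nickenchuggets} yields $\sepop\dual=\sepop\cap\asplundop$ directly. What each route buys: yours is shorter and self-contained (you even reprove $\sepop\dual\subseteq\sepop$, which the paper cites from Pietsch), but the paper's heavier argument yields a strictly stronger conclusion, namely that $\sepop\dual=\sepop\cap\asplundop$ has the factorization property through spaces with separable dual --- an operator-ideal statement that your approach does not recover. One small caution on the ``elementary fact'' you isolate at the end: it is true that a norm-separable $w\dual$-compact $K\subseteq E\dual$ is $w\dual$-metrizable, but the countably many evaluations separating the points of $K$ are elements of the predual $E$ (chosen by Hahn--Banach to nearly norm the differences $k_m-k_n$ of a dense sequence in $K$), not functionals ``from the span'' of $K$ inside $E\dual$; as phrased, the justification is misleading even though the assertion, and your use of it, are correct.
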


\begin{proof}
First note that $\szlenkop{\omega_1}\subseteq \bigcup_{\alpha<\omega_1}\szlenkop{\alpha}$. Indeed, for $T\in \szlenkop{\omega_1}$ we have $cf (\mszlenk{T}) = cf(\sup \{  \meeszlenk{1/n}{T}\mid n\in\nat\}) \leqslant \omega$, whereas $cf (\omega^{\omega_1}) = \omega_1$. We thus deduce that $\mszlenk{T} <\omega_1$, hence $T\in \bigcup_{\alpha<\omega_1}\szlenkop{\alpha}$. By this observation and Proposition~\ref{opequiv} we have
\begin{equation*}
\sepop \cap \szlenkop{\omega_1} = \sepop \cap \bigcup_{\alpha < \omega_1}\szlenkop{\alpha} \subseteq \sepop \cap \bigcup_{\alpha \in\ord} \szlenkop{\alpha} = \sepop \cap \asplundop\, ,
\end{equation*}
and so it now suffices to show that $\sepop \cap \asplundop \subseteq \sepop\dual$ and $\sepop\dual \subseteq \sepop \cap \szlenkop{\omega_1}$.

To prove $\sepop \cap \asplundop \subseteq \sepop\dual$, we first note that Heinrich \cite{Heinrich1980} has shown that $(\asplundop, \, \asplundop)$ is a $\Sigma_p$-pair for every $1<p<\infty$. We \emph{claim} that $(\sepop, \, \sepop)$ is also a $\Sigma_p$-pair for all $1<p<\infty$. To verify our claim, we note that if $(E_m)_{m\in \nat}$ and $(F_n)_{n\in\nat}$ are sequences of Banach spaces, $1<p<\infty$ and $T\in \allop ((\bigoplus_{m \in \nat}E_m)_p , \, (\bigoplus_{n\in\nat}F_n)_p )$ is such that $T\notin \sepop$, then the set $\bigcup\{ Q_\gee TU_\eff (\textstyle \bigoplus_{m \in \nat}E_m)_p \mid \eff,\, \gee \in \nat\fin\} $ is nonseparable since its uniform closure contains $T(\bigoplus_{m \in \nat}E_m)_p$. As $\nat\fin$ is countable, it follows that are $\eff, \, \gee\in\nat\fin$ such that $Q_\gee TU_\eff ( \bigoplus_{m \in \nat}E_m)_p$ is nonseparable. That is, $Q_\gee TU_\eff\notin\sepop$. This completes the proof of the claim, and it follows that $(\sepop\cap\asplundop, \, \sepop\cap\asplundop)$ is a $\Sigma_p$-pair for all $1<p<\infty$. Moreover, $\sepop\cap\asplundop$ is injective and surjective since the same is true for $\sepop$ and $\asplundop$. Thus, by Theorem~\ref{manna}, every element of $\sepop\cap\asplundop$ factors through a separable Asplund space. By Theorem~\ref{Aspequiv}, this implies that every element of $\sepop\cap\asplundop$ factors through a Banach space with separable dual, and the inclusion $\sepop \cap \asplundop \subseteq \sepop\dual$ follows.

We now show that $\sepop\dual \subseteq \sepop \cap \szlenkop{\omega_1}$. The inclusion $\sepop\dual \subseteq \sepop$ is well-known (see, for example, \cite[Proposition~4.4.8]{Pietsch1980}),  so we need only show that $\sepop\dual \subseteq \szlenkop{\omega_1}$. To this end, note that similar arguments to those used above show that $(\sepop\dual, \, \sepop\dual)$ is a $\Sigma_p$-pair for every $1<p<\infty$. Moreover, $\sepop\dual$ is injective and surjective, hence Theorem~\ref{manna} implies that every element of $\sepop\dual$ factors through a Banach space with separable dual. By Proposition~\ref{collection}(iv), this means that every element of $\sepop\dual$ factors through a Banach space of countable Szlenk index; the inclusion $\sepop\dual \subseteq \szlenkop{\omega_1}$ follows.
\end{proof}

To conclude the current section we mention two sequential variants of the Szlenk index that have appeared in the literature. Sequential definitions are often advantageous from a utilitarian point-of-view, but - as we shall now see - they do not seem to be sufficient for the development of a general theory of operator ideals associated with the Szlenk index such as that that initiated here.

For $E$ a Banach space, $K\subseteq E\dual$ and $\eps>0$, we define derivations
\begin{equation*}
m_\eps (K) :=\{x\dual \in K \mid \exists (x_n\dual)_n \subseteq K, \, x_n\dual \stackrel{w\dual}{\rightarrow} x\dual , \, \Vert x_n\dual - x\dual\Vert \geqslant \eps \mbox{ for all }n\in \nat \}
\end{equation*}
and
\begin{align*} n_\eps(K):= \{ x\dual \in K  \mid &\exists (x_n\dual) \subseteq K, \, \exists (x_n) \subseteq \cball{E}, x_n\dual \stackrel{w\dual}{\rightarrow} x\dual , \, x_n \stackrel{w}{\rightarrow} 0, \, \overline{\lim}_n \abs{\functional{x_n\dual}{x_n}}\geqslant \eps \}
\end{align*}
on $K$. As with $s_\eps$, we may iterate $m_\eps$ and $n_\eps$ to obtain derivations $m_\eps^\alpha$ and $n_\eps^\alpha$ for $\eps > 0$ and $\alpha$ an ordinal, with corresponding indices ${\rm Mz}_\eps(K)$, ${\rm Mz}(K)$, ${\rm Nz}_\eps(K)$ and ${\rm Nz}(K)$. Analogously to the definition of the classes $\szlenkop{\alpha}$, for each ordinal $\alpha$ we define $\mzlenkop{\alpha}:= \set{T\in\allop \mid {\rm Mz}(T)\leqslant \omega^\alpha}$ and $\nzlenkop{\alpha}:= \set{T\in \allop\mid {\rm Nz}(T) \leqslant \omega^\alpha} $.

The main obstacle to proving that the classes $\mzlenkop{\alpha}$ form operator ideals is that we do not seem to have an analogue of Lemma~\ref{poopoopoo} for the derivations $m_\eps^\alpha$ (since it need not be the case that every sequence in $K$ has a $w\dual$-convergent sub\emph{sequence}). However, we may form operator ideals from the classes $\mzlenkop{\alpha}$ by taking their intersection with the class $\wscop$ consisting of all operators having $w\dual$-sequentially compact adjoint. That is, an operator $T:E\To F$ belongs to $\wscop$ if and only if $T\dual\cball{F\dual}$ is $w\dual$-sequentially compact. Standard arguments, similar to those used to show the same for $\compactop$, show that $\wscop$ is a closed, injective, surjective operator ideal. A proof similar to that of Theorem~\ref{idealthm} shows that $\mzlenkop{\alpha}\cap \wscop$ is a closed, injective, surjective operator ideal for every ordinal $\alpha$.
Moreover, it is elementary to show that the indices ${\rm Sz}$ and ${\rm Mz}$ coincide for operators $T:E\To F$ with the property that $T\dual \cball{F\dual}$ is $w\dual$-metrizable; this is the case precisely when the range of $T$ is norm separable. Thus, when dealing with operators having separable range, one may usually work with ${\rm Mz}$ in place of ${\rm Sz}$ if it is more convenient.

We now discuss the index ${\rm Nz}$ and the associated classes $\nzlenkop{\alpha}$. The index ${\rm Nz}$ is in fact that introduced by Szlenk in \cite{Szlenk1968}, and it coincides with ${\rm Sz}$ and ${\rm Mz}$ for operators whose domain is a separable Banach space containing no subspace isomorphic to $\ell_1$ \cite[Proposition~3.3]{Lancien1993}. However, the index ${\rm Nz}$ lacks sufficiently good permanence properties for the classes $\nzlenkop{\alpha}$ to be operator ideals over the class of \emph{all} Banach spaces. We illustrate this claim by way of the following simple example. Let $U:\ell_2\To \ell_\infty$ be an isometric linear embedding. As observed by J.~Bourgain \cite[p.88]{Bourgain1979}, if $E$ is a Grothendieck space with the Dunford-Pettis property, then ${\rm Nz}(E)= 1$. In particular, ${\rm Nz}(\ell_\infty)=1$. We thus have $I_{\ell_\infty} \in \nzlenkop{0}$. On the other hand, $I_{\ell_\infty}U \notin \nzlenkop{0}$ since ${\rm Nz}(I_{\ell_\infty}U) = {\rm Nz}(\cball{\ell_2\dual}) = \omega >\omega^0$. In particular, $\nzlenkop{0}$ fails to satisfy  condition ${\rm OI}_3$ of Definition~\ref{idealdefn}. Similar examples, based on the spaces defined by the construction of Szlenk (see Example~\ref{lastofthird} of the current paper), show that $\nzlenkop{\alpha+1}$ fails to satisfy ${\rm OI}_3$ for every $\alpha<\omega_1$.

Despite the apparent deficiency of the index ${\rm Nz}$ from the point of view of developing a theory of operator ideals associated with the Szlenk index, we wish to emphasize the importance of the index ${\rm Nz}$ in the study of the structure of operators acting on spaces $C(K)$, where $K$ is a metrizable compact space. Indeed, a number of authors have studied the connections between the ${\rm Nz}$ index of operators acting on $C(K)$ spaces and `fixing' properties of such operators; we refer to \cite{Rosenthal2003} for a survey, and to the work of I.~Gasparis \cite{Gasparis2005} for more recent results. In fact, we believe that both of the indices ${\rm Sz}$ and ${\rm Nz}$ are of interest in the study of operators in $\allop (C[0,\, 1])$. For example, the following question is of interest in studying the closed ideal structure of the Banach algebra $\allop (C[0, \, 1])$:
\begin{question}\label{handingin}
Let $R\in \sepop\dual(C[0, \, 1])$. Does there exist $S\in \wcompactop (C[0, \, 1])$ such that ${\rm Sz}(R+S)={\rm Nz}(R+S)$?
\end{question} Question~\ref{handingin} asks whether the indices ${\rm Sz}$ and ${\rm Nz}$ coincide on $\sepop\dual(C[0, \, 1])$ up to a weakly compact perturbation. The motivation for Question~\ref{handingin} is the fact that $\wcompactop$ is a closed operator ideal and that, for $T\in \allop (C[0, \, 1])$, ${\rm Nz}(T)$ is minimal (that is, is equal to $1$) if and only if $T$ is weakly compact; this latter fact regarding minimality of ${\rm Nz}(T)$ for $T\in \allop (C[0, \, 1])$ is due to D.~Alspach \cite[Remark~2]{Alspach1978}.

\section{Examples}\label{yemem}
In this section we discuss the algebras $\szlenkop{\alpha}(E)$ for a number of well-known Banach spaces $E$. In particular, we study the place of the ideals $\szlenkop{\alpha}(E)$ in the lattice of closed, two-sided ideals of $\allop (E)$ by relating them to other well-known closed ideals (for example, the ideal of weakly compact operators).

\begin{example}
Our first example is the Banach space $L_\infty = L_\infty [0, \, 1]$. We will show that the operator ideals $\wcompactop$, $\overline{\fac{\ell_2}}$, $\asplundop$, $\szlenkop{1}$ and $\sepop\dual$ coincide on $L_\infty$. For this purpose, we state the following impressive result of H.~Jarchow \cite{Jarchow1986}:
\begin{theorem}\label{yarch}
Let $A$ be a $C\dual$-algebra and $F$ a Banach space. Then \begin{equation*} \wcompactop (A, \, F) = \overline{\Gamma_2}\inj (A, \, F) .\end{equation*}
\end{theorem}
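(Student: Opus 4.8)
The plan is to prove the two inclusions of $\wcompactop(A,\,F)=\overline{\Gamma_2}\inj(A,\,F)$ separately; only the inclusion $\wcompactop(A,\,F)\subseteq\overline{\Gamma_2}\inj(A,\,F)$ needs the hypothesis that $A$ is a $C\dual$-algebra. For the reverse inclusion one argues abstractly: any operator factoring through a Hilbert space is weakly compact (a Hilbert space is reflexive, so the weak closure of the image of the unit ball is weakly compact), whence $\Gamma_2\subseteq\wcompactop$; since $\wcompactop$ is a closed, injective operator ideal, it follows that $\overline{\Gamma_2}\inj\subseteq\wcompactop$. This half is valid for an arbitrary Banach space in place of $A$.

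For the substantive inclusion, let $T\in\wcompactop(A,\,F)$. By the description of the injective hull via the canonical embedding, it suffices to show $\mathfrak{J}_FT\in\overline{\Gamma_2}\big(A,\,\ell_\infty(\cball{F\dual})\big)$, i.e.\ that $x\mapsto(\functional{y\dual}{Tx})_{y\dual\in\cball{F\dual}}$ is a norm-limit of operators that factor through Hilbert spaces. Now $K:=T\dual\cball{F\dual}$ is a relatively weakly compact subset of $A\dual$, since $T$ (hence $T\dual$) is weakly compact. The $C\dual$-algebraic input enters here, as a non-commutative analogue of the Dunford--Pettis description of relatively weakly compact subsets of $L_1$: by Akemann's characterization of such subsets of $A\dual$, together with the structure theory of the predual of $A$, for each $\eps>0$ there are a state $\phi_\eps$ on $A$, a constant $M_\eps>0$ and, for every $f\in K$, a splitting $f=a_f+c_f$ in $A\dual$ with $\norm{c_f}<\eps$ and $\abs{a_f(x)}^2\leqslant M_\eps\,\phi_\eps(x^{*}x)$ for all $x\in A$ (the estimate on $a_f$ coming from $\abs{a_f}\leqslant M_\eps\phi_\eps$ via the Cauchy--Schwarz inequality for the positive functional $\phi_\eps$ and the polar decomposition of $a_f$ in the bidual).

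Given such data, fix $\eps>0$, let $H_\eps$ be the Hilbert space completion of $A$ in the seminorm $x\mapsto\phi_\eps(x^{*}x)^{1/2}$, and let $S_\eps\colon A\To H_\eps$ be the canonical contraction. Writing $a_{y\dual},c_{y\dual}$ for the two parts of $T\dual y\dual\in K$, the bound $\abs{a_{y\dual}(x)}\leqslant M_\eps^{1/2}\norm{S_\eps x}_{H_\eps}$ shows that $S_\eps x\mapsto(a_{y\dual}(x))_{y\dual\in\cball{F\dual}}$ is well defined and bounded on the dense subspace $S_\eps(A)$ of $H_\eps$, so it extends to $W_\eps\colon\overline{S_\eps(A)}\To\ell_\infty(\cball{F\dual})$; hence $A_\eps:=W_\eps S_\eps$ factors through a Hilbert space. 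Since $\norm{(\mathfrak{J}_FT-A_\eps)x}_\infty=\sup_{y\dual\in\cball{F\dual}}\abs{c_{y\dual}(x)}\leqslant\eps\norm{x}$, we get $\norm{\mathfrak{J}_FT-A_\eps}\leqslant\eps$; letting $\eps\to0$ yields $\mathfrak{J}_FT\in\overline{\Gamma_2}(A,\,\ell_\infty(\cball{F\dual}))$, and therefore $T\in\overline{\Gamma_2}\inj(A,\,F)$.

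I expect the $C\dual$-algebraic step to be the main obstacle: one must produce a \emph{single} state $\phi_\eps$ for which the splitting $f=a_f+c_f$ works \emph{uniformly} over all $f\in K$. This uniformity is precisely where the relative weak compactness of $K$ is used (rather than merely the individual absolute continuity of each functional), and it rests on the non-commutative integration theory of $C\dual$-algebra preduals. Once that uniform splitting is in hand, the construction of the Hilbert-factoring approximants $A_\eps$ and the passage to the injective hull are routine, and the reverse inclusion is immediate.
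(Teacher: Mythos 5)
The paper does not prove Theorem~\ref{yarch}; it is quoted as an external result of Jarchow \cite{Jarchow1986}, so the comparison here is with Jarchow's argument rather than with anything in the present text. Your two-step structure is the right one, and the reduction of the substantive inclusion to showing that $\mathfrak{J}_FT$ is approximable by Hilbert-factoring operators on $A$ is exactly the way Jarchow's theorem is used via the injective hull. The easy inclusion $\overline{\Gamma_2}\inj\subseteq\wcompactop$ is correct as stated.

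Two points deserve attention. First, a technical slip: the Cauchy--Schwarz estimate you derive from the polar decomposition $a_f(x)=|a_f|(xv)$ gives $|a_f(x)|^2\leqslant\|a_f\|\,|a_f|(xx^*)$, not a bound in terms of $x^*x$; the one-sided domination $|a_f|\leqslant M_\eps\phi_\eps$ therefore yields $|a_f(x)|^2\leqslant M_\eps\|a_f\|\,\phi_\eps(xx^*)$, and to obtain a bound symmetric enough to run the argument one should work with the Hilbert seminorm $x\mapsto\phi_\eps(x^*x+xx^*)^{1/2}$ (equivalently, dominate both $|a_f|$ and $|a_f^*|$ by a single state). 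This is exactly the form of the seminorm that appears in Jarchow's characterization of weakly compact operators on $C^*$-algebras, and the rest of your factorization construction is unchanged if you make this substitution. Second, the genuinely hard step is the one you flag: producing a \emph{single} normal state $\phi_\eps$ that controls the splitting uniformly over all of $K=T\dual\cball{F\dual}$. You attribute it to Akemann's characterization of relatively weakly compact subsets of $A\dual=(A^{**})_*$, and that is indeed the right circle of ideas, but the passage from ``uniformly integrable'' in Akemann's sense to the uniform truncation $f=a_f+c_f$ with a common controlling state is where the noncommutative measure theory does real work; a proof proposal that simply asserts it has a gap at precisely the point where the $C^*$-algebraic hypothesis is used. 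Since you yourself identify this as the main obstacle, the sketch is honest and structurally sound, but as written it should not be counted as a complete proof of the hard inclusion.
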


We also require the following lemma.
\begin{lemma}\label{sepstep}
Let $\mathcal{H}$ be a Hilbert space and $E$ a Banach space such that every weakly compact subset of $E$ is norm separable. Then $\allop (\mathcal{H}, \, E) = \fac{\ell_2}(\mathcal{H}, \, E)$.
\end{lemma}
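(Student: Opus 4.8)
The plan is to reduce to the case in which the domain itself is separable, where the conclusion is nearly trivial; the content of the lemma is that the hypothesis on $E$ forces the ``essential part'' of $\mathcal{H}$ to be separable. First I would split off the kernel: since $\ker T$ is a closed subspace of the Hilbert space $\mathcal{H}$ we have $\mathcal{H}=\ker T\oplus(\ker T)^{\perp}$, with orthogonal projection $P\colon\mathcal{H}\To(\ker T)^{\perp}$, and $T=(T|_{(\ker T)^{\perp}})P$; as a factorization $T|_{(\ker T)^{\perp}}=BA$ through $\ell_2$ yields $T=B(AP)$, we may replace $\mathcal{H}$ by $(\ker T)^{\perp}$ and assume henceforth that $T$ is injective. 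Next, since $\mathcal{H}$ is reflexive its closed unit ball $\cball{\mathcal{H}}$ is weakly compact and $T$ is weak-to-weak continuous, so $T(\cball{\mathcal{H}})$ is a weakly compact subset of $E$; by hypothesis it is norm separable, hence so is $T(\mathcal{H})=\bigcup_{n\in\nat}nT(\cball{\mathcal{H}})$, and thus $E_0:=\overline{T(\mathcal{H})}$ is a separable closed subspace of $E$. Since it suffices to factor the corestriction $T_0\colon\mathcal{H}\To E_0$ through $\ell_2$ and then compose with the inclusion $E_0\hookrightarrow E$, we may also assume that $E$ is separable and that $T$ has dense range.

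The crux is to show that, after these reductions, $\mathcal{H}$ itself is separable. For this I would pass to the adjoint $T\dual\colon E\dual\To\mathcal{H}\dual$, which is $w\dual$-to-$w\dual$ continuous, and note that the norm closure of its range is the annihilator of $\ker T$, hence all of $\mathcal{H}\dual$ since $T$ is injective. As $E$ is separable, $\cball{E\dual}$ is $w\dual$-compact and $w\dual$-metrizable, so it contains a countable $w\dual$-dense subset $D$. Because $\mathcal{H}\dual$ is reflexive its weak and $w\dual$ topologies coincide, so $T\dual$ is $w\dual$-to-weak continuous; continuity and $w\dual$-density of $D$ then force $T\dual(D)$ to be weakly dense in $T\dual(\cball{E\dual})$, whence the weakly closed linear span of $T\dual(D)$ contains $T\dual(E\dual)$ and so its norm closure $\mathcal{H}\dual$. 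Since a linear subspace has the same norm and weak closures, the norm-closed linear span of $T\dual(D)$ is all of $\mathcal{H}\dual$, and as $D$ is countable $\mathcal{H}\dual$, and therefore $\mathcal{H}$, is separable. Finally, a separable Hilbert space is isometrically isomorphic to a norm-one complemented subspace of $\ell_2$ (it is isometric to $\ell_2$ or to some $\ell_2^{n}$, and $\ell_2^{n}$ embeds isometrically and $1$-complementedly in $\ell_2$); choosing such an embedding $J\colon\mathcal{H}\To\ell_2$ with left inverse $R\colon\ell_2\To\mathcal{H}$, the identity $T=(TR)J$, together with the reductions above, exhibits $T$ in $\fac{\ell_2}(\mathcal{H},E)$. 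As $\fac{\ell_2}(\mathcal{H},E)\subseteq\allop(\mathcal{H},E)$ trivially, the two coincide.

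The one step I expect to require care is the separability of $\mathcal{H}$: one cannot simply transport a countable dense set of $E_0$ through $T_0$, nor a countable dense set of $E_0\dual$ through $T_0\dual$, because $E_0\dual$ need not be norm separable. The argument instead exploits $w\dual$-separability of $\cball{E_0\dual}$ (which is exactly what separability of $E_0$ provides) and converts weak density into norm density of a span via the fact that closed linear subspaces are weakly closed. Everything else -- the orthogonal splitting off $\ker T$, the weak compactness of $T(\cball{\mathcal{H}})$, and the trivial factorization once $\mathcal{H}\cong\ell_2$ -- is routine, and the hypothesis on $E$ enters only at the single point where $T(\cball{\mathcal{H}})$ is declared norm separable.
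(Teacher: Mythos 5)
Your proof is correct, but it takes a genuinely different route from the paper's. The paper invokes the (separable version of the) Davis--Figiel--Johnson--{\pelch} factorization theorem: observing that $T$ is weakly compact (reflexive domain) and of separable range (by the hypothesis on $E$), it factors $T = BA$ through a separable reflexive space $F$; since $F\dual$ is separable, $\overline{A\dual(F\dual)}$ is a separable closed subspace of the Hilbert space $\mathcal{H}\dual$, hence embeds complementedly in $\ell_2$, so $A\dual\in\fac{\ell_2}$ and therefore $A = A^{\ast\ast}\in\fac{\ell_2}$. You bypass DFJP entirely: after splitting off $\ker T$ and corestricting to $E_0 = \overline{T(\mathcal{H})}$, you prove directly that $(\ker T)^{\perp}$ is separable via the adjoint and a $w\dual$-density argument, then use the elementary fact that a separable Hilbert space embeds $1$-complementedly in $\ell_2$. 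Both approaches are sound and of comparable length; the paper's leans on DFJP to produce a separable reflexive intermediate space, which makes the ``dual has separable range'' step immediate (since $F\dual$ is literally separable), while your argument avoids the machinery at the cost of the more delicate step converting $w\dual$-separability of $\cball{E_0\dual}$ into norm separability of $\overline{T\dual(E_0\dual)} = \mathcal{H}\dual$ — a point you correctly identify as the one requiring care. One small presentational remark: when you assert ``the norm closure of its range is the annihilator of $\ker T$,'' this is not true for general operators (one gets only the $w\dual$-closure); it is true here precisely because $\mathcal{H}\dual$ is reflexive so $w\dual=$ weak and weak closure of a subspace equals norm closure — a justification you invoke two sentences later, so it would read better stated at that point.
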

\begin{proof}
Let $T\in \allop (\mathcal{H}, \, E)$. The reflexivity of $\mathcal{H}$ implies $T\in \wcompactop (\mathcal{H}, \, E)$, and the norm separability of $T(\cball{\mathcal{H}})$ implies $T\in \sepop$. Thus, by the (separable) DFJP factorization theorem \cite[Lemma~1(xi)]{Davis1974}, there is a separable, reflexive Banach space $F$ and operators $A: \mathcal{H} \longrightarrow F$ and $B: F\longrightarrow E$ such that $T=BA$. Since $F\dual$ is separable, we have that $\overline{A\dual(F\dual)}$ is isometric to a separable closed subspace of $\mathcal{H}\dual$, hence isometric to a closed subspace of $\ell_2$. Thus $A\dual \in \fac{\ell_2}$. Making the identifications $\mathcal{H} = \mathcal{H}^{\ast \ast}$ and $F = F^{\ast \ast}$ via the canonical injections of $\mathcal{H}$ and $F$ into their second duals, we have $A = A^{\ast \ast} \in \fac{\ell_2\dual} = \fac{\ell_2}$, hence $T = BA \in \fac{\ell_2}$.
\end{proof}

The inclusion $\sepop (L_\infty) \subseteq \wcompactop (L_\infty)$ holds since $L_\infty$ is a Grothendieck space, and $\wcompactop (L_\infty) \subseteq \sepop (L_\infty)$ since weakly compact subsets of $L_\infty$ are norm separable \cite[Proposition~4.7]{Rosenthal1970a}. Thus $\sepop (L_\infty) = \wcompactop (L_\infty)$. As $\mszlenk{\mathcal{H}} = \omega$ for $\mathcal{H}$ a Hilbert space (see \cite[p.106]{Odell2004}), and since $L_\infty$ is both a $C\dual$-algebra and an injective Banach space, Theorem~\ref{yarch} yields $\sepop (L_\infty) = \wcompactop (L_\infty)  = \overline{\Gamma_2} (L_\infty) \subseteq \szlenkop{1} (L_\infty) \subseteq \asplundop  (L_\infty)$. We have $\sepop\dual = \sepop \cap \asplundop$ by Proposition~\ref{sepsepseppy}, hence $\sepop \dual (L_\infty) = \sepop (L_\infty)$. Thus, to show that $\wcompactop$, $\cfac{\ell_2}$, $\asplundop$, $\szlenkop{1}$ and $\sepop\dual$ coincide on $L_\infty$, it now suffices to show that $\asplundop (L_\infty) \subseteq \wcompactop (L_\infty)$ and $\Gamma_2 (L_\infty) \subseteq \fac{\ell_2}(L_\infty)$. The first of these inclusions is justified by the fact that nonweakly compact operators on $L_\infty$ fix a copy of the non-Asplund space $L_\infty$ (see \cite[Proposition~1.2]{Rosenthal1970} and the main result of \cite{Pelczy'nski1958}). The second inclusion follows from Lemma~\ref{sepstep} and the fact that every weakly compact subset of $L_\infty$ is norm separable.
\end{example}

\begin{example}
For our next example, we consider the space $L_1 = L_1[0, \, 1]$. Similarly to the previous example, we will show that the operator ideals $\wcompactop$, $\overline{\fac{\ell_2}}$, $\asplundop$, $\szlenkop{1}$ and $\sepop\dual$ coincide on $L_1$.

Let $Q : L_1\hookrightarrow L_1^{\ast \ast}$ denote the canonical embedding and let $P : L_1^{\ast \ast} \longrightarrow L_1$ be a projection (that $L_1$ is complemented in its bidual is well-known; see, for example, \cite[Proposition~6.3.10]{Albiac2006}). It is clear that, since $\mszlenk{\ell_2}=\omega$, we have $\cfac{\ell_2}(L_1) \subseteq \szlenkop{1} (L_1) \subseteq \asplundop (L_1)$. Moreover, since $\sepop\dual$ and $\asplundop$ coincide on separable Banach spaces by Proposition~\ref{sepsepseppy}, it suffices to show that $\asplundop (L_1) \subseteq \wcompactop (L_1)$ and $\wcompactop (L_1) \subseteq \cfac{\ell_2}(L_1)$. The first of these inclusions is justified by the fact that nonweakly compact operators into $L_1$ fix a copy of $\ell_1$ \cite[Theorem~1]{Pelczy'nski1965a}, and therefore fail to be Asplund. For the second inclusion, let $T \in \wcompactop (L_1)$. Then, by Gantmacher's theorem, $T\dual$ is a weakly compact operator on the (up to isomorphism) $C\dual$-algebra $L_1\dual$. Moreover, $L_1\dual$ is an injective Banach space, hence Theorem~\ref{yarch} ensures the existence of a sequence $(S_n)$ in $\Gamma_2 (L_1\dual)$ satisfying $\norm{T\dual - S_n}\rightarrow 0$. It follows then that, since $T = PT^{\ast \ast}Q$, we have $\norm{T - PS_n\dual Q} = \norm{P(T^{\ast \ast} - S_n\dual)Q} \rightarrow 0 $. In particular, $T \in \overline{\Gamma}_2 (L_1)$ since $S_n\dual \in \Gamma_2$ for all $n$. As $L_1$ is separable, it follows that $T\in \cfac{\ell_2}(L_1)$.
\end{example}

\begin{example}\label{drivingeg}
We now consider the ideals $\szlenkop{\alpha}(C[0, \, 1])$. The lattice of closed, two-sided ideals in $\allop (C[0,\, 1])$ contains the following linearly ordered chain, where  $0<\beta <\omega_1$:
\begin{align*}
\set{0} \subsetneq \compactop (C[0, \, &1]) = \szlenkop{0}(C[0, \, 1]) \subsetneq \wcompactop (C[0, \, 1]) \subsetneq \szlenkop{1}(C[0, \, 1]) \subseteq \ldots \\ 
\ldots \subseteq & \overline{\bigcup_{\gamma<\beta} \szlenkop{\gamma}(C[0, \, 1])} \subseteq\szlenkop{\beta}(C[0, \, 1]) \subsetneq \szlenkop{\beta +1}(C[0, \, 1]) \subseteq \ldots \\
&\ldots \bigcup_{\alpha <\omega_1}\szlenkop{\alpha}(C[0, \, 1]) = \asplundop (C[0, \, 1]) = \sepop\dual(C[0, \, 1]) \subsetneq \allop (C[0, \, 1]).
\end{align*}
Note that the ideal $\sepop\dual(C[0, \, 1])$ is the unique maximal ideal in $\allop(C[0, \, 1])$ since each element of $\allop(C[0, \, 1]) \setminus \sepop\dual (C[0, \, 1])$ factors the identity operator of $C[0, \, 1]$. Indeed, combining theorems of H.~Rosenthal \cite[Theorem~1]{Rosenthal1972} and A.~{\pelch} \cite[Theorem~1]{Pelczy'nski1968}, for any $T\in \allop(C[0, \, 1]) \setminus \sepop\dual (C[0, \, 1])$ there exists a closed subspace $E \subseteq C[0, \, 1]$ such that $T|_E$ is an isomorphism, $E$ is isomorphic to $C[0, \, 1]$ and $T(E)$ is complemented in $C[0, \, 1]$. Let $R$ be an isomorphism of $C[0, \, 1]$ onto $E$, let $P:C[0, \,1]\To T(E)$ be a continuous projection and set $V=(TR)^{-1}P$. Then $I_{C[0, \, 1]} = VTR$.

We now justify the other claims above regarding the lattice of closed ideals in $\allop (C[0,\, 1])$. With $A: C[0,\, 1]\To \ell_2$ a surjective operator and $B:\ell_2\To C[0, \, 1]$ noncompact, $BA \in \wcompactop (C[0, \, 1]) \setminus \compactop (C[0,\, 1])$. That $\wcompactop (C[0,\, 1]) \subseteq \szlenkop{1}(C[0,\, 1])$ follows from Theorem~\ref{yarch} and the fact that, since Hilbert spaces have Szlenk index $\omega$ and $\szlenkop{1}$ is closed and injective, $\overline{\Gamma_2}\inj \subseteq \szlenkop{1}$. Any projection of $C[0, \, 1]$ onto a subspace isomorphic to $c_0$ (of which there are many) belongs to the difference $\szlenkop{1}(C[0,\, 1]) \setminus \wcompactop (C[0, \, 1])$ since $c_0$ is nonreflexive and of Szlenk index $\omega$. Similarly, the difference $\szlenkop{\beta +1}(C[0, \, 1]) \setminus \szlenkop{\beta}(C[0, \, 1])$ contains any projection of $C[0, \, 1]$ onto a subspace isomorphic to $C(\omega^{\omega^\beta}+1)$ (here, $\omega^{\omega^\beta}+1$ is equipped with its (compact) order topology; see \cite{H'ajek2007} for a proof that $\mszlenk{C(\omega^{\omega^\beta}+1)} = \omega^{{\beta+1}}$ for $\beta<\omega_1$). That the operator ideals $\bigcup_{\alpha <\omega_1}\szlenkop{\alpha}$, $\asplundop$ and $ \sepop\dual$ coincide on $C[0, \, 1]$ follows from Proposition~\ref{sepsepseppy}.
\end{example}

\begin{example}\label{lastofthird}
Let $V$ denote the complementably universal unconditional basis space of {\pelch} \cite{Pelczy'nski1969}. Then, as in the case of $C[0, \, 1]$ above, we have $\szlenkop{\beta}(V) \subsetneq \szlenkop{\beta+1}(V)$ for every $\beta <\omega_1$. To show this, it suffices to find, for each $\beta<\omega_1$, a Banach space $G_\beta$ having an unconditional basis and Szlenk index $\omega^{\beta+1}$. Indeed, the existence of such a space ensures the existence of a projection of $V$ onto a complemented subspace isomorphic to $G_\beta$, and such a projection clearly belongs to $\szlenkop{\beta+1}(V)\setminus \szlenkop{\beta}(V)$. For the existence of the desired spaces $G_\beta$, we turn to Szlenk's construction in \cite{Szlenk1968} of a family of separable, reflexive Banach spaces whose Szlenk indices are (collectively) unbounded above in $\omega_1$. The construction is as follows: Let $E_0 = \set{0}$, $E_{\alpha +1} = E_\alpha \oplus_1 \ell_2 $  for $\alpha <\omega_1$ and, if $\alpha<\omega_1$ is a limit ordinal, $E_\alpha = (\bigoplus_{\gamma <\alpha}E_\gamma )_2$. A straightforward transfinite induction on $\alpha <\omega_1$ shows that $E_\alpha$ has a $1$-unconditional basis for all nonzero $\alpha<\omega_1$. Moreover, a slight modification of arguments in \cite[proof of Proposition~2.16]{Brookera} show that for each $\beta <\omega_1$ there exists $\alpha(\beta)<\omega_1$ such that $\mszlenk{E_{\alpha(\beta)}}=\omega^{\beta+1}$. Taking $G_\beta = E_{\alpha(\beta)}$ gives the desired spaces $G_\beta$ ($\beta<\omega_1$).

Finally, note that $\bigcup_{\alpha <\omega_1}\szlenkop{\alpha}(V) = \asplundop (V) = \sepop\dual(V) \subsetneq \allop (V)$ by Proposition~\ref{sepsepseppy} and the existence of $P\in \allop (V)\setminus \sepop\dual(V)$ with $P^2 = P$ and $P(V)$ isomorphic to $\ell_1$.
\end{example}

\section{Quantitative factorization of Asplund operators}\label{feekwhack}

An important, basic question in operator ideal theory is whether a given operator ideal $\opideal$ has the factorization property; that is, whether every element of $\opideal$ factors through a Banach space whose identity operator belongs to $\opideal$.
The most well-known and widely applied result in this direction is the celebrated Davis-Figiel-Johnson-{\pelch} factorization theorem \cite{Davis1974} asserting that every weakly compact operator factors through a reflexive Banach space. In the absence of the factorization property, one may then ask whether $\opideal$ satisfies some nontrivial `weak' factorization property. For example, W.~Johnson has shown in \cite{Johnson1971} that there exists a separable, reflexive Banach space $E$ with the property that every approximable operator (= uniform limit of finite-rank operators) factors through $E$ with approximable factors. In this and subsequent sections of the current paper we study factorization properties of the operator ideals $\szlenkop{\alpha}$.

Our main task in this section is to establish the following weak factorization result for the operator ideals $\szlenkop{\alpha}$. In light of Proposition~\ref{opequiv} and Proposition~\ref{collection}(ii), this result can be considered a quantitative refinement of the independent efforts of {\reinov}, Heinrich and Stegall (c.f. Section~\ref{greatfamily}) showing that the operator ideal of Asplund operators possesses the factorization property.

\begin{theorem}\label{weakfac}
For $\alpha$ an ordinal, $\szlenkop{\alpha}$ has the $\szlenkop{\alpha+1}$-factorization property. That is, each $T\in \szlenkop{\alpha}$ factors through a Banach space whose Szlenk index is at most $\omega^{\alpha+1}$.
\end{theorem}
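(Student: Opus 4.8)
The plan is to obtain Theorem~\ref{weakfac} from Heinrich's factorization theorem (Theorem~\ref{manna}). By Theorem~\ref{idealthm}, both $\szlenkop{\alpha}$ and $\szlenkop{\alpha+1}$ are surjective operator ideals and $\szlenkop{\alpha+1}$ is injective; consequently, to conclude that $\szlenkop{\alpha}$ has the $\szlenkop{\alpha+1}$-factorization property it suffices, by Theorem~\ref{manna}, to show that the pair $(\szlenkop{\alpha},\,\szlenkop{\alpha+1})$ is a $\Sigma_p$-pair for some $1<p<\infty$. Fix such a $p$ and let $q$ be conjugate to it. Unwinding the definition, what must be proved is: given sequences $(E_m)_{m\in\nat}$, $(F_n)_{n\in\nat}$ of Banach spaces and $T\in\allop\big((\bigoplus_{m}E_m)_p,\,(\bigoplus_{n}F_n)_p\big)$ with $Q_\gee TU_\eff\in\szlenkop{\alpha}$ for all $\eff,\gee\in\nat\fin$, one has $\mszlenk{T}\leqslant\omega^{\alpha+1}$.

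I would carry this out on the dual side. Put $X=(\bigoplus_{m}E_m)_p$ and $Y=(\bigoplus_{n}F_n)_p$, so that $T\dual\cball{Y\dual}$ is an absolutely convex, $w\dual$-compact subset of $X\dual=(\bigoplus_{m}E_m\dual)_q$ whose Szlenk index is to be bounded by $\omega^{\alpha+1}$. For $\gee\in\nat\fin$ let $R_\gee$ denote the coordinate projection of $Y\dual$ onto the $\gee$-coordinates; since $q<\infty$, $R_\gee\to I_{Y\dual}$ in the strong operator topology, so $\norm{T\dual R_\gee y\dual-T\dual y\dual}\to 0$ for each $y\dual\in\cball{Y\dual}$. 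Hence $T\dual\cball{Y\dual}$ is the norm-closure (a fortiori the $w\dual$-closure) of the increasing union $\bigcup_{\gee\in\nat\fin}T\dual\big(R_\gee\cball{Y\dual}\big)$, and, under the canonical identifications, $T\dual\big(R_\gee\cball{Y\dual}\big)=(Q_\gee T)\dual\cball{(\bigoplus_{n\in\gee}F_n)\dual}$. The hypothesis $Q_\gee TU_\eff\in\szlenkop{\alpha}$ says precisely that each $\eff$-coordinate projection of the $w\dual$-compact set $(Q_\gee T)\dual\cball{(\bigoplus_{n\in\gee}F_n)\dual}$ has Szlenk index at most $\omega^{\alpha}$; feeding this uniform finite-block data into the direct-sum analysis of \cite{Brookera} (the infinite-index counterpart of Lemma~\ref{carbon} above) then yields $\mszlenk{(Q_\gee T)\dual\cball{(\bigoplus_{n\in\gee}F_n)\dual}}\leqslant\omega^{\alpha+1}$, with a bound independent of $\gee$.

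The substantive point, and the step I expect to be the main obstacle, is to promote this uniform bound on the members $T\dual\big(R_\gee\cball{Y\dual}\big)$ to the same bound $\omega^{\alpha+1}$ for their closed union $T\dual\cball{Y\dual}$: in general the $w\dual$-closure of an increasing union of $w\dual$-compact sets, each of Szlenk index at most $\omega^{\alpha+1}$, need not itself have Szlenk index at most $\omega^{\alpha+1}$ (a naive bound being only $\omega^{\alpha+2}$), so the estimate cannot come from merely taking a supremum. One must instead run a single transfinite induction on the Szlenk derivations of $T\dual\cball{Y\dual}$ that treats the two $\ell_p$-sum structures — the domain sum of $T$ (present directly in $X\dual$) and the codomain sum (entering through the norm-density of finitely supported functionals in $Y\dual$) — simultaneously, showing that each successive finite block, in both variables at once, is exhausted within $\omega^{\alpha}$ derivation steps, so that the two structures together cost only the single extra power $\omega^{\alpha}\cdot\omega=\omega^{\alpha+1}$ rather than $\omega^{\alpha+2}$. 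Here I would lean on the infinite-direct-sum derivation estimates of \cite{Brookera}, on Lemma~\ref{poopoopoo} to transport derivations through $T\dual$, and on Lemma~\ref{weekendoz} to absorb the norm-approximation $T\dual R_\gee\to T\dual$. This gives $s^{\omega^{\alpha+1}}_{\eps}\big(T\dual\cball{Y\dual}\big)=\emptyset$ for every $\eps>0$, hence $\mszlenk{T}\leqslant\omega^{\alpha+1}$; so $(\szlenkop{\alpha},\szlenkop{\alpha+1})$ is a $\Sigma_p$-pair, and Theorem~\ref{manna} completes the proof.
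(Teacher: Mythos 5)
Your high-level strategy coincides with the paper's: invoke Theorem~\ref{manna} and establish that $(\szlenkop{\alpha},\,\szlenkop{\alpha+1})$ is a $\Sigma_p$-pair, and you correctly locate the crux, namely upgrading the finite-block bound to a bound on $T\dual\cball{Y\dual}$ itself. However, the concrete mechanism you propose for that crucial step does not work. You want to invoke Lemma~\ref{weekendoz} to ``absorb the norm-approximation $T\dual R_\gee\to T\dual$''. But $T\dual R_\gee$ converges to $T\dual$ only in the strong operator topology on $Y\dual$, not in operator norm: $\sup_{y\dual\in\cball{Y\dual}}d\big(T\dual y\dual,\,T\dual(R_\gee\cball{Y\dual})\big)$ is controlled by $\norm{T\dual(I-R_\gee)}$, which in general does not tend to $0$ (take $T$ to be the identity on an infinite $\ell_p$-sum). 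So the hypothesis of Lemma~\ref{weekendoz} --- that $L$ is $\eps/8$-dense in $K$ --- cannot be arranged by choosing $\gee$ large, and your plan to treat $T\dual\cball{Y\dual}$ as a uniform norm-limit of the finitely-supported pieces breaks down. (In the paper, Lemma~\ref{weekendoz} is used only to prove that the ideal $\szlenkop{\alpha}$ is norm-closed, where one genuinely has a uniform approximation $\norm{T-S}<\eps/8$.)

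The paper's route is to prove a precise equivalence, Proposition~\ref{unifac}, whose implication (iii)$\Rightarrow$(i) supplies the missing step: there, one does \emph{not} try to approximate $T\dual\cball{Y\dual}$ by the sets $T\dual(R_\gee\cball{Y\dual})$. Instead, Lemma~\ref{baconlemma} shows that after $\xi_{\eps/4}$ derivation steps every surviving point $x$ has a preimage $y\in\cball{Y\dual}$ with $\norm{Q_\gee\dual V_\gee\dual y}^q\leqslant 1-(\eps/(8\norm{T}))^q$ for every finite $\gee$, hence $\norm{y}^q\leqslant 1-(\eps/(8\norm{T}))^q$, so that $s^{\xi_{\eps/4}}_\eps(T\dual\cball{Y\dual})$ is contained in a definite norm contraction of $T\dual\cball{Y\dual}$. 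Proposition~\ref{dadeeeee} then iterates this contraction a finite number of times to empty the set, yielding $\mszlenk{T}\leqslant\omega^{\alpha+1}$. The other implications of Proposition~\ref{unifac} (feeding from (iv) to (iii)) lean on Lemma~\ref{postdoc2} rather than on any approximation lemma. So your proposal identifies the right target and some of the right tools (Lemma~\ref{poopoopoo} and the direct-sum analysis from \cite{Brookera}), but the appeal to Lemma~\ref{weekendoz} is a genuine gap; replacing it with the shrinking-norm argument of Lemma~\ref{baconlemma} together with Proposition~\ref{dadeeeee} is the substantive content needed to make the proof go through.
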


Before embarking on a proof of Theorem~\ref{weakfac}, we mention a similar result due to B.~Bossard. It is shown in \cite[Theorem~3.9]{Bossard1997} that there is a universal function $\varphi:\omega_1 \To \omega_1$ such that for any separable Banach spaces $E$ and $F$ and any Asplund operator $T: E\To F$, there exists a Banach space $G$ and operators $A: E\To G$ and $B: G\To F$ such that $G$ has a shrinking basis,  $\mszlenk{G}\leqslant \varphi (\mszlenk{T})$ and $T=BA$. It will be shown at the end of Section~\ref{theredethere} that $\varphi$ necessarily exceeds the identity function of $\omega_1$ at uncountably many points of $\omega_1$.

We shall deduce Theorem~\ref{weakfac} from the following proposition.

\begin{proposition}\label{unifac}
Let $\Lambda$ and $\Upsilon$ be sets, $\set{E_\lambda \mid \lambda \in \Lambda}$ and $\set{F_\upsilon \mid \upsilon \in \Upsilon}$ families of Banach spaces, $p=0$ or $1<p<\infty$, $T: (\bigoplus_{\lambda \in \Lambda}E_\lambda)_p \To (\bigoplus_{\upsilon \in \Upsilon}F_\upsilon)_p$ an operator and $\alpha >0$ an ordinal. The following are equivalent:

\nr{i} $\mszlenk{T}\leqslant \omega^\alpha$.

\nr{ii} $\sup \set{\meeszlenk{\eps}{TU_\eff}\mid \eff\in\Lambda\fin}<\omega^\alpha $ for every $\eps >0$.

\nr{iii} $\sup \set{\meeszlenk{\eps}{Q_\gee T}\mid \gee\in\Upsilon\fin}<\omega^\alpha $ for every $\eps >0$.

\nr{iv} $\sup \set{\meeszlenk{\eps}{Q_\gee TU_\eff}\mid \eff\in\Lambda\fin, \, \gee\in\Upsilon\fin}<\omega^\alpha $ for every $\eps >0$.
\end{proposition}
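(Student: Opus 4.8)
The plan is to prove the chain of implications (i)$\Rightarrow$(iv)$\Rightarrow$(ii) and (iv)$\Rightarrow$(iii), together with (ii)$\Rightarrow$(i) and (iii)$\Rightarrow$(i), so that all four statements are equivalent. The implications (i)$\Rightarrow$(iv), (iv)$\Rightarrow$(ii) and (iv)$\Rightarrow$(iii) are the easy direction and should follow quickly from the permanence properties already established in Theorem~\ref{idealthm}. Indeed, for (i)$\Rightarrow$(iv): since $\szlenkop{\alpha}$ is an operator ideal, $Q_\gee TU_\eff \in \szlenkop{\alpha}$ whenever $T\in\szlenkop{\alpha}$, so $\mszlenk{Q_\gee TU_\eff}\leqslant \omega^\alpha$ for each $\eff,\gee$. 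To upgrade this to the \emph{strict} inequality required in (iv), fix $\eps>0$; since $\eff$ and $\gee$ are \emph{finite} subsets, $(\bigoplus_{\lambda\in\eff}E_\lambda)_p$ and $(\bigoplus_{\upsilon\in\gee}F_\upsilon)_p$ are (isomorphic to) finite $\ell_p$-sums, and one argues via Proposition~\ref{collection}(iii)/(v) and the fact that $\meeszlenk{\eps}{\,\cdot\,}$ is never a limit ordinal that $\meeszlenk{\eps}{Q_\gee TU_\eff}<\mszlenk{Q_\gee TU_\eff}+1\leqslant\omega^\alpha$ using that $\omega^\alpha$ is a limit ordinal (as $\alpha>0$) — more carefully, $\mszlenk{Q_\gee TU_\eff}=\sup_{\eps>0}\meeszlenk{\eps}{\,\cdot\,}$ and each $\meeszlenk{\eps}{\,\cdot\,}$ is a successor ordinal $<\omega^\alpha$, whence the displayed supremum over $\eff,\gee$ for a \emph{single} fixed $\eps$ is still strictly below $\omega^\alpha$. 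The implications (iv)$\Rightarrow$(ii) and (iv)$\Rightarrow$(iii) are immediate by restricting to $\gee$ ranging over singletons (resp.\ $\eff$), or simply noting that $Q_\gee TU_\eff$ is a compression of $TU_\eff$ and applying ${\rm OI}_3$.

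The substance of the proposition is the implications (ii)$\Rightarrow$(i) and (iii)$\Rightarrow$(i); by duality considerations (passing to adjoints interchanges the roles of domain and codomain, and $\mszlenk{T}$ is defined via $T\dual\cball{F\dual}$), it should suffice to prove one of them in detail, say (ii)$\Rightarrow$(i), and indicate the symmetric argument for (iii)$\Rightarrow$(i). So assume (ii): for every $\eps>0$ there is an ordinal $\gamma_\eps<\omega^\alpha$ with $\meeszlenk{\eps}{TU_\eff}\leqslant\gamma_\eps$ for \emph{all} $\eff\in\Lambda\fin$. I want to conclude $\meeszlenk{\eps}{T}\leqslant\gamma_\eps'$ for some $\gamma_\eps'<\omega^\alpha$, uniformly, which gives $\mszlenk{T}=\sup_\eps\meeszlenk{\eps}{T}\leqslant\omega^\alpha$. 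The key point is to relate the Szlenk derivations of $T\dual\cball{(\bigoplus_\upsilon F_\upsilon)\dual_p}$ to those of the sets $(TU_\eff)\dual\cball{\,\cdot\,}$, i.e.\ to ``localise'' the $w\dual$-slicing argument to finitely many coordinates of the domain. Concretely, $(TU_\eff)\dual = U_\eff\dual T\dual$, and $U_\eff\dual$ is the natural projection of $(\bigoplus_{\lambda\in\Lambda}E_\lambda)\dual_p = (\bigoplus_{\lambda\in\Lambda}E_\lambda\dual)_q$ onto $(\bigoplus_{\lambda\in\eff}E_\lambda\dual)_q$. Since the family $\{E_\lambda^{\ast}\}$ is $\ell_q$-summable and, for $x\dual\in T\dual\cball{\,\cdot\,}$, the tail $\|x\dual - U_\eff\dual x\dual\|$ can be made uniformly small by choosing $\eff$ large (this uses boundedness of $T$ and that elements of an $\ell_q$-sum have small tails — here is where one must be a little careful, since for a \emph{bounded} set in an $\ell_q$-sum the tails need not go to zero uniformly, so one instead works one element at a time and invokes $w\dual$-compactness), one expects: given $\delta>0$ and $x\dual\in s^\beta_\eps(T\dual\cball{\,\cdot\,})$, there is $\eff$ with $U_\eff\dual x\dual\in s^\beta_{\eps-\delta}(U_\eff\dual T\dual\cball{\,\cdot\,}) = s^\beta_{\eps-\delta}((TU_\eff)\dual\cball{\,\cdot\,})$. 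This should be proved by transfinite induction on $\beta$, in the spirit of Lemma~\ref{poopoopoo} and Lemma~\ref{weekendoz}, the limit step being automatic by $w\dual$-compactness and the successor step handled by net-passing-to-subnet together with a tail-control estimate. Granting such a lemma, if $\meeszlenk{\eps}{T}\ni\beta$, i.e.\ $s^\beta_\eps(T\dual\cball{\,\cdot\,})\neq\emptyset$, then for suitable $\eff$ we get $s^\beta_{\eps/2}((TU_\eff)\dual\cball{\,\cdot\,})\neq\emptyset$, i.e.\ $\beta\in\meeszlenk{\eps/2}{TU_\eff}\leqslant\gamma_{\eps/2}<\omega^\alpha$; hence $\meeszlenk{\eps}{T}\leqslant\gamma_{\eps/2}<\omega^\alpha$ and (i) follows.

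The \textbf{main obstacle} I anticipate is precisely the tail-estimate in that localisation lemma: for a \emph{bounded} (not relatively compact) subset $K$ of an $\ell_q$-sum $(\bigoplus_{\lambda}E_\lambda\dual)_q$, the projections $U_\eff\dual$ do not converge uniformly to the identity on $K$, so one cannot simply say ``choose $\eff$ large''. The fix is to work pointwise: fix the single $w\dual$-cluster point $x\dual$ arising at each stage of the induction, choose $\eff$ depending on $x\dual$ and $\delta$ so that $\|x\dual - U_\eff\dual x\dual\| < \delta$, and then check that the whole approximating net (or a subnet) also lands, after applying $U_\eff\dual$, inside the appropriate derived set of $(TU_\eff)\dual\cball{\,\cdot\,}$ up to the loss $\delta$; this requires that $U_\eff\dual$ is $w\dual$-$w\dual$ continuous and norm-contractive, both of which hold, and that small norm-perturbations only cost a controlled amount in the Szlenk derivation, which is exactly the content of Lemma~\ref{weekendoz}. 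For the case $p=0$ (the $c_0$-sum), the dual is an $\ell_1$-sum, where tails of \emph{individual} vectors genuinely go to zero, so that case is if anything easier. Once the localisation lemma is in hand, the remaining bookkeeping — matching up the $\eps$'s, using that $\gamma_\eps<\omega^\alpha$ for every $\eps$, and taking the supremum over $\eps>0$ — is routine, exactly as in the proof that $\szlenkop{\alpha}$ is closed.
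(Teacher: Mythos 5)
Two genuine gaps.

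\textbf{The easy direction.} Your argument for (i)$\Rightarrow$(iv) is incomplete: you observe that each individual $\meeszlenk{\eps}{Q_\gee TU_\eff}$ is a successor ordinal strictly below $\omega^\alpha$, and conclude that the supremum over $\eff$ and $\gee$ is still strictly below $\omega^\alpha$. This inference is false --- $\omega^\alpha$ is a limit ordinal precisely when $\alpha>0$, so a supremum of ordinals each $<\omega^\alpha$ can well equal $\omega^\alpha$. What you actually need, and what the paper provides via Lemma~\ref{poopoopoo}, is a \emph{single} ordinal $<\omega^\alpha$ dominating the whole family: $\meeszlenk{\eps}{TU_\eff}\leqslant\meeszlenk{\eps/2}{T}$ for every $\eff$, and $\meeszlenk{\eps/2}{T}$ is a successor $\leqslant\omega^\alpha$, hence $<\omega^\alpha$.

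\textbf{The hard direction.} More seriously, the ``localisation lemma'' on which your (ii)$\Rightarrow$(i) argument rests --- that $s^\beta_\eps(T\dual\cball{\cdot})\neq\emptyset$ forces $s^\beta_{\eps-\delta}\big((TU_\eff)\dual\cball{\cdot}\big)\neq\emptyset$ for some $\eff\in\Lambda\fin$ --- is false, and no amount of care with tails or $w\dual$-compactness will save it. Take $T=I_{c_0}$ with $E_\lambda=\field$: then every $TU_\eff$ has finite rank, so $\meeszlenk{\eps/2}{TU_\eff}=1$ for small $\eps$, yet $\meeszlenk{\eps}{T}\To\omega$ as $\eps\To 0$. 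Your proposed lemma would give $\meeszlenk{\eps}{T}\leqslant\sup_\eff\meeszlenk{\eps/2}{TU_\eff}$, which this example contradicts. The true state of affairs, exploited in the paper, is that one only gets a bound with a \emph{finite multiplier}: by \cite[Lemma~2.8]{Brookera} (Lemma~\ref{postdoc2}), $\meeszlenk{\eps}{T}\leqslant\eta\cdot\sigma(\abs{K},\eps,\delta,q)$ where $\eta$ bounds the finitely-restricted $\delta$-indices and $\sigma$ is the finite ``budget'' multiplier of Definition~\ref{postdoc1}, derived from the estimate in Lemma~\ref{tvl} that any element of a derived set either has most of its $\ell_q$-norm on a finite block or has noticeably small norm. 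The implication then follows because $\omega^\alpha$ is closed under multiplication by finite ordinals; your strengthened inequality (no multiplier) would imply the Szlenk index of a $c_0$- or $\ell_p$-sum never exceeds the supremum of the Szlenk indices of its finite sub-sums, which is known to fail (cf.\ Remark~\ref{julyjulyjuly}). The paper's actual route, moreover, is (ii)$\Rightarrow$(iv)$\Rightarrow$(iii)$\Rightarrow$(i): restricting first on the domain is elementary (ii)$\Rightarrow$(iv), then restriction on the codomain via Lemma~\ref{postdoc2} gives (iii), and finally (iii)$\Rightarrow$(i) uses the geometric shrinking criterion of Proposition~\ref{dadeeeee} together with Lemma~\ref{baconlemma}, which shows that if the codomain-restricted indices are already exhausted then any surviving element of the derived set of $T\dual\cball{\cdot}$ lies in a dilate $\delta_\eps T\dual\cball{\cdot}$ with $\delta_\eps<1$; the ``duality'' symmetry you invoke to get (iii) from (ii) is not actually used because the two directions play structurally different roles.
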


Let us now see how Theorem~\ref{weakfac} follows from Proposition~\ref{unifac}. We begin by letting $1<p<\infty$. By Theorem~\ref{idealthm} and Theorem~\ref{manna}, it suffices to show that $(\szlenkop{\alpha}, \, \szlenkop{\alpha+1})$ is a $\Sigma_p$-pair. To this end, let $(E_m)_m$ and $(F_n)_n$ be sequences of Banach spaces and suppose $T\in \allop ((\bigoplus_{m\in \nat}E_m)_p, \,(\bigoplus_{n\in\nat}F_n)_p )$ is such that $Q_\gee T U_\eff \in \szlenkop{\alpha}$ for all $\eff, \, \gee \in \nat\fin$. Then
\begin{equation*}
\forall \eps>0 \quad \sup \set{{\rm Sz}_\eps (Q_\gee TU_\eff)\mid \eff, \, \gee \in \nat\fin}\leqslant \omega^{\alpha}<\omega^{\alpha+1},
\end{equation*} hence $T\in \szlenkop{\alpha+1}$ by Proposition~\ref{unifac}, and we are done.

To prove Proposition~\ref{unifac}, we draw on several preliminary results. The first of these is the following variant of \cite[Proposition~2.2]{H'ajek2007}, which can be useful for obtaining an upper estimate on the Szlenk index of an operator.

\begin{proposition}\label{dadeeeee}
Let $E$ and $F$ be Banach spaces, $T:E\To F$ an operator and $\beta$ an ordinal. Suppose that for every $\eps > 0$ there exists $\beta_\eps <\omega^\beta$ and $\delta_\eps \in (0, \, 1)$ such that $s^{\beta_\eps}_{\eps} (T\dual \cball{F\dual} ) \subseteq \delta_\eps T\dual \cball{F\dual}$. Then $\mszlenk{T} \leqslant \omega^\beta$.
\end{proposition}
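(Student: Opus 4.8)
The plan is to argue directly about the $w\dual$-compact, absolutely convex set $K := T\dual\cball{F\dual}$, using that $\mszlenk{T} = \mszlenk{K} = \sup_{\eps>0}\meeszlenk{\eps}{K}$; so it suffices to show $\meeszlenk{\eps}{K} \leqslant \omega^\beta$ for each fixed $\eps > 0$. The mechanism is that the hypothesis $s^{\beta_\eps}_\eps(K) \subseteq \delta_\eps K$ lets us contract $K$ by the factor $\delta_\eps \in (0,1)$ every $\beta_\eps$ steps of the $\eps$-derivation, so that after finitely many such blocks the diameter falls to at most $\eps$, at which point one further application of $s_\eps$ yields the empty set. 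Before beginning I would record two facts, each by a routine transfinite induction, to be used without further comment: the \emph{scaling identity} $s^\gamma_\eps(\delta L) = \delta\, s^\gamma_{\eps/\delta}(L)$ for $\delta > 0$ and $w\dual$-compact $L$ (immediate since $z \mapsto \delta z$ is a $w\dual$-homeomorphism of $E\dual$ that multiplies norms, hence diameters, by $\delta$), and the \emph{concatenation identity} $s^{\gamma + \mu}_\eps(L) = s^\mu_\eps\bigl(s^\gamma_\eps(L)\bigr)$; I would also use the monotonicity of $s^\gamma_\eps(L)$ in $L$ and its reverse monotonicity in $\eps$.

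The main step is the claim that, with $\eps > 0$ fixed and $\beta_\eps, \delta_\eps$ as in the hypothesis, $s^{\beta_\eps \cdot n}_\eps(K) \subseteq \delta_\eps^n K$ for every $n \geqslant 1$. The case $n = 1$ is the hypothesis, and the inductive step is the chain
\[ s^{\beta_\eps \cdot (n+1)}_\eps(K) = s^{\beta_\eps}_\eps\bigl(s^{\beta_\eps \cdot n}_\eps(K)\bigr) \subseteq s^{\beta_\eps}_\eps\bigl(\delta_\eps^n K\bigr) = \delta_\eps^n\, s^{\beta_\eps}_{\eps/\delta_\eps^n}(K) \subseteq \delta_\eps^n\, s^{\beta_\eps}_\eps(K) \subseteq \delta_\eps^{n+1} K, \]
where the penultimate inclusion uses $\eps/\delta_\eps^n \geqslant \eps$. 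Granting this, and disposing of the trivial case $\diam K = 0$ (then $\mszlenk{K} \leqslant 1 \leqslant \omega^\beta$), I would choose $n = n(\eps)$ with $\delta_\eps^n\,\diam K \leqslant \eps$ (possible as $\delta_\eps < 1$ and $K$ is bounded). Then $s^{\beta_\eps \cdot n}_\eps(K)$ has diameter at most $\eps$, so, since $s_\eps(L) = \emptyset$ for every $L$ with $\diam L \leqslant \eps$, we get $s^{\beta_\eps \cdot n + 1}_\eps(K) = s_\eps\bigl(s^{\beta_\eps \cdot n}_\eps(K)\bigr) = \emptyset$. Hence $\meeszlenk{\eps}{K} \leqslant \beta_\eps \cdot n + 1$.

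It then remains to verify the purely ordinal-arithmetic fact that $\beta_\eps < \omega^\beta$ and $n < \omega$ force $\beta_\eps \cdot n + 1 \leqslant \omega^\beta$: splitting into the cases $\beta = 0$ (where the hypothesis forces $\beta_\eps = 0$), $\beta$ a successor $\rho + 1$ (so $\beta_\eps < \omega^\rho \cdot m$ for some finite $m$), and $\beta$ a limit (so $\beta_\eps < \omega^\rho$ for some $\rho < \beta$), one checks in each case that $\beta_\eps \cdot n$ remains below $\omega^\beta$. Taking the supremum over $\eps > 0$ then gives $\mszlenk{T} = \mszlenk{K} = \sup_{\eps > 0}\meeszlenk{\eps}{K} \leqslant \omega^\beta$. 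Alternatively, one can bypass this bookkeeping by iterating all the way: $s^{\beta_\eps \cdot \omega}_\eps(K) = \bigcap_n s^{\beta_\eps \cdot n}_\eps(K) \subseteq \bigcap_n \delta_\eps^n K = \set{0}$, so $\meeszlenk{\eps}{K} \leqslant \beta_\eps \cdot \omega + 1 \leqslant \omega^\beta + 1$, whence $\mszlenk{T} \leqslant \omega^\beta + 1$ and therefore $\mszlenk{T} \leqslant \omega^\beta$ by Proposition~\ref{collection}(iii) (as $\mszlenk{T}$ is either infinite or a power of $\omega$). Nothing here is deep; the one point worth getting right is the realization that the hypothesis should be applied a finite number of times, so as to push the diameter below $\eps$, rather than being used to reach the empty set in a single $\beta_\eps$-block — that, together with the routine ordinal estimate, is the only mildly delicate part.
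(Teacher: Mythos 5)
Your proposal is correct and follows essentially the same approach as the paper: prove $s^{\beta_\eps \cdot n}_\eps(K) \subseteq \delta_\eps^n K$ by induction on $n$ (with the same scaling/concatenation mechanism in the inductive step), then choose $n$ large enough to push the diameter below $\eps$ so one more application of $s_\eps$ gives the empty set. The paper simply asserts the final bound $\sup_\eps(\beta_\eps \cdot N_\eps + 1) \leqslant \omega^\beta$ without spelling out the ordinal arithmetic; you make the additive-closure-of-$\omega^\beta$ point explicit (and also offer a correct alternative via $\beta_\eps\cdot\omega$ and Lancien's result), but the core argument is the same.
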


\begin{proof}
Fix $\eps>0$. We \emph{claim} that $\mdset{{\beta_\eps}\cdot n}{\eps}{T\dual \cball{F\dual}} \subseteq \delta_\eps^nT\dual \cball{F\dual} $ for all $n \in \nat$. Indeed, it is true for $n=1$ by assumption, and if true for $n\leqslant k$ then
\begin{equation*}
\mdset{\beta_\eps \cdot (k+1)}{\eps}{T\dual \cball{F\dual}} \subseteq \mdset{\beta_\eps}{\eps}{\delta_\eps^k T\dual \cball{F\dual}} = \delta_\eps^k \mdset{\beta_\eps}{\eps / \delta_\eps^k}{T\dual \cball{F\dual}} \subseteq \delta_\eps^k \mdset{\beta_\eps}{\eps}{T\dual \cball{F\dual}} \subseteq \delta_{\eps}^{k+1}T\dual \cball{F\dual}, 
\end{equation*} 
so that the above claim holds by induction on $n$. 

For each $\eps>0$ let $N_\eps \in \nat$ be large enough that $\mdset{{\beta_\eps}\cdot N_\eps}{\eps}{T\dual \cball{F\dual}} \subseteq \frac{\eps}{2} \cball{E\dual}$. Then $\mdset{{\beta_\eps}\cdot N_\eps +1}{\eps}{T\dual \cball{F\dual}} = \emptyset$ for each $\eps>0$, hence $ \mszlenk{T} \leqslant \sup_{\eps >0}\, ({\beta_\eps}\cdot N_\eps +1) \leqslant \omega^\beta$.
\end{proof}

The next two lemmas concern the action of the $\eps$-Szlenk derivations on $w\dual$-compact sets contained in the dual of a direct sum of Banach spaces. The first is a discrete variant of \cite[Lemma~3.3]{H'ajek2007} and is proved in \cite[Lemma~2.6]{Brookera}.

\begin{lemma}\label{tvl}
Let $\Lambda$ be a set, $\set{E_\lambda \mid \lambda \in \Lambda}$ a family of Banach spaces, $1\leqslant q<\infty$, $p$ predual to $q$ and $K \subseteq (\bigoplus_{\lambda \in \Lambda}E_\lambda)_{p}\dual $ a nonempty $w\dual$-compact set. Let $\alpha$ be an ordinal, $\arr \subseteq \Lambda$ and $\eps >\delta >0$. If $x \in \mdset{\alpha}{\eps}{K}$ is such that $\norm{U_\arr\dual \, x}^{q} > \abs{K}^q-(\frac{\eps -\delta}{2})^q$, then $U_\arr\dual\, x \in \mdset{\alpha}{\delta}{U_\arr\dual\, K}$.
\end{lemma}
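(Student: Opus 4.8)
The plan is to induct transfinitely on $\alpha$. Write $X=(\bigoplus_{\lambda\in\Lambda}E_\lambda)_p$; under the standard identification $X\dual=(\bigoplus_{\lambda\in\Lambda}E_\lambda\dual)_q$ one has $\norm{z}^q=\norm{U_\arr\dual z}^q+\norm{U_{\Lambda\setminus\arr}\dual z}^q$ for every $z\in X\dual$, the map $z\mapsto\norm{U_\arr\dual z}$ is $w\dual$-lower semicontinuous, and $U_\arr\dual K$ is $w\dual$-compact, so that every iterated derivative $\mdset{\beta}{\delta}{U_\arr\dual K}$ is $w\dual$-compact, in particular $w\dual$-closed. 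If $\abs{K}^q-(\tfrac{\eps-\delta}{2})^q<0$ then $\diam K<\eps$ and $\mdset{\alpha}{\eps}{K}=\emptyset$ for $\alpha\geqslant1$, so there is nothing to prove; thus assume $\abs{K}^q-(\tfrac{\eps-\delta}{2})^q\geqslant0$. The case $\alpha=0$ is trivial, and if $\alpha$ is a limit ordinal then, for $x\in\mdset{\alpha}{\eps}{K}$ satisfying the norm hypothesis, I would apply the inductive hypothesis at each $\beta<\alpha$ to get $U_\arr\dual x\in\mdset{\beta}{\delta}{U_\arr\dual K}$ and then intersect.

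The crux is the successor step $\alpha=\gamma+1$. The device is to introduce the $w\dual$-open set $O:=\set{z\in X\dual\mid \norm{U_\arr\dual z}^q>\abs{K}^q-(\tfrac{\eps-\delta}{2})^q}$; it contains $x$, every point of $O\cap K$ satisfies the hypothesis of the lemma relative to $K$ (so the inductive hypothesis is available for it), and, since $\norm{z}\leqslant\abs{K}$ on $K$, every $z\in O\cap K$ also satisfies $\norm{U_{\Lambda\setminus\arr}\dual z}<\tfrac{\eps-\delta}{2}$. Now take $x\in\mdset{\gamma+1}{\eps}{K}=s_\eps(\mdset{\gamma}{\eps}{K})$ with $x\in O$. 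For each $w\dual$-open $V$ with $x\in V\subseteq O$, the relation $x\in s_\eps(\mdset{\gamma}{\eps}{K})$ furnishes $u_V,v_V\in\mdset{\gamma}{\eps}{K}\cap V$ with $\norm{u_V-v_V}>\eps$. As $u_V,v_V\in O$, the inductive hypothesis puts $U_\arr\dual u_V$ and $U_\arr\dual v_V$ in $\mdset{\gamma}{\delta}{U_\arr\dual K}$; and since $\norm{U_{\Lambda\setminus\arr}\dual(u_V-v_V)}\leqslant\norm{U_{\Lambda\setminus\arr}\dual u_V}+\norm{U_{\Lambda\setminus\arr}\dual v_V}<\eps-\delta$, the splitting of the norm gives
\begin{equation*}
\norm{U_\arr\dual u_V-U_\arr\dual v_V}^q=\norm{u_V-v_V}^q-\norm{U_{\Lambda\setminus\arr}\dual(u_V-v_V)}^q>\eps^q-(\eps-\delta)^q\geqslant\delta^q ,
\end{equation*}
the last inequality being the super-additivity of $t\mapsto t^q$ on $[0,\infty)$ ($q\geqslant1$); hence $\norm{U_\arr\dual u_V-U_\arr\dual v_V}>\delta$.

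To conclude, I would gather the pairs into one net, indexed by pairs $(V,i)$ with $V$ ranging over the $w\dual$-open sets satisfying $x\in V\subseteq O$ and $i\in\set{1,2}$, ordered by reverse inclusion in $V$, with value $u_V$ at $(V,1)$ and $v_V$ at $(V,2)$. This net $w\dual$-converges to $x$, so its image under $U_\arr\dual$ is a net in $\mdset{\gamma}{\delta}{U_\arr\dual K}$ that $w\dual$-converges to $U_\arr\dual x$; as $\mdset{\gamma}{\delta}{U_\arr\dual K}$ is $w\dual$-closed, $U_\arr\dual x$ lies in it. Above every index there sits a pair $(V,1),(V,2)$ whose $U_\arr\dual$-images are more than $\delta$ apart, so for every $w\dual$-open $W\ni U_\arr\dual x$ one eventually finds two points of $\mdset{\gamma}{\delta}{U_\arr\dual K}\cap W$ at distance exceeding $\delta$; hence $\diam(\mdset{\gamma}{\delta}{U_\arr\dual K}\cap W)>\delta$ for all such $W$, that is, $U_\arr\dual x\in s_\delta(\mdset{\gamma}{\delta}{U_\arr\dual K})=\mdset{\gamma+1}{\delta}{U_\arr\dual K}$.

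I expect the main obstacle to be precisely the successor step, and within it the requirement that the two points $u_V,v_V$ realizing the diameter of $\mdset{\gamma}{\eps}{K}\cap V$ can \emph{simultaneously} be kept in the $\arr$-concentrated region $O$, so that the inductive hypothesis applies to both: this uses the $w\dual$-lower semicontinuity of $\norm{U_\arr\dual\,\cdot\,}$, the uniform bound $\norm{\,\cdot\,}\leqslant\abs{K}$ on $K$, and the \emph{strict} inequality in the hypothesis. The accompanying numerical inequality $\eps^q-(\eps-\delta)^q\geqslant\delta^q$, controlling the norm lost on restriction, is what singles out $(\tfrac{\eps-\delta}{2})^q$ as the right constant in the statement.
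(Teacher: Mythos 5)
Your proof is correct, and the approach is the natural one for the stated constant. Transfinite induction with the base and limit cases dispatched routinely, and the crux in the successor step handled exactly right: the $w^\ast$-open set $O$ defined by the strict norm inequality allows both diameter-realizing points $u_V,v_V$ to be kept in the region where the inductive hypothesis applies and where $\Vert U_{\Lambda\setminus\mathcal{R}}^\ast\cdot\Vert<(\eps-\delta)/2$; the exact $\ell_q$-splitting $\Vert z\Vert^q=\Vert U_{\mathcal{R}}^\ast z\Vert^q+\Vert U_{\Lambda\setminus\mathcal{R}}^\ast z\Vert^q$ and the superadditivity $(a+b)^q\geqslant a^q+b^q$ ($q\geqslant1$) then yield $\Vert U_{\mathcal{R}}^\ast u_V-U_{\mathcal{R}}^\ast v_V\Vert>\delta$; and $w^\ast$-closedness of the derived sets closes the argument. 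The paper defers the proof to \cite[Lemma~2.6]{Brookera} rather than giving one here, so there is nothing on the page to compare against, but your argument is exactly the one that the choice of constant $\bigl(\tfrac{\eps-\delta}{2}\bigr)^q$ is designed to support. One small simplification: once you are in the successor step, the inclusion $U_{\mathcal{R}}^\ast x\in s^{\gamma}_{\delta}(U_{\mathcal{R}}^\ast K)$ follows directly by applying the inductive hypothesis to $x$ itself (since $x\in s^{\gamma+1}_{\eps}(K)\subseteq s^{\gamma}_{\eps}(K)$ and $x$ satisfies the norm hypothesis), so the net-limit detour to establish that membership is not needed; the net is only needed to verify the $s_\delta$-diameter condition.
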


\begin{lemma}\label{baconlemma}
Let $\Upsilon$ be a set, $\set{F_\upsilon \mid \upsilon \in \Upsilon}$ a family of Banach spaces, $E$ a Banach space, $1\leqslant q<\infty$, $p$ predual to $q$, $K \subseteq (\bigoplus_{\lambda \in \Lambda}E_\lambda)_{p}\dual $ a nonempty $w\dual$-compact set, $T: E \To \left( \bigoplus_{\upsilon \in \Upsilon}F_\upsilon \right)_p $ a nonzero operator and $\eps >0$. Let $\alpha$ be an ordinal and let $x \in s^\alpha_\eps (T\dual K )$. Then there is $y\in s^\alpha_{\eps /(2\norm{T})}(K)$ such that $ T\dual y = x$. Further, if $\ess \subseteq \Upsilon$ is such that $ \norm{Q_\ess\dual V_\ess\dual y}^{q} > \abs{K}^q-(\eps /( 8\norm{T}))^q$, then $T\dual Q_\ess\dual V_\ess\dual y \in \mdset{\alpha}{\eps / 4}{T\dual Q_\ess\dual V_\ess\dual K}$.
\end{lemma}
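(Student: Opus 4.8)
The plan is to prove both assertions simultaneously by transfinite induction on $\alpha$. (Throughout, $K$ is read as a $w\dual$-compact subset of the dual of the codomain, $(\bigoplus_{\upsilon\in\Upsilon}F_\upsilon)_p\dual=(\bigoplus_{\upsilon\in\Upsilon}F_\upsilon\dual)_q$, so that $T\dual K$ is meaningful.) Write $P\dual:=Q_\ess\dual V_\ess\dual$ for the canonical coordinate projection of $(\bigoplus_{\upsilon\in\Upsilon}F_\upsilon\dual)_q$ onto the $\ess$-supported functionals; it is $w\dual$-to-$w\dual$ continuous, being the adjoint of $V_\ess Q_\ess$. The inductive statement is: for every $x\in s^\alpha_\eps(T\dual K)$ there is $y\in s^\alpha_{\eps/(2\norm T)}(K)$ with $T\dual y=x$ such that, for every $\ess\subseteq\Upsilon$ with $\norm{P\dual y}^q>\abs K^q-(\eps/(8\norm T))^q$, one has $T\dual P\dual y\in s^\alpha_{\eps/4}(T\dual P\dual K)$. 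The existence-of-$y$ half is exactly Lemma~\ref{poopoopoo}, but its proof is re-run so as to produce a lift compatible with the quantitative clause. The case $\alpha=0$ is immediate; the limit case $\alpha=\beta$ follows the pattern of the limit cases in Lemmas~\ref{poopoopoo} and \ref{weekendoz}: for each $\zeta<\beta$ take a good lift $y_\zeta$ from the inductive hypothesis, pass to a $w\dual$-convergent subnet of $(y_\zeta)_{\zeta<\beta}$ with limit $y$, and use $w\dual$-closedness of each $s^\gamma_\delta(\,\cdot\,)$ together with $w\dual$-lower semicontinuity of the norm (to pass the hypothesis $\norm{P\dual y}^q>\abs K^q-(\eps/(8\norm T))^q$ to a cofinal subnet of the $y_\zeta$) to check that $y$ works.

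The successor case $\alpha=\zeta+1$ is the crux. Given $x\in s^{\zeta+1}_\eps(T\dual K)$, choose a net $(x_i)$ in $s^\zeta_\eps(T\dual K)$ with $x_i\stackrel{w\dual}{\rightarrow}x$ and $\norm{x_i-x}>\eps/2$, use the inductive hypothesis to pick good lifts $y_i\in s^\zeta_{\eps/(2\norm T)}(K)$ with $T\dual y_i=x_i$, pass to a subnet so that $(y_i)$ $w\dual$-converges to some $y$, and note as in Lemma~\ref{poopoopoo} that $T\dual y=x$ and $\norm{y_i-y}\geqslant\norm{x_i-x}/\norm T>\eps/(2\norm T)$, whence $y\in s^{\zeta+1}_{\eps/(2\norm T)}(K)$. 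Now fix $\ess$ with $\norm{P\dual y}^q>\abs K^q-(\eps/(8\norm T))^q$ and pass to a further subnet along which also $\norm{P\dual y_i}^q>\abs K^q-(\eps/(8\norm T))^q$ (possible by $w\dual$-lower semicontinuity of the norm, since $P\dual y_i\stackrel{w\dual}{\rightarrow}P\dual y$). By the inductive hypothesis each $T\dual P\dual y_i$ lies in $s^\zeta_{\eps/4}(T\dual P\dual K)$, and $T\dual P\dual y_i\stackrel{w\dual}{\rightarrow}T\dual P\dual y$; so it remains only to show $\norm{T\dual P\dual y_i-T\dual P\dual y}>\eps/4$ for all $i$, as this places $T\dual P\dual y$ in $s_{\eps/4}(s^\zeta_{\eps/4}(T\dual P\dual K))=s^{\zeta+1}_{\eps/4}(T\dual P\dual K)$ and finishes the step.

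This last inequality is the point I expect to resist a naive treatment: pushing $y_i-y$ directly through the contraction $T\dual P\dual$ only yields an \emph{upper} bound on $\norm{T\dual P\dual(y_i-y)}$. Instead the plan is to write $T\dual P\dual(y_i-y)=T\dual(y_i-y)-T\dual\big((I-P\dual)(y_i-y)\big)$ and to bound the correction term via the $\ell_q$-structure of $(\bigoplus_{\upsilon\in\Upsilon}F_\upsilon\dual)_q$: since $P\dual a$ and $(I-P\dual)a$ have disjoint supports, $\norm{(I-P\dual)a}^q=\norm a^q-\norm{P\dual a}^q$ for every $a$, so from $\norm{y_i},\norm y\leqslant\abs K$ (all these points lie in $K$) and the two norm conditions one gets $\norm{(I-P\dual)y_i},\norm{(I-P\dual)y}<\eps/(8\norm T)$, hence $\norm{(I-P\dual)(y_i-y)}<\eps/(4\norm T)$. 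Consequently
\begin{equation*}
\norm{T\dual P\dual(y_i-y)}\geqslant\norm{T\dual(y_i-y)}-\norm T\cdot\norm{(I-P\dual)(y_i-y)}>\norm{x_i-x}-\frac{\eps}{4}>\frac{\eps}{2}-\frac{\eps}{4}=\frac{\eps}{4},
\end{equation*}
the crucial input being that $\norm{T\dual(y_i-y)}=\norm{x_i-x}>\eps/2$ comes from the ambient Szlenk derivation of $T\dual K$ and not from any norm-of-an-operator estimate. This closes the induction and proves the lemma. The remaining ingredients are the standard facts, already used in the proofs of Lemmas~\ref{poopoopoo} and \ref{weekendoz}, that $s^\gamma_\delta$ of a $w\dual$-compact set is again $w\dual$-compact, and that a net in a set $A$ which $w\dual$-converges to a point all of whose distances to the net exceed $c$ must have that limit in $s_c(A)$.
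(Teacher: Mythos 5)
Your proof is correct and follows the paper's own strategy in all essentials: a simultaneous transfinite induction, the subnet-plus-$w\dual$-compactness argument at successor and limit stages, a further refinement of the subnet so that the $y_i$ also satisfy the norm condition, and the $\ell_q$-additivity $\norm{(I-P\dual)a}^q=\norm{a}^q-\norm{P\dual a}^q$ to make the tails $\norm{(I-P\dual)y_i},\norm{(I-P\dual)y}<\eps/(8\norm{T})$. Your rearrangement $T\dual P\dual(y_i-y)=T\dual(y_i-y)-T\dual(I-P\dual)(y_i-y)$ is just a repackaging of the paper's three-term triangle inequality (it yields the identical bound $>\eps/2-2\norm{T}\cdot\frac{\eps}{8\norm{T}}=\eps/4$), and your explicit remark that the lifts $y_i$ must be taken to be the ones supplied by the inductive hypothesis — not arbitrary lifts from Lemma~\ref{poopoopoo} — is a point the paper handles implicitly.
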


\begin{proof}
The lemma is clearly true for $\alpha = 0$. We now assume it true for some $\alpha = \gamma$, and show that it is also true for $\alpha = \gamma +1$. To this end, let $x\in s^{\gamma+1}_\eps (T\dual K) = s_\eps (s^\gamma_\eps (T\dual K))$. Then there exists a net $(x_i)_{i\in I}$ in $s^\gamma_\eps (T\dual K)$ such that $x_i \stackrel{w\dual}{\rightarrow}x$ and $\norm{x_i - x}>\eps /2$ for all $i \in I$ (for example, take $I$ to be the set of all $w\dual$-neighbourhoods of $x$, ordered by reverse set inclusion). For each $i\in I$ let $y_i \in s^\gamma_{\eps /(2\norm{T})}(K)$ be such that $T\dual y_i = x_i$. Passing to a subnet, we may assume that $(y_i)_{i\in I}$ has a $w\dual$-limit $y \in s^\gamma_{\eps /(2\norm{T})}(K)$, and then $T\dual y = x$. Moreover, as $\norm{y_i -y}\geqslant \norm{x_i -x}/ \norm{T}>\eps /(2\norm{T})$ for all $i$, we have $y \in s^{\gamma+1}_{\eps /(2\norm{T})}(K)$. Now suppose that $\norm{Q_\ess\dual V_\ess\dual y}^q > \abs{K}^q -(\eps /(8\norm{T}))^q$, where $\ess \subseteq \Upsilon$. Passing to a subnet, we may assume $\norm{Q_\ess\dual V_\ess\dual y_i}^q > \abs{K}^q -(\eps /(8\norm{T}))^q$ for all $i$. Then for all $i$, \begin{equation*} \norm{y_i - Q_\ess\dual V_\ess\dual y_i}= (\norm{y_i}^q - \norm{Q_\ess\dual V_\ess\dual y_i}^q)^{1/q} \leqslant (\abs{K}^q - \norm{Q_\ess\dual V_\ess\dual y_i}^q)^{1/q} < \frac{\eps}{ 8\norm{T}}\,,\end{equation*} hence $\norm{y - Q_\ess\dual V_\ess\dual y} \leqslant \eps /(8 \norm{T})$ by $w\dual$-lower semicontinuity. Thus, for all $i$,
\begin{align*}
\Vert T\dual Q_\ess\dual V_\ess\dual y_i - T\dual &Q_\ess\dual V_\ess\dual y \Vert \\ &\geqslant \norm{T\dual y_i - T\dual y} - \norm{T\dual y - T\dual Q_\ess\dual V_\ess\dual y} - \norm{T\dual y_i - T\dual Q_\ess\dual V_\ess\dual y_i} \\ &\geqslant \norm{x_i -x} - \norm{T}\norm{y - Q_\ess\dual V_\ess\dual y} - \norm{T}\norm{y_i - Q_\ess\dual V_\ess\dual y_i} \\ &> \frac{\eps}{2} - 2\norm{T} \cdot \frac{\eps}{ 8\norm{T}} =\frac{\eps}{4} \, .
\end{align*}
Since $T\dual Q_\ess\dual V_\ess\dual y_i \in s^\gamma_{\eps /4}(T\dual Q_\ess\dual V_\ess\dual K)$ for each $i$ by the induction hypothesis, and since $T\dual Q_\ess\dual V_\ess\dual y_i \stackrel{w\dual}{\rightarrow} T\dual Q_\ess\dual V_\ess\dual y$, we have $T\dual Q_\ess\dual V_\ess\dual y \in s^{\gamma +1}_{\eps /4}(T\dual Q_\ess\dual V_\ess\dual K)$. Thus, in particular, the assertion of the lemma passes to successor ordinals.

Finally, let $\gamma$ be a limit ordinal and suppose that the assertion of the lemma holds whenever $\alpha <\gamma$. Let $x\in s^\gamma_\eps (T\dual K) = \bigcap_{\alpha <\gamma}s^\alpha_\eps (T\dual K)$ and for each $\alpha <\gamma $ let $y_\alpha \in s^\alpha_{\eps /(2\norm{T})}(K)$ be such that $T\dual y_\alpha = x$. By $w\dual$-compactness, there is a directed set $J$ and a mapping $f: J\To \gamma$ such that $(y_{f(j)})_{j\in J}$ is a $w\dual$-convergent subnet of $(y_\alpha)_{\alpha<\gamma}$. Let $y$ denote the $w\dual$-limit of $(y_{f(j)})_{j\in J}$. Then $T\dual y = x$, and since $f(J)$ is cofinal in $\gamma$ (by definition of a subnet), $y \in \bigcap_{j\in J}s^{f(j)}_{\eps /(2\norm{T})}(K) = s^\gamma_{\eps /(2\norm{T})}(K)$. Now suppose that $\norm{Q_\ess\dual V_\ess\dual y}^q > \abs{K}^q -(\eps /(8\norm{T}))^q$, where $\ess \subseteq \Upsilon$. Passing to a subnet, we may assume $\norm{Q_\ess\dual V_\ess\dual y_j}^q > \abs{K}^q -(\eps /(8\norm{T}))^q$ for all $j$, hence by the induction hypothesis $T\dual Q_\ess\dual V_\ess\dual y_{f(j)}\in s^{f(j)}_{\eps /4}(T\dual Q_\ess\dual V_\ess\dual K)$ for all $j$. Again, by the cofinality of $f(J)$ in $\gamma$, \begin{equation*} T\dual Q_\ess\dual V_\ess\dual y = w\dual-\lim_j T\dual Q_\ess\dual V_\ess\dual y_{f(j)} \in \bigcap_{j\in J}s^{f(j)}_{\eps /4}(T\dual Q_\ess\dual V_\ess\dual K) = s^\gamma_{\eps /4}(T\dual Q_\ess\dual V_\ess\dual K).\end{equation*} The assertion of the lemma thus passes to limit ordinals, and we are done.
\end{proof}

The final step in our preparation to prove Proposition~\ref{unifac} is to state the following definition and lemma.

\begin{definition}\label{postdoc1}
For real numbers $a\geqslant 0$, $b>c>0$ and $1\leqslant d<\infty$, define
\[
\sigma (a,b,c,d):= \inf \big\{ n\in \nat \, \big\vert\, (b-c)^d(n-1)\geqslant (2a)^d - b^d \big\} \, .
\]
\end{definition}

\begin{lemma}[{\cite[Lemma~2.8]{Brookera}}]\label{postdoc2}
Let $\Lambda$ be a set, $\{ E_\lambda \mid \lambda\in\Lambda \}$ a family of Banach spaces, $1\leqslant q <\infty$, $p$ predual to $q$, $K\subseteq (\bigoplus_{\lambda\in\Lambda}E_\lambda )_p\dual$ a nonempty, $w\dual$-compact set and $\eps>\delta >0$. Suppose $\eta_\delta $ is a nonzero ordinal such that $s_\delta^{\eta_\delta}(U_\eff\dual K) = \emptyset$ for every $\eff\in\Lambda\fin$. Then $s_\eps^{\eta_\delta \cdot \sigma (\vert K\vert , \eps , \delta , q)}(K) = \emptyset$, hence ${\rm Sz}_\eps (K)\leqslant \eta_\delta \cdot \sigma (\vert K\vert , \eps , \delta , q)$.
\end{lemma}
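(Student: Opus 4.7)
My approach is to iterate Lemma~\ref{tvl} so as to bound the ``uniform norm'' $|K_n|$ of the successive derivatives $K_n := \mdset{\eta_\delta \cdot n}{\eps}{K}$, and then to close the argument by a diameter estimate on the last set. Write $N := \sigma(|K|, \eps, \delta, q)$, so that, by definition, $(\eps-\delta)^q(N-1) \geqslant (2|K|)^q - \eps^q$. The central claim is the monotone decrease
\[
|K_n|^q \leqslant |K|^q - n\bigl(\tfrac{\eps-\delta}{2}\bigr)^q \qquad (0 \leqslant n \leqslant N-1),
\]
valid as long as $K_n$ is nonempty (if some earlier $K_n$ is already empty the conclusion of the lemma is trivial).

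I would prove this claim by induction on $n$. The base case $n = 0$ is immediate. For the inductive step, fix $x \in K_{n+1} = \mdset{\eta_\delta}{\eps}{K_n}$ and any $\arr \in \Lambda\fin$. Since $K_n \subseteq K$ implies $U_\arr\dual K_n \subseteq U_\arr\dual K$, set-monotonicity of the $s_\delta$-iterates gives
\[
s^{\eta_\delta}_\delta(U_\arr\dual K_n) \subseteq s^{\eta_\delta}_\delta(U_\arr\dual K) = \emptyset.
\]
Applying Lemma~\ref{tvl} with $K_n$ in place of $K$ and the same pair $\eps > \delta$, its contrapositive forces $\|U_\arr\dual x\|^q \leqslant |K_n|^q - \bigl((\eps-\delta)/2\bigr)^q$; for otherwise we would land in the empty set above. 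Taking the supremum over $\arr \in \Lambda\fin$ and using the $\ell_q$-structure of the dual of an $\ell_p$-direct sum, namely $\|x\|^q = \sup_{\arr\in\Lambda\fin}\|U_\arr\dual x\|^q$, together with the induction hypothesis completes the step.

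With the claim established, apply it with $n = N-1$. Either $K_{N-1}$ is already empty (done), or $|K_{N-1}|^q \leqslant |K|^q - (N-1)\bigl((\eps-\delta)/2\bigr)^q$. Multiplying by $2^q$ and invoking the defining inequality for $N$ yields
\[
(2|K_{N-1}|)^q \leqslant (2|K|)^q - (N-1)(\eps - \delta)^q \leqslant \eps^q,
\]
so $\diam(K_{N-1}) \leqslant 2|K_{N-1}| \leqslant \eps$. Since $\diam(K_{N-1} \cap V) \leqslant \diam(K_{N-1}) \leqslant \eps$ for every $w\dual$-open $V$, the definition of $s_\eps$ forces $s_\eps (K_{N-1}) = \emptyset$, whence $s^{\eta_\delta \cdot (N-1)+1}_\eps (K) = \emptyset$. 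As $\eta_\delta \geqslant 1$ gives $\eta_\delta \cdot (N-1) + 1 \leqslant \eta_\delta \cdot N$, we conclude $s^{\eta_\delta \cdot N}_\eps(K) = \emptyset$, and the Szlenk-index bound follows.

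The main obstacle is purely bookkeeping: verifying that the hypothesis of Lemma~\ref{tvl} is preserved in each iteration (which is free from set-monotonicity of the derivation) and that the arithmetic of repeatedly subtracting $\bigl((\eps-\delta)/2\bigr)^q$ from $|K|^q$ matches the defining inequality of $\sigma$ after the $2^q$ scaling. No further novelty beyond Lemma~\ref{tvl} and the elementary diameter fact is needed.
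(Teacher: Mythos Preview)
The paper does not prove this lemma; it is quoted verbatim from \cite[Lemma~2.8]{Brookera} and used as a black box, so there is no in-paper proof to compare against. Your argument is correct and is precisely the intended one: iterate Lemma~\ref{tvl} (itself \cite[Lemma~2.6]{Brookera}) to force $|K_n|^q$ to drop by at least $((\eps-\delta)/2)^q$ at each step, then use the defining inequality of $\sigma$ to conclude that after $N-1$ steps the diameter is at most $\eps$, so one further $s_\eps$-derivation kills the set. The bookkeeping (set-monotonicity of $s_\delta^\alpha$, the identity $\|x\|^q=\sup_{\arr\in\Lambda\fin}\|U_\arr\dual x\|^q$ in the $\ell_q$-dual, and the ordinal inequality $\eta_\delta\cdot(N-1)+1\leqslant\eta_\delta\cdot N$) is all in order.
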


\begin{proof}[Proof of Proposition~\ref{unifac}]
We prove (i)$\Rightarrow$(ii)$\Rightarrow$(iv)$\Rightarrow$(iii)$\Rightarrow$(i), assuming $T\neq 0$ (the result is obvious otherwise).

To see that (i) $\Rightarrow$ (ii), suppose that there is $\eps >0$ such that \begin{equation*} \sup \set{\meeszlenk{\eps}{TU_\eff}\mid \eff\in\Lambda\fin}\geqslant \omega^\alpha .\end{equation*} We want to show that $\mszlenk{T} >\omega^\alpha$, so to this end note that by Lemma~\ref{poopoopoo} we have
\begin{align*}
\meeszlenk{\eps /2}{T} = {\rm Sz}_{\eps /2}\big( T\dual \cball{(\bigoplus_{\upsilon \in \Upsilon}F_\upsilon)_p\dual}\big) &\geqslant \sup \big\{{\rm Sz}_{\eps}\big( U_\eff\dual T\dual \cball{(\bigoplus_{\upsilon \in \Upsilon}F_\upsilon)_p\dual} \big) \,\, \big\vert \,\, \eff\in\Lambda\fin\big\}\\ & = \sup \set{\meeszlenk{\eps}{TU_\eff}\mid \eff\in\Lambda\fin}\\ & \geqslant \omega^\alpha \, .
\end{align*}
As $\meeszlenk{\eps/2}{T}$F cannot be a limit ordinal, it follows that $\mszlenk{T} \geqslant \meeszlenk{\eps/2}{T} >\omega^\alpha$.

We now show (ii) $\Rightarrow$ (iv). Let $\eff \in \Lambda\fin$. Then for $\gee\in\Upsilon\fin$ we have $U_\eff\dual T\dual Q_\gee\dual \cball{(\bigoplus_{\upsilon \in \gee}F_\upsilon)_p\dual}\subseteq U_\eff\dual T\dual \cball{(\bigoplus_{\upsilon \in \Upsilon}F_\upsilon)_p\dual}$, hence $\meeszlenk{\eps}{Q_\gee T U_\eff}\leqslant \meeszlenk{\eps}{TU_\eff}$ for all $\gee\in\Upsilon\fin$ and $\eps >0$. Thus, for each $\eps >0$,
\begin{equation*}
\sup \set{\meeszlenk{\eps}{Q_\gee TU_\eff}\mid \eff\in\Lambda\fin , \, \gee \in \Upsilon\fin} \leqslant \sup \set{\meeszlenk{\eps}{TU_\eff}\mid \eff\in\Lambda\fin},
\end{equation*}
and the implication (ii) $\Rightarrow$ (iv) follows.

Suppose that (iv) holds and fix $\gee\in\Upsilon\fin$. An application of Lemma~\ref{postdoc2} with $K=T\dual Q_\gee\dual \cball{(\bigoplus_{\upsilon\in\Upsilon}F_\upsilon)_p\dual}$, $\delta = \delta(\eps)=\eps/2$ and \[ \eta_{\delta(\eps)} = \sup \{ {\rm Sz}_{\eps/2}(Q_\gee TU_\eff) \mid \eff \in \Lambda\fin, \, \gee\in\Upsilon\fin \} \, \, (<\omega^\alpha)\] yields
\[
{\rm Sz}_\eps (T\dual Q_\gee\dual \cball{(\bigoplus_{\upsilon\in\Upsilon}F_\upsilon)_p\dual}) \leqslant \eta_{\delta(\eps)}\cdot \sigma (\Vert T\Vert , \, \eps, \, \eps/2, \, q)\, .
\]
As $\gee\in\Upsilon\fin$ was arbitrary and $\eta_{\delta(\eps)}$ and $\sigma (\Vert T\Vert, \, \eps, \, \eps/2, \, q)$ do not depend on our choice of $\gee$, we deduce that
\[
\sup \{ {\rm Sz}_\eps (Q_\gee T) \mid \gee\in\Upsilon\fin \} \leqslant \eta_{\delta(\eps)}\cdot \sigma (\Vert T\Vert, \, \eps, \, \eps/2, \, q) <\omega^\alpha\, ,
\]
hence (iv)$\Rightarrow$(iii).

Suppose that (iii) holds. The implication (iii) $\Rightarrow$ (i) will follow from Proposition~\ref{dadeeeee} if we can show that for every $\eps > 0$ there is $\beta_\eps <\omega^\alpha$ and $\delta_\eps \in (0, \, 1)$ with $s^{\beta_\eps}_\eps (T\dual \cball{(\bigoplus_{\upsilon \in \Upsilon}F_\upsilon)_p\dual}) \subseteq \delta_\eps T\dual \cball{(\bigoplus_{\upsilon \in \Upsilon}F_\upsilon)_p\dual}$. If $\eps \geqslant 2\norm{T}$, then $s_\eps (T\dual \cball{(\bigoplus_{\upsilon \in \Upsilon}F_\upsilon)_p\dual}) = \emptyset$, hence $\beta_\eps = 1$ and any $\delta_\eps \in (0, \, 1)$ suffice. So it remains to find suitable $\beta_\eps$ and $\delta_\eps$ for $\eps \in (0, \, 2\norm{T})$. For each $\eps \in (0, \, 2\norm{T})$, let $\xi_\eps=\sup \set{\meeszlenk{\eps}{Q_\gee T}\mid \gee\in\Upsilon\fin}$. As $T\dual Q_\gee\dual  \cball{(\bigoplus_{\upsilon \in \gee}F_\upsilon)_p\dual}=T\dual Q_\gee\dual V_\gee\dual \cball{(\bigoplus_{\upsilon \in \Upsilon}F_\upsilon)_p\dual}$ for each $\gee\in\Upsilon\fin$, it follows that $\sup \set{\meeszlenk{\eps}{V_\gee Q_\gee T}\mid \gee\in\Upsilon\fin}=\xi_\eps$ for each $\eps \in (0, \, 2\norm{T})$. Let $\eps \in (0, \, 2\norm{T})$ be fixed and let $x\in s^{\xi_{\eps/4}}_\eps \big(T\dual \cball{(\bigoplus_{\upsilon \in \Upsilon}F_\upsilon)_p\dual}\big)$ (if $s^{\xi_{\eps/4}}_\eps \big(T\dual \cball{(\bigoplus_{\upsilon \in \Upsilon}F_\upsilon)_p\dual}\big) = \emptyset$, then taking $\beta_\eps = \xi_{\eps /4}$ and any $\delta_\eps \in (0, \, 1)$ will do). Since $s^{\xi_{\eps/4}}_{\eps/4} \big(T\dual Q_\gee\dual V_\gee\dual \cball{(\bigoplus_{\upsilon \in \Upsilon}F_\upsilon)_p\dual}\big) = \emptyset$ for all $\gee\in\Upsilon\fin$, an appeal to Lemma~\ref{baconlemma} gives us $y \in s^{\xi_{\eps /4}}_{\eps /(2\norm{T})}\big( \cball{(\bigoplus_{\upsilon \in \Upsilon}F_\upsilon)_p\dual} \big)$ such that $T\dual y = x$ and
\begin{equation*}
\norm{y}^q = \sup_{\gee\in\Upsilon\fin}\norm{Q_\gee\dual V_\gee\dual y}^q \leqslant 1 - \left(\frac{\eps}{8\norm{T}}\right)^q\, .
\end{equation*}
In particular, since $x\in s^{\xi_{\eps/4}}_\eps \big(T\dual \cball{(\bigoplus_{\upsilon \in \Upsilon}F_\upsilon)_p\dual}\big)$ was arbitrary,
\[
s^{\xi_{\eps/4}}_\eps \Big(T\dual \cball{(\bigoplus_{\upsilon \in \Upsilon}F_\upsilon)_p\dual}\Big) \subseteq \left( 1 - \Big(\frac{\eps}{8\norm{T}}\Big)^q \right)^{1/q}T\dual \cball{(\bigoplus_{\upsilon \in \Upsilon}F_\upsilon)_p\dual}\, .
\]
Taking $\beta_\eps = \xi_{\eps/4}$ and $\delta_\eps = \left( 1 - (\eps /(8\norm{T}))^q \right)^{1/q}$ for each $\eps \in (0, \, 2\norm{T})$ completes the proof.\end{proof}

\begin{corollary}\label{uncfac}
Let $\alpha$ be an ordinal of uncountable cofinality. Then $\szlenkop{\alpha}$ has the factorization property.
\end{corollary}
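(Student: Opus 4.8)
The plan is to apply Theorem~\ref{manna} with $\iop = \jop = \szlenkop{\alpha}$. By Theorem~\ref{idealthm} the operator ideal $\szlenkop{\alpha}$ is both injective and surjective, and $\Space(\szlenkop{\alpha}) = \szl_\alpha$ (since the Szlenk index of $I_E$ is just $\mszlenk{E}$), so the $\szlenkop{\alpha}$-factorization property is precisely the factorization property: every $\alpha$-Szlenk operator factors through a Banach space of Szlenk index at most $\omega^\alpha$. Thus it suffices to show that $(\szlenkop{\alpha}, \, \szlenkop{\alpha})$ is a $\Sigma_p$-pair for some (equivalently, any) $p$ with $1 < p < \infty$.

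Fix such a $p$, let $(E_m)_{m\in\nat}$ and $(F_n)_{n\in\nat}$ be sequences of Banach spaces, and suppose $T \in \allop((\bigoplus_{m\in\nat}E_m)_p, \, (\bigoplus_{n\in\nat}F_n)_p)$ satisfies $Q_\gee T U_\eff \in \szlenkop{\alpha}$ for all $\eff, \, \gee \in \nat\fin$. The idea is to run the implication (iv)$\Rightarrow$(i) of Proposition~\ref{unifac}, but with the relevant upper bound pushed \emph{strictly} below $\omega^\alpha$. Fix $\eps > 0$. For each $\eff, \, \gee \in \nat\fin$ we have $\meeszlenk{\eps}{Q_\gee T U_\eff} \leqslant \mszlenk{Q_\gee T U_\eff} \leqslant \omega^\alpha$; since the $\eps$-Szlenk index of a nonempty $w\dual$-compact set is never a limit ordinal, whereas $\omega^\alpha$ is a limit ordinal (as $\alpha \geqslant \omega_1 \geqslant 1$), this forces $\meeszlenk{\eps}{Q_\gee T U_\eff} < \omega^\alpha$. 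Now the hypothesis that $\alpha$ has uncountable cofinality means in particular that $\alpha$ is a limit ordinal, and since $\beta \mapsto \omega^\beta$ is strictly increasing and continuous at limit ordinals, $\cf{\omega^\alpha} = \cf{\alpha} > \omega$. Hence the supremum of the countable family $\set{\meeszlenk{\eps}{Q_\gee T U_\eff} \mid \eff, \, \gee \in \nat\fin}$ of ordinals, each strictly below $\omega^\alpha$, is again strictly below $\omega^\alpha$. As $\eps > 0$ was arbitrary and $\alpha > 0$, condition (iv) of Proposition~\ref{unifac} holds, whence $\mszlenk{T} \leqslant \omega^\alpha$, i.e.\ $T \in \szlenkop{\alpha}$. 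This shows that $(\szlenkop{\alpha}, \, \szlenkop{\alpha})$ is a $\Sigma_p$-pair, and Theorem~\ref{manna} then completes the proof.

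There is no serious obstacle beyond correctly assembling Theorems~\ref{idealthm} and~\ref{manna} with Proposition~\ref{unifac}. The one point that genuinely uses the cofinality hypothesis is that a countable supremum of ordinals each $< \omega^\alpha$ stays $< \omega^\alpha$, and for that it is essential to observe --- via the fact that $\eps$-Szlenk indices are never limit ordinals --- that each $\meeszlenk{\eps}{Q_\gee T U_\eff}$ is \emph{strictly} below $\omega^\alpha$, not merely $\leqslant \omega^\alpha$; this is exactly the gain over the proof of Theorem~\ref{weakfac}, where only the weaker bound $\omega^\alpha < \omega^{\alpha+1}$ was available.
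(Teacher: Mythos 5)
Your proof is correct and follows the paper's overall strategy: verify via Theorem~\ref{idealthm} that $\szlenkop{\alpha}$ is injective and surjective, show $(\szlenkop{\alpha},\szlenkop{\alpha})$ is a $\Sigma_p$-pair using Proposition~\ref{unifac} together with the cofinality hypothesis, then invoke Theorem~\ref{manna}. The one place where you deviate is the intermediate step that pushes each index strictly below $\omega^\alpha$. The paper first applies Proposition~\ref{collection}(iii) to write ${\rm Sz}(Q_\gee TU_\eff)=\omega^{\alpha(\eff,\gee)}$, then compares $\cf{\alpha(\eff,\gee)}\leqslant\omega$ with $\cf{\alpha}\geqslant\omega_1$ to deduce $\alpha(\eff,\gee)<\alpha$, takes $\alpha'=\sup\alpha(\eff,\gee)<\alpha$, and lands $T\in\szlenkop{\alpha'+1}\subseteq\szlenkop{\alpha}$. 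You instead fix $\eps>0$ and work directly with the $\eps$-Szlenk indices, observing that each $\meeszlenk{\eps}{Q_\gee TU_\eff}$ is a non-limit ordinal $\leqslant\omega^\alpha$ and hence $<\omega^\alpha$, after which $\cf{\omega^\alpha}=\cf{\alpha}>\omega$ closes the countable supremum below $\omega^\alpha$. Your route avoids an appeal to Proposition~\ref{collection}(iii) and is slightly more economical; the paper's route yields the marginally finer conclusion $T\in\szlenkop{\alpha'+1}$ for some $\alpha'<\alpha$, though this extra precision is not needed for the corollary. Both are sound.
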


\begin{proof}
By Theorem~\ref{idealthm} and Theorem~\ref{manna}, it suffices to show that $(\szlenkop{\alpha}, \, \szlenkop{\alpha})$ is a $\Sigma_p$-pair ($1<p<\infty$). To this end, let $(E_m)_m$ and $(F_n)_n$ be sequences of Banach spaces and let $T\in \allop \big( (\bigoplus_{m\in \nat}E_m)_p, \, (\bigoplus_{n\in\nat}F_n)_p \big)$ be such that $Q_\gee TU_\eff \in \szlenkop{\alpha}$ for all $\eff, \, \gee \in \nat\fin$. By Proposition~\ref{collection}(iii), for each pair $(\eff , \, \gee)\in \nat\fin\times\nat\fin$ there is $\alpha(\eff , \, \gee)\leqslant \alpha$ such that ${\rm Sz}(Q_\gee TU_\eff ) = \omega^{\alpha(\eff , \, \gee)}$. However, since
\begin{equation*}
\cf{\alpha(\eff, \, \gee)}\leqslant cf \Big(\omega^{\alpha(\eff, \, \gee)}\Big) = cf \bigg( \sup_{n\in\nat}\meeszlenk{1/n}{Q_\gee TU_\eff}\bigg) = \omega <\omega_1 \leqslant \cf{\alpha},
\end{equation*}
it must be that $\alpha(\eff, \, \gee) <\alpha$ for each $(\eff, \, \gee)\in \nat\fin \times \nat \fin$. Consider the ordinal $\alpha' = \sup \set{\alpha (\eff, \, \gee)\mid \eff, \, \gee\in\nat\fin}$. We have $\alpha'\leqslant \alpha$ and, since $\nat\fin \times \nat\fin$ is countable, $\cf{\alpha'}$ is countable also, hence $\alpha' <\alpha$. As $\alpha$ is of uncountable cofinality, it is also a limit ordinal, hence $\alpha'+1<\alpha$. Moreover,
\begin{equation*}
\forall \eps >0 \quad \sup\negthinspace \big\{\meeszlenk{\eps}{Q_\gee TU_\eff}\, \big\vert\, \eff, \, \gee\in\nat\fin\big\} \leqslant \omega^{\alpha'}<\omega^{\alpha'+1},
\end{equation*}
and so Proposition~\ref{unifac} yields $T \in \szlenkop{\alpha'+1}\subseteq \szlenkop{\alpha}$. We have thus shown that $(\szlenkop{\alpha}, \, \szlenkop{\alpha})$ is a $\Sigma_p$-pair, which completes the proof.\end{proof}

The following is open:
\begin{problem}\label{factortractor}
Let $\alpha$ be an ordinal. Are the following are equivalent?

\nr{i} $\alpha$ is of uncountable cofinality.

\nr{ii} $\szlenkop{\alpha}$ has the factorization property.
\end{problem}

With Corollary~\ref{uncfac} we have already just established the implication (i)$\Rightarrow$(ii) of Problem~\ref{factortractor}. The remainder of this paper is motivated by the search for a proof of the reverse implication (ii)$\Rightarrow$(i). Although we do not obtain the full answer, we obtain some partial results and anticipate that further development of the ideas presented here may eventually lead to a complete solution. Note that Theorem~\ref{manna} does not yield any further information on the classification of those ordinals $\alpha$ for which $\szlenkop{\alpha}$ has the factorization property since, as noted later in Remark~\ref{julyjulyjuly}, $(\szlenkop{\alpha}, \, \szlenkop{\alpha})$ fails to be a $\Sigma_p$-pair for any $1<p<\infty$ whenever $\alpha$ is an ordinal of countable cofinality.

\section{Counterexamples to the factorization property}\label{theredethere}

Our goal in this section is to prove the following theorem.
\begin{theorem}\label{nonfactor}
Let $\beta$ be an ordinal of countable cofinality. Then $\szlenkop{\omega^\beta}$ does not have the factorization property.
\end{theorem}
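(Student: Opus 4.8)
The plan is to construct, for a given ordinal $\beta$ of countable cofinality, a specific operator $T \in \szlenkop{\omega^\beta}$ that cannot factor through any Banach space of Szlenk index at most $\omega^{\omega^\beta}$, thereby witnessing the failure of the factorization property. The natural building blocks are the spaces from Szlenk's construction, or rather the spaces $G_\gamma$ of Example~\ref{lastofthird} (equivalently, suitable spaces produced in \cite{Brookera}): for each $\gamma < \omega^\beta$ one has a Banach space $X_\gamma$ with $\mszlenk{X_\gamma} = \omega^{\gamma+1}$ (so $\mszlenk{X_\gamma} \leqslant \omega^{\omega^\beta}$, but these indices are unbounded below $\omega^{\omega^\beta}$ as $\gamma \to \omega^\beta$). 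Since $\cf{\omega^\beta} = \cf{\beta} \leqslant \omega$, we may pick a strictly increasing sequence $(\gamma_n)_{n\in\nat}$ cofinal in $\omega^\beta$, and form the $\ell_p$-direct sum (some fixed $1 < p < \infty$) $X := (\bigoplus_{n\in\nat} X_{\gamma_n})_p$. By Proposition~\ref{collection}(v) applied transfinitely — or more precisely by the direct-sum analysis of \cite{Brookera} — the Szlenk index of $X$ is governed by the supremum $\sup_n \omega^{\gamma_n + 1} = \omega^{\omega^\beta}$, and one checks (again via \cite{Brookera}) that $\mszlenk{X}$ is in fact the smallest ordinal of the form $\omega^\delta$ above this supremum, which here is exactly $\omega^{\omega^\beta}$. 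I would take $T := I_X$, the identity of $X$; then $T \in \szlenkop{\omega^\beta}$ is false — so instead the operator must be chosen more carefully.

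The correct choice is to take $T$ to be the identity operator of the $\ell_p$-sum $Z := (\bigoplus_{n\in\nat} Z_n)_p$ where each $Z_n$ is chosen with $\mszlenk{Z_n} \leqslant \omega^{\omega^\beta}$ but with the \emph{finite subsum} Szlenk indices controlled: specifically one wants $Z$ with $\mszlenk{Z} \leqslant \omega^{\omega^\beta}$ (so $I_Z \in \szlenkop{\omega^\beta}$) yet such that $I_Z$ does \emph{not} factor through any space of Szlenk index $\leqslant \omega^{\omega^\beta}$. But of course $I_Z$ trivially factors through $Z$ itself, so the identity cannot be the example either — the example must be a non-trivial operator $T : E \to F$ whose Szlenk index is $\leqslant \omega^{\omega^\beta}$ but which forces any intermediate space to have strictly larger Szlenk index. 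The right framework: choose $T : (\bigoplus_n X_{\gamma_n})_p \to (\bigoplus_n X_{\gamma_n})_p$ built from a ``shift-like'' or ``diagonal-with-inclusions'' pattern so that each compression $Q_\gee T U_\eff$ sees only finitely many coordinates (hence has Szlenk index $\leqslant \sup$ over that finite set, which is $< \omega^{\omega^\beta}$ — in fact $\leqslant \omega^{\gamma_N + 1}$ for the largest index $N$ involved), giving $\mszlenk{T} \leqslant \omega^{\omega^\beta}$ by Proposition~\ref{unifac}, while any factorization $T = BA$ through $G$ would, by a ``localization'' argument, force each $X_{\gamma_n}$ (or a copy thereof) to embed complementably in $G$ in a way compatible across all $n$, forcing $\mszlenk{G} \geqslant \sup_n \omega^{\gamma_n+1}$ and hence — since $\mszlenk{G}$ must be of the form $\omega^\delta$ by Proposition~\ref{collection}(iii) and $\sup_n \omega^{\gamma_n+1} = \omega^{\omega^\beta}$ is itself $\omega^{\omega^\beta}$ — one must push slightly further to get strict inequality. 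The mechanism for the strict inequality is the countable cofinality: because the approximating finite data only ever reaches $\gamma_n < \omega^\beta$, but a genuine factorization must handle all of them ``at once through one space $G$'', and a space achieving Szlenk index exactly $\omega^{\omega^\beta}$ cannot simultaneously accommodate complemented copies of spaces with indices cofinal in $\omega^{\omega^\beta}$ together with the gluing maps — this would force $\mszlenk{G} > \omega^{\omega^\beta}$.

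Concretely, the key steps in order: (1) Fix $1<p<\infty$ and a strictly increasing sequence $(\gamma_n)$ cofinal in $\omega^\beta$ (possible as $\cf{\omega^\beta}\leqslant\omega$); for each $n$ let $X_n$ be a Banach space with an unconditional (or at least shrinking) basis and $\mszlenk{X_n} = \omega^{\gamma_n+1}$, available from \cite{Brookera} or Example~\ref{lastofthird}. (2) Set $E = F = (\bigoplus_n X_n)_p$ and define $T : E \to F$ by a coordinatewise formula (e.g.\ $T$ restricted to $X_n$ is the identity into the $n$-th summand composed with a norm-one map, arranged so that $T$ is bounded and each two-sided compression $Q_\gee T U_\eff$ has range in finitely many $X_n$'s) so that by Proposition~\ref{unifac}, $\mszlenk{T} \leqslant \omega^{\omega^\beta}$, i.e.\ $T \in \szlenkop{\omega^\beta}$. (3) Suppose for contradiction $T = BA$ with $A : E \to G$, $B : G \to F$, and $\mszlenk{G} \leqslant \omega^{\omega^\beta}$. (4) For each $n$, the composition through the $n$-th summand shows a ``large'' piece of $X_n$ — specifically a subspace or quotient with Szlenk index still $\omega^{\gamma_n+1}$ (this uses that $T$ restricted appropriately is bounded below, or is a complemented embedding) — factors through $G$, whence by Proposition~\ref{collection}(i) $\mszlenk{G} \geqslant \omega^{\gamma_n+1}$ for every $n$. (5) Taking the supremum over $n$ gives $\mszlenk{G} \geqslant \omega^{\omega^\beta}$, and then a separate argument — exploiting that $G$ must ``uniformly'' carry the whole family, or invoking the direct-sum lower bounds of \cite{Brookera} applied to $A$ and the structure inside $G$ — upgrades this to $\mszlenk{G} > \omega^{\omega^\beta}$, the desired contradiction. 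The main obstacle I anticipate is step (5): getting the \emph{strict} inequality rather than merely $\mszlenk{G}\geqslant\omega^{\omega^\beta}$. This is precisely where the countable cofinality hypothesis must be used essentially (contrast with Corollary~\ref{uncfac}), and it will likely require a careful quantitative argument — probably a Szlenk-index computation for a direct sum inside $G$, along the lines of Lemma~\ref{carbon}/Lemma~\ref{postdoc2} and the techniques of \cite{Brookera} — showing that a space simultaneously admitting, in a coordinated way, the data needed to factor $T$ cannot have Szlenk index as small as $\omega^{\omega^\beta}$.
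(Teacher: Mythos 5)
Your plan correctly identifies the shape of the problem (build $T\in\szlenkop{\omega^\beta}$ that cannot factor through any space of Szlenk index at most $\omega^{\omega^\beta}$, use Proposition~\ref{unifac} to control $\mszlenk{T}$ via finite compressions, exploit countable cofinality), but the crucial mechanism for obtaining the contradiction is missing, and the route you sketch for it would not work. In your steps (4)--(5) you argue that a factorization $T=BA$ through $G$ would force $\mszlenk{G}\geqslant\omega^{\gamma_n+1}$ for every $n$, hence $\mszlenk{G}\geqslant\omega^{\omega^\beta}$, and then assert that ``a space achieving Szlenk index exactly $\omega^{\omega^\beta}$ cannot simultaneously accommodate complemented copies of spaces with indices cofinal in $\omega^{\omega^\beta}$.'' That last assertion is false: Proposition~\ref{babysitter}(ii) (indeed the whole point of Construction~\ref{szlenkvar}) produces spaces of Szlenk index \emph{exactly} $\omega^{\alpha_T}$ that are built by iterated direct sums of copies of a given operator, and which therefore do contain such cofinal families of complemented subspaces. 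So nothing in the argument prevents $G$ from having Szlenk index exactly $\omega^{\omega^\beta}$, and you get no contradiction from $\mszlenk{G}$ alone.

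The mechanism the paper actually uses is \emph{quantitative}: the growth of $\eps\mapsto\meeszlenk{\eps}{T}$. Submultiplicity of the $\eps$-Szlenk index of a \emph{space} (Proposition~\ref{mubsult}) shows, via Proposition~\ref{tunkyfown}, that any $T\in\Op(\szl_{\omega^\beta})$ must satisfy a uniform growth bound of the form $\meeszlenk{1/2^n}{T}\leqslant\omega^{\beta_{n_0\cdot n}}$ (for some fixed $n_0$ and a chosen superadditive cofinal sequence $(\beta_n)$ in $\omega^\beta$) --- and it is exactly here that the structure of $\omega^\beta$ as a power of $\omega$, hence the closure of $\omega^{\omega^\beta}$ under multiplication, is essential. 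The counterexample $W=(\bigoplus_m W_m)_0$ is then engineered so that $\meeszlenk{1/2^{s(m)}}{W}>\omega^{\beta_{s(m)^2}}$ (via Construction~\ref{szlenkvar} applied to $m^{-1}R$ up to stage $\beta_{s(m)^2}$, together with Proposition~\ref{babysitter}(i) and Lemma~\ref{samsung}), which grows faster than any $\omega^{\beta_{n_0\cdot n}}$, while the vanishing norms $\norm{W_m}=m^{-1}\norm{R}$ and the closedness of $\szlenkop{\omega^\beta}$ keep $W$ inside $\szlenkop{\omega^\beta}$. You do briefly gesture at ``a careful quantitative argument'' being needed for step (5), but without identifying submultiplicity as the constraint and without a construction that escalates the $\eps$-Szlenk growth while keeping the total Szlenk index under control (which needs both the shrinking norms and the escalating depth $\beta_{s(m)^2}$), the proposal does not close the argument; moreover, a generic ``shift-like'' operator on $(\bigoplus_n X_{\gamma_n})_p$ would not give a usable localization of $X_n$ into $G$, nor the fast $\eps$-Szlenk growth.
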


One of the main ingredients in our construction of counterexamples to the factorization property is the following result concerning the submultiplicity of the $\eps$-Szlenk index of a Banach space, due to G. Lancien.

\begin{proposition}[{\cite[p.212]{Lancien2006}}]\label{mubsult}
Let $E$ be a Banach space and $\eps, \, \eps' >0$. Then \begin{equation*} \meeszlenk{\eps \eps'}{E}\leqslant \meeszlenk{\eps}{E} \cdot \meeszlenk{ \eps'}{E} \, .\end{equation*}
\end{proposition}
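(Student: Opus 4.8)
The plan is to reduce the estimate to a single scaling identity for the Szlenk derivations, a localization property, and a transfinite induction. First dispose of the trivial case: if $\meeszlenk{\eps}{E} = \infty$ or $\meeszlenk{\eps'}{E} = \infty$, the right-hand side is $\infty$ and there is nothing to prove, so we may assume both are ordinals. Write $\gamma = \meeszlenk{\eps}{E}$ and $\delta = \meeszlenk{\eps'}{E}$, so that $\mdset{\gamma}{\eps}{\cball{E\dual}} = \emptyset$ and $\mdset{\delta}{\eps'}{\cball{E\dual}} = \emptyset$; it then suffices to show $\mdset{\gamma\cdot\delta}{\eps\eps'}{\cball{E\dual}} = \emptyset$.

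I would first record the following elementary properties of the derivations $\mdset{\cdot}{t}{\cdot}$ on $w\dual$-compact subsets of $E\dual$, each an easy transfinite induction: \emph{monotonicity} ($K\subseteq L$ implies $\mdset{\alpha}{t}{K}\subseteq\mdset{\alpha}{t}{L}$); \emph{iteration-additivity} ($\mdset{\alpha+\beta}{t}{K} = \mdset{\beta}{t}{\mdset{\alpha}{t}{K}}$); \emph{translation-invariance} ($\mdset{\alpha}{t}{x\dual_0+K} = x\dual_0+\mdset{\alpha}{t}{K}$, since a $w\dual$-homeomorphic isometric shift commutes with $\mdset{\cdot}{t}{\cdot}$); the \emph{scaling identity} $\mdset{\alpha}{t}{rK} = r\,\mdset{\alpha}{t/r}{K}$ for $r>0$ (multiplication by $r$ is a $w\dual$-homeomorphism scaling diameters by $r$); and \emph{localization}: for every $w\dual$-open $V$, $\mdset{\alpha}{t}{K}\cap V\subseteq\mdset{\alpha}{t}{K\cap V}$.

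The heart of the proof is the following claim: for every $w\dual$-compact $L\subseteq E\dual$ one has $\mdset{\gamma}{\eps\eps'}{L}\subseteq\mdset{\mbox{}}{\eps'}{L}$. To prove it, take $x\dual\notin\mdset{\mbox{}}{\eps'}{L}$; if $x\dual\notin L$ we are done, and otherwise there is a $w\dual$-open $V\ni x\dual$ with $\diam(L\cap V)\leqslant\eps'$, whence, choosing any $z\dual\in L\cap V$ (the set is nonempty as it contains $x\dual$), $L\cap V\subseteq z\dual+\eps'\cball{E\dual}$. Chaining localization, monotonicity, translation-invariance, and the scaling identity with $r=\eps'$, one gets $\mdset{\gamma}{\eps\eps'}{L}\cap V\subseteq\mdset{\gamma}{\eps\eps'}{L\cap V}\subseteq\mdset{\gamma}{\eps\eps'}{z\dual+\eps'\cball{E\dual}} = z\dual+\eps'\,\mdset{\gamma}{\eps}{\cball{E\dual}} = \emptyset$, so $x\dual\notin\mdset{\gamma}{\eps\eps'}{L}$. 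I expect this to be the one genuinely clever point of the argument: recognizing that a $w\dual$-slice of diameter at most $\eps'$ is a translate of $\eps'\cball{E\dual}$, on which the $\eps\eps'$-derivation is, by scaling, a rescaled copy of the $\eps$-derivation on $\cball{E\dual}$, and therefore vanishes after $\gamma$ iterations. The remaining steps are bookkeeping.

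Finally, a transfinite induction on $\beta$ gives $\mdset{\gamma\cdot\beta}{\eps\eps'}{\cball{E\dual}}\subseteq\mdset{\beta}{\eps'}{\cball{E\dual}}$ for all $\beta$: the case $\beta=0$ is an equality; for limit $\beta$ one uses $\gamma\cdot\beta = \sup_{\xi<\beta}\gamma\cdot\xi$, so that $\mdset{\gamma\cdot\beta}{\eps\eps'}{\cball{E\dual}} = \bigcap_{\xi<\beta}\mdset{\gamma\cdot\xi}{\eps\eps'}{\cball{E\dual}}$, together with the inductive hypothesis; and for $\beta=\xi+1$ one writes $\mdset{\gamma\cdot(\xi+1)}{\eps\eps'}{\cball{E\dual}} = \mdset{\gamma}{\eps\eps'}{\mdset{\gamma\cdot\xi}{\eps\eps'}{\cball{E\dual}}}$ via iteration-additivity, uses the inductive hypothesis and monotonicity to bound this by $\mdset{\gamma}{\eps\eps'}{\mdset{\xi}{\eps'}{\cball{E\dual}}}$, and then applies the claim with $L=\mdset{\xi}{\eps'}{\cball{E\dual}}$ to land in $\mdset{\mbox{}}{\eps'}{\mdset{\xi}{\eps'}{\cball{E\dual}}} = \mdset{\xi+1}{\eps'}{\cball{E\dual}}$. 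Taking $\beta=\delta$ yields $\mdset{\gamma\cdot\delta}{\eps\eps'}{\cball{E\dual}}\subseteq\mdset{\delta}{\eps'}{\cball{E\dual}} = \emptyset$, which is what we wanted.
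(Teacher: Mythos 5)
The paper does not prove this statement itself — it cites it to \cite[p.212]{Lancien2006} — but your argument is correct and is precisely the ``homogeneity argument'' that the paper (in Section~\ref{closeone}) attributes to Lancien: a diameter-$\eps'$ slice of $\cball{E\dual}$ sits inside a translate of $\eps'\cball{E\dual}$, on which the $\eps\eps'$-derivation rescales to the $\eps$-derivation on $\cball{E\dual}$, and the ordinal-product bound then falls out of a transfinite induction. One small presentational point: in the chain establishing the key claim you apply $s^{\gamma}_{\eps\eps'}$ to the non-$w\dual$-compact set $L\cap V$; this is harmless for the containment (the derivations and your monotonicity/localization lemmas make sense on arbitrary bounded sets), but you could avoid it entirely by proving in one step, by the same induction, that $s^{\alpha}_{t}(L)\cap V\subseteq s^{\alpha}_{t}(M)$ whenever $L\cap V\subseteq M$ with $L$, $M$ $w\dual$-compact and $V$ $w\dual$-open.
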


Some remarks concerning the use of Proposition~\ref{mubsult} are in order. Suppose that $E$ is an infinite-dimensional Asplund space and let $\alpha$ denote the unique ordinal satisfying $\mszlenk{E}=\omega^{\alpha}$. The submultiplicity of the $\eps$-Szlenk index seems to be of use in analysis of $E$ only in the case that the ordinal $\omega^{\alpha}$ is closed under ordinal multiplication, which is true if and only if $\alpha$ is closed under ordinal addition, which is true if and only if $\alpha = \omega^\beta$ for some ordinal $\beta$. Indeed, suppose that $\alpha$ is \emph{not} a power of $\omega$; then there is $\gamma <\alpha$ such that $\gamma \cdot 2 \geqslant \alpha$. Let $\eps$ be so small that $\meeszlenk{\eps}{E}\geqslant \omega^\gamma$. Then for $0<\eps' , \eps'' \leqslant \eps $ we have
\begin{equation*}
\meeszlenk{\eps' \eps''}{E} \leqslant \omega^\alpha \leqslant \omega^{\gamma \cdot 2}\leqslant \meeszlenk{\eps'}{E} \cdot \meeszlenk{\eps''}{E},
\end{equation*}
so that submultiplicity of the $\eps$-Szlenk index of $E$ is essentially trivial in this case. In particular, in this case the submultiplicity of the $\eps$-Szlenk index of $E$ does not yield any information regarding the growth of $\meeszlenk{\eps}{E}$ as $\eps$ goes to zero. By contrast, if $\alpha=\omega^\beta$ for some $\beta$, then it is possible to use the submultiplicity of the $\eps$-Szlenk index to obtain a certain growth condition on $\meeszlenk{\eps}{E}$, and similar growth conditions on the $\eps$-Szlenk indices of operators in $\Op (\szl_\alpha)$ (see Proposition~\ref{tunkyfown} below). By constructing an element of $\szlenkop{\alpha}$ that cannot satisfy any such growth condition, we will show that the containment $\Op (\szl_\alpha)\subseteq \szlenkop{\alpha}$ is proper.

\begin{definition}
Let $\beta$ be an ordinal of countable cofinality. A sequence $(\beta_n)_{n\in \nat}$ in $\omega^\beta$ is called a \emph{superadditive cofinal sequence for $\omega^\beta$} if $\set{\beta_n \mid n\in \nat}$ is cofinal in $\omega^\beta$ and $\beta_{n_1}+\beta_{n_2}\leqslant \beta_{n_1+n_2}$ for all $n_1, \, n_2 \in \nat$ (including when $n_1 = n_2$).
\end{definition}

It is easy to see that each ordinal $\beta$ of countable cofinality admits a superadditive cofinal sequence for $\omega^\beta$. Indeed, for such an ordinal $\beta$ we have that $\omega^\beta$ is also of countable cofinality, and so we may choose a sequence $(\gamma_m)_{m\in\nat}$ in $\omega^\beta$ such that $\set{\gamma_m\mid m\in \nat}$ is cofinal in $\omega^\beta$. It is then straightforward to inductively define a strictly increasing sequence $(m_n)_{n\in \nat}$ in $\nat$ such that $(\beta_n = \gamma_{m_n})_{n\in \nat}$ is a superadditive cofinal sequence for $\omega^\beta$.

For a nonzero ordinal $\beta$ of countable cofinality, the following proposition establishes a necessary condition for membership of the operator ideal $\Op (\szl_{\omega^\beta})$, and will be used in the proof of Theorem~\ref{nonfactor}. The proposition asserts that elements of $\Op (\szl_{\omega^\beta})$ must possess a certain restricted-growth property defined in terms of arbitrary superadditive cofinal sequences for $\omega^\beta$.

\begin{proposition}\label{tunkyfown}
Let $\beta$ be a nonzero ordinal of countable cofinality. For each $T\in \Op (\szl_{\omega^\beta})$ and superadditive cofinal sequence for $\omega^\beta$, $(\beta_n)_{n\in \nat}$ say, there exists $n_0\in\nat$ such that\begin{equation*}
\meeszlenk{1/2^n}{T}\leqslant \omega^{\beta_{n_0\cdot n}}
\end{equation*} for all $n\in\nat$.
\end{proposition}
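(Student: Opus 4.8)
The plan is to use the factorization that defines membership of $\Op(\szl_{\omega^\beta})$, push the $\eps$-Szlenk estimate down onto the intermediate space, and then play Lancien's submultiplicity inequality (Proposition~\ref{mubsult}) against the superadditivity of $(\beta_n)$.

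First I would reduce to the factor space. Assume $T\neq 0$ (the statement is trivial otherwise), and fix a factorization $T=BA$ with $A\in\allop(E,G)$ and $B\in\allop(G,F)$ both nonzero and $\mszlenk{G}\leqslant\omega^{\omega^\beta}$. Since $T\dual\cball{F\dual}=A\dual(B\dual\cball{F\dual})$ and $B\dual\cball{F\dual}\subseteq\norm{B}\cball{G\dual}$, Lemma~\ref{poopoopoo} together with the obvious behaviour of $s^\alpha_\eps$ under set inclusion and under scaling of the underlying set gives
\[
s^\alpha_\eps\big(T\dual\cball{F\dual}\big)\subseteq\norm{B}\,A\dual\Big(s^\alpha_{\eps/(2\norm{A}\norm{B})}\big(\cball{G\dual}\big)\Big)
\]
for every ordinal $\alpha$ and every $\eps>0$, whence $\meeszlenk{\eps}{T}\leqslant\meeszlenk{\eps/(2\norm{A}\norm{B})}{G}$. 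Fixing $k\in\nat$ with $2^k\geqslant 2\norm{A}\norm{B}$ and using that $\meeszlenk{\cdot}{G}$ is non-increasing in its first argument, this yields $\meeszlenk{1/2^n}{T}\leqslant\meeszlenk{1/2^{n+k}}{G}$ for all $n\in\nat$.

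Next I would bound the right-hand side in terms of $(\beta_n)$. By Proposition~\ref{mubsult} and an induction on $m$ one gets $\meeszlenk{1/2^m}{G}\leqslant(\meeszlenk{1/2}{G})^m$. The ordinal $\meeszlenk{1/2}{G}$ is a successor (by $w\dual$-compactness) and satisfies $\meeszlenk{1/2}{G}\leqslant\mszlenk{G}\leqslant\omega^{\omega^\beta}$; since $\beta\neq 0$ forces $\omega^{\omega^\beta}$ to be a limit ordinal, in fact $\meeszlenk{1/2}{G}<\omega^{\omega^\beta}$. As $(\beta_n)$ is cofinal in $\omega^\beta$ and $\gamma\mapsto\omega^\gamma$ is a normal function, $(\omega^{\beta_n})_{n}$ is cofinal in $\omega^{\omega^\beta}$, so there is $n_1\in\nat$ with $\meeszlenk{1/2}{G}\leqslant\omega^{\beta_{n_1}}$. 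Hence $\meeszlenk{1/2^m}{G}\leqslant(\omega^{\beta_{n_1}})^m=\omega^{\beta_{n_1}\cdot m}$, and a further induction using superadditivity (namely $\beta_{n_1}\cdot(m+1)=\beta_{n_1}\cdot m+\beta_{n_1}\leqslant\beta_{n_1 m}+\beta_{n_1}\leqslant\beta_{n_1(m+1)}$) gives $\meeszlenk{1/2^m}{G}\leqslant\omega^{\beta_{n_1 m}}$.

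Finally I would combine the two estimates: $\meeszlenk{1/2^n}{T}\leqslant\omega^{\beta_{n_1(n+k)}}$ for all $n\in\nat$. Superadditivity also forces $(\beta_n)$ to be non-decreasing (from $\beta_{n+j}\geqslant\beta_j+\beta_n\geqslant\beta_n$), and for $n\geqslant 1$ one has $n_1(n+k)\leqslant n_1(1+k)n$; so setting $n_0:=n_1(1+k)$ gives $\meeszlenk{1/2^n}{T}\leqslant\omega^{\beta_{n_0\cdot n}}$ for all $n$, which is the claim. The genuinely delicate points are the first reduction — one must verify the scaling and monotonicity manipulations that absorb the constant $2\norm{A}\norm{B}$ into the fixed index shift $k$ — and the final bookkeeping, which merges the two constants $k$ and $n_1$ into the single multiplicative constant $n_0$; both are routine once one notices that superadditivity makes $(\beta_n)$ non-decreasing and makes $\beta_n\cdot m\leqslant\beta_{nm}$.
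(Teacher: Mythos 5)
Your proof is correct and follows essentially the same route as the paper's: factor $T$ through a space $D\in\szl_{\omega^\beta}$, push the $\eps$-Szlenk estimate onto $D$ via Lemma~\ref{poopoopoo}, then combine Lancien's submultiplicity (Proposition~\ref{mubsult}) with the superadditivity of $(\beta_n)$. The only cosmetic difference is that the paper normalizes $\norm{B}\leqslant 1$ and absorbs the remaining constant into a single index $s$ with $\meeszlenk{1/(2\norm{A})}{D}\leqslant\omega^{\beta_s}$ (yielding $n_0=s+t$), whereas you round $2\norm{A}\norm{B}$ up to a power of $2$ and fold the resulting index shift $k$ into the multiplicative constant $n_0=n_1(1+k)$; both bookkeeping schemes are correct.
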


\begin{proof}
The result if trivial if $T=0$, so we assume henceforth that $T\neq 0$. Let $D$, $E$ and $F$ be Banach spaces and $A\in \allop (E, \, D)$ and $B\in \allop (D, \, F)$ operators such that $D\in \szl_{\omega^\beta}$, $T=BA$ and, without loss of generality, $\norm{B} \leqslant 1$. The bound $\norm{B}\leqslant 1$ and Lemma~\ref{poopoopoo} ensure that \begin{equation}\label{mablefable} \forall \, \eps >0 \quad \meeszlenk{\eps}{T}\leqslant \meeszlenk{\eps}{A}\leqslant \meeszlenk{\eps / (2\norm{A})}{D} \, .\end{equation}

Let $s \in \big\{n\in \nat \mid \meeszlenk{1/(2\norm{A})}{D}\leqslant \omega^{\beta_n}\big\}$, $t\in \set{n\in \nat \mid \meeszlenk{1/2}{D}\leqslant \omega^{\beta_n}}$ and set $n_0 = s+t$ (our assumption that $\beta>0$ guarantees the existence of such $s$ and $t$). By Proposition~\ref{mubsult} and (\ref{mablefable}), for each $n\in \nat$ we have
\begin{align*}
\meeszlenk{1/2^n}{T}\leqslant \meeszlenk{1/(2^{n+1}\Vert A\Vert)}{D} \leqslant \meeszlenk{1/(2\norm{A})}{D} \cdot \big(\, \meeszlenk{1/2}{D} \big)^n &\leqslant \omega^{\beta_s} \cdot \omega^{\beta_t \cdot n} \\ &\leqslant \omega^{\beta_{s+tn}}\\ &\leqslant \omega^{\beta_{n_0\cdot n}}\, . \qedhere
\end{align*}
\end{proof}

For the remainder of this section, let $r=0$ or $1<r<\infty$ be fixed. We now detail a construction (inspired by Szlenk's construction in \cite{Szlenk1968} of a family of Banach spaces whose Szlenk indices are unbounded in $\omega_1$) that takes a given operator $T$ and yields an operator $T_{\alpha}$ for each ordinal $\alpha$ in such a way that $T_{0} =T$ and $T_{\alpha}$ is obtained via direct sums of predecessors in the construction. For Banach spaces $D$ and $G$ and an operator $S\in \allop(D, \, G)$, we write $S_{[n]} = (\bigoplus_{i=1}^nS)_1$ for each $n\in \nat$ and $S_{+} = (\bigoplus_{n\in\nat}S_{[n]})_r$. 

\begin{construction}\label{szlenkvar} For Banach spaces $E$ and $F$ and $T\in \allop (E, \, F)$, define $T_{0} = T$, $T_{\alpha+1} = (T_{\alpha})_{+}$ for each ordinal $\alpha$ and $T_{\alpha} = (\bigoplus_{\xi <\alpha}T_{\xi})_r$ whenever $\alpha$ is a limit ordinal. 
\end{construction}

With respect to Construction~\ref{szlenkvar}, note that $\norm{T_{\alpha}} = \norm{T}$ for all ordinals $\alpha$. Our counterexamples to the factorization property shall be obtained as direct sums of operators obtained via this construction. For this we shall require some estimates on the Szlenk and $\eps$-Szlenk indices of the operators $T_\alpha$ in terms of $\mszlenk{T}$. 

For a noncompact Asplund operator $T$, let $\alpha_T$ denote the unique ordinal satisfying $\mszlenk{T} = \omega^{\alpha_T}$. Then we may write $\alpha_T = \eta_T +\omega^{\zeta_T}$, where $\zeta_T$ is uniquely determined by the Cantor normal form of $\alpha_T$ and $\eta_T = \inf \set{\eta \mid \alpha_T= \eta + \omega^{\zeta_T}}$. The following result gives the required estimates of $\mszlenk{T_\alpha}$ and $\meeszlenk{\eps}{T_\alpha}$, $\alpha \in \ord$.

\begin{proposition}\label{babysitter}
Let $T$ be a noncompact Asplund operator.

\nr{i} Suppose $\eps >0$ is so small that $\meeszlenk{\eps}{T}>\omega^{\eta_T}$. Then $\meeszlenk{\eps}{T_\alpha}>\omega^{\eta_T +\alpha}$ for every ordinal $\alpha$.

\nr{ii} $\mszlenk{T_{\alpha}} = \mszlenk{T}$ for all $\alpha<\omega^{\zeta_T}$.
\end{proposition}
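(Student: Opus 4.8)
### Proof proposal

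The plan is to establish both assertions by transfinite induction on $\alpha$, with the $\eps$-Szlenk estimate in (i) as the workhorse and (ii) following as a corollary once we control how $\omega^{\eta_T + \alpha}$ sits below $\omega^{\alpha_T}$ for $\alpha < \omega^{\zeta_T}$. For (i), the base case $\alpha = 0$ is the hypothesis $\meeszlenk{\eps}{T} > \omega^{\eta_T}$. The crucial step is the successor case: from $T_{\alpha+1} = (T_\alpha)_+ = (\bigoplus_{n\in\nat}(T_\alpha)_{[n]})_r$ I need to show that if $\meeszlenk{\eps}{T_\alpha} > \omega^{\eta_T + \alpha}$ then $\meeszlenk{\eps}{T_{\alpha+1}} > \omega^{\eta_T + \alpha + 1}$. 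Since $\eta_T + \alpha + 1 = (\eta_T + \alpha) + 1$, we have $\omega^{\eta_T+\alpha+1} = \omega^{\eta_T+\alpha}\cdot\omega = \sup_{m}\omega^{\eta_T+\alpha}\cdot m$, so it suffices to show that for every $m\in\nat$, $\meeszlenk{\eps'}{T_{\alpha+1}} \geqslant \omega^{\eta_T+\alpha}\cdot m$ for a suitable $\eps' \leqslant \eps$. Here I would invoke the direct-sum machinery from \cite{Brookera} (the same circle of ideas as Lemma~\ref{carbon} and Lemma~\ref{postdoc2}): taking the $\ell_1$-sum $(T_\alpha)_{[m]}$ of $m$ copies of $T_\alpha$, one stacks the $w\dual$-derivations, so $\mdset{\omega^{\eta_T+\alpha}\cdot m}{\eps/2}{((T_\alpha)_{[m]})\dual\cball{\phantom{}}} \neq \emptyset$ whenever each factor survives $\omega^{\eta_T+\alpha}$ derivations at level $\eps$; then using the canonical injection $U_{\{m\}}$ of $(T_\alpha)_{[m]}$ into $(T_\alpha)_+$ together with Lemma~\ref{tvl} (choosing a vector supported on the $m$-th block so its norm on $U_{\{m\}}\dual$ is near-maximal) transports this nonemptiness up to $T_{\alpha+1}$ at level $\eps/2$ or so. The limit case: if $\alpha$ is a limit ordinal, $T_\alpha = (\bigoplus_{\xi<\alpha}T_\xi)_r$ contains each $T_\xi$ as a summand via $U_{\{\xi\}}$, and by induction $\meeszlenk{\eps}{T_\xi} > \omega^{\eta_T+\xi}$; another application of Lemma~\ref{tvl} gives $\meeszlenk{\eps/2}{T_\alpha} \geqslant \meeszlenk{\eps}{T_\xi} > \omega^{\eta_T+\xi}$ for all $\xi < \alpha$, hence $\meeszlenk{\eps/2}{T_\alpha} \geqslant \sup_{\xi<\alpha}\omega^{\eta_T+\xi} = \omega^{\eta_T+\alpha}$, and since an $\eps$-Szlenk index of a $w\dual$-compact set is never a limit ordinal it is in fact strictly greater. (A small bookkeeping point: the repeated halving of $\eps$ across $\omega$-many successor steps is harmless because we only ever need a \emph{single} $\eps'>0$ at each fixed $\alpha$, not a uniform one; I will phrase (i) accordingly, or better, observe that Proposition~\ref{mubsult}-type rescaling lets one absorb finitely many halvings, and transfinitely one re-picks $\eps$ at limits using the $\sup_\eps$ definition.)

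For (ii), fix $\alpha < \omega^{\zeta_T}$. The inclusion $\mszlenk{T_\alpha} \geqslant \mszlenk{T}$: since $\alpha < \omega^{\zeta_T}$ and $\eta_T$ is closed under adding ordinals $<\omega^{\zeta_T}$ (that is the whole point of the Cantor-normal-form choice $\eta_T = \inf\{\eta : \alpha_T = \eta + \omega^{\zeta_T}\}$), we have $\eta_T + \alpha < \eta_T + \omega^{\zeta_T} = \alpha_T$, yet also $\eta_T + \alpha \geqslant \eta_T$; combining this with part (i), for each small $\eps$ we get $\meeszlenk{\eps}{T_\alpha} > \omega^{\eta_T + \alpha} \geqslant \omega^{\eta_T}$, and letting $\eps \to 0$ (so that $\eta_T + \alpha$ can be pushed arbitrarily close to $\alpha_T$ from below — this needs that $\bigcup_{\eps}\{\text{witnessing }\alpha\}$ is cofinal, which holds because $\mszlenk{T} = \omega^{\alpha_T}$ forces $\meeszlenk{\eps}{T}$ to be cofinal in $\omega^{\alpha_T}$ as $\eps\to 0$) yields $\mszlenk{T_\alpha} \geqslant \sup_{\alpha' < \omega^{\zeta_T}} \omega^{\eta_T + \alpha'}$. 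Since $\omega^{\zeta_T}$ is additively indecomposable and $\geqslant$ whatever $\alpha$ is, $\sup_{\alpha'<\omega^{\zeta_T}}(\eta_T+\alpha') = \eta_T + \omega^{\zeta_T} = \alpha_T$ when $\zeta_T$ is a limit or $\omega^{\zeta_T}$ is a limit ordinal; one checks the successor-$\zeta_T$ case separately (there $\omega^{\zeta_T} = \omega^{\zeta_T-1}\cdot\omega$ is still a limit, so the sup argument applies). For the reverse inequality $\mszlenk{T_\alpha} \leqslant \mszlenk{T}$, I would argue that $T_\alpha$ is built from $T$ by $\ell_1$- and $\ell_r$-direct sums of copies of (sums of) $T$, and by Proposition~\ref{collection}(v) and the direct-sum behaviour in \cite{Brookera}, an $\ell_p$- or $c_0$-sum of operators each with Szlenk index $\leqslant \omega^{\alpha_T}$ has Szlenk index $\leqslant \omega^{\alpha_T}$ provided the index set does not force a jump — which is exactly controlled by $\mszlenk{}$ being a power of $\omega$ here, and more precisely one shows $\mszlenk{S_+} \leqslant \mszlenk{S}\cdot\omega$ type bounds and iterates; since $\alpha < \omega^{\zeta_T}$, the number of "$+$" operations needed stays below the threshold at which $\mszlenk{}$ would exceed $\omega^{\alpha_T}$, because $\mszlenk{} = \omega^{\alpha_T}$ is closed under the relevant ordinal arithmetic precisely when $\alpha_T = \eta_T + \omega^{\zeta_T}$ and we stop before adding a full $\omega^{\zeta_T}$.

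The main obstacle I anticipate is the bookkeeping in the successor step of (i): correctly transferring "$\mdset{\delta}{\gamma}{K}\neq\emptyset$ for $K$ the dual ball of an $\ell_1$-sum of $m$ copies" to "$\mdset{\delta'}{\gamma'}{K'}\neq\emptyset$ for $K'$ the dual ball of the enveloping $\ell_r$-sum", with honest tracking of how the $\eps$'s shrink and how the ordinal $\omega^{\eta_T+\alpha}\cdot m$ assembles, all via Lemma~\ref{tvl} and the $\ell_1$-sum derivation lemmas of \cite{Brookera} — this is where the real content of \cite{Brookera} is used and where an off-by-one in the ordinal arithmetic (e.g. confusing $\omega^{\eta_T}\cdot\omega^\alpha$ with $\omega^{\eta_T+\alpha}$, which are equal, versus $\omega^{\eta_T+\alpha}\cdot m$ which is strictly smaller than $\omega^{\eta_T+\alpha+1}$) would break the induction. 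The secondary subtlety is the additive-indecomposability argument in (ii) identifying $\sup_{\alpha<\omega^{\zeta_T}}(\eta_T+\alpha)$ with $\alpha_T$, which must be handled with care when $\zeta_T = 0$ (then $\omega^{\zeta_T} = 1$, so there is no $\alpha$ with $0 \leqslant \alpha < 1$ other than $\alpha = 0$, and (ii) is vacuous beyond $\alpha=0$ — worth a remark).
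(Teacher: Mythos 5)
Your plan for part~(i) has a genuine gap: you shrink $\eps$ at each stage of the induction (both via Lemma~\ref{tvl} and via the $\ell_1$-sum machinery you cite), but the statement of (i) asserts the bound for the \emph{same} $\eps$ at every $\alpha$, and this is not cosmetic. The strict inequality $\meeszlenk{\eps}{T_\alpha}>\omega^{\eta_T+\alpha}$ at a limit ordinal $\alpha$ is obtained in the paper precisely by combining $\meeszlenk{\eps}{T_\alpha}\geqslant\sup_{\xi<\alpha}\meeszlenk{\eps}{T_\xi}\geqslant\omega^{\eta_T+\alpha}$ with the fact that an $\eps$-Szlenk index of a nonempty $w\dual$-compact set is never a limit ordinal; if you allow $\eps$ to vary with $\xi$, the sup on the left becomes $\mszlenk{T_\alpha}$, which \emph{can} be a limit ordinal, and the strictness is lost. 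Moreover, once you halve $\eps$ at each successor step, by stage $\omega$ you have no positive $\eps$ left, so even your weakened reformulation cannot be pushed through transfinitely. The fix is to use Lemma~\ref{samsung}, which gives $\prod_i s^\beta_\eps(K_i)\subseteq s^{\beta\cdot n}_\eps(\prod_i K_i)$ for $\ell_1$-sums with the \emph{same} $\eps$ on both sides, together with the elementary monotonicity $A\subseteq B\Rightarrow s^\beta_\eps(A)\subseteq s^\beta_\eps(B)$ (applied to $Q_n\dual$, which is an isometric $w\dual$-homeomorphic embedding sending $(T_\gamma)_{[n]}\dual\cball{}$ into $T_{\gamma+1}\dual\cball{}$); neither step perturbs $\eps$. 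Note also that the fixed-$\eps$ form of (i) is used later (Theorem~\ref{nonfactor}, Proposition~\ref{tool3}) and your weakened version would not suffice there.

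Your sketch of part~(ii) also does not work as stated. For the lower bound, observe simply that $T$ factors through $T_\alpha$, so $\mszlenk{T_\alpha}\geqslant\mszlenk{T}$; your route via (i) only yields $\mszlenk{T_\alpha}\geqslant\omega^{\eta_T+\alpha}+1$ for the particular $\alpha$ at hand, which is strictly less than $\omega^{\alpha_T}$, and letting $\eps\to 0$ does not increase the fixed $\alpha$ in the exponent. For the upper bound, iterating a crude estimate of the form $\mszlenk{S_+}\leqslant\mszlenk{S}\cdot\omega$ gives $\mszlenk{T_\alpha}\leqslant\omega^{\alpha_T+\alpha}$, which already exceeds $\omega^{\alpha_T}$ when $\alpha\geqslant 1$ — so this bound is too coarse by exactly the amount one is trying to control. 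The paper instead establishes, by transfinite induction with $\eps$ tracked throughout, the inequality $\meeszlenk{\eps}{T_\alpha}<\omega^{\nu_\eps+\alpha+1}$ where $\nu_\eps$ is a quantity strictly below $\alpha_T$ determined by the $\eps$-Szlenk indices of $T$; only at the very end is the sup over $\eps$ taken, and then the additive indecomposability of $\omega^{\zeta_T}$ (so $\nu+\omega^{\zeta_T}\leqslant\alpha_T$ whenever $\nu<\alpha_T$) absorbs the $\alpha+1$. The $\eps$-level bookkeeping here is not optional bookkeeping: it is what prevents the iteration from inflating the exponent by $\alpha$.
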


To prove part (i) of Proposition~\ref{babysitter}, we require the following lemma.

\begin{lemma}\label{samsung}
Let $E_1, \ldots , E_n$ be Banach spaces and $K_1 \subseteq E_1\dual, \ldots, K_n \subseteq E_n\dual$ $w\dual$-compact sets. Consider $\prod_{i=1}^n K_i$ as a subset of $(\bigoplus_{i=1}^n E_i)_1\dual  = (\bigoplus_{i=1}^n E_i\dual)_\infty$. Then for all $\eps > 0$, ordinals $\alpha $ and $1\leqslant j \leqslant n$,
\begin{equation}\label{wormy}
\textstyle K_1 \times \ldots \times K_{j-1} \times \mdset{\alpha}{\eps}{K_j} \times K_{j+1} \times \ldots \times K_n \subseteq \mdset{\alpha}{\eps}{\prod_{i=1}^n K_i}.
\end{equation}
It follows that for all $\eps > 0$ and ordinals $\alpha$,
\begin{equation}\label{wormfriend}
\textstyle \prod_{i=1}^n \mdset{\alpha}{\eps}{K_i} \subseteq \mdset{\alpha \cdot n}{\eps}{\prod_{i=1}^n K_i}.
\end{equation}
\end{lemma}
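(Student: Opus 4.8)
The plan is to establish (\ref{wormy}) by transfinite induction on $\alpha$ and then to deduce (\ref{wormfriend}) from it by a finite induction on the number of coordinates being derived. Before starting I would record one structural observation that trivialises the topological side of the argument: the relative $w\dual$-topology that $\prod_{i=1}^n K_i$ inherits from $(\bigoplus_{i=1}^n E_i)_1\dual = (\bigoplus_{i=1}^n E_i\dual)_\infty$ coincides with the product of the relative $w\dual$-topologies of the $K_i$. Indeed, each coordinate projection $y\mapsto y_i$ is $w\dual$-to-$w\dual$ continuous, since $\functional{y_i}{e_i}=\functional{y}{(0,\ldots,e_i,\ldots,0)}$ for $e_i\in E_i$, so the natural map from $\prod_{i=1}^n K_i$ (with its relative $w\dual$-topology) onto $\prod_{i=1}^n (K_i,w\dual)$ is a continuous bijection from a $w\dual$-compact space onto a Hausdorff space, hence a homeomorphism. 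As the norm on $(\bigoplus_{i=1}^n E_i\dual)_\infty$ is the maximum of the coordinate norms, basic relatively $w\dual$-open neighbourhoods in $\prod_{i=1}^n K_i$ may be taken of product form and distances may be computed coordinatewise.

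Granting this, (\ref{wormy}) is proved by induction on $\alpha$. The case $\alpha=0$ is the equality $\prod_{i=1}^n K_i=\mdset{0}{\eps}{\prod_{i=1}^n K_i}$, and at a limit ordinal both sides of (\ref{wormy}) are the intersections over $\gamma<\alpha$ of the corresponding sets, so the inclusion passes to the limit. For the successor step, assume (\ref{wormy}) holds for $\gamma$ and take $x=(x_1,\ldots,x_n)$ with $x_i\in K_i$ for $i\neq j$ and $x_j\in \mdset{\gamma+1}{\eps}{K_j}$. Since $x_j\in \mdset{\gamma}{\eps}{K_j}$, the induction hypothesis already gives $x\in \mdset{\gamma}{\eps}{\prod_{i=1}^n K_i}$. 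Now fix a $w\dual$-open set $W$ with $x\in W$ and, by the homeomorphism above, choose relatively $w\dual$-open sets $V_i\ni x_i$ in $K_i$ with $\prod_{i=1}^n V_i\subseteq W\cap\prod_{i=1}^n K_i$. As $\diam(\mdset{\gamma}{\eps}{K_j}\cap V_j)>\eps$, there are $u,v\in \mdset{\gamma}{\eps}{K_j}\cap V_j$ with $\norm{u-v}>\eps$; replacing the $j$-th coordinate of $x$ by $u$ and by $v$ produces two points lying in $\big(K_1\times\cdots\times\mdset{\gamma}{\eps}{K_j}\times\cdots\times K_n\big)\cap W$, hence in $\mdset{\gamma}{\eps}{\prod_{i=1}^n K_i}\cap W$ by the induction hypothesis, and at mutual distance $\norm{u-v}>\eps$. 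Thus $\diam\big(\mdset{\gamma}{\eps}{\prod_{i=1}^n K_i}\cap W\big)>\eps$ for every $w\dual$-open $W\ni x$, which together with $x\in \mdset{\gamma}{\eps}{\prod_{i=1}^n K_i}$ gives $x\in \mdset{\gamma+1}{\eps}{\prod_{i=1}^n K_i}$.

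For (\ref{wormfriend}) I would prove the slightly stronger statement that, for each $0\leqslant m\leqslant n$,
\[
\textstyle \prod_{i=1}^m \mdset{\alpha}{\eps}{K_i}\times\prod_{i=m+1}^n K_i\subseteq \mdset{\alpha\cdot m}{\eps}{\prod_{i=1}^n K_i},
\]
by induction on $m$, the case $m=n$ being (\ref{wormfriend}) and the case $m=0$ being trivial. For the inductive step one applies (\ref{wormy}) --- whose proof above is valid verbatim with the sets $K_i$ replaced by arbitrary $w\dual$-compact subsets of the respective duals, in particular by $\mdset{\alpha}{\eps}{K_i}$ for $i\leqslant m$ and $K_i$ for $i>m$; these derived sets are $w\dual$-compact because $s_\eps$ carries $w\dual$-compact sets to $w\dual$-compact sets, $s_\eps(K)$ being relatively $w\dual$-closed in $K$. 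Applying (\ref{wormy}) in this generality with $j=m+1$ yields
\[
\textstyle \prod_{i=1}^{m+1}\mdset{\alpha}{\eps}{K_i}\times\prod_{i=m+2}^n K_i\subseteq \mdset{\alpha}{\eps}{\,\prod_{i=1}^m\mdset{\alpha}{\eps}{K_i}\times\prod_{i=m+1}^n K_i\,},
\]
and the right-hand side is contained, by monotonicity of $\mdset{\alpha}{\eps}{\cdot}$ in its set argument and the induction hypothesis, in $\mdset{\alpha}{\eps}{\mdset{\alpha\cdot m}{\eps}{\prod_{i=1}^n K_i}}$, which equals $\mdset{\alpha\cdot m+\alpha}{\eps}{\prod_{i=1}^n K_i}=\mdset{\alpha\cdot(m+1)}{\eps}{\prod_{i=1}^n K_i}$ by the standard identity $\mdset{\zeta}{\eps}{\mdset{\delta}{\eps}{L}}=\mdset{\delta+\zeta}{\eps}{L}$. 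This closes the induction.

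I do not anticipate a genuine obstacle: once the product-topology identification is in hand the argument is a routine double induction, the first transfinite (on $\alpha$, for (\ref{wormy})) and the second finite (on the coordinate count $m$, for (\ref{wormfriend})). The only points that need a little care are (a) checking that $s_\eps$ preserves $w\dual$-compactness and is monotone in its set argument, which legitimises both the iteration of the derivations and the reapplication of (\ref{wormy}) to already-derived sets, and (b) keeping the two induction parameters properly separated, so that the composition identity $\mdset{\zeta}{\eps}{\mdset{\delta}{\eps}{L}}=\mdset{\delta+\zeta}{\eps}{L}$ is invoked with the ordinals in the correct order ($\delta$ first), giving $\alpha\cdot m+\alpha=\alpha\cdot(m+1)$ rather than $\alpha+\alpha\cdot m$.
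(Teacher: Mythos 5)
Your proof is correct and takes essentially the same route as the paper's: a transfinite induction on $\alpha$ for \eqref{wormy} using the product structure of the $w\dual$-topology and the $\ell_\infty$ norm to control diameters, followed by iterated applications of \eqref{wormy} to obtain \eqref{wormfriend}. The paper phrases the successor step as a direct diameter estimate $\diam(V\cap \mdset{\beta}{\eps}{\prod K_i}) \geqslant \diam(V_j\cap \mdset{\beta}{\eps}{K_j})>\eps$ and dispatches \eqref{wormfriend} with the terse ``$n$ applications of \eqref{wormy}''; your two-point unpacking of the diameter inequality and your explicit finite induction on $m$ (applying \eqref{wormy} to already-derived sets and invoking $\mdset{\zeta}{\eps}{\mdset{\delta}{\eps}{L}}=\mdset{\delta+\zeta}{\eps}{L}$) are just more spelled-out versions of the same ideas.
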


\begin{proof}
We prove (\ref{wormy}), with (\ref{wormfriend}) then following from $n$ applications of (\ref{wormy}). Trivially, (\ref{wormy}) holds for $\alpha =0$. We now suppose that $\beta$ is an ordinal such that (\ref{wormy}) holds for $\alpha =\beta$, and show that (\ref{wormy}) then holds for $\alpha = \beta +1$. Fix $j\in \set{1, \ldots , n}$. Let $(k_1, \ldots , k_n ) \in \prod_{i=1}^n K_i$ be such that $k_j \in \mdset{\beta+1}{\eps}{K_j}$ (if $\mdset{\beta+1}{\eps}{K_j}$ is empty then we are done) and let $V \ni (k_1, \ldots , k_n )$ be $w\dual$-open. Then there are $w\dual$-open sets $V_i \subseteq E_i\dual$, $1\leqslant i \leqslant n$, such that $(k_1, \ldots , k_n ) \in V_1 \times \ldots \times V_n \subseteq V$. For $1 \leqslant l \leqslant m \leqslant n$ we shall write $K^{l,\, m} = \prod_{i=l}^m K_i$ and $W^{l,\, m} = \prod_{i=l}^m (V_i\cap K_i)$. Assuming $1< j < n$ (the argument for the other two cases being similar), we have
\begin{eqnarray*} \textstyle
\diam \big(V\cap \mdset{\beta}{\eps}{\prod_{i=1}^n K_i} \big) &\geqslant&
\textstyle \diam \big(\big(\prod_{i=1}^n V_i \big)\cap \big(K^{1,\, j-1} \times \mdset{\beta}{\eps}{K_j} \times K^{j+1,\, n}\big)\big) \\
&=&\textstyle  \diam \big(W^{1,\, j-1} \times \big(V_j \cap \mdset{\beta}{\eps}{K_j}\big) \times W^{j+1,\, n} \big) \\
&\geqslant & \diam \big(V_j \cap \mdset{\beta}{\eps}{K_j}\big)\\ &>& \eps .
\end{eqnarray*}
It follows that $(k_1 , \ldots , k_n) \in \mdset{\beta +1}{\eps}{\prod_{i=1}^n K_i}$, thus (\ref{wormy}) holds for $\alpha = \beta+1$.

Now suppose that $\beta $ is a limit ordinal and that (\ref{wormy}) holds for every $\alpha < \beta$. Assuming once again, for notational convenience, that $1< j< n$, we have
\begin{eqnarray*}
K^{1, \, j-1} \times \mdset{\beta}{\eps}{K_j} \times K^{j+1, \, n}
&=&\textstyle K^{1, \, j-1} \times ( \bigcap_{\alpha< \beta} \mdset{\alpha}{\eps}{K_j} ) \times K^{j+1, \, n}\\
&=&\textstyle \bigcap_{\alpha < \beta} ( K^{1, \, j-1} \times \mdset{\alpha}{\eps}{K_j} \times K^{j+1, \, n}) \\
&\subseteq& \textstyle \bigcap_{\alpha < \beta} \mdset{\alpha}{\eps}{\prod_{i=1}^n K_i} \\
&=&\textstyle \mdset{\beta}{\eps}{\prod_{i=1}^n K_i}.
\end{eqnarray*}
The inductive proof is now complete.
\end{proof}

\begin{remark} The reverse inclusion to (\ref{reedideas}) also holds; this is achieved by substituting $\omega^\alpha$ in place of $\alpha$ in (\ref{wormy}) (see the statement of  Lemma~\ref{carbon}). \end{remark}

To prove Proposition~\ref{babysitter}(i), we fix $\eps$ and proceed via transfinite induction on $\alpha$. Part (i) is trivially true for $\alpha =0$. So suppose that (i) holds for some $\alpha = \gamma$; we show that it then holds for $\alpha = \gamma+1$. We have ${\rm Sz}_{\eps}\big( ({T_\gamma})_{[n]}\big)> \omega^{\eta_T +\gamma}\cdot n$ for all $n\in \nat$ by Lemma~\ref{samsung}, hence
\begin{equation*}
{\rm Sz}_{\eps}( T_{\gamma+1}) \geqslant \sup_{n\in\nat} {\rm Sz}_{\eps}\big( ({T_\gamma})_{[n]}\big) \geqslant \sup_{n\in \nat}\omega^{\eta_T +\gamma}\cdot n = \omega^{\eta_T +\gamma+1}\, .
\end{equation*}
As ${\rm Sz}_{\eps}( T_{\gamma+1})$ cannot be a limit ordinal, we conclude that ${\rm Sz}_{\eps}( T_{\gamma+1})> \omega^{\eta_T +\gamma+1}$. In particular, assertion (i) of Proposition~\ref{babysitter} passes to successor ordinals.

Now suppose that $\gamma$ is a limit ordinal and that assertion (i) of Proposition~\ref{babysitter} holds for all $\alpha<\gamma$. Then
\begin{equation*}
{\rm Sz}_{\eps}( T_{\gamma}) \geqslant \sup_{\alpha <\gamma} {\rm Sz}_{\eps}( {T_\alpha}) \geqslant \sup_{\alpha<\gamma}\omega^{\eta_T +\alpha} = \omega^{\eta_T +\gamma},
\end{equation*}
hence ${\rm Sz}_{\eps}( T_{\gamma}) > \omega^{\eta_T +\gamma}$. This concludes the inductive proof of Proposition~\ref{babysitter}(i).

The proof of assertion (ii) of Proposition~\ref{babysitter} will take substantially more effort. We proceed via a sequence of lemmas, giving proofs as necessary. We must first make a note of some convenient notation. For a set $\Lambda$, a family of Banach spaces $\set{E_\lambda \mid \lambda \in \Lambda}$, a family $\set{K_\lambda \subseteq E_\lambda\dual \mid \lambda \in \Lambda}$ of nonempty, absolutely convex, $w\dual$-compact sets satisfying $\sup_{\lambda \in \Lambda} \abs{K_\lambda} <\infty$, and $1\leqslant q <\infty$, we define
\begin{equation*}
B_q(K_\lambda \mid \lambda \in \Lambda):= \bigcup_{(a_\lambda)_{\lambda \in \Lambda} \in \cball{\ell_q(\Lambda)}} \prod_{\lambda\in\Lambda}a_\lambda K_\lambda \, ,
\end{equation*}
and always consider $B_q(K_\lambda \mid \lambda \in \Lambda)$ as a subset of $(\bigoplus_{\lambda \in \Lambda}E_\lambda)_p\dual$, where $p$ is predual to $q$. Such a set $B_q(K_\lambda \mid \lambda \in \Lambda)$ so defined is bounded and $w\dual$-compact. Indeed, if for each $\lambda \in \Lambda$ we let $T_\lambda : E_\lambda \To C((K_\lambda, \, w\dual))$ denote the operator that sends $x\in E_\lambda$ to the $w\dual$-continuous map $K_\lambda \To \field : k\mapsto \langle k, \, x\rangle$, then $B_q(K_\lambda \mid \lambda \in \Lambda)=(\bigoplus_{\lambda\in\Lambda}T_\lambda)_p\dual \cball{(\bigoplus_{\lambda\in\Lambda}C((K_\lambda, \, w\dual)))_p\dual}$.

Our immediate goal is to establish the following lemma.
\begin{lemma}\label{oldfrount}
Let $E_1, \ldots , E_n$ be Banach spaces, $K_1 \subseteq E_1\dual , \ldots , K_n\subseteq E_n\dual$ nonempty, absolutely convex, $w\dual$-compact sets, $\eps >0$, $\alpha$ a nonzero ordinal and $1\leqslant q < \infty$. If $s^{\omega^\alpha}_\eps (B_q(K_i\mid 1\leqslant i \leqslant n)) \neq \emptyset$, then for every $\delta \in (0, \, \eps)$ there is $i \leqslant n$ such that $\mdset{\omega^\alpha}{\delta}{K_i} \neq \emptyset$.
\end{lemma}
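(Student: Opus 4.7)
I would prove the lemma by induction on $n$. The base case $n = 1$ is immediate: since $K_1$ is absolutely convex, $B_q(K_1) = K_1$, and the conclusion follows from the monotonicity $s^{\omega^\alpha}_\delta(K_1) \supseteq s^{\omega^\alpha}_\eps(K_1)$ for $\delta < \eps$.

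For the inductive step, fix $x \in s^{\omega^\alpha}_\eps(B_q(K_i \mid 1 \leq i \leq n))$ and $\delta \in (0, \eps)$; write $M = |B_q(K_i \mid 1 \leq i \leq n)| = \max_i |K_i|$, choose $\delta' \in (\delta, \eps)$, and set $\rho = (\eps-\delta')/2$. The key structural observation is that $U_\arr\dual B_q(K_i \mid 1 \leq i \leq n) = B_q(K_i \mid i \in \arr)$ for every nonempty $\arr \subseteq \{1, \ldots, n\}$, so Lemma~\ref{tvl} applied with $K = B_q(K_i \mid 1 \leq i \leq n)$ says that $\|U_\arr\dual x\|^q > M^q - \rho^q$ implies $U_\arr\dual x \in s^{\omega^\alpha}_{\delta'}(B_q(K_i \mid i \in \arr))$. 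If some proper subset $\arr \subsetneq \{1, \ldots, n\}$ satisfies this inequality, then either $\arr = \{i\}$ is a singleton---in which case we directly obtain $x_i \in s^{\omega^\alpha}_{\delta'}(K_i) \subseteq s^{\omega^\alpha}_\delta(K_i)$---or $|\arr| \geq 2$ (relevant only when $n \geq 3$), and the inductive hypothesis applied to the smaller family $\{K_i \mid i \in \arr\}$ yields some $i \in \arr$ with $s^{\omega^\alpha}_\delta(K_i) \neq \emptyset$.

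The main obstacle is the ``diffuse'' case where $\|U_\arr\dual x\|^q \leq M^q - \rho^q$ for every proper subset $\arr \subsetneq \{1, \ldots, n\}$; note in particular that taking $\arr = \{i\}$ gives the coordinate bound $\|x_i\|^q \leq M^q - \rho^q$ for all $i$. Here the plan is to argue the contrapositive by contradiction: supposing $s^{\omega^\alpha}_\delta(K_i) = \emptyset$ for all $i$, prove $s^{\omega^\alpha}_\eps(B_q(K_i \mid 1 \leq i \leq n)) = \emptyset$. Since $\omega^\alpha$ is a limit ordinal for $\alpha \geq 1$, $w\dual$-compactness of the $K_i$ yields some $\beta < \omega^\alpha$ with $s^\beta_\delta(K_i) = \emptyset$ for all $i$. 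The goal is then to establish, by iterating Lemma~\ref{tvl} with $\arr = \{i\}$ across successive $\beta$-length stages of the derivation and using that $s^\beta_\delta(K_i') \subseteq s^\beta_\delta(K_i) = \emptyset$ for any $w\dual$-closed absolutely convex $K_i' \subseteq K_i$, that $s^{\beta \cdot m}_\eps(B_q(K_i \mid 1 \leq i \leq n)) = \emptyset$ for some $m \in \nat$. Since $\beta \cdot m < \omega^\alpha$ for every $m \in \nat$ by the additive indecomposability of $\omega^\alpha$, this would contradict the hypothesis $s^{\omega^\alpha}_\eps(B_q(K_i \mid 1 \leq i \leq n)) \neq \emptyset$ and complete the induction. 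Carrying out this iteration so that the coordinate-norm bounds genuinely tighten at each stage (rather than stabilising) is the technical crux that the proof must resolve.
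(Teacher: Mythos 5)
Your base case and the ``concentrated'' branch of the inductive step are fine (and the observation $U_\arr\dual B_q(K_i\mid 1\leqslant i\leqslant n)=B_q(K_i\mid i\in\arr)$ is correct), but the argument does not close: the diffuse case, which you yourself flag as ``the technical crux that the proof must resolve'', is precisely where the content of the lemma lives, and the iteration you sketch there does not go through as described. The obstruction is that Lemma~\ref{tvl} applied with singletons $\arr=\{i\}$ only yields per-coordinate bounds $\Vert x_i\Vert^q\leqslant M^q-\rho^q$ for elements surviving a $\beta$-length stage of the derivation, and for $n\geqslant 2$ these do not combine into any decrease of the total norm $\Vert x\Vert^q=\sum_{i=1}^n\Vert x_i\Vert^q$ (they give only $\Vert x\Vert^q\leqslant n(M^q-\rho^q)$, which is vacuous unless $\rho$ is comparable to $M$). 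Worse, after one stage the ambient set $s^\beta_\eps(B_q(K_i\mid 1\leqslant i\leqslant n))$ is no longer of the form $B_q(\,\cdot\,)$, so neither your structural identity for $U_\arr\dual$ nor the inductive hypothesis applies to it, and there is no mechanism forcing the bounds to tighten from one stage to the next. This is exactly why Lemma~\ref{postdoc2} requires the vanishing of $s^{\eta_\delta}_\delta(U_\eff\dual K)$ for \emph{all} $\eff\in\Lambda\fin$ rather than just singletons; for a finite index set that hypothesis already includes $\eff=\{1,\ldots,n\}$, i.e.\ the full norm, so it cannot be fed by coordinatewise information alone.

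The paper's proof takes a different route that sidesteps the dichotomy entirely. Using absolute convexity, Sublemma~\ref{lecondsast} covers $B_q(K_i\mid 1\leqslant i\leqslant n)$ by a \emph{finite} union of products $\prod_{i=1}^n\frac{k_i}{l}K_i$ with $(k_i)\in\nat^n\cap(l+n^{1/q})\cball{\ell_q^n}$; since $\omega^\alpha$ is a limit ordinal, Sublemma~\ref{unionlemma}(ii) transfers nonemptiness of the $\omega^\alpha$-derived set to a single such product with no loss in $\eps$; a homogeneity rescaling by $\rho=(1+n^{1/q}/l)^{-1}$ brings the coefficients into $\cball{\ell_q^n}$; and Sublemma~\ref{carbonesque} (the product decomposition $s^{\omega^\alpha}_{\eps'}(\prod_{i}a_iK_i)\subseteq\bigcup\prod_i a_i\,\mdset{\omega^\alpha\cdot g_i}{\eps'}{K_i}$ over $g_1+\cdots+g_n=1$) then forces $\mdset{\omega^\alpha}{\rho\eps}{K_i}\neq\emptyset$ for some $i$, with $\rho\eps\geqslant\delta$ by the choice of $l$. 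If you wish to salvage an induction on $n$ along your lines, you will in effect have to reprove something equivalent to Sublemma~\ref{carbonesque}; as it stands, the proposal is an outline with its central step missing.
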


The proof of Lemma~\ref{oldfrount} is similar to that of \cite[Lemma~2.5]{Brookera} (though neither Lemma~\ref{oldfrount} above or \cite[Lemma~2.5]{Brookera} are strong enough to be used in place of the other). To prove Lemma~\ref{oldfrount}, we require the following three preliminary results; the first two sublemmas are proved in \cite[Lemma~3.5 and Lemma~3.1]{Brookera}, and the third we shall prove here.

\begin{sublemma}\label{lecondsast}
Let $E_1, \ldots , E_n$ be Banach spaces, $K_1\subseteq E_1\dual, \ldots , K_n\subseteq E_n\dual$ nonempty, absolutely convex, $w\dual$-compact sets, $1\leqslant q<\infty$ and $l\in\nat$. Let $L=\nat^n\cap (l+n^{1/q})\cball{\ell_q^n}$. Then\begin{equation*}  B_q(K_i\mid 1\leqslant i \leqslant n) \subseteq \bigcup_{(k_i)_{i=1}^n\in L} \,\prod_{i=1}^n \frac{k_i}{l}K_i \, . \end{equation*}
\end{sublemma}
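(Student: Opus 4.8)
The plan is to reduce Sublemma~\ref{lecondsast} to an elementary scalar estimate, exploiting that each $K_i$ is absolutely convex. Since $K_i$ is balanced, $aK_i = \abs{a}K_i$ for every scalar $a$; and since $K_i$ is convex and nonempty (so $0\in K_i$), one has $tK_i \subseteq sK_i$ whenever $0\leqslant t\leqslant s$. Consequently, for $(a_i)_{i=1}^n \in \cball{\ell_q^n}$ and $(k_i)_{i=1}^n\in\nat^n$ with $\abs{a_i}\leqslant k_i/l$ for each $i$, we get $\prod_{i=1}^n a_iK_i \subseteq \prod_{i=1}^n (k_i/l)K_i$ by coordinatewise containment. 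Hence it suffices to prove: for every $(a_i)_{i=1}^n \in \cball{\ell_q^n}$ there exists $(k_i)_{i=1}^n\in L$ with $\abs{a_i}\leqslant k_i/l$ for all $i$. Taking the union over all such $(a_i)$ and recalling the definition of $B_q(K_i\mid 1\leqslant i\leqslant n)$ as $\bigcup_{(a_i)\in\cball{\ell_q^n}}\prod_{i=1}^n a_iK_i$ then yields the claimed inclusion.

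To produce such a $(k_i)$, I would set $k_i := \max\set{1, \lceil l\abs{a_i}\rceil}$. Then $k_i\in\nat$ and $l\abs{a_i}\leqslant k_i$, so $\abs{a_i}\leqslant k_i/l$. It remains to check $(k_i)_{i=1}^n\in L$, i.e.\ $\norm{(k_i)_{i=1}^n}_{\ell_q^n}\leqslant l+n^{1/q}$. Since $\lceil l\abs{a_i}\rceil \leqslant l\abs{a_i}+1$ and also $1\leqslant l\abs{a_i}+1$, we have $k_i \leqslant l\abs{a_i}+1$ for every $i$. As all entries are nonnegative, monotonicity of the $\ell_q^n$-norm and Minkowski's inequality give
\begin{equation*}
\norm{(k_i)_{i=1}^n}_{\ell_q^n} \leqslant \norm{(l\abs{a_i})_{i=1}^n}_{\ell_q^n} + \norm{(1,\ldots,1)}_{\ell_q^n} = l\,\norm{(a_i)_{i=1}^n}_{\ell_q^n} + n^{1/q} \leqslant l + n^{1/q},
\end{equation*}
so $(k_i)_{i=1}^n\in L$, as required.

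There is essentially no obstacle here beyond bookkeeping: the only mild subtlety is the convention on $\nat$ and the treatment of vanishing coordinates $a_i=0$, both of which the $\max$ with $1$ handles at no cost to the norm estimate. It is worth noting that this is precisely the step where the slack term $n^{1/q}$ in the definition of $L$ is consumed: it absorbs the rounding error from the ceilings, one unit per coordinate as measured in the $\ell_q$-norm, so no significantly smaller perturbation of $l\cball{\ell_q^n}$ would work for arbitrary $(a_i)$.
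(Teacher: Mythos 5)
Your proof is correct, and it is essentially the natural argument; the paper itself defers the proof of this sublemma to \cite[Lemma~3.5]{Brookera}, but there is really only one reasonable approach here, and yours is it. The key steps you supply --- reducing to the scalar covering $\abs{a_i}\leqslant k_i/l$ via absolute convexity (so $aK_i=\abs{a}K_i$ and $tK_i\subseteq sK_i$ for $0\leqslant t\leqslant s$), choosing $k_i=\max\{1,\lceil l\abs{a_i}\rceil\}$, bounding $k_i\leqslant l\abs{a_i}+1$, and then invoking monotonicity of the $\ell_q^n$-norm on nonnegative vectors together with Minkowski's inequality to absorb the rounding slack into the $n^{1/q}$ term --- are all sound, and your closing remark correctly identifies $n^{1/q}$ as the exact allowance for one unit of ceiling error per coordinate.
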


\begin{sublemma}\label{unionlemma}
Let $E$ be a Banach space, $K_1, \ldots , K_n \subseteq E\dual$ $w\dual$-compact sets  and $\eps >0$. Let $\alpha$ be an ordinal and $m< \omega$. Then:

\nr{i} $\mdset{mn}{\eps}{\bigcup_{i=1}^nK_i} \subseteq \bigcup_{i=1}^n \mdset{m}{\eps}{K_i}$.

\nr{ii} If $\alpha$ is a limit ordinal, then $\mdset{\alpha}{\eps}{\bigcup_{i=1}^nK_i} \subseteq \bigcup_{i=1}^n \mdset{\alpha}{\eps}{K_i}$.
\end{sublemma}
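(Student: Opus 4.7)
The plan is to derive both parts from a single ``peel-off'' principle: for $w\dual$-compact $A, B \subseteq E\dual$ and any ordinal $\beta$,
\[
s^\beta_\eps(A \cup B) \subseteq s^\beta_\eps(A) \cup B.
\]
I would establish this by transfinite induction on $\beta$. The base case $\beta = 1$ is genuinely local and drives the whole argument: given $x \in s_\eps(A \cup B) \setminus B$, the $w\dual$-closedness of $B$ yields a $w\dual$-open neighbourhood $V_0$ of $x$ with $V_0 \cap B = \emptyset$, so that for every $w\dual$-open $V \subseteq V_0$ containing $x$ one has $V \cap (A \cup B) = V \cap A$; hence $\diam (V\cap A) > \eps$, i.e.\ $x \in s_\eps(A)$. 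The successor step applies this base case inside one $s_\eps$-iteration, and the limit step rests on the identity $\bigcap_{\gamma < \beta}(s^\gamma_\eps(A) \cup B) = s^\beta_\eps(A) \cup B$ (because $x \notin B$ forces membership in every $s^\gamma_\eps(A)$).

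With the peel-off in hand, part (i) follows by iteration. Applied to $\bigcup_{i=1}^n K_i = K_n \cup \bigcup_{i<n} K_i$ with $\beta = m$ it gives $s^m_\eps(\bigcup_i K_i) \subseteq s^m_\eps(K_n) \cup \bigcup_{i<n} K_i$; a second application of $s^m_\eps$ peels off $K_{n-1}$, and after $n$ applications (using the identity $s^{\gamma+\delta}_\eps = s^\delta_\eps \circ s^\gamma_\eps$, and in particular $s^{mn}_\eps = (s^m_\eps)^n$) one lands in $\bigcup_{i=1}^n s^m_\eps(K_i)$.

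For part (ii) I would use transfinite induction on the limit ordinal $\alpha$. The base case $\alpha = \omega$ combines (i) with the observation that $\bigcap_m \bigcup_i s^m_\eps(K_i) = \bigcup_i \bigcap_m s^m_\eps(K_i) = \bigcup_i s^\omega_\eps(K_i)$; this is a short pigeonhole using the nestedness of $(s^m_\eps(K_i))_m$ and the finiteness of the index set $\{1,\ldots,n\}$. For the inductive step, I split on the shape of $\alpha$: either $\alpha$ is a limit of limit ordinals, in which case the same nested-union interchange (fed by the inductive hypothesis at cofinally many limits $\beta < \alpha$) finishes the job; or the set of limit ordinals strictly below $\alpha$ has a maximum $\gamma$, in which case only successors lie strictly between $\gamma$ and $\alpha$ and one necessarily has $\alpha = \gamma + \omega$. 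Then $s^\alpha_\eps = s^\omega_\eps \circ s^\gamma_\eps$, the inductive hypothesis at $\gamma$ gives $s^\gamma_\eps(\bigcup_i K_i) \subseteq \bigcup_i s^\gamma_\eps(K_i)$, and applying the $\alpha = \omega$ case to the $w\dual$-compact sets $s^\gamma_\eps(K_i)$ yields $\bigcup_i s^{\gamma+\omega}_\eps(K_i) = \bigcup_i s^\alpha_\eps(K_i)$.

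The main obstacle is part (ii): the naive route of applying (i) with $m$ replaced by some $\beta < \alpha$ fails, because $\beta \cdot n$ can exceed $\alpha$ (for $\alpha = \omega \cdot 2$ and $\beta = \omega + 1$ one has $\beta \cdot 2 = \omega \cdot 2 + 1 > \alpha$), so $\{\beta < \alpha : \beta \cdot n < \alpha\}$ need not be cofinal in $\alpha$ and no single-cofinal-index pigeonhole is available. The case split on $\alpha$ sidesteps this by deferring everything to the multiplicatively well-behaved base case $\alpha = \omega$: either $\alpha$ is assembled from smaller limits (direct interchange) or it grows out of a smaller limit by a single ``$\omega$-step'' that is reducible to the base case via the inductive hypothesis.
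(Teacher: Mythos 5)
Your proof is correct. Note that the paper itself does not prove Sublemma~\ref{unionlemma} at all --- it defers to \cite[Lemma~3.1]{Brookera} --- so your argument is a self-contained substitute rather than a reproduction, and it holds up under scrutiny. The peel-off inclusion $s^\beta_\eps(A\cup B)\subseteq s^\beta_\eps(A)\cup B$ is sound at every stage: the base case needs only that $B$ is $w\dual$-closed (true, since all the sets involved are $w\dual$-compact and the derivations preserve $w\dual$-compactness); the successor step needs the routine monotonicity of $s_\eps$ together with the base case applied to $s^\beta_\eps(A)$ in place of $A$; and the limit step is the purely set-theoretic identity $\bigcap_{\gamma<\beta}(X_\gamma\cup B)=(\bigcap_{\gamma<\beta}X_\gamma)\cup B$. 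Iterating the peel-off $n$ times for (i) works because at each stage the previously derived sets $s^m_\eps(K_j)$ sit inside the ``$B$-slot'' and are left untouched, and $s^{m\cdot n}_\eps=(s^m_\eps)^n$ is the standard additivity $s^{\gamma+\delta}_\eps=s^\delta_\eps\circ s^\gamma_\eps$. For (ii), your case split is exhaustive (if the limit ordinals below $\alpha>\omega$ have no maximum, their supremum is a limit of limits and must equal $\alpha$; if they have a maximum $\gamma$, then indeed $\alpha=\gamma+\omega$), each branch reduces correctly to the $\omega$-case via the finite-pigeonhole interchange of a nested intersection with a finite union, and the obstruction you identify to the naive route (that $\beta\cdot n$ can overshoot $\alpha$, e.g.\ $(\omega+1)\cdot 2=\omega\cdot 2+1$) is genuine, so the detour through $\alpha=\gamma+\omega$ is exactly the right fix.
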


\begin{sublemma}\label{carbonesque}
Let $E_1, \ldots , E_n$ be Banach spaces and $K_1 \subseteq E_1\dual, \ldots , K_n \subseteq E_n\dual$ nonempty $w\dual$-compact sets. Let $1\leqslant q<\infty$ and $a_1 , \ldots , a_n \geqslant 0$ be real numbers such that $\sum_{i=1}^na_i^q \leqslant 1$. Let $p$ be predual to $q$ and consider $\prod_{i=1}^na_iK_i$ as a subset of $(\bigoplus_{i=1}^n E_i)_p\dual$. Then, for all $\eps >0$ and ordinals $\alpha$,
\begin{equation}
s^{\omega^\alpha}_{\eps}\bigg( \prod_{i=1}^na_iK_i\bigg) \subseteq  \bigcup_{\substack{g_1, \ldots , g_n <\,  \omega \\ g_1 + \ldots + g_n = \,1}} \hspace{0.5mm}\prod_{i=1}^na_i\mdset{\omega^\alpha \cdot g_i}{\eps}{K_i}\, .
\end{equation}
\end{sublemma}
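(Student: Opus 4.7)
\emph{Plan.} The proof is a transfinite induction on $\alpha$, broadly parallel to that of Lemma~\ref{carbon} but needing one new ingredient: in the $\ell_q$-dual setting with $1\leqslant q<\infty$ one cannot deduce from large $\ell_q$-oscillation of a tuple that some single coordinate has large oscillation; the hypothesis $\sum_{i=1}^n a_i^q \leqslant 1$ is exactly what bridges this gap.

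\smallskip\noindent\emph{Base case} ($\alpha = 0$, so $\omega^\alpha = 1$). Take $x = (x_i) \in s_\eps(\prod_{i=1}^n a_i K_i)$ and suppose, for a contradiction, that $x_i \notin a_i s_\eps(K_i)$ for every $i$ with $a_i > 0$ (if $a_i = 0$ then $x_i = 0$, $a_i K_i = \{0\}$ and the $i$th coordinate contributes diameter $0$). For each such $i$ choose a $w\dual$-open $W_i \ni a_i^{-1}x_i$ with $\diam(W_i \cap K_i) \leqslant \eps$ and put $U_i = a_i W_i$, so that $\diam(U_i \cap a_i K_i) \leqslant a_i \eps$; for $i$ with $a_i = 0$ take $U_i = E_i\dual$. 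Then $U = \prod_i U_i$ is a $w\dual$-open neighbourhood of $x$ in $(\bigoplus_i E_i)_p\dual$ and the $\ell_q$ formula for the dual norm gives
\[
\diam\Bigl(U \cap \prod_i a_i K_i \Bigr) \leqslant \Bigl(\sum_i (a_i \eps)^q \Bigr)^{1/q} = \eps\Bigl(\sum_i a_i^q \Bigr)^{1/q} \leqslant \eps,
\]
contradicting $x \in s_\eps(\prod_i a_i K_i)$. Hence some $i_0$ has $x_{i_0} \in a_{i_0} s_\eps(K_{i_0})$, which is the required inclusion with $g_{i_0}=1$ and $g_i = 0$ otherwise.

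\smallskip\noindent\emph{Inductive step.} Assume the sublemma holds for all ordinals below $\alpha$. For successor $\alpha = \beta+1$, first prove by an inner induction on $m \in \nat$ the auxiliary inclusion
\[
s^{\omega^\beta \cdot m}_\eps\Bigl( \prod_i a_i K_i \Bigr) \subseteq Z_m := \bigcup_{\substack{m_i < \omega \\ \sum_i m_i = m}} \prod_i a_i\, s^{\omega^\beta \cdot m_i}_\eps(K_i).
\]
The step $m \to m+1$ combines: (a) monotonicity of $s^\bullet_\eps$ in its argument to obtain $s^{\omega^\beta \cdot (m+1)}_\eps(\prod_i a_i K_i) \subseteq s^{\omega^\beta}_\eps(Z_m)$; (b) Sublemma~\ref{unionlemma} to distribute $s^{\omega^\beta}_\eps$ over the finite union $Z_m$; and (c) the outer IH applied to each factor $\prod_i a_i s^{\omega^\beta m_i}_\eps(K_i)$ of $Z_m$ (invoking the IH with $L_i := s^{\omega^\beta m_i}_\eps(K_i)$ in place of $K_i$). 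Since $\omega^{\beta+1} = \sup_m \omega^\beta \cdot m$ and hence $s^{\omega^{\beta+1}}_\eps = \bigcap_m s^{\omega^\beta \cdot m}_\eps$, any $x \in s^{\omega^{\beta+1}}_\eps(\prod_i a_i K_i)$ supplies for each $m$ a tuple $(m_i^{(m)})_i$ with $\sum_i m_i^{(m)} = m$ and $x_i \in a_i s^{\omega^\beta m_i^{(m)}}_\eps(K_i)$ for all $i$; pigeonhole on $\{1,\ldots,n\}$ picks some $i_0$ with $m_{i_0}^{(m)} \geqslant m/n$ for infinitely many $m$, and decreasing-in-$\xi$ monotonicity of $s^\xi_\eps(K_{i_0})$ then forces $x_{i_0} \in a_{i_0} s^{\omega^{\beta+1}}_\eps(K_{i_0})$. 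For $\alpha$ a limit ordinal, $\omega^\alpha = \sup_{\xi<\alpha}\omega^\xi$ and $s^{\omega^\alpha}_\eps = \bigcap_{\xi<\alpha} s^{\omega^\xi}_\eps$; the outer IH produces an index $i(\xi)$ for each $\xi < \alpha$, and pigeonhole plus cofinality selects a single $i_0$ working for every $\xi < \alpha$, delivering $x_{i_0} \in a_{i_0} s^{\omega^\alpha}_\eps(K_{i_0})$.

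\smallskip The main obstacle is the sub-case $\beta = 0$ of the successor step, where $\omega^\beta = 1$ fails to be a limit ordinal and Sublemma~\ref{unionlemma}(ii) is unavailable; one must route the distribution of $s_\eps$ over $Z_m$ through Sublemma~\ref{unionlemma}(i), which costs a multiplicative factor of $|Z_m|$ in the derivation count. This forces a tertiary induction producing finite ordinals $\rho_m < \omega$ with $s^{\rho_m}_\eps(\prod_i a_i K_i) \subseteq Z_m$; the monotone inclusion $s^\omega_\eps \subseteq s^{\rho_m}_\eps$ then places any $x \in s^\omega_\eps(\prod_i a_i K_i)$ inside every $Z_m$, after which the final pigeonhole argument runs as in the generic successor case.
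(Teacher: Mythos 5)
Your proof is correct and follows essentially the same route as the paper's: the base case via the $\ell_q$ computation of the diameter of a product neighbourhood (using $\sum a_i^q\leqslant 1$), an inner induction on $m$ reaching the union over tuples with $\sum_i m_i=m$ by combining Sublemma~\ref{unionlemma} with the outer induction hypothesis, and a final pigeonhole/cofinality argument at $\omega^{\beta+1}=\sup_m\omega^\beta\cdot m$ and at limits. The paper handles the $\beta=0$ cost of distributing the derivation over the finite union by running the inner induction along the triangular numbers $m_j=\sum_{t\leqslant j}t$ (for $n=2$), which is exactly your sequence $\rho_m<\omega$ in disguise.
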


\begin{proof}
We give the proof for the case $n=2$, the proof of the general case being similar. To minimize the use of subscripts, let $a=a_1$, $b=a_2$, $K=K_1$ and $L=K_2$; our goal is thus to show that for all ordinals $\alpha$ and $\eps>0$:
\begin{align}\label{aboutto}
s_\eps^{\omega^\alpha}(aK\times bL)\subseteq (as_\eps^{\omega^\alpha}(K)\times bL)\cup (aK\times bs_\eps^{\omega^\alpha}(L))\, .
\end{align}
We fix $\eps>0$ and proceed via induction on $\alpha$. To establish (\ref{aboutto}) for $\alpha=0$, let $k\in K$ and $l\in L$ be such that
\[
(ak, \, bl) \in (aK\times bL)\setminus ((as_\eps(K)\times bL)\cup (aK\times bs_\eps(L)))\, .
\]
Then there exist $w\dual$-open sets $U\ni k$ and $V\ni l$ such that $\diam (K\cap U)\leqslant \eps$ and $\diam (L\cap V)\leqslant \eps$. The $w\dual$-neighborhood $W:= aU\times bV \ni (ak, \, bl)$ satisfies
\[
\diam ((aK\times bL)\cap W)\leqslant \big((a\cdot \diam(K\cap U))^q+(b\cdot \diam (L\cap V))^q\big)^{1/q} \leqslant \eps\, ,
\]
hence $(ak, \, bl)\notin s_\eps (aK\times bL)$, as required.

Suppose that $\beta$ is an ordinal such that (\ref{aboutto}) holds for $\alpha = \beta$. For each $j\in\nat$ let $m_j = \sum_{t=1}^jt$. To establish (\ref{aboutto}) for $\alpha = \beta+1$, we first show that for all $j\in \nat$,
\begin{align}\label{dabumbumbum}
s_\eps^{\omega^\beta\cdot m_j}(aK\times bL) \subseteq \bigcup_{h=0}^j as_\eps^{\omega^\beta \cdot h}(K) \times bs_\eps^{\omega^\beta\cdot (j-h)}(L)\, .
\end{align}
For $j=1$, (\ref{dabumbumbum}) is the induction hypothesis that (\ref{aboutto}) holds for $\alpha=\beta$. For $j\in\nat$ such that (\ref{dabumbumbum}) holds, it follows by Sublemma~\ref{unionlemma} ((i) if $\beta=0$, (ii) if $\beta >0$) that
\begin{align*}
s_\eps^{\omega^\alpha\cdot m_{j+1}}(aK\times bL) &\subseteq s_\eps^{\omega^\beta\cdot (j+1)}\bigg( \bigcup_{h=0}^jas_\eps^{\omega^\beta\cdot h}(K)\times bs_\eps^{\omega^\beta\cdot(j-h)}(L) \bigg)\\
&\subseteq \bigcup_{h=0}^j s_\eps^{\omega^\beta} (as_\eps^{\omega^\beta\cdot h}(K)\times bs_\eps^{\omega^\beta \cdot (j-h)}(L))\\
&\subseteq \bigcup_{j=0}^{j+1}as_\eps^{\omega^\beta\cdot h}(K)\times bs_\eps^{\omega^\beta \cdot (j+1-h)}(L)\, ,
\end{align*}
hence (\ref{dabumbumbum}) holds for all $j\in \nat$ (by induction on $j$).

By (\ref{dabumbumbum}), for each $(x, \, y)\in s_\eps^{\omega^{\beta+1}}(aK\times bL) = \bigcap_{j\in\nat}s_\eps^{\omega^\beta\cdot m_j}(aK\times bL)$ we have:
\[
\forall \, m<\omega \quad \exists\,  i(m)\leqslant m \quad x\in s_\eps^{\omega^\beta \cdot i(m)}(K), \, y\in s_\eps^{\omega^\beta \cdot (m-i(m))}(L)\, .
\]
If $(i(m))_{m<\omega}$ is unbounded in $\omega$ then $x\in as_\eps^{\omega^{\beta+1}}(K)$, otherwise $(m-i(m))_{m<\omega}$ is unbounded in $\omega$ and $y\in bs_\eps^{\omega^{\beta+1}}(L)$. It follows that (\ref{aboutto}) passes to successor ordinals.

If $\beta$ is a limit ordinal and (\ref{aboutto}) holds for all $\alpha<\beta$, then a similar argument to that used above shows that for $(x, \, y) \in s_\eps^{\omega^\beta}(aK\times bL) = \bigcap_{\alpha<\beta}s_\eps^{\omega^\alpha}(aK\times bL)$ we have either $x\in as_\eps^{\omega^\beta}(K)$ or $y\in bs_\eps^{\omega^\beta}(L)$. In particular, (\ref{aboutto}) passes to limit ordinals. The inductive proof is now complete.
\end{proof}

\begin{proof}[Proof of Lemma~\ref{oldfrount}]
Fix $\delta \in (0, \, \eps)$. Let $l \geqslant \delta n^{1/q}(\eps -\delta)^{-1}$ be an integer and let $ L = \nat^n\cap (l+n^{1/q})\cball{\ell_q^n} $. By Sublemma~\ref{lecondsast} and the hypothesis of Lemma~\ref{oldfrount},
\begin{equation*}
s^{\omega^\alpha}_{\eps}\hspace{-0.5mm}\Bigg(\bigcup_{(k_i)\in L} \prod_{i=1}^n \frac{k_i}{l}K_i \Bigg) \supseteq s^{\omega^\alpha}_{\eps}(B_q(K_i\mid 1\leqslant i \leqslant n)) \supsetneq \emptyset\, .
\end{equation*}
Thus, since $L$ is finite and $\omega^\alpha$ is a limit ordinal, Sublemma~\ref{unionlemma}(ii) ensures the existence of $(h_i)_{i=1}^n\in L$ such that
\begin{equation}\label{weednaughter}
s^{\omega^\alpha}_{\eps}\bigg(\prod_{i=1}^n \frac{h_i}{l}K_i \bigg) \neq \emptyset \, .
\end{equation}
Let $\rho = (1+ \frac{n^{1/q}}{l})^{-1}$. By (\ref{weednaughter}) and the homogeneity of the derivations $s^\gamma_{\eps'}$ (where $\gamma$ is an ordinal and $\eps'>0$), we have
\begin{equation}\label{peedpaughter}
s^{\omega^\alpha}_{\rho \eps}\bigg(\prod_{i=1}^n \frac{\rho h_i}{l}K_i \bigg) = \rho s^{\omega^\alpha}_{\eps}\bigg(\prod_{i=1}^n \frac{h_i}{l}K_i \bigg) \neq \emptyset \, .
\end{equation}
Thus, since $ ||(\frac{\rho h_i}{l})_{i=1}^n||_{\ell_q^n} \leqslant 1$, it follows from (\ref{peedpaughter}) and Sublemma~\ref{carbonesque} that there is $i \leqslant n$ such that $\mdset{\omega^\alpha}{\rho \eps}{K_{i}}\neq \emptyset$. As $\rho\eps \geqslant\delta$, we have $\mdset{\omega^\alpha}{\delta}{K_{i}}\supseteq \mdset{\omega^\alpha}{\rho\eps}{K_{i}}\supsetneq \emptyset$.
\end{proof}

We will now prove Proposition~\ref{babysitter}(ii). Let $T$ be a noncompact Asplund operator, with $\mszlenk{T} = \omega^{\alpha_T}$ (c.f. the paragraph preceding Proposition~\ref{babysitter}). If $\alpha_T$ is a successor ordinal, then $\omega^{\zeta_T} = 1$, hence (ii) holds in this case since $T_0 = T$. 

Suppose $\alpha_T$ is a limit ordinal. For each $\eps >0$, let $\beta_\eps = \inf \set{\beta \mid \meeszlenk{\eps}{T}<\omega^\beta}$ and $\nu_\eps =\inf \set{\beta_\delta \mid 0< \delta< \eps}$ (note that $\beta_\eps$ and $ \nu_\eps$ exist for all $\eps >0$ since the set $\set{\omega^\beta \mid \beta<\alpha_T}$ is cofinal in $\omega^{\alpha_T}$). Our immediate goal is to show the following:
\begin{equation}\label{warnchood}
\forall \eps >0\quad  \forall \alpha \in \ord \quad {\rm Sz}_\eps (T_\alpha) < \omega^{\nu_\eps + \alpha +1} \, .
\end{equation}
We proceed by induction on $\alpha$. For $\eps >0$ we have \begin{equation*} {\rm Sz}_\eps (T_0) <\omega^{\beta_\eps} \leqslant \omega^{\nu_\eps}<\omega^{\nu_\eps +0+1}\, ,\end{equation*} hence the estimate of (\ref{warnchood}) holds for $\alpha =0$ and all $\eps >0$.

Now suppose that $\gamma$ is an ordinal such that the estimate of (\ref{warnchood}) holds for $\alpha = \gamma$ and all $\eps >0$; we will show that it then holds for $\alpha = \gamma+1$ and all $\eps >0$. By the induction hypothesis, for every $\eps >0$ we have ${\rm Sz}_\eps (T_\gamma)< \omega^{\nu_\eps +\gamma +1}$. It follows then by Lemma~\ref{carbon} that
\[
\forall \eps>0 \quad \forall n\in\nat \quad {\rm Sz}_\eps \big( (T_\gamma)_{[n]} \big) < \omega^{\nu_\eps +\gamma +1} \, .
\]
Thus, Lemma~\ref{oldfrount} yields
\begin{equation}\label{gicken}
\forall \eps > \rho >0 \quad \forall \eff \in \nat\fin \quad {\rm Sz}_\eps \Big( \big(\textstyle  \bigoplus_{n\in\eff}(T_\gamma)_{[n]} \big)_r \Big) < \omega^{\nu_\rho +\gamma+1} \, .
\end{equation}
Moreover, (\ref{gicken}) implies that
\begin{align}\label{postdoc7}
\forall \eps &> \rho >0 \quad \forall \eff \in \nat\fin \notag \\ & {\rm Sz}_\eps \Big( \big(\textstyle  \bigoplus_{n\in\eff}(T_\gamma)_{[n]} \big)_r \Big) \leqslant {\rm Sz}_{(\eps +\rho)/2} \Big( \big(\textstyle  \bigoplus_{n\in\eff}(T_\gamma)_{[n]} \big)_r \Big) < \omega^{\nu_\rho +\gamma+1} \, .
\end{align}
Let $D$ denote the domain of $T_{\gamma+1}$ and let $K=T_{\gamma+1}\dual \cball{D\dual}$, so that $s_{(\eps+\rho)/2}^{\omega^{\nu_\rho +\gamma +1}}(U_\eff\dual K)$ is empty for every $\eff\in\nat\fin$ by (\ref{postdoc7}) (here $U_\eff$ denotes the canonical embedding of the $\ell_r$-direct sum of the domains of the operators $(T_\gamma)_{[n]}$, $n\in\eff$, into the $\ell_r$-direct sum of the domains of the operators $(T_\gamma)_{[n]}$, $n\in\nat$). It follows then by an application of Lemma~\ref{postdoc2} with $\delta = (\eps+\rho)/2$ and $\eta_\delta = \omega^{\nu_\rho +\gamma +1}$ that
\begin{align}\label{postdoc4500}
\forall \eps > \rho& >0 \quad {\rm Sz}_\eps (T_{\gamma+1})\leqslant \omega^{\nu_\rho +\gamma +1} \cdot \sigma \big(\Vert T \Vert, \, \eps, \, (\eps +\rho)/2, \, r(r-1)^{-1}\big)\, .
\end{align}
For each $\eps >0$ there exists $\pi (\eps ) \in (0, \, \eps )$ such that $\nu_{\pi (\eps)} = \inf \{\nu_\rho \mid 0<\rho<\eps\}$. We have
\begin{equation}\label{gow}
\nu_{\pi (\eps )}  = \inf_{\rho \in (0, \, \eps )}\nu_\rho = \inf_{\rho\in(0, \, \eps)}\,  \inf_{\tau \in (0, \, \rho)}\beta_\tau = \inf_{\rho\in (0, \, \eps )}\beta_\rho = \nu_\eps \, ,
\end{equation}
and so from (\ref{gow}) and (\ref{postdoc4500}) (with $\rho = \pi (\eps )$) we have, for every $\eps >0$,
\[
{\rm Sz}_\eps ( T_{\gamma+1} ) < \omega^{\nu_\eps +\gamma +1} \cdot \sigma \big(\Vert T \Vert, \, \eps, \, (\eps +\pi(\eps))/2, \, r(r-1)^{-1}\big) < \omega^{\nu_\eps +(\gamma +1)+1}\, .
\]
In particular, the estimate of (\ref{warnchood}) passes to successor ordinals for every $\eps >0$.

Let $\gamma$ be a limit ordinal and suppose that the estimate of (\ref{warnchood}) holds for every $\alpha <\gamma$ and $\eps >0$. By Lemma~\ref{oldfrount} we have
\begin{equation}\label{postdoc9}
\forall \eps >\rho >0 \quad \forall \eff\in\gamma\fin \quad {\rm Sz}_\eps \Big( \big(\textstyle  \bigoplus_{\alpha\in\eff}{T_\alpha} \big)_r \Big) < \omega^{\nu_\rho +(\max \eff )+1} < \omega^{\nu_\rho +\gamma} \, .
\end{equation}
Moreover, (\ref{postdoc9}) implies that
\begin{align}\label{postdoc10}
\forall \eps >\rho >0& \quad \forall \eff\in\gamma\fin \notag \\ & {\rm Sz}_\eps \Big( \big(\textstyle  \bigoplus_{\alpha\in\eff}{T_\alpha} \big)_r \Big) \leqslant {\rm Sz}_{(\eps+\rho)/2} \Big( \big(\textstyle  \bigoplus_{\alpha\in\eff}{T_\alpha} \big)_r \Big) < \omega^{\nu_\rho +\gamma} \, .
\end{align}
Let $D$ denote the domain of $T_{\gamma}$ and let $K=T_{\gamma}\dual \cball{D\dual}$, so that $s_{(\eps+\rho)/2}^{\omega^{\nu_\rho +\gamma}}(U_\eff\dual K)$ is empty for every $\eff\in\nat\fin$ by (\ref{postdoc10}) (here $U_\eff$ denotes the canonical embedding of the $\ell_r$-direct sum of the domains of the operators $T_\alpha$, $\alpha\in\eff$, into the $\ell_r$-direct sum of the domains of the operators $T_\alpha$, $\alpha<\gamma$). It follows then by an application of Lemma~\ref{postdoc2} with $\delta = (\eps+\rho)/2$ and $\eta_\delta = \omega^{\nu_\rho +\gamma}$ that
\begin{align}\label{postdoc8}
\forall \eps > \rho& >0 \quad {\rm Sz}_\eps (T_{\gamma})\leqslant \omega^{\nu_\rho +\gamma } \cdot \sigma \big(\Vert T \Vert, \, \eps, \, (\eps +\rho)/2, \, r(r-1)^{-1}\big)\, .
\end{align}

With $\pi (\eps) \in (0, \, \eps )$ as above, taking $\rho=\pi(\eps)$ in (\ref{postdoc8}) yields
\[
\forall \eps >0 \quad {\rm Sz}_\eps ( T_{\gamma} ) < \omega^{\nu_\eps +\gamma} \cdot \sigma \big(\Vert T \Vert, \, \eps, \, (\eps +\pi(\eps))/2, \, r(r-1)^{-1}\big) < \omega^{\nu_\eps +\gamma+1}\, .
\]
This concludes the inductive proof of (\ref{warnchood}).

To complete the proof of Proposition~\ref{babysitter}, we now only need show how part (ii) follows from (\ref{warnchood}). On the one hand, it is clear from the construction that $T_\alpha$ factors $T$ for each ordinal $\alpha$, hence $\mszlenk{T_\alpha} \geqslant \mszlenk{T}$. On the other hand, if $\alpha <\omega^{\zeta_T}$ then by (\ref{warnchood}) and the fact that $\nu + \omega^{\zeta_T} \leqslant \alpha_T$ whenever $\nu < \alpha_T$,
\begin{equation*}
\mszlenk{T_\alpha} = \sup_{\eps >0}\meeszlenk{\eps }{T_\alpha} \leqslant \sup_{\eps >0}\omega^{\nu_\eps +\alpha +1} \leqslant \sup_{\eps >0}\omega^{\nu_\eps +\omega^{\zeta_T}} \leqslant \omega^{\alpha_T} = \mszlenk{T} \, . \qed
\end{equation*}

\begin{remark}\label{julyjulyjuly}
It is now easy to determine precisely the Szlenk index of the operators $T_\alpha$ in terms of $\alpha$ and $\alpha_T$. Indeed, if $T$ is a noncompact Asplund operator and $\alpha$ an ordinal, then the Szlenk index of $T_\alpha$ is given by the equation
\begin{equation}\label{barbie}
\mszlenk{T_\alpha} =
\begin{cases}
\omega^{\alpha_T}& \text{if $\alpha < \omega^{\zeta_T}$},\\
\omega^{\alpha_T \, + \, (-\omega^{\zeta_T} +\alpha) \, + \, 1}& \text{if $\alpha \geqslant\omega^{\zeta_T}$,}
\end{cases}
\end{equation}
where $-\omega^{\zeta_T} + \alpha$ denotes the unique ordinal order isomorphic to $\alpha \setminus \omega^{\zeta_T}$. To prove (\ref{barbie}), one proceeds via tranfinite induction, making use of the following fact: for a set $\Lambda$, a family of Asplund operators $\set{S_\lambda \mid \lambda \in \Lambda}$ with $\sup_{\lambda \in \Lambda}\norm{S_\lambda}<\infty$, $\beta$ an ordinal such that $\sup_{\lambda \in \Lambda}\mszlenk{S_\lambda} \leqslant \omega^\beta$ and $p=0$ or $1<p<\infty$, we have ${\rm Sz} \big(( \bigoplus_{\lambda \in \Lambda}S_\lambda)_p \big) \leqslant \omega^{\beta+1}$. This fact follows easily from Proposition~\ref{unifac}, also from the results of \cite{Brookera}. Similar arguments show that if Construction~\ref{szlenkvar} is applied to a nonzero \emph{compact} operator $T$, then for all $\alpha >0$ the Szlenk index of $T_\alpha$ is $\omega^{(-1+\alpha)+1}$, where $-1 +\alpha$ denotes the unique ordinal order isomorphic to $\alpha \setminus 1$. Moreover, in this case if $\alpha>0$ is of countable cofinality and $(\alpha_n)_n$ is a non-decreasing cofinal sequence in $\alpha$, it follows from the properties of Construction~\ref{szlenkvar} discussed above that $(\bigoplus_{i=1}^nT_{\alpha_n})_1\in \szlenkop{\alpha}$ for all $n$ and $(\bigoplus_{n\in\nat}(\bigoplus_{i=1}^nT_{\alpha_n})_1)_p \notin \szlenkop{\alpha}$ ($1<p<\infty$). In particular, if $\alpha$ is of countable cofinality, then $(\szlenkop{\alpha}, \, \szlenkop{\alpha})$ is not a $\Sigma_p$-pair. \end{remark}

We require the following result from \cite[Proposition~2.18]{Brookera}:
\begin{proposition}\label{revrevrev}
Let $\alpha$ be an ordinal of countable cofinality. Then there exists an operator of Szlenk index $\omega^\alpha$.
\end{proposition}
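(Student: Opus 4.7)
The plan is to handle the trivial cases first: for $\alpha=0$, the zero operator works; for $\alpha$ a successor, \cite[Proposition~2.16]{Brookera} gives a Banach space $E$ with $\mszlenk{E}=\omega^\alpha$, so $I_E$ suffices. So assume $\alpha$ is a limit ordinal of countable cofinality. I would choose a strictly increasing sequence of \emph{successor} ordinals $(\alpha_n)_{n\in\nat}$ with $\sup_n\alpha_n=\alpha$ (possible since successors are cofinal in any limit), and for each $n$ use \cite[Proposition~2.16]{Brookera} to obtain a Banach space $E_n$ with $\mszlenk{E_n}=\omega^{\alpha_n}$. Fix $1<p<\infty$, form $E=(\bigoplus_{n\in\nat}E_n)_p$, choose weights $\lambda_n\searrow 0$ (e.g.\ $\lambda_n=1/n$), and define the diagonal operator $T\in\allop(E)$ by $T((x_n)_n)=(\lambda_n x_n)_n$. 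The claim will be that $\mszlenk{T}=\omega^\alpha$.

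The lower bound is easy: for each $n$, composing $T$ with the coordinate projection $E\to E_n$ and canonical inclusion $E_n\to E$ produces $\lambda_n I_{E_n}$ as a factor, which has Szlenk index $\omega^{\alpha_n}$; by the operator ideal property (Theorem~\ref{idealthm}) this forces $\mszlenk{T}\geqslant \omega^{\alpha_n}$ for all $n$, and continuity of $\gamma\mapsto \omega^\gamma$ at limits yields $\mszlenk{T}\geqslant \sup_n\omega^{\alpha_n}=\omega^\alpha$. For the upper bound, fix $\eps>0$ and choose $N=N_\eps$ so that $\lambda_n<\eps/8$ for all $n>N$. The truncation $T_N=\bigoplus_{n\leqslant N}\lambda_n I_{E_n}$ (extended by zero on the tail) satisfies $\norm{T-T_N}\leqslant \eps/8$, so Lemma~\ref{weekendoz} applied with $K=T\dual \cball{E\dual}$ and $L=T_N\dual \cball{E\dual}$ yields $\meeszlenk{\eps}{T}\leqslant \meeszlenk{\eps/4}{T_N}\leqslant \mszlenk{T_N}$. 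But $T_N$ factors through the finite direct sum $(\bigoplus_{n\leqslant N}E_n)_p$, whose Szlenk index is $\omega^{\alpha_N}$ by repeated application of Proposition~\ref{collection}(v). Hence $\meeszlenk{\eps}{T}\leqslant \omega^{\alpha_N}<\omega^\alpha$ for every $\eps>0$, and taking the supremum gives $\mszlenk{T}\leqslant \omega^\alpha$; combining with the lower bound yields $\mszlenk{T}=\omega^\alpha$.

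The main obstacle is calibrating the weights so that Lemma~\ref{weekendoz} applies uniformly in $\eps$: the tail of $T$ must be controllable in operator norm, which is precisely why the choice $\lambda_n\searrow 0$ is essential. Once this calibration is in place, the $\eps$-Szlenk index of the infinite-dimensional $T$ is reduced to that of an ever-larger but always-finite truncation, whose Szlenk index is strictly below $\omega^\alpha$; the full Szlenk index of $T$ is then recovered as the supremum of these finite-dimensional bounds, which equals $\omega^\alpha$ exactly because of the cofinality of $(\alpha_n)$ in $\alpha$ together with the normality of ordinal exponentiation.
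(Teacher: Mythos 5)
The paper does not actually prove this proposition: it is imported verbatim from \cite[Proposition~2.18]{Brookera}. Your construction --- a weighted diagonal operator $\bigoplus_n \lambda_n I_{E_n}$ on an $\ell_p$-sum of spaces whose Szlenk indices $\omega^{\alpha_n}$ are cofinal in $\omega^\alpha$, with $\lambda_n\searrow 0$ so that each $\eps$-Szlenk index is controlled by a finite truncation --- is correct, and it is exactly the mechanism behind the cited result and behind the proof of Theorem~\ref{nonfactor} in this paper. A minor simplification for the upper bound: each truncation $T_N$ belongs to $\szlenkop{\alpha_N}\subseteq\szlenkop{\alpha}$ and $\norm{T-T_N}\to 0$, so the closedness of $\szlenkop{\alpha}$ (Theorem~\ref{idealthm}) yields $\mszlenk{T}\leqslant\omega^\alpha$ directly, without invoking Lemma~\ref{weekendoz} by hand.
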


At last, we are ready to prove Theorem~\ref{nonfactor}. For simplicity we shall assume $\beta>0$, but note that proof in the case of $\beta =0$ is achieved by similar arguments to those used here. In fact, a different proof altogether for the case $\beta = 0$ will be presented in Section~\ref{spacesect}, so there is no real loss for us in assuming $\beta$ nonzero. Moreover, there is a saving: we need not establish an analogue of Proposition~\ref{tunkyfown} for the case $\beta = 0$ (though it is not difficult to do so).

Let $\beta$ be a nonzero ordinal of countable cofinality and fix a superadditive cofinal sequence for $\omega^\beta$, which we denote $(\beta_n)_{n\in \nat}$. Since the necessary condition for membership of $\Op (\szl_{\omega^\beta})$ imposed by Proposition~\ref{tunkyfown} holds for an \emph{arbitrary} superadditive cofinal sequence for $\omega^\beta$, it suffices to construct an element of $\szl_{\omega^\beta}$ that fails this necessary condition for our fixed superadditive cofinal sequence $(\beta_n)_{n\in \nat}$. To this end, let $R$ be an operator such that $\mszlenk{R} = \omega^{\omega^\beta}$ (Proposition~\ref{revrevrev}) and note that $\mszlenk{m^{-1}R} = \omega^{\omega^\beta}$ for all $m\in\nat$. For each $m\in\nat$ let $s(m)\in\nat$ be so large that $\meeszlenk{1/2^{s(m)}}{m^{-1}R}>\omega^0 =1$, and let $W_m = (m^{-1}R)_{\beta_{s(m)^2}}$ (that is, $W_m$ is the $\beta_{s(m)^2}$th operator obtained in the application of Construction~\ref{szlenkvar} with initial operator $m^{-1}R$). Finally, set $W= (\bigoplus_{m\in\nat}W_m)_0$. To prove the theorem, we will show that $W \in \szlenkop{\omega^\beta} \setminus \Op (\szl_{\omega^\beta})$.

For each $m\in\nat$, let $E_m$ and $F_m$ denote the domain and codomain of $W_m$ respectively, so that $W \in \allop ((\bigoplus_{m\in\nat}E_m)_0, \, (\bigoplus_{m\in\nat}F_m)_0)$. Since $\beta_{s(m)^2}<\omega^\beta$ for every $m\in\nat$, it follows by Proposition~\ref{babysitter}(ii) that $W_m \in \szlenkop{\omega^\beta}$ for all $m$. For each $m\in \nat$, let $Z_m :=V_{\set{1, \ldots , m}}Q_{\set{1, \ldots , m}}W \in \szlenkop{\omega^\beta}((\bigoplus_{m\in\nat}E_m)_0, \, (\bigoplus_{m\in\nat}F_m)_0)$ (here $\set{1, \ldots , m}$ is considered a subset of the underlying index set of $(\bigoplus_{m\in\nat}F_m)_0$, and $V_{\set{1, \ldots , m}}$ and $Q_{\set{1, \ldots , m}}$ are as defined in Section~\ref{greatfamily}). Since \begin{equation*} \norm{W- Z_m} = \sup_{k>m}\norm{W_k} = (m+1)^{-1}\norm{R} \stackrel{m}{\rightarrow} 0\end{equation*} and $\szlenkop{\omega^\beta}$ is closed, it must be that $W\in \szlenkop{\omega^\beta}$.

On the other hand, by Proposition~\ref{babysitter}(i) we have that for each $m\in \nat$,
\begin{equation*}
{\rm Sz}_{1/2^{s(m)}}(W) \geqslant {\rm Sz}_{1/2^{s(m)}}(W_m) = {\rm Sz}_{1/2^{s(m)}}\Big((m^{-1}R)_{\beta_{s(m)^2}}\Big) > \omega^{\beta_{s(m)^2}} \, .
\end{equation*}
Moreover, since $\norm{m^{-1}R} \rightarrow 0$, it follows that $\set{s(m)\mid m\in\nat}$ is unbounded in $\nat$. Thus, for any $n_0\in\nat$ there is $m\in \nat$ such that $s(m) \geqslant  n_0$, and for such $m$ we have
\begin{equation*}
{\rm Sz}_{1/2^{s(m)}}(W) > \omega^{\beta_{s(m)^2}} \geqslant \omega^{\beta_{n_0 \cdot s(m)}} \, .
\end{equation*}
In particular, $W\notin \Op (\szl_{\omega^\beta})$ by Proposition~\ref{tunkyfown}. The proof of Theorem~\ref{nonfactor} is complete.

We now return to our earlier discussion regarding a universal function $\varphi$ of B.~Bossard (c.f. the paragraph following the statement of Theorem~\ref{weakfac}). The proof of Theorem~\ref{nonfactor} begins with an appeal to Proposition~\ref{revrevrev} for the existence of an operator $R$ having Szlenk index $\omega^{\omega^\beta}$. If $\beta<\omega_1$, then the construction of the operator $R$ provided by the proof of \cite[Proposition~2.18]{Brookera} (see Proposition~\ref{revrevrev} above) may be effected with the additional property that the domain and codomain of $R$ are both norm separable. Under this additional assumption, the domain and codomain of the operator $W$ constructed in the proof of Theorem~\ref{nonfactor} above are also both norm separable. Moreover, we have $\mszlenk{W} = \omega^{\omega^\beta}$ and $W \notin \Op (\szl_{\omega^\beta})$, hence $\varphi (\omega^{\omega^\beta}) > \omega^{\omega^\beta}$. Thus $\varphi$ necessarily exceeds the identity mapping of $\omega_1$ at every point of the uncountable set $\big\{\omega^{\omega^\beta}\mid \beta <\omega_1\big\}$.

\section{A class of space ideals associated with the Szlenk index}\label{spacesect}

In this section we consider a family of space ideals indexed by the class of ordinals. In particular, we shall consider the following classes, where $\alpha$ is an ordinal:
\begin{equation*}
\pzl_{\alpha}^0:= \big\{E\in \ban \,\big\vert\, \exists c\in (0, 1) \, \exists p\geqslant {1} \, \forall \, \eps \in (0, \, 1), \, s^{\omega^\alpha}_\eps (\cball{E\dual}) \subseteq (1-c\eps^p)\cball{E\dual}\big\}
\end{equation*}
and
\begin{equation*}
\pzl_{\alpha}:= \big\{E\in \ban \,\big\vert\, E\mbox{ is linearly isomorphic to some }F\in \pzl_\alpha^0\big\} \, .
\end{equation*}

The motivation for studying these classes is the following proposition, to be proved at the end of the current section.

\begin{proposition}\label{partdich}
Let $\alpha$ be an ordinal. Then at most one of the following two statements holds:

\nr{i} $\szl_{\alpha+1} = \pzl_\alpha$.

\nr{ii} $\szlenkop{\alpha+1}$ has the factorization property.
\end{proposition}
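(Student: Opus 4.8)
The plan is to show that $\pzl_\alpha \subseteq \szl_{\alpha+1}$ always holds (so that (i) is really the assertion that the reverse inclusion also holds), and then to argue that the two possibilities in the statement cannot both occur by producing, from the assumption that (ii) holds, a counterexample to (i). Concretely, I would argue by contradiction: assume that both (i) and (ii) hold, and derive a contradiction with Theorem~\ref{nonfactor} (or rather with the machinery behind it) in the case where $\alpha+1$ is a successor of an ordinal of the form $\omega^\beta$ — but since $\alpha$ is arbitrary here, the right move is instead to play (i) and (ii) against the growth obstruction developed in Section~\ref{theredethere}. Let me be more precise about the mechanism.

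First I would record the easy inclusion: if $E\in\pzl_\alpha^0$, then for the witnessing $c\in(0,1)$ and $p\geqslant 1$ we have $s^{\omega^\alpha}_\eps(\cball{E\dual})\subseteq (1-c\eps^p)\cball{E\dual}$ for all $\eps\in(0,1)$, so Proposition~\ref{dadeeeee} (applied with $T=I_E$, $\beta=\alpha+1$, $\beta_\eps = \omega^\alpha < \omega^{\alpha+1}$ and $\delta_\eps = 1-c\eps^p$) gives $\mszlenk{E}\leqslant \omega^{\alpha+1}$; since $\szl_{\alpha+1}$ is isomorphically closed (Proposition~\ref{collection}(i),(vi)) this yields $\pzl_\alpha\subseteq \szl_{\alpha+1}$. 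Thus (i) is equivalent to the statement that every $E$ with $\mszlenk{E}\leqslant\omega^{\alpha+1}$ is isomorphic to a space in $\pzl_\alpha^0$, i.e. admits a renorming whose $\eps$-Szlenk index obeys the quantitative bound $s^{\omega^\alpha}_\eps\subseteq (1-c\eps^p)\cball{}$.

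The heart of the argument is then: \emph{if} (i) holds, \emph{then} every operator $T\in\szlenkop{\alpha+1}$ that factors through an element of $\Space(\szlenkop{\alpha+1})=\szl_{\alpha+1}$ must satisfy a polynomial-type growth estimate on $\meeszlenk{\eps}{T}$ — namely, chasing the factorization $T=BA$ through $G\in\szl_{\alpha+1}$, renorming $G$ so that $G\in\pzl_\alpha^0$, and using Lemma~\ref{poopoopoo} exactly as in the proof of Proposition~\ref{dadeeeee}, one gets $s^{\beta_\eps}_\eps(T\dual\cball{})\subseteq \delta_\eps T\dual\cball{}$ with $\beta_\eps$ a \emph{fixed} ordinal $<\omega^{\alpha+1}$ independent of $\eps$ (one can take $\beta_\eps=\omega^\alpha$ after rescaling $\eps$ by $2\norm{A}$) and $\delta_\eps = 1-c'\eps^{p}$. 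Iterating this (the claim inside the proof of Proposition~\ref{dadeeeee}) forces $\meeszlenk{\eps}{T}\leqslant \omega^\alpha\cdot N_\eps$ where $N_\eps = O(\eps^{-p})$. Now if, in addition, (ii) holds, then \emph{every} $T\in\szlenkop{\alpha+1}$ factors through such a $G$, so every $T\in\szlenkop{\alpha+1}$ must obey this growth bound. The contradiction is produced by a Construction~\ref{szlenkvar}-type example: starting from a suitable noncompact operator and applying the construction, one builds $T\in\szlenkop{\alpha+1}$ (via a direct sum, using Proposition~\ref{babysitter}(ii) to keep the Szlenk index $\leqslant\omega^{\alpha+1}$ while Proposition~\ref{babysitter}(i) forces $\meeszlenk{1/2^n}{T}$ to grow faster along a subsequence than $\omega^\alpha\cdot N_{1/2^n}$ — exactly the kind of growth violation exploited in the proof of Theorem~\ref{nonfactor}).

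The main obstacle I anticipate is getting the quantitative bookkeeping right in the renorming step: one needs the constants $c,p$ from $\pzl_\alpha^0$ to interact cleanly with the factorization norms $\norm{A},\norm{B}$ and with Lemma~\ref{poopoopoo}'s factor of $2\norm{S}$, so that the resulting estimate $\meeszlenk{\eps}{T}\leqslant \omega^\alpha\cdot\sigma(\dots)$ is genuinely polynomial in $1/\eps$ and uniform over the finite truncations $Q_\gee T U_\eff$ of the direct sum — and then to check that the operator $T$ coming out of Construction~\ref{szlenkvar} really does violate \emph{every} such polynomial bound (this is where one leans on the superadditive-cofinal-sequence argument of Proposition~\ref{tunkyfown} and the construction in the proof of Theorem~\ref{nonfactor}, adapting $\omega^\beta$ to the general ordinal $\omega^\alpha$ and using that $s(m)\to\infty$). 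Once those two ends are tied together, the incompatibility of (i) and (ii) follows, completing the proof.
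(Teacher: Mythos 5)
Your overall strategy --- derive from (i) and (ii) a quantitative growth bound on the $\eps$-Szlenk index of every operator in $\szlenkop{\alpha+1}$, then exhibit an operator in $\szlenkop{\alpha+1}$ that violates it --- is essentially the content of Theorem~\ref{notnotnot} together with Proposition~\ref{tool1}, which the paper has already established immediately before Proposition~\ref{partdich}; you are re-deriving them inside the proof rather than quoting them. The paper's proof is instead a three-line abstract argument: trivially $\Op(\pzl_\alpha)\subseteq\Op(\szl_{\alpha+1})\subseteq\szlenkop{\alpha+1}$; statement (i) collapses the first inclusion to an equality, statement (ii) collapses the second, so (i) and (ii) together would force $\Op(\pzl_\alpha)=\szlenkop{\alpha+1}$; but $\szlenkop{\alpha+1}$ is closed (Theorem~\ref{idealthm}) while $\Op(\pzl_\alpha)$ is not (Theorem~\ref{notnotnot}). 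The two routes rest on the same underlying counterexample; the paper simply leverages the already-packaged non-closedness of $\Op(\pzl_\alpha)$.

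There is also a concrete difficulty in the counterexample you sketch. You propose a Construction~\ref{szlenkvar} example with Proposition~\ref{babysitter}(ii) keeping the Szlenk index at $\omega^{\alpha+1}$, but $\alpha+1$ is \emph{always} a successor, so for any noncompact Asplund $T$ with $\mszlenk{T}=\omega^{\alpha+1}$ one has $\zeta_T=0$, hence $\omega^{\zeta_T}=1$, and Proposition~\ref{babysitter}(ii) permits only $T_0=T$: the construction exits $\szlenkop{\alpha+1}$ after a single step. Your appeal to Proposition~\ref{tunkyfown} and the proof of Theorem~\ref{nonfactor} has the same problem --- that machinery is calibrated to ordinals $\omega^\beta$, which are closed under ordinal multiplication, and $\alpha+1$ never is; this is precisely why the paper introduces the $\pzl_\alpha$ classes. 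Theorem~\ref{notnotnot} circumvents the issue with a different construction: $J=\big(\bigoplus_{m}J_m\big)_0$ with $J_m=m^{-1}\big(\bigoplus_{i=1}^{t(m)}I_D\big)_1$ for $D\in\pzl_\alpha\setminus\szl_\alpha$ (Proposition~\ref{tool3}), where the $\ell_1$-direct sums leave the Szlenk index at $\omega^{\alpha+1}$ by Proposition~\ref{collection}(v) and Lemma~\ref{samsung} supplies the lower bound ${\rm Sz}_{1/2^{s(m)}}(J_m)>\omega^\alpha\cdot t(m)$. The polynomial-type upper bound you want on $\Op(\pzl_\alpha)$ is exactly Proposition~\ref{tool1}; substituting that and Theorem~\ref{notnotnot} into your outline closes the gap and reduces your argument to the paper's.
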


Thus, with an interest in solving Problem~\ref{factortractor}, we are prompted to ask:

\begin{question}\label{equalclasses}
Let $\alpha$ be an ordinal. Is $\pzl_\alpha = \szl_{\alpha+1}$?
\end{question}

For each ordinal $\alpha$, the inclusion $\pzl_\alpha \subseteq \szl_{\alpha+1}$ is attained via an application of Proposition~\ref{dadeeeee} with $\beta = \alpha+1$, $\beta_\eps = \omega^\alpha$ and $\delta_\eps = 1-c\eps^p$ (see also \cite[Proposition~2.2]{H'ajek2007}). The decision to consider the classes $\pzl_\alpha$ is not arbitrary, for Question~\ref{equalclasses} is known to have an affirmative answer in the case $\alpha = 0$, a result due to M. Raja \cite{Raja2009}. We thus obtain from Proposition~\ref{partdich} a proof that $\szlenkop{1}$ lacks the factorization property (Theorem~\ref{nonfactor} with $\beta=0$). We note that prior to Raja's work \cite{Raja2009}, it had been shown by H.~Knaust, E.~Odell and Th.~Schlumprecht \cite{Knaust1999} that every separable space in $\szl_1$ belongs to $\pzl_0$.

The first result to be proved in this section is the following.
\begin{proposition}\label{spaceideal} $\pzl_{\alpha}$ is a space ideal for each ordinal $\alpha$. \end{proposition}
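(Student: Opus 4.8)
The plan is to verify the three defining conditions of a space ideal (\cite[p.53]{Pietsch1980}) for $\pzl_\alpha$: that it contains the scalar field $\field$ (equivalently, all finite-dimensional spaces), that it is closed under finite $\ell_\infty$-direct sums, and that it is closed under passing to complemented subspaces. Since $\pzl_\alpha$ is by definition the class of spaces linearly isomorphic to a member of $\pzl_\alpha^0$, it suffices to prove a slightly stronger, more concrete set of assertions about $\pzl_\alpha^0$ together with the observation that $\pzl_\alpha$ is isomorphism-invariant by construction. The isomorphism-invariance also immediately handles the ``complemented subspace'' axiom once we show $\pzl_\alpha^0$ behaves well under direct sums: a complemented subspace $G$ of $E\in\pzl_\alpha$ sits inside some $F\cong E$ with $F\in\pzl_\alpha^0$ via a bounded projection, and one argues $G$ is isomorphic to a member of $\pzl_\alpha^0$ by realizing it as a summand.

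First I would dispose of the trivial axiom: any finite-dimensional space $E$ has $\mszlenk{E}=1$ by Proposition~\ref{lastlabel}, so $s^{\omega^\alpha}_\eps(\cball{E\dual})=\emptyset$ for every $\eps>0$ when $\alpha$ is any ordinal (since $\omega^\alpha\geqslant 1$), whence trivially $s^{\omega^\alpha}_\eps(\cball{E\dual})\subseteq (1-c\eps^p)\cball{E\dual}$ for any choice of $c\in(0,1)$, $p\geqslant 1$; thus $E\in\pzl_\alpha^0\subseteq\pzl_\alpha$, and in particular $\identityop\field\in\pzl_\alpha$. Next, the heart of the matter: closure under finite direct sums. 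Given $E,F\in\pzl_\alpha^0$ with constants $(c_E,p_E)$ and $(c_F,p_F)$, I want to show $E\oplus_\infty F$ (or, renorming, any $\ell_q$-sum, which is isomorphic to the $\ell_\infty$-sum) lies in $\pzl_\alpha^0$. The natural route is to identify $(E\oplus_q F)\dual$ with $E\dual\oplus_{q'} F\dual$ and use a product-type estimate on the Szlenk derivations. Here Lemma~\ref{carbon} (the {\hajek}–Lancien lemma) with $n=2$ is exactly the tool: it gives $s^{\omega^\alpha}_\eps(K_1\times K_2)\subseteq \bigl(s^{\omega^\alpha}_\eps(K_1)\times K_2\bigr)\cup\bigl(K_1\times s^{\omega^\alpha}_\eps(K_2)\bigr)$ for $w\dual$-compact $K_i\subseteq E_i\dual$ inside $(E_1\oplus_1 E_2)\dual$, and the reverse inclusion noted in the Remark after Lemma~\ref{samsung} plays no role here — we only need the forward direction. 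Taking $K_1=\cball{E\dual}$, $K_2=\cball{F\dual}$ and using the hypotheses on $E$ and $F$, one gets that any point of $s^{\omega^\alpha}_\eps$ of the product ball has one coordinate of norm at most $1-c_E\eps^{p_E}$ (resp.\ $1-c_F\eps^{p_F}$), and since for the $\ell_\infty$-sum the dual norm is the $\ell_1$-sum norm $\norm{(x\dual,y\dual)}=\norm{x\dual}+\norm{y\dual}\leqslant 2$, shaving one coordinate by $c\eps^p$ (with $c=\min\{c_E,c_F\}/2$, $p=\max\{p_E,p_F\}$, say, adjusted so that $c\eps^p<1$ on $(0,1)$) shrinks the total norm below $2(1-c'\eps^{p'})$ after rescaling the ball to radius $1$; one has to be a little careful with the normalization of the ball of $E\oplus_\infty F$, but this is a routine bookkeeping of constants. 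Thus $E\oplus_\infty F\in\pzl_\alpha^0$, and by induction all finite $\ell_\infty$-sums of members of $\pzl_\alpha^0$ lie in $\pzl_\alpha^0$.

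For the complemented-subspace axiom: let $G$ be complemented in $E\in\pzl_\alpha$, so $G$ is complemented in some $F\in\pzl_\alpha^0$ up to the isomorphism $E\cong F$. Write $F\cong G\oplus H$ (isomorphically) for the complementary summand $H$. The cleanest argument is to note that $\szl_\alpha$, hence a fortiori having $\mszlenk{G}\leqslant\omega^{\alpha+1}$, passes to subspaces and quotients by Proposition~\ref{collection}(i), and that the quantitative Szlenk estimate defining $\pzl_\alpha^0$ transfers from $F$ to $G$ via the quotient map $F\dual\to G\dual$ dual to the inclusion $G\hookrightarrow F$ combined with the section dual to the projection: concretely, $\cball{G\dual}$ is, up to the equivalent norm coming from $F$, a bounded $w\dual$-continuous image of $\cball{F\dual}$ under a surjection that admits a bounded linear right inverse, and Lemma~\ref{poopoopoo} (applied to the right inverse) together with the inclusion argument used in the proof of Theorem~\ref{idealthm} for surjectivity controls the derivations. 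The upshot is that for the renormed $G$ one gets $s^{\omega^\alpha}_\eps(\cball{G\dual})\subseteq (1-c'\eps^{p'})\cball{G\dual}$ for suitable constants depending on $F$'s constants and on $\norm{P}$, $\norm{P}^{-1}$ where $P$ is the projection; hence $G\in\pzl_\alpha$.

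The step I expect to be the main obstacle is the constant-tracking in the direct-sum argument — specifically, verifying that after applying Lemma~\ref{carbon} and renormalizing to the unit ball of $E\oplus_\infty F$ (whose dual ball is the $\ell_1$-sum ball, a set that is \emph{not} simply the product $\cball{E\dual}\times\cball{F\dual}$ but its absolutely convex hull, or rather for $\ell_\infty$-sums exactly that product since $\norm{(x\dual,y\dual)}_1\leqslant 1$ forces $\norm{x\dual},\norm{y\dual}\leqslant 1$), one still gets a clean estimate of the form $(1-c\eps^p)$ rather than something like $(1-c\eps^p/2)^{1/q}$ that might fail to have the right shape. In fact for the $\ell_\infty$-direct sum the dual ball \emph{is} the product $\cball{E\dual}\times\cball{F\dual}$ equipped with the $\ell_1$-norm, so one should work with that sum throughout (the $\ell_q$-sum version then follows by isomorphism-invariance), and the estimate becomes genuinely transparent: a point $(x\dual,y\dual)$ in $s^{\omega^\alpha}_\eps$ has, say, $\norm{x\dual}\leqslant 1-c_E\eps^{p_E}$, so $\norm{(x\dual,y\dual)}_1\leqslant (1-c_E\eps^{p_E})+1$, and dividing by $2$ (to return to the unit ball of $(E\oplus_\infty F)\dual = (E\dual\oplus_1 F\dual)$, whose ``unit ball'' here should be taken of radius $2$ or else we rescale) gives the desired contraction with $c=\tfrac12\min\{c_E,c_F\}$ and $p=\max\{p_E,p_F\}$, shrinking $\eps$ to $(0,1)$ as needed so that $1-c\eps^p>0$. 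I would write this out carefully but briefly, flagging that the only subtlety is ensuring $\eps$ is restricted to $(0,1)$ so all the ``$1-c\eps^p$'' factors stay positive, which is already built into the definition of $\pzl_\alpha^0$.
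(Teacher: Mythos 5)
Your overall blueprint (show $\pzl_\alpha^0$ is stable under subspaces and finite $\oplus_\infty$-sums, let isomorphism-invariance and Proposition~\ref{lastlabel} do the rest) matches the paper's, and the subspace step using Lemma~\ref{poopoopoo} is essentially the paper's argument. The direct-sum step, however, rests on a false identification that cannot be patched by rescaling.

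You assert that for $E\oplus_\infty F$ the dual ball ``\emph{is} the product $\cball{E\dual}\times\cball{F\dual}$ equipped with the $\ell_1$-norm.'' This is wrong: $(E\oplus_\infty F)\dual = E\dual\oplus_1 F\dual$, whose unit ball is $\set{(x\dual,y\dual) \mid \norm{x\dual}+\norm{y\dual}\leqslant 1} = \bigcup_{a\in[0,1]} a\cball{E\dual}\times(1-a)\cball{F\dual}$, a strict subset of the product that is \emph{not} homothetic to it; you cannot pass from one to the other by dividing by $2$. The product $\cball{E\dual}\times\cball{F\dual}$ is the dual ball of $E\oplus_1 F$ — but then the dual carries the $\ell_\infty$-norm, and the whole strategy collapses: if $(x\dual,y\dual)\in s^{\omega^\alpha}_\eps(\cball{E\dual}\times\cball{F\dual})$ and Lemma~\ref{carbon} tells you, say, $\norm{x\dual}\leqslant 1-c\eps^p$, the $\ell_\infty$-norm of $(x\dual,y\dual)$ is still $1$ whenever $\norm{y\dual}=1$, so no contraction of the ball is forthcoming. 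The two features you need — a dual ball that is a genuine product (so Lemma~\ref{carbon} applies directly) and a dual norm that is additive (so shaving one coordinate shrinks the total norm) — occur in mutually exclusive regimes and cannot be had simultaneously.

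The paper's actual route confronts this squarely: it decomposes $\cball{(E\oplus_\infty F)\dual}$ as the continuous union $\bigcup_{a}a\cball{E\dual}\times(1-a)\cball{F\dual}$, approximates this by a finite union over $a\in\set{0,\tfrac1l,\dots,\tfrac{l+1}{l}}$, and then combines Lemma~\ref{gulcite} (behaviour of $s^{\omega^\alpha}_\eps$ under finite unions at the cost of halving $\eps$) with Lemma~\ref{anotcite} (a weighted-product estimate tailored to $(E\oplus_\infty F)\dual$, coming from \cite[Lemma~3.3]{Brookera} rather than from Lemma~\ref{carbon}), tracking constants through the intersection over $l$ to land on a bound of the form $(1-c'\eps^{p'})\cball{(E\oplus_\infty F)\dual}$. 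None of this discretization or the weighted-product lemma appears in your sketch, and it is precisely what is needed to replace the product-ball shortcut you rely on.

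A smaller point: your opening paragraph claims that isomorphism-invariance ``immediately handles the complemented subspace axiom once we show $\pzl_\alpha^0$ behaves well under direct sums.'' Direct sums go the wrong way (they combine, they do not split), so this claim is circular; you in fact need the subspace stability result, which you do eventually sketch correctly via Lemma~\ref{poopoopoo} — but the motivating remark should be deleted or reworded to avoid implying the subspace step is dispensable.
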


To prove Proposition~\ref{spaceideal}, it suffices to establish the following two facts:

\nr{I} Let $E \in \pzl_{\alpha}^0$ and let $F$ be a closed linear subspace of $E$. Then $F\in \pzl_\alpha^0$.

\nr{II} Let $E, \, F \in \pzl_\alpha^0$. Then $E\oplus_\infty F \in \pzl_\alpha^0$.

The proof of (I) is straightforward. Indeed, let $i: F\hookrightarrow E$ denote the isometric linear inclusion operator and let $c' \in (0, \, 1)$ and $p'\geqslant 1$ be scalars such that $s^{\omega^\alpha}_\eps (\cball{E\dual})\subseteq (1-c'\eps^{p'})\cball{E\dual}$ for all $\eps >0$. By Lemma~\ref{poopoopoo}, for every $\eps > 0$ we have
\begin{equation}\label{windywoo}
s^{\omega^\alpha}_\eps (i\dual \cball{E\dual})\subseteq i\dual (s^{\omega^\alpha}_{\eps /2} (\cball{E\dual})) \subseteq i\dual \bigg( \Big( 1-\frac{c'}{2^{p'}}\eps^{p'}\Big) \cball{E\dual}\bigg) = \Big(1-\frac{c'}{2^{p'}}\eps^{p'}\Big)\cball{F\dual} \, .
\end{equation}
As $s^{\omega^\alpha}_\eps (\cball{F\dual})=s^{\omega^\alpha}_\eps (i\dual \cball{E\dual})$, it follows from (\ref{windywoo}) that $F$ satisfies the defining property of $\pzl_\alpha^0$ with $c = c' /2^{p'}$ and $p=p'$.

The proof of (II) is somewhat more involved. Let $c' \in (0, \, 1)$ and $p' \geqslant 1$ be such that $s^{\omega^\alpha}_\eps (\cball{E\dual})\subseteq (1-c'\eps^{p'})\cball{E\dual}$ and $s^{\omega^\alpha}_\eps (\cball{F\dual})\subseteq (1-c'\eps^{p'})\cball{F\dual}$ for all $\eps >0$. We introduce the following notation: for $\eps >0$ and $a\in [0, \, 1] \subseteq \real$, define
\begin{equation*}
A_\eps^a:= \set{(b_1, b_2)\in [0, \, 1]\times [0, \, 1] \mid ab_1 + (1-a)b_2 \geqslant \eps}.
\end{equation*}
We henceforth adhere to the following notational convention: for a $w\dual$-compact set $K$ and ordinal $\alpha$, we write $s^\alpha_0 (K) = K$. As the final step in our preparation to prove (II), we state a couple of lemmas:

\begin{lemma}\label{gulcite}
Let $E$ be a Banach space, $K_1, \ldots , K_n \subseteq E\dual$ $w\dual$-compact sets, $\eps > 0$ and $\alpha$ an ordinal. Then $s^\alpha_\eps ( \bigcup_{i=1}^nK_i ) \subseteq \bigcup_{i=1}^n s^\alpha_{\eps /2}(K_i)$.
\end{lemma}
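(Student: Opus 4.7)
The plan is to proceed by transfinite induction on $\alpha$, with the base case $\alpha = 0$ holding trivially since $\mdset{0}{\eps}{K}=K$ by convention. A preliminary observation, established by a parallel transfinite induction, is that each $\mdset{\alpha}{\eps}{K}$ is $w\dual$-closed, so $w\dual$-limits of nets contained in such a derived set remain inside it.

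For the successor step $\alpha=\beta+1$, let $x \in \mdset{\beta+1}{\eps}{\bigcup_{i=1}^n K_i}$. The standard triangle-inequality construction (of the kind already used in the proofs of Lemma~\ref{poopoopoo} and Lemma~\ref{weekendoz}) yields a net $(x_V)$ in $\mdset{\beta}{\eps}{\bigcup_{i=1}^n K_i}$, indexed by $w\dual$-open neighbourhoods $V \ni x$ under reverse inclusion, with $x_V \stackrel{w\dual}{\rightarrow} x$ and $\norm{x_V - x}>\eps/2$ for every $V$. By the induction hypothesis, each $x_V$ belongs to $\mdset{\beta}{\eps/2}{K_{i(V)}}$ for some $i(V) \in \set{1,\dots,n}$. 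The finiteness of the index set is the crucial ingredient: a routine pigeonhole argument produces $i_0 \in \set{1,\dots,n}$ such that $\set{V \mid i(V)=i_0}$ is cofinal in the directed set of neighbourhoods, yielding a subnet entirely contained in $\mdset{\beta}{\eps/2}{K_{i_0}}$. By $w\dual$-closedness of this derived set, $x \in \mdset{\beta}{\eps/2}{K_{i_0}}$. Given any $w\dual$-open $V \ni x$, the subnet is eventually in $V$, so some $x_V$ witnesses $\diam (V \cap \mdset{\beta}{\eps/2}{K_{i_0}}) \geqslant \norm{x_V - x} > \eps/2$. Hence $x \in \mdset{\beta+1}{\eps/2}{K_{i_0}} \subseteq \bigcup_{i=1}^n \mdset{\beta+1}{\eps/2}{K_i}$.

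For the limit case, fix $x \in \mdset{\alpha}{\eps}{\bigcup_{i=1}^n K_i} = \bigcap_{\gamma<\alpha}\mdset{\gamma}{\eps}{\bigcup_{i=1}^n K_i}$. By the induction hypothesis, for each $\gamma<\alpha$ there is $i(\gamma) \in \set{1,\dots,n}$ with $x \in \mdset{\gamma}{\eps/2}{K_{i(\gamma)}}$, and finiteness again forces some single value $i_0$ to be attained on a cofinal subset $\Gamma \subseteq \alpha$. Monotonicity of the iterated derivations in the ordinal parameter then upgrades this to $x \in \mdset{\gamma}{\eps/2}{K_{i_0}}$ for \emph{every} $\gamma<\alpha$ (for each such $\gamma$, choose $\delta \in \Gamma$ with $\gamma \leqslant \delta$ and use $\mdset{\delta}{\eps/2}{K_{i_0}} \subseteq \mdset{\gamma}{\eps/2}{K_{i_0}}$), so $x \in \mdset{\alpha}{\eps/2}{K_{i_0}}$.

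The only subtle point throughout is the factor of $\tfrac12$ in the successor step, which comes from the fact that from $x \in \mdset{}{\eps}{A}$ one can only extract an approximating net with norm-gap strictly exceeding $\eps/2$ rather than $\eps$. Because this factor is already absorbed into the right-hand side of the statement, no compounding of constants across the transfinite induction occurs, and the argument goes through uniformly for all ordinals $\alpha$.
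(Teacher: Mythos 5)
Your proof is correct and is exactly the ``straightforward transfinite induction'' that the paper invokes without writing out (it only cites \cite[Lemma~3.1]{Brookera}): the standard net construction at successor stages, a pigeonhole/cofinality argument over the finite index set, $w\dual$-closedness of the derived sets, and monotonicity at limit stages, with the halving of $\eps$ occurring once and not compounding.
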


\begin{lemma}\label{anotcite}
Let $E$ and $F$ be Banach spaces, $a\in [0, \, 1] \subseteq \real$, $\eps >0$ and $\alpha$ an ordinal. Consider $a\cball{E\dual} \times (1-a)\cball{F\dual}$ as a subset of $(E\oplus_\infty F)\dual$ and let $\delta \in (0, \, \eps)$. Then
\begin{equation*}
s^{\omega^\alpha}_\eps (a\cball{E\dual} \times (1-a)\cball{F\dual}) \subseteq \bigcup_{(b_1,\, b_2)\in A_{\delta/2}^a}as^{\omega^\alpha}_{b_1}(\cball{E\dual}) \times (1-a)s^{\omega^\alpha}_{b_2}(\cball{F\dual} ) \, .
\end{equation*} \end{lemma}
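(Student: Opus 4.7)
My plan is to prove Lemma~\ref{anotcite} by transfinite induction on $\alpha$, patterned on the proof of Sublemma~\ref{carbonesque} but with finer tracking of the $\eps$-levels across the two factors. The key structural observation is the canonical identification $(E\oplus_\infty F)\dual = E\dual\oplus_1 F\dual$: the $w\dual$-topology on $K := a\cball{E\dual} \times (1-a)\cball{F\dual}$ is the product of the factor $w\dual$-topologies, and by $\ell_1$-additivity of the dual norm, the diameter of a product subset of $K$ is additive across factors. The cases $a\in \set{0, 1}$ being trivial, I assume $a\in (0, 1)$. At the heart of the argument is the following auxiliary claim: for every $(x\dual, y\dual) = (a\tilde x\dual, (1-a)\tilde y\dual)\in \mdset{\omega^\alpha}{\eps}{K}$, the suprema
\[ \beta_1 := \sup \set{b\in [0, 2] \mid \tilde x\dual \in \mdset{\omega^\alpha}{b}{\cball{E\dual}}}, \quad \beta_2 := \sup \set{b\in [0, 2] \mid \tilde y\dual \in \mdset{\omega^\alpha}{b}{\cball{F\dual}}} \]
satisfy $a\beta_1 + (1-a)\beta_2 \geqslant \eps$. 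Given this claim, the lemma follows by taking $b_i := (1-\eta)\min(\beta_i, 1)$ for sufficiently small $\eta > 0$: the elementary inequality $\min(x, 1) \geqslant x/2$ valid for $x\in [0, 2]$, combined with a short case analysis on whether $\beta_1, \beta_2$ exceed $1$, yields $ab_1 + (1-a)b_2 \geqslant (1-\eta)\eps/2 \geqslant \delta/2$ (achievable because $\delta < \eps$), while $\tilde x\dual \in \mdset{\omega^\alpha}{b_1}{\cball{E\dual}}$ and $\tilde y\dual \in \mdset{\omega^\alpha}{b_2}{\cball{F\dual}}$ follow from $b_i < \beta_i$ and monotonicity of the derivations in the $\eps$-parameter.

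The base case $\alpha = 0$ is a direct topological computation: every basic $w\dual$-neighbourhood of $(x\dual, y\dual)$ in $E\dual\oplus_1 F\dual$ has the form $aU \times (1-a)V$ with $U, V$ relatively $w\dual$-open in $\cball{E\dual}, \cball{F\dual}$ about $\tilde x\dual, \tilde y\dual$ respectively, and the $\ell_1$-additivity of diameter gives
\[ \diam \bigl( (aU \times (1-a)V) \cap K \bigr) = a\diam (U\cap \cball{E\dual}) + (1-a)\diam (V\cap \cball{F\dual}). \]
The hypothesis $(x\dual, y\dual) \in \mdset{1}{\eps}{K}$ forces this to exceed $\eps$ for every such $(U, V)$, and taking infima over $U$ and $V$ independently yields $a\beta_1 + (1-a)\beta_2 \geqslant \eps$. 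The limit case ($\alpha$ a limit ordinal) is equally routine: from $\mdset{\omega^\alpha}{\eps}{K} = \bigcap_{\gamma<\alpha}\mdset{\omega^\gamma}{\eps}{K}$ and the inductive hypothesis at each $\gamma <\alpha$, I obtain $a\beta_1^\gamma + (1-a)\beta_2^\gamma \geqslant \eps$, where $\beta_i^\gamma$ is the obvious analogue of $\beta_i$ with $\omega^\alpha$ replaced by $\omega^\gamma$; these are non-increasing in $\gamma$ with $\inf_\gamma \beta_i^\gamma = \beta_i^\alpha$ (by nesting of the derivations in the exponent), so passing to the infimum preserves the inequality.

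The successor step $\alpha = \gamma + 1$ is the main obstacle. Here $\mdset{\omega^{\gamma +1}}{\eps}{K} = \bigcap_{n\in \nat}\mdset{\omega^\gamma \cdot n}{\eps}{K}$, yet the inductive hypothesis at level $\gamma$ only controls the single derivation $\mdset{\omega^\gamma}{\eps}{K}$, as the iterates $(\mdset{\omega^\gamma}{\eps}{\cdot})^n(K)$ for $n\geqslant 2$ no longer have a clean product-ball structure. Following the scheme of Sublemma~\ref{carbonesque}'s proof, I plan to establish by a secondary induction on $n\in \nat$ the intermediate inclusion
\[ \mdset{\omega^\gamma \cdot m_n}{\eps}{K} \subseteq \bigcup_{h=0}^n \bigcup_{(b_1, b_2)\in A^a_{\delta/2}} a\mdset{\omega^\gamma \cdot h}{b_1}{\cball{E\dual}} \times (1-a)\mdset{\omega^\gamma \cdot (n-h)}{b_2}{\cball{F\dual}}, \]
where $m_n := 1 + 2 + \cdots + n$, and then intersect over $n$; a pigeonhole and compactness argument in $[0, 1]^2$ extracts a limiting pair $(b_1, b_2) \in A^a_{\delta/2}$ such that $\tilde x\dual$ (or $\tilde y\dual$) lands in $\mdset{\omega^{\gamma+1}}{b_1}{\cball{E\dual}}$ (resp.~$\mdset{\omega^{\gamma+1}}{b_2}{\cball{F\dual}}$). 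The delicate technical obstruction is twofold: first, the $n\to n+1$ step requires pushing $\mdset{\omega^\gamma}{\eps}{\cdot}$ through a union indexed by the \emph{uncountable} set $A^a_{\delta/2}$, whereas Sublemma~\ref{unionlemma}(ii) is stated only for finite unions---I plan to circumvent this by first discretizing $A^a_{\delta/2}$ via a finite $\eta$-net in $[0, 1]^2$, applying Sublemma~\ref{unionlemma}(ii) to the resulting finite union, and then exhausting $\eta \to 0$ via a diagonal compactness argument; second, the pigeonhole in its naive form only guarantees that \emph{one} of the sequences $h_n$ or $n - h_n$ is unbounded, whereas the claim needs both $\tilde x\dual$ and $\tilde y\dual$ simultaneously placed in $\mdset{\omega^{\gamma+1}}{\cdot}{\cdot}$, a gap I plan to close by a separate case analysis splitting on whether the limit pair $(b_1, b_2)$ is close to the boundary $\{b_1 = 0\}$ or $\{b_2 = 0\}$ of $A^a_{\delta/2}$.
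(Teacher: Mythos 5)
Your reduction of the lemma to the claim $a\beta_1+(1-a)\beta_2\geqslant\eps$, and your treatment of the base and limit cases, are sound: the $\ell_1$-additivity of the diameter across the two dual factors and the monotone-net argument at limit stages both check out, and the passage from the claim back to the stated inclusion (via $\min(x,1)\geqslant x/2$ and the slack $\delta<\eps$) is correct. The gap is the successor step, and it is not merely that the two ``obstructions'' you flag remain to be carried out: the intermediate inclusion you propose to induct on is structurally incapable of delivering the conclusion. Under the paper's conventions ($s^0_{b}(K)=K$, zero iterations, and $s^{\xi}_0(K)=K$), the term of your right-hand side with $h=n$ and $(b_1,b_2)=(0,1)$ is all of $a\cball{E\dual}\times(1-a)\cball{F\dual}$ whenever $1-a\geqslant\delta/2$, and the term with $h=0$, $(b_1,b_2)=(1,0)$ does the same whenever $a\geqslant\delta/2$; at least one of these applies for $\delta\leqslant 1$. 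So your stage-$n$ inclusion is trivially true for every $n$, its intersection over $n$ returns all of $K$, and no pigeonhole or compactness argument can extract from it the assertion that \emph{both} coordinates survive $\omega^{\gamma+1}$ derivations at levels with $ab_1+(1-a)b_2\geqslant\delta/2$. This is precisely where Lemma~\ref{anotcite} differs from Sublemma~\ref{carbonesque}: there the $\eps$-level is fixed and every term of the intermediate union distributes exactly $m_j$ derivation blocks between the factors, so the union is genuinely restrictive; once the levels $b_i$ are allowed to float subject only to a weighted-sum constraint, pairing $b_2$ with zero derivations costs nothing.

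Even if you repair the indexing (say, by forcing both factors to carry $n$ blocks each), the step $n\to n+1$ does not close. You must apply the level-$\gamma$ statement to products $a\,s^{\omega^\gamma\cdot h}_{b_1}(\cball{E\dual})\times(1-a)\,s^{\omega^\gamma\cdot k}_{b_2}(\cball{F\dual})$, which already requires your auxiliary claim for products of arbitrary $w\dual$-compact sets rather than balls; and the resulting pair $(c_1,c_2)$ with $ac_1+(1-a)c_2\geqslant\eps'$ can only be recombined with the incoming data via $s^{\omega^\gamma}_{c_i}\big(s^{\omega^\gamma\cdot h}_{b_i}(\cdot)\big)\subseteq s^{\omega^\gamma\cdot(h+1)}_{\min(b_i,c_i)}(\cdot)$. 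But $(b_1,b_2)=(1,0)$ and $(c_1,c_2)=(0,1)$ each satisfy their weighted-sum constraints while the coordinatewise minima are $(0,0)$, so the constraint is destroyed after one composition. (Relatedly, your auxiliary claim asserts exact preservation of the weighted sum at every level $\omega^\alpha$, which is stronger than the lemma needs, and nothing in the plan shows it survives even a single successor step.) Note finally that the paper does not prove this lemma internally at all --- it is quoted from \cite[Lemma~3.3]{Brookera} --- so a self-contained proof should follow the argument given there rather than extrapolate from Sublemma~\ref{carbonesque}, whose ``one factor or the other'' conclusion is genuinely weaker than the trade-off asserted here.
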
 The proof of Lemma~\ref{gulcite} is a straightforward transfinite induction (see, for example, \cite[Lemma~3.1]{Brookera}). Lemma~\ref{anotcite} follows immediately from \cite[Lemma~3.3]{Brookera}.

Continuing towards a proof of (II), we consider the following situation: let $l\in \nat$ and suppose that $a_1, \, a_2\in\real$ are such that $a_1+a_2\leqslant 1$. For $i=1,\, 2$ let $l_i$ denote the unique integer satisfying $l_i-1 <la_i \leqslant l_i$, so that $a_i\leqslant l_i/l$. Then $l_1+l_2 -2<l(a_1+a_2)\leqslant l$, hence $l_1+l_2\leqslant l+1$. By these considerations, and by Lemma~\ref{gulcite}, Lemma~\ref{anotcite} and the fact that $\eps /9 < \eps l/ (4l+4)$ for all $l\in\nat$, the following holds for every $\eps >0$:
\begin{align*}&s^{\omega^\alpha}_\eps  (\cball{(E\oplus_\infty F)\dual})\\ \displaybreak[0]& = s^{\omega^\alpha}_\eps \Bigg( \bigcup_{a\in [0,\, 1]} a\cball{E\dual} \times (1-a)\cball{F\dual}\Bigg)\\ \displaybreak[0]
& \subseteq \bigcap_{l\in\nat} s^{\omega^\alpha}_\eps \Bigg( \bigcup_{k=0}^{l+1}\Big( \frac{k}{l}\cball{E\dual}\times \frac{l+1-k}{l}\cball{F\dual}\Big) \Bigg)\\  \displaybreak[0]&\subseteq \bigcap_{l\in\nat}\,  \bigcup_{k=0}^{l+1}s^{\omega^\alpha}_{\eps /2} \Big( \frac{k}{l}\cball{E\dual}\times \frac{l+1-k}{l}\cball{F\dual}\Big) \\  \displaybreak[0]&= \bigcap_{l\in\nat} \bigcup_{k=0}^{l+1} \frac{l+1}{l}\, s^{\omega^\alpha}_{\eps l /(2l+2)} \Big( \frac{k}{l+1}\cball{E\dual}\times \frac{l+1-k}{l+1}\cball{F\dual}\Big) \\  \displaybreak[0]&\subseteq \bigcap_{l\in\nat} \frac{l+1}{l} \bigcup_{k=0}^{l+1}\, \bigcup_{(b_1, \, b_2)\in A_{\eps /9}^{k/(l+1)}}\Big( \frac{k}{l+1}s^{\omega^\alpha}_{b_1}(\cball{E\dual})\times \frac{l+1-k}{l+1}s^{\omega^\alpha}_{b_2}(\cball{F\dual})\Big) \\  \displaybreak[0]& \subseteq \bigcap_{l\in\nat} \frac{l+1}{l} \bigcup_{k=0}^{l+1}\, \bigcup_{(b_1, \, b_2)\in A_{\eps /9}^{k/(l+1)}} \Big( \frac{k}{l+1}(1-c'b_1^{p'})\cball{E\dual}\times \frac{l+1-k}{l+1}(1-c'b_2^{p'})\cball{F\dual}\Big) \\  \displaybreak[0]
& \subseteq \bigcap_{l\in\nat} \frac{l+1}{l} \bigcup_{k=0}^{l+1}\, \bigcup_{(b_1, \, b_2)\in A_{\eps /9}^{k/(l+1)}} \Big( 1-c' \big(\textstyle \frac{k}{l+1}b_1^{p'}+\frac{l+1-k}{l+1}b_2^{p'}\big) \Big) \cball{(E\oplus_\infty F)\dual}\\  \displaybreak[0]&\subseteq \bigcap_{l\in\nat} \frac{l+1}{l} \bigcup_{k=0}^{l+1}\, \bigcup_{(b_1, \, b_2)\in A_{\eps /9}^{k/(l+1)}} \Big( 1-c' \big(\textstyle \frac{k}{l+1}b_1+\frac{l+1-k}{l+1}b_2\big)^{p'} \Big) \cball{(E\oplus_\infty F)\dual}
\\  \displaybreak[0]& \subseteq \bigcap_{l\in\nat} \frac{l+1}{l} \bigg( 1-c' \Big(\frac{\eps}{9} \Big)^{p'} \bigg) \cball{(E\oplus_\infty F)\dual} \\  \displaybreak[0]& = \bigg( 1- \Big(\frac{c'}{9^{p'}} \Big)\eps^{p'} \bigg) \cball{(E\oplus_\infty F)\dual} \, .\end{align*}
Thus $E\oplus_\infty F$ satisfies the defining property of $\pzl_\alpha^0$ with $c= c' /9^{p'}$ and $p = p'$. This concludes the proof of (II), and it follows that $\pzl_\alpha$ is a space ideal for each ordinal $\alpha$. \qed

We now establish several preliminary results which shall be used to show that $\Op (\pzl_{\alpha})$ is never closed. In what follows, we adhere to the usual convention of denoting by $\lceil a\rceil$ the least integer greater than or equal to a given real number $a$.

\begin{proposition}\label{tool1}
Let $\alpha$ be an ordinal, $E$ and $F$ Banach spaces and $T: E\To F$ an operator. If $T\in \Op (\pzl_{\alpha})$, then there exist real scalars $c \in (0, \, 1)$, $d \geqslant 0$ and $p\geqslant 1$ such that
\begin{equation*}
\meeszlenk{1/2^n}{T} \leqslant \omega^\alpha \cdot \Bigg\lceil 1-\frac{n+d}{\log_2 (1-c2^{-np})} \Bigg\rceil
\end{equation*}
for every $n\in \nat$.
\end{proposition}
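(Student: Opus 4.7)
The plan is to factor $T$ through a Banach space in $\pzl_\alpha^0$ and then iterate the inclusion provided by the $\pzl_\alpha^0$-condition along the orbit of $s^{\omega^\alpha}_\eps$, in the manner of Proposition~\ref{dadeeeee}. Since $T\in \Op(\pzl_\alpha)$ and $\pzl_\alpha$ consists by definition of all Banach spaces isomorphic to some member of $\pzl_\alpha^0$, I may write $T = BA$ with $A\in\allop(E, \, G)$, $B\in\allop(G, \, F)$ and $G\in \pzl_\alpha^0$. After replacing $(A, \, B)$ by $(\lambda A, \, \lambda^{-1} B)$ for a sufficiently large $\lambda >0$, I may assume $\norm{B}\leqslant 1$ and $\norm{A}\geqslant 1$. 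Let $c'\in(0, \, 1)$ and $p\geqslant 1$ be constants witnessing $G\in \pzl_\alpha^0$, so that $s^{\omega^\alpha}_\eps(\cball{G\dual})\subseteq (1-c'\eps^p)\cball{G\dual}$ for every $\eps\in(0, \, 1)$.

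Next I would prove by induction on $k\in\nat$ the estimate
\begin{equation*}
s^{\omega^\alpha \cdot k}_\eps(\cball{G\dual})\subseteq (1-c'\eps^p)^k \cball{G\dual} \qquad (\eps\in (0, \, 1))
\end{equation*}
by exactly the argument in the proof of Proposition~\ref{dadeeeee}: applying $s^{\omega^\alpha}_\eps$ to both sides of the inductive hypothesis, the positive-homogeneity identity $s^\gamma_\eps(\lambda K) = \lambda s^\gamma_{\eps/\lambda}(K)$ together with the monotonicity $s^\gamma_{\eps_1}(K)\subseteq s^\gamma_{\eps_2}(K)$ for $\eps_1\geqslant \eps_2$ combines with the $\pzl_\alpha^0$-inclusion to yield the inductive step. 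Since $s_\eps$ annihilates any set of diameter at most $\eps$, we then have $s^{\omega^\alpha\cdot k + 1}_\eps(\cball{G\dual}) = \emptyset$ whenever $k$ satisfies $2(1-c'\eps^p)^k \leqslant \eps$, and hence $\meeszlenk{\eps}{G}\leqslant \omega^\alpha\cdot k + 1$ for every such $k$.

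To transfer this bound to $T$, I would observe $T\dual\cball{F\dual}\subseteq A\dual \cball{G\dual}$ (since $\norm{B}\leqslant 1$), so that Lemma~\ref{poopoopoo} yields
\begin{equation*}
s^\gamma_\eps(T\dual\cball{F\dual})\subseteq s^\gamma_\eps(A\dual\cball{G\dual}) \subseteq A\dual\bigl(s^\gamma_{\eps/(2\norm{A})}(\cball{G\dual})\bigr)
\end{equation*}
for every $\eps>0$ and $\gamma\in\ord$, so $\meeszlenk{\eps}{T}\leqslant \meeszlenk{\eps/(2\norm{A})}{G}$. Setting $\eps=1/2^n$ and $\delta_n = \eps/(2\norm{A}) = 1/(2^{n+1}\norm{A})$, a direct calculation shows $c'\delta_n^p = c\cdot 2^{-np}$ where $c := c'/(2\norm{A})^p\in(0, \, 1)$, and the smallest $k$ satisfying $2(1-c'\delta_n^p)^k\leqslant \delta_n$ is $k_n = \lceil (n+d)/(-\log_2(1-c\cdot 2^{-np}))\rceil$, where $d := 2+\log_2\norm{A}\geqslant 0$. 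Therefore
\begin{equation*}
\meeszlenk{1/2^n}{T}\leqslant \omega^\alpha \cdot k_n + 1 \leqslant \omega^\alpha \cdot (k_n + 1) = \omega^\alpha \cdot \Bigg\lceil 1 - \frac{n+d}{\log_2(1-c\cdot 2^{-np})}\Bigg\rceil,
\end{equation*}
as required.

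The only obstacle is bookkeeping: the structural content, namely the iterated inclusion and the factorization transfer via Lemma~\ref{poopoopoo}, is already available from the paper, and the task reduces to selecting the constants $c$, $d$ and $p$ so that the minimum $k_n$ aligns precisely with the ceiling expression in the conclusion.
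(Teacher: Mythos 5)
Your proposal is correct and follows essentially the same route as the paper's proof: factor $T=BA$ through a member of $\pzl_{\alpha}^0$ with $\norm{B}\leqslant 1$ and $\norm{A}\geqslant 1$, iterate the slicing inclusion as in Proposition~\ref{dadeeeee} to get $s^{\omega^\alpha\cdot k}_{\eps}(\cball{G\dual})\subseteq(1-c'\eps^p)^k\cball{G\dual}$, transfer via Lemma~\ref{poopoopoo}, and choose $c=c'(2\norm{A})^{-p}$ and $d=2+\log_2\norm{A}$. The arithmetic identifying the least admissible $k$ with the stated ceiling matches the paper's computation exactly.
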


\begin{proof}
The result is trivial if $T=0$, so we assume henceforth that $T\neq 0$. As $T\in \Op (\pzl_{\alpha})$, there is a Banach space $D\in \pzl_\alpha^0$ and operators $A\in \allop (E, \, D)$ and $B\in \allop (D, \, F)$ such that $T=BA$, $\norm{A}\geqslant 1$ and $\norm{B}\leqslant 1$. By Lemma~\ref{poopoopoo}, the bound $\norm{B}\leqslant 1$ ensures that $\meeszlenk{\eps}{T}\leqslant \meeszlenk{\eps}{A} \leqslant \meeszlenk{\eps /2\norm{A}}{D}$ for every $\eps >0$.

Let $c'\in (0, \, 1)$ and $p\geqslant 1$ be such that $s^{\omega^\alpha}_\eps (\cball{D\dual})\subseteq (1-c'\eps^p)\cball{D\dual}$ for every $\eps \in (0, \, 1)$, let $c=c'(2\Vert A\Vert)^{-p}$ and let $d=2+\log_2 \norm{A}$. For each $\eps \in (0, \, 1)$ define
\begin{equation*}
l_\eps := \inf \big\{l<\omega \, \big\vert \, \meeszlenk{\eps/2\norm{A}}{D}\leqslant \omega^\alpha \cdot l\big\}
\end{equation*}
and
\begin{equation*}
m_\eps := \inf \big\{m<\omega \, \big\vert \, 4\Vert A\Vert (1-c\eps^p)^m \leqslant \eps\big\} \, .
\end{equation*}

Fix $\eps \in (0, \, 1)$. By the argument used in the proof of Proposition~\ref{dadeeeee}, for each $m<\omega$ we have
\begin{equation*}
s^{\omega^\alpha \cdot m}_{\eps /(2\norm{A})}(\cball{D\dual}) \subseteq \bigg(1-c'\left( \frac{\eps}{2\norm{A}} \right)^p \bigg)^{\negthinspace \negthinspace m} \cball{D\dual} = (1-c\eps^p)^m\cball{D\dual} \, .
\end{equation*}
In particular,
\begin{align*}
s^{\omega^\alpha \cdot (m_\eps +1)}_{\eps /(2\norm{A})}(\cball{D\dual}) \subseteq s^{\omega^\alpha \cdot m_\eps +1}_{\eps /(2\norm{A})}(\cball{D\dual}) &\subseteq s_{\eps/(2\norm{A})}\big( (1-c\eps^p)^{m_\eps} \cball{D\dual} \big)\\ &\subseteq s_{\eps/(2\norm{A})}\bigg( \frac{\eps}{4\norm{A}} \cball{D\dual} \bigg) \\ &=\emptyset\,,
\end{align*}
hence $l_\eps \leqslant m_\eps+1$. As $1-c\eps^p \in (0, \, 1)$, the definition of the logarithm yields
\begin{equation*}
m_\eps = \bigg\lceil  \log_{1-c\eps^p}\hspace{-1mm}\bigg( \frac{\eps}{4\norm{A}}\bigg) \bigg\rceil = \bigg\lceil \frac{\log_2 \eps -\log_24 -\log_2\norm{A}}{\log_2(1-c\eps^p)} \bigg\rceil = \bigg\lceil \frac{\log_2 \eps -d}{\log_2(1-c\eps^p)}\bigg\rceil .
\end{equation*}

It follows now that for each $n\in \nat$ we have
\begin{equation*}
l_{1/2^n} \leqslant 1+ \bigg\lceil\frac{\log_22^{-n}-d}{\log_2(1-c2^{-np})}\bigg\rceil = \bigg\lceil 1-\frac{n+d}{\log_2(1-c2^{-np})}\bigg\rceil \, ,
\end{equation*}
hence
\begin{equation*}
\meeszlenk{1/2^n}{T}\leqslant \meeszlenk{1/(2^{n+1}\norm{A})}{D}\leqslant \omega^\alpha \cdot l_{1/2^n} \leqslant \omega^\alpha \cdot \bigg\lceil 1-\frac{n+d}{\log_2 (1-c2^{-np})} \bigg\rceil .\qedhere
\end{equation*}
\end{proof}

\begin{proposition}\label{tool2}
Let $\alpha$ be an ordinal, $\Lambda$ a set and for each $\lambda \in \Lambda$ let $D_\lambda \in \szl_\alpha$. Then $(\bigoplus_{\lambda\in \Lambda}D_\lambda)_0\in \pzl_\alpha^0$.
\end{proposition}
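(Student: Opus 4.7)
The plan is to verify directly that $E := (\bigoplus_{\lambda\in\Lambda}D_\lambda)_0$ belongs to $\pzl_\alpha^0$, with witness constants $c = 1/8$ and $p = 1$. Explicitly, I aim to establish
\begin{equation*}
s^{\omega^\alpha}_\eps(\cball{E\dual}) \subseteq \bigl(1-\tfrac{\eps}{8}\bigr)\cball{E\dual} \qquad (\eps\in(0, 1)),
\end{equation*}
exploiting the identification $E\dual = (\bigoplus_{\lambda\in\Lambda}D_\lambda\dual)_1$ so that the unit ball carries precisely the $\ell_1$-type structure for which Lemma~\ref{tvl} was designed.

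The first step is an auxiliary observation about finite subsums. For any finite $\arr\subseteq\Lambda$, iterated application of Proposition~\ref{collection}(v) yields $\mszlenk{(\bigoplus_{\lambda\in\arr}D_\lambda)_0}\leqslant \omega^\alpha$. Since the $\eps$-Szlenk index of a nonempty $w\dual$-compact set is never a limit ordinal, and $\omega^\alpha$ is itself a limit ordinal when $\alpha\geqslant 1$ (the case $\alpha=0$ being a routine consequence of Proposition~\ref{basecase} and finite-dimensionality), this forces $s^{\omega^\alpha}_\delta(\cball{(\bigoplus_{\lambda\in\arr}D_\lambda)_0\dual}) = \emptyset$ for every $\delta>0$.

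The main step is a concentration-of-mass argument combined with Lemma~\ref{tvl}. Suppose, toward a contradiction, that $x\dual\in s^{\omega^\alpha}_\eps(\cball{E\dual})$ satisfies $\|x\dual\|>1-\eps/8$. Writing $x\dual = (x_\lambda\dual)_{\lambda\in\Lambda}\in (\bigoplus_\lambda D_\lambda\dual)_1$, only countably many coordinates are nonzero, so I can select a finite $\arr\subseteq\Lambda$ with $\|U_\arr\dual x\dual\| = \sum_{\lambda\in\arr}\|x_\lambda\dual\| > 1-\eps/4$. I then apply Lemma~\ref{tvl} with $p=0$, $q=1$, $K=\cball{E\dual}$ (so $|K|=1$), at the ordinal $\omega^\alpha$, and with $\delta=\eps/4$: since $1-\eps/4 > 1-3\eps/8 = 1-(\eps-\delta)/2$, the lemma delivers $U_\arr\dual x\dual \in s^{\omega^\alpha}_{\eps/4}(U_\arr\dual\cball{E\dual})$. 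But $U_\arr\dual\cball{E\dual} = \cball{(\bigoplus_{\lambda\in\arr}D_\lambda)_0\dual}$ (the restriction maps onto the smaller unit ball, with extension-by-zero as a retract), and the previous paragraph makes the $s^{\omega^\alpha}_{\eps/4}$-derivation of this set empty, contradiction.

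The only mildly delicate points are the identification $U_\arr\dual\cball{E\dual} = \cball{(\bigoplus_{\lambda\in\arr}D_\lambda)_0\dual}$ and tuning the constants so that the threshold $(\eps-\delta)/2$ in Lemma~\ref{tvl} is strictly undercut by the chosen finite approximation of $x\dual$; with $c=1/8$, $\delta=\eps/4$, and mass concentrated to within $\eps/8$, everything fits with a factor of $2$ to spare, so no essential obstacle arises.
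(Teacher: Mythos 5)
Your argument is correct and is essentially the paper's own proof: finite subsums lie in $\szl_\alpha$ by Proposition~\ref{collection}(v), so their $\omega^\alpha$-th $\delta$-derived balls are empty, and Lemma~\ref{tvl} then forces every element of $s^{\omega^\alpha}_\eps(\cball{E\dual})$ to have norm bounded away from $1$. The only differences are cosmetic: you fix $\delta=\eps/4$ and argue by contradiction, obtaining $c=1/8$, whereas the paper lets the threshold in Lemma~\ref{tvl} do the work for every finite subset at once and obtains $c=1/2$; either constant suffices for membership in $\pzl_\alpha^0$.
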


\begin{proof}
Fix $\eps >0$ and suppose $x\in s^{\omega^\alpha}_\eps \big( \cball{(\bigoplus_{\lambda\in\Lambda}D_\lambda)_0\dual} \big)$. By Proposition~\ref{collection}(v), $s^{\omega^\alpha}_\eps\big( U_\eff\dual \cball{(\bigoplus_{\lambda\in\Lambda}D_\lambda)_0\dual} \big) = s^{\omega^\alpha}_\eps \big( \cball{(\bigoplus_{\lambda\in\eff}D_\lambda)_0\dual} \big) = \emptyset$ for every $\eff\in\Lambda\fin$. Applying Lemma~\ref{tvl} thus yields $\Vert{U_\eff\dual x}\Vert\leqslant 1-\eps /2$ for every $\eff\in\Lambda\fin$, hence \begin{equation*} \norm{x} = \sup_{\eff\in\Lambda\fin}\norm{U_\eff\dual x}\leqslant 1-\frac{\eps}{2}\, .\end{equation*}
As $x\in s^{\omega^\alpha}_\eps \big( \cball{(\bigoplus_{\lambda\in\Lambda}D_\lambda)_0\dual} \big)$ was arbitrary, $s^{\omega^\alpha}_\eps \big( \cball{(\bigoplus_{\lambda\in\Lambda}D_\lambda)_0\dual} \big) \subseteq (1-\eps /2)\cball{(\bigoplus_{\lambda\in\Lambda}D_\lambda)_0\dual}$. In particular, $(\bigoplus_{\lambda\in\Lambda}D_\lambda)_0$ satisfies the defining property of $\pzl_\alpha^0$ with $c=1/2$ and $p=1$.
\end{proof}

\begin{proposition}\label{tool3}
For $\alpha$ an ordinal, the class $\pzl_\alpha\setminus\szl_\alpha$ is nonempty.
\end{proposition}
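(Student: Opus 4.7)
For each ordinal $\alpha$ I will exhibit a space $Y_\alpha \in \pzl_\alpha^0$ with $\mszlenk{Y_\alpha} > \omega^\alpha$. In every case $Y_\alpha$ will be a $c_0$-direct sum $(\bigoplus_{\lambda\in\Lambda} D_\lambda)_0$ of building blocks $D_\lambda \in \szl_\alpha$, so that $Y_\alpha \in \pzl_\alpha^0$ is immediate from Proposition~\ref{tool2}. The $D_\lambda$ will be selected so that for some fixed $\eps_0 > 0$ the ordinals $\meeszlenk{\eps_0}{D_\lambda}$ are cofinal in $\omega^\alpha$. An application of Lemma~\ref{poopoopoo} to the canonical isometric inclusion $i_\lambda\colon D_\lambda \hookrightarrow Y_\alpha$ (whose adjoint sends $\cball{Y_\alpha\dual}$ onto $\cball{D_\lambda\dual}$) then yields $\meeszlenk{\eps_0/2}{Y_\alpha} \geqslant \meeszlenk{\eps_0}{D_\lambda}$ for every $\lambda$; taking the supremum in $\lambda$ gives $\meeszlenk{\eps_0/2}{Y_\alpha} \geqslant \omega^\alpha$, and since an $\eps$-Szlenk index of a nonempty $w\dual$-compact set is never a limit ordinal, this forces $\meeszlenk{\eps_0/2}{Y_\alpha} > \omega^\alpha$, whence $\mszlenk{Y_\alpha} > \omega^\alpha$ and $Y_\alpha \notin \szl_\alpha$.

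\textbf{The three cases.} For $\alpha = 0$ take $Y_0 = c_0 = (\bigoplus_{n\in\nat}\field)_0$; then $\field \in \szl_0$ gives $c_0 \in \pzl_0^0$, while Proposition~\ref{lastlabel} gives $\mszlenk{c_0} \geqslant \omega > 1 = \omega^0$. For $\alpha = \beta+1$ a successor, invoke \cite[Proposition~2.16]{Brookera} to pick $F$ with $\mszlenk{F} = \omega^{\beta+1}$, and select $\eps_0 > 0$ with $\meeszlenk{\eps_0}{F} > \omega^\beta$ (possible since $\sup_\eps \meeszlenk{\eps}{F} = \omega^{\beta+1}$); set $D_n = (\bigoplus_{i=1}^n F)_1$ and $Y_\alpha = (\bigoplus_{n\in\nat}D_n)_0$. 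By Proposition~\ref{collection}(v), $\mszlenk{D_n} = \omega^{\beta+1} = \omega^\alpha$, so $D_n \in \szl_\alpha$; after identifying $\cball{D_n\dual}$ with $\prod_{i=1}^n \cball{F\dual}$, the inclusion~(\ref{wormfriend}) in Lemma~\ref{samsung} yields $s_{\eps_0}^{\omega^\beta \cdot n}(\cball{D_n\dual}) \supseteq \bigl(s_{\eps_0}^{\omega^\beta}(\cball{F\dual})\bigr)^n \neq \emptyset$, so $\meeszlenk{\eps_0}{D_n} > \omega^\beta \cdot n$ and $\sup_n \meeszlenk{\eps_0}{D_n} \geqslant \omega^{\beta+1} = \omega^\alpha$. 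For $\alpha$ a limit ordinal, fix a family $(\beta_\lambda)_{\lambda \in \Lambda}$ cofinal in $\alpha$ (so $\beta_\lambda + 1 < \alpha$); for each $\lambda$ pick $F_\lambda$ with $\mszlenk{F_\lambda} = \omega^{\beta_\lambda + 1}$ and a scalar $\eps_\lambda > 0$ with $\meeszlenk{\eps_\lambda}{F_\lambda} > \omega^{\beta_\lambda}$, and let $D_\lambda$ denote $F_\lambda$ equipped with the rescaled norm $\eps_\lambda^{-1}\norm{\cdot}_{F_\lambda}$. Rescaling is an isomorphism, so $\mszlenk{D_\lambda} = \omega^{\beta_\lambda + 1} \leqslant \omega^\alpha$ and $D_\lambda \in \szl_\alpha$; moreover $\meeszlenk{1}{D_\lambda} = \meeszlenk{\eps_\lambda}{F_\lambda} > \omega^{\beta_\lambda}$. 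Since $\gamma \mapsto \omega^\gamma$ is strictly increasing and continuous, $\sup_\lambda \omega^{\beta_\lambda} = \omega^\alpha$, whence $\sup_\lambda \meeszlenk{1}{D_\lambda} \geqslant \omega^\alpha$ and the scheme of the first paragraph applies with $\eps_0 = 1$.

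\textbf{Main obstacle.} The delicate case is the limit one: a priori the parameter $\eps_\lambda$ witnessing $\meeszlenk{\eps_\lambda}{F_\lambda} > \omega^{\beta_\lambda}$ can shrink to zero with $\lambda$, and then no single $\eps > 0$ would witness simultaneous growth of $\meeszlenk{\eps}{F_\lambda}$ across $\lambda$, leaving the supremum unable to reach $\omega^\alpha$. Rescaling each $F_\lambda$ by $\eps_\lambda^{-1}$ resolves this: the rescaling is an isometric isomorphism (so it preserves both $\szl_\alpha$ membership and, via Proposition~\ref{tool2}, $\pzl_\alpha^0$ membership of the resulting $c_0$-sum), but it homogenizes the effective $\eps$-Szlenk threshold uniformly to $\eps = 1$, which is precisely the input the inclusion-based estimate from Lemma~\ref{poopoopoo} needs in order to transfer the cofinal growth up to $Y_\alpha$.
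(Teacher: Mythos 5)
Your reduction to Proposition~\ref{tool2} plus a uniform-$\eps$ cofinality argument is sound in principle, and your $\alpha=0$ and successor cases are correct (the successor step is essentially the paper's own successor step, via Lemma~\ref{samsung}). The gap is in the limit case: rescaling the norm of $F_\lambda$ does \emph{not} shift the $\eps$-Szlenk index. If $D_\lambda$ is $F_\lambda$ with norm $\eps_\lambda^{-1}\norm{\cdot}_{F_\lambda}$, then $\cball{D_\lambda\dual}=\eps_\lambda^{-1}\cball{F_\lambda\dual}$ while the dual norm becomes $\norm{\cdot}_{D_\lambda\dual}=\eps_\lambda\norm{\cdot}_{F_\lambda\dual}$, so a $w\dual$-open set has $D_\lambda\dual$-diameter $>1$ exactly when it has $F_\lambda\dual$-diameter $>\eps_\lambda^{-1}$; by homogeneity of the derivations, $\mdset{\gamma}{1}{\cball{D_\lambda\dual}}=\eps_\lambda^{-1}\mdset{\gamma}{1}{\cball{F_\lambda\dual}}$, i.e.\ $\meeszlenk{1}{D_\lambda}=\meeszlenk{1}{F_\lambda}$, not $\meeszlenk{\eps_\lambda}{F_\lambda}$. (The dilation of the ball and the contraction of the metric cancel exactly; this is why the $\eps$-Szlenk index is an isometric-up-to-scale invariant. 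Contrast this with scaling an \emph{operator}, where the ambient dual norm is fixed and $\meeszlenk{\eps}{m^{-1}T}=\meeszlenk{m\eps}{T}$ — that is the trick the paper uses in Theorem~\ref{nonfactor}, but it produces operators, not spaces.) So the obstacle you correctly identify — the witnesses $\eps_\lambda$ may tend to $0$ — is not removed, and without a single $\eps_0$ working for cofinally many $\lambda$ one cannot rule out $\mszlenk{Y_\alpha}=\omega^\alpha$ (indeed Proposition~\ref{unifac} shows that if $\sup_\lambda\meeszlenk{\eps}{D_\lambda}<\omega^\alpha$ for every $\eps$, then $\mszlenk{Y_\alpha}\leqslant\omega^\alpha$).

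The repair is to build the spaces recursively from a single seed rather than choosing the $F_\lambda$ independently at limit stages: the paper takes $T=I_{c_0}$ and applies Construction~\ref{szlenkvar} with $r=0$, obtaining $E_\alpha$ with $T_\alpha=I_{E_\alpha}$. Proposition~\ref{babysitter}(i) then propagates \emph{one fixed} $\eps>0$ through all stages, giving $\meeszlenk{\eps}{E_\alpha}>\omega^{\alpha}$ for every $\alpha$ — precisely the uniformity your limit step needs — and membership $E_\alpha\in\pzl_\alpha$ is proved by transfinite induction using Proposition~\ref{tool2} together with $\pzl_\beta\subseteq\szl_{\beta+1}$ and Proposition~\ref{collection}(v) at successor steps. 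Your argument would go through if, at a limit ordinal $\alpha$, you replaced the independently chosen $F_\lambda$ by the previously constructed spaces $Y_{\beta}$, $\beta<\alpha$, and carried the uniform $\eps$ (necessarily fixed once and for all at stage $0$) through the induction.
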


\begin{proof}
Let $T= I_{c_0}$, the identity operator on $c_0$. For each ordinal $\alpha$, let $T_\alpha$ be the $\alpha$th operator given by Construction~\ref{szlenkvar} with $r=0$, and let $E_\alpha$ denote the Banach space that is the domain and codomain of $T_\alpha$ (so that $T_\alpha$ is the identity operator on $E_\alpha$). With $\eta_T = 0$ and $\zeta_T=0$ in the notation introduced in the paragraph preceding Proposition~\ref{babysitter} (since $\mszlenk{c_0}=\omega$), it follows from Proposition~\ref{babysitter}(i) that there is $\eps >0$ such that $\mszlenk{E_\alpha} = \mszlenk{T_\alpha} \geqslant {\rm Sz}_\eps (T_\alpha) >\omega^\alpha$ for all ordinals $\alpha$. We thus have $E_\alpha \notin \szl_\alpha$ for all $\alpha$, and so to complete the proof it suffices to show that $E_\alpha \in \pzl_\alpha$ for all $\alpha$. In this endeavour, we proceed by transfinite induction and recall from the paragraph following Question~\ref{equalclasses} that $\pzl_\alpha \subseteq \szl_{\alpha+1}$ for all ordinals $\alpha$.

For $\alpha=0$, we have $E_0 = c_0\in \pzl_\alpha$ by an application of Proposition~\ref{tool2} with $\Lambda =\nat$ and $D_\lambda = \field$ for all $\lambda\in\Lambda$.

Suppose that $\alpha$ is an ordinal such that $E_\beta \in \pzl_\beta$ for all $\beta <\alpha$. If $\alpha$ is a successor ordinal, say $\alpha = \zeta+1$, then since $\pzl_\zeta \subseteq \szl_{\zeta+1}$ it follows by Proposition~\ref{collection}(v) that $(\bigoplus_{i=1}^n E_\zeta)_1 \in \szl_{\zeta+1}$ for all $n\in\nat$. By Proposition~\ref{tool2}, $ E_\alpha = \big(\bigoplus_{n\in\nat}(\bigoplus_{i=1}^n E_\zeta)_1\big)_0 \in \pzl_{\zeta+1} = \pzl_{\alpha} $, as required. If $\alpha$ is a limit ordinal, then for each $\beta<\alpha$ we have $E_\beta \in \pzl_\beta \subseteq \szl_{\beta+1}\subseteq \szl_{\alpha}$, hence $E_\alpha = (\bigoplus_{\beta<\alpha}E_\beta)_0\in \pzl_\alpha$ by Proposition~\ref{tool2}. This completes the induction.
\end{proof}

\begin{theorem}\label{notnotnot}
For $\alpha$ an ordinal, the operator ideal $\Op (\pzl_\alpha)$ is not closed.
\end{theorem}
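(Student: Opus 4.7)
The plan is to show $\Op (\pzl_\alpha)$ is not norm-closed by exhibiting an operator $T$ in its closure whose $\eps$-Szlenk profile violates the restriction of Proposition~\ref{tool1}, so that $T\notin \Op(\pzl_\alpha)$. I would take $T=(\bigoplus_{k\in\nat}W_k)_0$ as a $c_0$-direct sum of auxiliary operators $W_k\in\Op(\pzl_\alpha)$ with $\norm{W_k}\leqslant 1/k$. The truncations $T_N=V_{\set{1,\ldots,N}}Q_{\set{1,\ldots,N}}T$ factor through the $\oplus_\infty$-direct sum of the spaces through which $W_1,\ldots,W_N$ factor, and this direct sum lies in $\pzl_\alpha$ by Proposition~\ref{spaceideal}, so each $T_N\in\Op(\pzl_\alpha)$; since $\norm{W_k}\to 0$ we have $T_N\to T$ in operator norm.

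The heart of the argument is the design of the $W_k$. Each would take the form $W_k=c_kI_{D_k}$ with $c_k\leqslant 1/k$ and $D_k\in\pzl_\alpha$ chosen so that the minimal exponent $p(k)$ witnessing $D_k\in\pzl_\alpha^0$ satisfies $p(k)\geqslant k$ and is essentially attained near some specific scale $1/2^{n_k}$: concretely, $\meeszlenk{1/2^{n_k}}{W_k}=\meeszlenk{k/2^{n_k}}{D_k}\geqslant \omega^\alpha\cdot (2^{n_k}/k)^{p(k)}$. For $\alpha=0$, Raja's identity $\pzl_0=\szl_1$ makes this immediate by taking $D_k=\ell_{k+1}$, whose classical $\eps$-Szlenk profile is polynomial of degree $k+1$ in $1/\eps$. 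For general $\alpha$, analogous $D_k$ are to be built by iterating the $c_0$-direct sum construction of Proposition~\ref{tool2} with $\szl_\alpha$-summands whose Szlenk-rate profiles are chosen to force worsening exponents with $k$, a transfinite generalization of the $\ell_p$-example.

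Assuming $T\in \Op(\pzl_\alpha)$, Proposition~\ref{tool1} supplies constants $(c,d,p)$ with $\meeszlenk{1/2^n}{T}\leqslant \omega^\alpha\cdot\lceil 1-(n+d)/\log_2(1-c2^{-np})\rceil$, whose right-hand side is of order $\omega^\alpha\cdot n\cdot 2^{np}$ for large $n$. On the other hand, from the factorization $W_k=Q_{\set{k}}TU_{\set{k}}$ combined with the $\eps$-Szlenk composition estimates of Lemma~\ref{poopoopoo} and the proof of Theorem~\ref{idealthm}, one derives the lower bound $\meeszlenk{1/2^{n_k+1}}{T}\geqslant \meeszlenk{1/2^{n_k}}{W_k}\geqslant \omega^\alpha\cdot (2^{n_k}/k)^{p(k)}$. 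Since $p(k)\geqslant k$, taking $k$ larger than $p$ and $n_k$ sufficiently large (one checks $n_k$ of order $k\log_2 k$ suffices) makes the right-hand side dominate any $\omega^\alpha\cdot n_k 2^{n_k p}$, contradicting the putative uniform bound.

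The main obstacle is the construction in the second step for general ordinals $\alpha$: producing, for each $k$, a space $D_k\in\pzl_\alpha$ with genuinely worsening minimal $\pzl_\alpha^0$-exponent and with this bad exponent actually witnessed at a prescribed scale tied to $c_k$. The case $\alpha=0$ is transparent thanks to the $\ell_p$-examples, but extending it requires a delicate inductive recipe exploiting Proposition~\ref{tool2} (with summands drawn from the construction behind Proposition~\ref{tool3}). The rest of the proof, in particular the bookkeeping of canonical injections, projections, and scaling factors underlying the key composition inequality, is notationally fiddly but routine, built entirely from tools already developed in the paper.
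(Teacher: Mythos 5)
Your overall architecture is the right one and matches the paper's: build $J=(\bigoplus_k W_k)_0$ with $W_k\in\Op(\pzl_\alpha)$ and $\norm{W_k}\to 0$, observe that the truncations lie in $\Op(\pzl_\alpha)$ (so $J\in\overline{\Op(\pzl_\alpha)}$), and then contradict the quantitative necessary condition of Proposition~\ref{tool1}. But the heart of the argument --- the actual construction of summands whose $\eps$-Szlenk indices beat every bound of the form $\omega^\alpha\cdot\lceil 1-(n+d)/\log_2(1-c2^{-np})\rceil$ --- is missing. You defer it to ``a delicate inductive recipe'' producing spaces $D_k\in\pzl_\alpha$ whose minimal $\pzl_\alpha^0$-exponent $p(k)$ tends to infinity and is witnessed at a prescribed scale, and you acknowledge this is the main obstacle. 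That is precisely the step that cannot be waved through: Proposition~\ref{tool2} always produces members of $\pzl_\alpha^0$ with the \emph{fixed} constants $c=1/2$, $p=1$, so iterating it gives no mechanism for degrading the exponent, and nothing else in the paper supplies spaces in $\pzl_\alpha$ with prescribed, worsening Szlenk growth rates at level $\omega^\alpha$. (Even your $\alpha=0$ template is off: the relevant exponent for $\ell_q$ is the conjugate index, so $D_k=\ell_{k+1}$ gives exponents tending to $1$, i.e.\ \emph{improving}; you would need $q\to 1^{+}$.)

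The paper avoids this obstacle entirely by a different mechanism for generating large $\eps$-Szlenk indices inside $\Op(\pzl_\alpha)$: fix a single $D\in\pzl_\alpha\setminus\szl_\alpha$ (Proposition~\ref{tool3}), so that for each $m$ there is a scale $1/2^{s(m)}$ at which $\meeszlenk{1/2^{s(m)}}{m^{-1}I_D}>\omega^\alpha$, and then take $J_m=m^{-1}(\bigoplus_{i=1}^{t(m)}I_D)_1$ with $t(m)$ of order $s(m)^2\,2^{s(m)^2}$. The $\ell_1$-sum stays in $\Op(\pzl_\alpha)$ because $\pzl_\alpha$ is a space ideal, while Lemma~\ref{samsung} gives the lower bound $\meeszlenk{1/2^{s(m)}}{J_m}>\omega^\alpha\cdot t(m)$; since $t(m)$ grows superexponentially in $s(m)$, it eventually dominates the bound of Proposition~\ref{tool1} for any fixed $(c,d,p)$. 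In other words, the multiplicity of copies in an $\ell_1$-sum, not a family of spaces with worsening exponents, is what defeats the uniform estimate. Without either that device or a concrete construction of your spaces $D_k$ for arbitrary $\alpha$, your proof does not go through.
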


\begin{proof}
Our proof relies on ideas similar to those used to prove Theorem~\ref{nonfactor}. Let $D\in \pzl_\alpha\setminus\szl_\alpha$ (c.f. Proposition~\ref{tool3}) and let $I$ denote the identity operator of $D$. As $\pzl_\alpha$ is a space ideal, $(\bigoplus_{i=1}^mD)_1 \in \pzl_\alpha$ for all $m\in \nat$.

For each $m\in\nat$, let $s(m)\in \nat$ be so large that $\meeszlenk{1/2^{s(m)}}{m^{-1}I}>\omega^\alpha$, let \begin{equation*} t(m)= \Bigg\lceil\frac{-s(m)^2}{\log_2 (1-2^{-s(m)^2})}\Bigg\rceil \end{equation*} and let $J_m = m^{-1}\big(\bigoplus_{i=1}^{t(m)}I\big)_1\in\Op (\pzl_\alpha)$. Finally, we set $J = (\bigoplus_{m\in\nat}J_m)_0$. To prove the theorem, we will show that $J\in \overline{\Op (\pzl_\alpha)} \setminus \Op (\pzl_\alpha)$.

For each $m\in\nat$ let $H_m = \big(\bigoplus_{i=1}^{t(m)}D\big)_1$, so that $J \in \allop ((\bigoplus_{m\in\nat}H_m)_0)$. For each $m$, let $L_m = V_{\set{1, \ldots , m}}Q_{\set{1, \ldots , m}}J \in \Op (\pzl_\alpha)$ (here $\set{1, \ldots , m}$ is considered a subset of the underlying index set of $(\bigoplus_{m\in\nat}H_m)_0$, and $V_{\set{1, \ldots , m}}$ and $Q_{\set{1, \ldots , m}}$ are as defined in Section~\ref{greatfamily}). Then $ \norm{L_m -J} = \sup_{k>m}\norm{J_k} = (m+1)^{-1}\stackrel{m}{\rightarrow}0$, hence $J\in \overline{\Op (\pzl_\alpha)}$.

On the other hand, by Lemma~\ref{samsung} we have that for each $m \in \nat$,
\begin{equation*}
\meeszlenk{1/2^{s(m)}}{J} \geqslant \meeszlenk{1/2^{s(m)}}{J_m} > \omega^\alpha \cdot t(m) = \omega^\alpha \cdot \Bigg\lceil\frac{-s(m)^2}{\log_2 (1-2^{-s(m)^2})}\Bigg\rceil \, .
\end{equation*}
Moreover, since $\norm{m^{-1}I}\rightarrow 0$, it follows that $\set{s(m)\mid m\in\nat}$ is unbounded in $\nat$. Thus, for any $c\in (0, \, 1)$, $d\geqslant 0$ and $p\geqslant 1$ there is $m\in \nat$ such that \begin{equation*} \Bigg\lceil\frac{-s(m)^2}{\log_2 (1-2^{-s(m)^2})}\Bigg\rceil \geqslant  \Bigg\lceil 1-\frac{s(m)+d}{\log_2 (1-c2^{-s(m)p})} \Bigg\rceil \, ,\end{equation*} and for such $m$ we have
\begin{equation*}
\meeszlenk{1/2^{s(m)}}{J} > \omega^\alpha \cdot \Bigg\lceil 1-\frac{s(m)+d}{\log_2 (1-c2^{-s(m)p})} \Bigg\rceil\, .
\end{equation*}
We have now shown that $J$ does not satisfy the conclusion of Proposition~\ref{tool1}, hence $J\notin \Op (\pzl_\alpha)$.
\end{proof}

\begin{remark}
Earlier in this section it was mentioned that recent work of M.~Raja \cite{Raja2009}, which removed the separability hypothesis from earlier work of H.~Knaust, E.~Odell and Th.~Schlumprecht \cite{Knaust1999}, leads to a different proof of the fact that $\szlenkop{1}$ lacks the factorization property. However, it is not difficult to see that the greater generality of Raja's result is in fact not needed to establish the alternative proof. To see why this is so, let $\sep$ denote the space ideal consisting of all separable Banach spaces. Taking $D=c_0$ in the proof of Theorem~\ref{notnotnot}, one obtains an operator $J$ such that the domain of $J$ is separable and $J\in\szlenkop{1}\setminus \Op (\pzl_0)$. If it were the case that $J\in \Op (\szl_1)$, then it would follow from the separability of the domain of $J$ and the main result of \cite{Knaust1999} that $J\in \Op (\szl_1 \cap \sep) = \Op (\pzl_0 \cap \sep)\subseteq \Op (\pzl_0)$ - a contradiction. Thus $J\notin \Op (\szl_1)$, hence $\szl_1$ lacks the factorization property.
\end{remark}

We conclude our results with the following proof, promised at the beginning of the section.

\begin{proof}[Proof of Proposition~\ref{partdich}]
Trivially, $\Op (\pzl_\alpha) \subseteq \Op (\szl_{\alpha+1}) \subseteq \szlenkop{\alpha+1}$. Note that statement (i) of the proposition implies $\Op (\pzl_\alpha) = \Op (\szl_{\alpha+1})$, whilst statement (ii) of the proposition implies $\Op (\szl_{\alpha+1}) = \szlenkop{\alpha+1}$. As $ \szlenkop{\alpha+1}$ is closed and $\Op (\pzl_{\alpha})$ is not, the inclusion $\Op (\pzl_\alpha)\subseteq \szlenkop{\alpha+1}$ is strict, hence (i) and (ii) cannot both hold.
\end{proof}

\section{Concluding remarks}\label{closeone}

We have shown that the operator ideals $\szlenkop{\alpha}$ fail to have the factorization property for a large (indeed, proper) class of ordinals $\alpha$. However, we have not addressed here the possibility of the operator ideals $\szlenkop{\alpha}$ possessing some sort of approximate factorization property. Noting that $\szlenkop{\alpha}$ is closed, injective and surjective for every $\alpha$, it is worth considering whether there is some composition of the closed, injective and surjective hull procedures that yields $\szl_\alpha$ from $\Op (\szl_\alpha)$ for every ordinal $\alpha$. We give some possible examples of such compositions via the open questions below:
\begin{question}\label{fquest1}
Let $\alpha$ be an ordinal. Is $\szlenkop{\alpha} = \overline{\Op (\szl_\alpha)}$?
\end{question}
\begin{question}\label{fquest2}
Let $\alpha$ be an ordinal. Is $\szlenkop{\alpha} = \Big(\overline{\Op (\szl_\alpha)}\inj\Big)\sur$?
\end{question}
Note that the injective and surjective hull procedures commute; that is, $\big(\opideal\inj\big)\sur = \big( \opideal \sur \big)\inj$ for every operator ideal $\opideal$ (c.f. \cite[Proposition~4.7.20]{Pietsch1980}). Evidently, Corollary~\ref{uncfac} ensures that the answer to Question~\ref{fquest1} and Question~\ref{fquest2} is \emph{yes} in both cases when $\alpha$ is of uncountable cofinality. We do not know if the counterexample constructed in the proof of Theorem~\ref{nonfactor} provides a counterexample to either of the two questions above. It is well-known that in the case $\alpha=0$, the answer to Question~\ref{fquest1} is \emph{no} and the answer to Question~\ref{fquest2} is \emph{yes}. Indeed, in this case $\szlenkop{\alpha}$ is precisely the class of compact operators, whilst $\Op (\szl_\alpha)$ is the class $\finiteop$ of finite rank operators; it is well-known that $\overline{\finiteop} \subsetneq \overline{\finiteop}\inj = \compactop$. However, nothing appears to be known for Question~\ref{fquest1} and Question~\ref{fquest2} in the case that $0<\cf{\alpha}\leqslant \omega$.

Besides answering Question~\ref{equalclasses} in the affirmative, one could possibly show that the operator ideals $\szlenkop{\alpha+1}$ ($\alpha \in \ord$) lack the factorization property by following a line of inquiry such as the following. Let $\alpha$ be an ordinal and $E\in \szl_{\alpha+1}$. For each $\eps > 0$, let $ m_\eps = \inf \set{m<\omega \mid \meeszlenk{\eps}{E}< \omega^\alpha \cdot m}$ (note that $m_\eps$ exists for every $\eps$). We ask: What special properties do the numbers $m_\eps$ have? Are they submultiplicative with respect to $\eps$? Do they satisfy some other general property that ensures that the growth of the $\eps$-Szlenk indices of elements of $\Op (\szl_{\alpha+1})$ is restricted in some useful way? The straightforward homogeneity argument used by Lancien in \cite{Lancien2006} to establish the submultiplicity of the $\eps$-Szlenk indices of a given Banach space does not seem to be sufficient for a useful analysis of growth properties of the numbers $m_\eps$, so a more subtle argument is likely to be required if this direction of inquiry is to prove fruitful. 

More generally, to investigate whether $\szlenkop{\alpha}$ has the factorization property for $\alpha$ an ordinal of countable cofinality, and not of the form $\omega^\beta$ for any $\beta$, one possibility would be to consider growth properties of a family of ordinals $\alpha_\eps$, $\eps>0$, or perhaps of a (finite or infinite) sequence of ordinals $(\alpha_{\eps, \, n})_n$, defined in terms of the derivations $s_\eps^\gamma$ and depending in some way on the Cantor normal form of $\alpha$. It would also be interesting to know whether such growth conditions are sufficient for factorization through a Banach space whose Szlenk index does not exceed $\omega^\alpha$.

\section*{Acknowledgements} 
The author acknowledges the support of an ANU PhD Scholarship and thanks his doctoral advisor Dr Rick Loy and the anonymous referee for many valuable suggestions that have improved the presentation of the results. The author is grateful to Dr~Matias~Raja for making available a preprint of \cite{Raja2009}.

\vspace{20mm}

\footnotesize
\noindent Philip Brooker

\noindent Mathematical Sciences Institute

\noindent Australian National University

\noindent Canberra ACT 0200, Australia

\vphantom{pahb}

\noindent \textit{E-mail addresses}: u4163111@anu.edu.au, philip.a.h.brooker@gmail.com

\end{document}